\newenvironment{changemargin}[2]{\begin{list}{}{%
\setlength{\topsep}{0pt}%
\setlength{\leftmargin}{0pt}%
\setlength{\rightmargin}{0pt}%
\setlength{\listparindent}{\parindent}%
\setlength{\itemindent}{\parindent}%
\setlength{\parsep}{0pt plus 1pt}%
\addtolength{\leftmargin}{#1}%
\addtolength{\rightmargin}{#2}%
}\item }{\end{list}}
\let\oldtocsection=\tocsection
\let\oldtocsubsection=\tocsubsection
\let\oldtocsubsubsection=\tocsubsubsection
\renewcommand{\tocsection}[2]{\hspace{0em}\oldtocsection{#1}{#2}}
\renewcommand{\tocsubsection}[2]{\hspace{1em}\oldtocsubsection{#1}{#2}}
\renewcommand{\tocsubsubsection}[2]{\hspace{2em}\oldtocsubsubsection{#1}{#2}}
  \newtheorem{thm}{Theorem}[section]
  \newtheorem{cor}[thm]{Corollary}
  \newtheorem{lem}[thm]{Lemma}
  \newtheorem{prop}[thm]{Proposition}
  \theoremstyle{definition}
    \newtheorem{defn}[thm]{Definition}
  \theoremstyle{remark}
  \newtheorem{rem}[thm]{Remark}
 \newtheorem*{conj}{Conjecture}
\begin{document}
%
\pagenumbering{roman}
\title[Motion of an immersed particle and  point vortex dynamics]{Motion of a  particle immersed in a two dimensional incompressible perfect fluid \textit{and} point vortex dynamics \\ \, \\ Franck Sueur\footnote{Institut de Math\'ematiques de Bordeaux, 
UMR CNRS 5251, Universit\'e de Bordeaux, Franck.Sueur@math.u-bordeaux.fr}}
\maketitle
\setcounter{page}{5}

\begin{abstract}
In these notes, we expose some  recent  works by the author  in collaboration with  Olivier Glass, Christophe Lacave and  Alexandre Munnier, 
establishing point vortex dynamics as zero-radius limits of motions of a rigid body immersed  in a two dimensional  incompressible perfect fluid in several inertia regimes. 
\end{abstract}

\pagenumbering{arabic}\setcounter{page}{1}

\section{Introduction}

The aim of these notes is to present some recent and forthcoming works, in particular  \cite{GLS,GLS2,GMS} and \cite{GLMS},
obtained  by the author  in collaboration with  Olivier Glass, Christophe Lacave and  Alexandre Munnier 
establishing point vortex dynamics as zero-radius limits of motions of a rigid body immersed  in a two dimensional  incompressible perfect fluids in several inertia regimes.

The rigid body is assumed to be only accelerated by the force exerted by the fluid pressure on its boundary according to  the Newton equations, the fluid velocity and pressure being given by the incompressible Euler equations. 
The equations at stake  then read:
\begin{gather}
\label{intrau1}
\displaystyle \frac{\partial u }{\partial t}+(u  \cdot\nabla)u   + \nabla \pi =0 \quad  \text{ and } \quad \operatorname{div} u   = 0  ,
\\\ \label{intrau2} m h'' (t) =  \int_{\partial  \mathcal{S} (t)} \pi n \, ds \quad \text{ and } \quad \mathcal{J} r' (t) =  \int_{\partial  \mathcal{S} (t)} (x-h(t))^{\perp} \cdot \pi n \, ds .
\end{gather}
Here 
\begin{itemize}
\item $u=(u_1,u_2)$ and $\pi$ respectively denote the velocity and pressure fields (the fluid  is supposed to be  homogeneous, of density $1$ in order to simplify the notations),
\item $m >0$ and $\mathcal{J} >0$ denote respectively the mass and the momentum of inertia of the body,
\item  $h'(t)$ is the velocity of the center of mass  $h (t)$ of the body and $r(t)$ is the angular velocity.
The body  rigidly moves so that at times $t$ it occupies a domain $\mathcal{S}(t)$ which is isometric to its initial position  $\mathcal{S}_0$
which is supposed to be a simply connected smooth compact subset of $\mathbb{R}^2$. 
Indeed,  there exists a rotation matrix
\begin{equation}
\label{2drot}
R(\theta(t)):=\begin{pmatrix} \cos \theta (t) & -\sin \theta(t) \\ \sin \theta (t) & \cos \theta(t)\end{pmatrix}
\end{equation}
such that
\[
\mathcal{S}(t) = \{ h(t)+ R(\theta(t)) x , \ x \in \mathcal{S}_{0} \} .
\]
Furthermore, the angle satisfies
$\theta'(t)=r(t) .$
\item when $x=(x_1,x_2)$ the notation $x^\perp $ stands for $x^\perp =( -x_2 , x_1 )$, 
\item $n$ denotes  the unit normal vector pointing outside the fluid domain, which of course depends on the solid position. 
\end{itemize}

We assume that the boundary of the solid is impermeable so that the 
fluid cannot penetrate into the solid and we assume that there is no cavitation as well. 
The natural boundary condition at the fluid-solid interface is therefore 
\begin{equation}
\label{bc-intro}
u  \cdot n =   \Big( h'(t) + r(t)(x-h(t))^{\perp} \Big)  \cdot n   \quad \text{for}\ \  x\in \partial \mathcal{S}  (t) . 
\end{equation}

Let us emphasize that this condition extends the usual condition $u  \cdot n = 0$ on a fixed boundary and involves only the normal part of the fluid velocity. 
As usual with perfect fluid no pointwise boundary condition needs to be prescribed for the tangential part of the fluid velocity. 
However because the domain occupied by the solid is a hole in the fluid domain there is a global condition on the tangential part of the fluid velocity involving the circulation 
 $ \gamma$  defined as 
$$\gamma =  \int_{ \partial \mathcal{S}_0} u  \cdot  \tau \, ds $$
where $u$ is the fluid velocity and $\tau$ the the unit counterclockwise tangential vector so that $n = \tau^\perp$.
Indeed when considering a fluid velocity $u$ which has a good enough regularity, what we will always do in these notes, 
 the so-called Kelvin's theorem applies and $\gamma$ is preserved over time.
 The circulation somehow encodes the amount of vorticity hidden in the particle from the fluid viewpoint. 
Indeed by Green Theorem the circulation can be recast as the integral over $\mathcal{S}_0$ of the vorticity  $ \operatorname{curl} \overline{u}= \partial_1  \overline{u}_2 -  \partial_2  \overline{u}_1$ of any smooth vector field  $\overline{u} $ in $\mathcal{S}_0$ such that $\overline{u} \cdot  \tau = u  \cdot  \tau $ on $ \partial \mathcal{S}_0$.
Therefore the limit where the body  radius converges to zero corresponds to a singular perturbation problem (in space) for the fluid velocity when this latter has to accommodate with a condition of nonzero $ \gamma$ around a shrinking solid. 
Indeed it is well understood since the works \cite{ILN} and \cite{Lopes} that for a solid obstacle held fixed in a perfect incompressible fluid,  with a nonzero given circulation  and with possibly nonzero vorticity in the fluid,  in the limit where the obstacle shrinks into a fixed pointwise particle, 
 the Euler equation driving the fluid evolution has to be modified: in the Biot-Savart law providing the fluid velocity generated by the fluid vorticity, a Dirac mass 
 at the fixed position of the pointwise obstacle with an amplitude equal to the circulation has to be added to the fluid vorticity.  
 We will refer to the background fluid velocity  in the sequel and we will denote it by $ u_\text{bd} $.
 The genuine fluid vorticity $\omega$ (that is without the Dirac mass) is convected by  the background fluid velocity $ u_\text{bd}$.
In the case of a moving body one may wonder if the divergence of the fluid velocity has to be modified  in the zero-radius limit as well in order to accommodate with the non-homogeneous condition \eqref{bc-intro}. In the sequel we will consider some cases where the solid radius $\varepsilon$ shrinks to $0$ with  $(h' , \varepsilon \theta' )$ bounded so that the limit fluid velocity 
is divergence free including in the region where the solid has disappeared.\footnote{Let us observe that since 
$$ \int_{ \partial \mathcal{S}(t)}   \Big( h'(t) + r(t)(x-h(t))^{\perp} \Big)  \cdot n    \, ds = 0 ,$$ see \eqref{Stokes0} below, should the behaviour of the solid velocity with $\varepsilon$ be worse the resulting singularity would be rather a dipole than a Dirac mass.}
Still the analysis of the dynamics of immersed rigid particles requires a more precise analysis, in particular because it is driven by the fluid pressure, a quantity which depends in a non linear and non local way on the fluid velocity. Hence to understand the limit dynamics one has to precisely evaluate the pressure field on the boundary of the solid, that is, where the singularity is concentrated.

One main goal of this line of research  is  prove another derivation of the point vortex dynamics as motions of  immersed particles. 
  In particular we will consider, in some appropriate settings, the limit of the dynamics of an immersed rigid body  when its size and its mass go to zero and recover the equation of a point vortex. Let us recall that the point vortex system is a classical model  which goes back to Helmholtz \cite{Helmholtz}, Kirchhoff \cite{Kirchhoff}, Poincar\'e \cite{Poincare}, Routh \cite{Routh}, Kelvin \cite{Kelvin}, and Lin \cite{Lin1,Lin2}. In these works it was thought  as an idealized fluid model where the vorticity of an ideal incompressible two-dimensional fluid is discrete.  Although it does not constitute a solution of the incompressible Euler equations, even in the sense of distributions,  point vortices can be viewed as Dirac masses obtained as  limits of concentrated smooth vortices which evolve according to the Euler equations. In particular in the case of  a single vortex moving in a bounded and simply-connected domain this was proved by Turkington in \cite{Turk} and an extension to the case of several vortices, including in  the case where there is also a part of the vorticity which is absolutely continuous with respect to Lebesgue measure (the so-called wave-vortex system),  was given by Marchioro and Pulvirenti, see \cite{MP}. 
Let us also mention that Gallay has recently proven in \cite{Gallay} that the point vortex system can also be obtained as vanishing viscosity limits of concentrated smooth vortices evolving according to the incompressible Navier-Stokes equations.
Our main goal is to prove that this classical point vortex motion  can also be viewed as the limit of the dynamics of a solid, shrinking into a pointwise massless particle with fixed circulation, in free motion.
Indeed, our analysis also covers the case where the mass is kept fixed positive in the limit, one then obtains a second-order ordinary differential equation for the particle's position, that we will refer to as a ``massive'' point vortex system.

Let us precise more the two kinds of inertia regimes we are going to consider in the small radius limit  with the following definitions. 

\begin{defn}[Massive and massless particles] \label{massiveP}
We define 
\begin{itemize}
\item a massive particle as the limit of a rigid body
when its radius $\varepsilon$ goes to $0$ with its mass  $m^\varepsilon$ and its momentum of inertia $ \mathcal J^\varepsilon$  
 respectively satisfying   $m^\varepsilon =  m $ and $ \mathcal J^\varepsilon=\varepsilon^2\mathcal J,$
 \item a massless particle as the limit of a rigid body 
when its radius $\varepsilon$ goes to $0$ with its mass  $m^\varepsilon$ and its momentum of inertia $ \mathcal J^\varepsilon$  
 respectively satisfying  $m^\varepsilon= \varepsilon^{\alpha} \,  m $ and $\mathcal J^\varepsilon= \varepsilon^{\alpha+2} \,  {\mathcal J}$,
\end{itemize}
where  $ \alpha > 0$, $m>0$  and  $\mathcal J>0$ are fixed  independent of $\varepsilon$.
\end{defn}

Five remarks are in order: 
\begin{itemize}
\item it is understood that we consider a self-similar shrinking of the rigid body into its center of mass. Choosing the origin $0$ of the frame as the center of mass of $\mathcal S_0$ it means that we will as initial domain, 
for every $\varepsilon \in (0,1]$, 
\begin{equation} 
\label{DomInit}
\mathcal S^\varepsilon_0 :=\varepsilon\mathcal S_0 ,
\end{equation}
\item one observes, of course, that the case of a massive particle corresponds to the limit case $ \alpha = 0$,
\item the scaling of  momentum of inertia $ \mathcal J^\varepsilon$ may look surprising at first sight, but it is quite natural since it corresponds to a second order moment whereas the mass is a zeroth order moment of the body's density, 
\item  it is understood that the circulation $\gamma^\varepsilon $ around the body  satisfies   $\gamma^\varepsilon = \gamma$, where $ \gamma$ is fixed. The amount of circulation is therefore supposed to be independent of the size of the body in our problem. Moreover we assume that $\gamma \neq 0$ in the case of a massless particle.
\item the case where  $\mathcal S_0$ is a homogeneous disk is the most simple whereas the case where $\mathcal S_0$ is a non-homogeneous disk involves some adapted tools in particular in order to deal with the case where  $ \alpha \geq 2$. We refer to \cite{GMS} for a detailed treatment  of these cases and we will consider only here the case where  $\mathcal S_0$ is not a  disk.
\end{itemize}

Let us have a deeper look at Newton's equations \eqref{intrau2}
and anticipate that in the case of a massless particle  the prefactor $m$ and $\mathcal J$ in front of the second-order time derivative  converge to zero in the zero radius limit so that one faces a  singular perturbation problem in time of a non linear dynamics.
We will make use of geodesic and gyroscopic features of the system in order to overcome this difficulty.
Let us mention from now on that the use of the geodesic structure  is more subtle that one may expect at first glance. 
Indeed  the full system ``fluid + rigid body'' is conservative and enjoy a geodesic structure as a whole in the sense that if on a time interval  $(0,T)$
the initial and final configurations are prescribed, then 
the PDE's system  ``fluid + rigid body'' is satisfied on $(0,T)$ if and only if  the couple of flow maps associated with the fluid and  solid velocities 
 is a critical point of the  action obtained by time integration of the total kinetic energy, cf \cite{GS-Arnold}.
This gives some credit to the belief that  the energy conservation drives the dynamics of the system, still  some important transfers of energy from one phase to another may occur  and this lead to a lack of  bound of the solid velocity in the case of a light body.
Since the fluid velocity corresponding to a point vortex is not square integrable   a renormalization of the energy is necessary in the zero radius limit.
Indeed one main feature of the point vortex equation is that the self-induced velocity of the vortex is discarded, or more precisely the self-induced velocity as if the point vortex was alone in the plane. 
Let us therefore introduce $K_{ \mathbb{R}^2} [\cdot] $ the Biot-Savart law in the full plane that is the operator which maps  any reasonable scalar function
 $\omega$ to the unique vector field $K_{ \mathbb{R}^2} [\omega] $ vanishing at infinity and satisfying  
$\operatorname{div} K_{ \mathbb{R}^2} [\omega]  = 0 $ and $\operatorname{curl}  K_{ \mathbb{R}^2} [\omega] = \omega$  in $ \mathbb{R}^2$, which is given by the convolution formula 
\begin{equation} \label{KR2}
K_{\mathbb{R}^{2}}[\omega]:=\frac{1}{2 \pi} \int_{ \mathbb{R}^{2}} \frac{(x-y)^{\perp}}{|x-y|^{2}} \omega (t,y) \, dy .
\end{equation}
We believe that the following statement is true in a very general setting.
\ \par \
\begin{conj}[$\mathcal C$]
A massive particle immersed in a two dimensional  incompressible perfect fluid moves according to Newton's law with a gyroscopic force 
  orthogonally proportional to its relative velocity and  proportional  to the  circulation around  the body. 
    A massless particle  immersed in a two dimensional  incompressible perfect fluid with nonzero circulation moves as a point vortex, its 
 vortex strength being given by the circulation. 
    More precisely, the  position $h(t)$ of a massive (respectively massless) particle satisfies  the equation
\begin{equation}
  \label{natoo}
m h'' = \gamma \big(h' -u_\text{d} (h)\big)^\perp \quad \text{ (resp. } h' = u_\text{d} (h) ),
\end{equation}
with\footnote{The index ``d'' of $u_\text{d} $ can be either interpreted as drift or desingularized as it is obtained from $ u_\text{bd}$ by removing its orthoradial singular part in $h$.}
   $u_\text{d} (h) = \big( u_\text{bd} - K_{ \mathbb{R}^2} [\gamma \delta_h] \big) (h) $
   where  
   $ u_\text{bd}$ denotes the background fluid velocity hinted above. On the other hand the genuine fluid vorticity $\omega$ is convected by  the background fluid velocity $ u_\text{bd}$.
  \end{conj}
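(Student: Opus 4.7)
The plan is to derive, for each $\varepsilon>0$, a closed ODE for the solid unknowns $(h^\varepsilon,\theta^\varepsilon)$ in which the fluid has been eliminated, and then pass to the limit $\varepsilon\to 0$. The first step is to exploit the linearity of the div-curl problem satisfied by $u$ at fixed time: given the body configuration $q=(h,\theta)$ and its velocity $p=(h',\varepsilon\theta')$, together with the (conserved) circulation $\gamma$ and the vorticity $\omega$ transported by $u_{\text{bd}}$, decompose
\begin{equation*}
u \;=\; \sum_{j=1}^{3} p_j\,\nabla\varphi_j[q] \;+\; H[q]\,\gamma \;+\; K[q,\omega],
\end{equation*}
where the Kirchhoff potentials $\varphi_j$ encode the impermeability condition \eqref{bc-intro}, $H$ is the harmonic vector field with prescribed unit circulation, and $K$ is the Biot-Savart contribution. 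Injecting this decomposition into \eqref{intrau2} and computing the pressure via Bernoulli-type identities (using that $\pi$ is determined by $u$ and $p'$ through an elliptic boundary value problem) produces a Newton-type ODE of the form
\begin{equation*}
\bigl(M^\varepsilon + M_a[q^\varepsilon]\bigr)\,{p^\varepsilon}' \;=\; \bigl\langle \Gamma[q^\varepsilon],\,p^\varepsilon\otimes p^\varepsilon\bigr\rangle \;+\; \gamma\,F_{\text{gyr}}[q^\varepsilon]\,p^\varepsilon \;+\; E[q^\varepsilon,\omega^\varepsilon,\gamma],
\end{equation*}
where $M^\varepsilon=\mathrm{diag}(m^\varepsilon,m^\varepsilon,\mathcal{J}^\varepsilon/\varepsilon^2)$, $M_a$ is the added-mass tensor, $\Gamma$ collects the quadratic Christoffel-type terms of the geodesic structure, and $F_{\text{gyr}}$ is the antisymmetric ``Kutta-Joukowski'' operator responsible for the gyroscopic Magnus-type term proportional to $\gamma\,(h')^\perp$.

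The second step is the asymptotic expansion of every geometric object as $\varepsilon\to 0$. Rescaling in the body frame, one has $M_a=O(\varepsilon^2)$, so in the massive regime $\alpha=0$ the added mass disappears and $M^\varepsilon$ remains of order one, while in the massless regime both $M^\varepsilon$ and $M_a$ are small but not of the same order (they compete only when $\alpha=2$; the assumption $\mathcal S_0\neq$ disk ensures the anisotropy of $M_a$ is harmless). Crucially, the $\varepsilon$-independent asymptotics of the Kirchhoff data around a shrinking obstacle produce, after integration against $\pi n$, a leading contribution $\gamma\bigl(u_{\text{d}}(h^\varepsilon)-h'^{,\varepsilon}\bigr)^\perp$ from $F_{\text{gyr}}$, together with lower-order terms that vanish thanks to the simply-connectedness and the assumption $(h',\varepsilon\theta')$ bounded in the footnote.

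The third step, and the main obstacle, is the uniform-in-$\varepsilon$ control of $(h^\varepsilon,\theta^\varepsilon,u^\varepsilon)$. Energy conservation alone is insufficient in the massless case because the fluid kinetic energy must be renormalized (the background velocity generated by a Dirac of mass $\gamma$ is not $L^2$); moreover $M^\varepsilon\to 0$ creates a classical singular perturbation in time. The strategy is to construct a modulated energy
\begin{equation*}
\mathcal E^\varepsilon(t) \;=\; \tfrac{1}{2}\bigl(M^\varepsilon + M_a[q^\varepsilon]\bigr)(p^\varepsilon-\widehat p)\cdot(p^\varepsilon-\widehat p) \;+\; \tfrac12\int|u^\varepsilon - \widehat u|^2 + \text{correctors},
\end{equation*}
where $(\widehat p,\widehat u)$ is built from the expected limit $h'=u_{\text{d}}(h)$ and from $u_{\text{bd}}$ minus the Dirac. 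A Gr\"onwall argument, combined with the fact that the gyroscopic term is skew-symmetric and therefore does no work on the modulation, yields $\mathcal E^\varepsilon\to 0$, hence strong convergence of $h^{\varepsilon}$ and $u^\varepsilon$ away from $h(t)$. In the massive case, where $M^\varepsilon$ stays bounded below, the same modulated-energy technique directly gives $h''\to \gamma(h'-u_{\text{d}}(h))^\perp/m$.

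The final step is to identify the limit of the vorticity transport: since $\omega^\varepsilon$ satisfies $\partial_t\omega^\varepsilon + u^\varepsilon\cdot\nabla\omega^\varepsilon=0$ outside a shrinking hole, any weak limit $\omega$ is transported by the limit background field $u_{\text{bd}}$, and the Biot-Savart reconstruction of $u_{\text{bd}}$ includes the Dirac $\gamma\delta_{h(t)}$ enforced by the trapped circulation, as in \cite{ILN,Lopes}. Combining this with the limits obtained in the preceding step establishes both \eqref{natoo} and the claimed transport equation for $\omega$, completing the proof of the conjecture.
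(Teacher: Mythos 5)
You should first note that the statement you are proving is labelled a \emph{Conjecture} in the paper, and the paper only establishes it in three specific settings (unbounded irrotational, bounded irrotational, unbounded rotational flow). Your outline does follow the general strategy used there — elimination of the fluid via Kirchhoff potentials plus the harmonic field plus Biot--Savart, reformulation of Newton's equations as an ODE with added inertia, a connection term and a gyroscopic Kutta--Joukowski force, then modulation of the solid velocity by the drift and a Gr\"onwall estimate exploiting the skew-symmetry of the gyroscopic term — but every genuinely hard step is setting-specific and is asserted rather than carried out. In particular, the drift $u_{\text{d}}$ does not come out of the gyroscopic operator $F_{\text{gyr}}$ as you claim: it is produced by the leading term of the \emph{electric}-type force (the quadratic self-interaction of the circulatory part of the velocity), and the whole point of the paper's asymptotic normal form is the algebraic identity that lets this electric term be absorbed into the magnetic one, namely $\gamma^2 \mathsf{E}_0(q) + \gamma\, p\times B_\theta = \gamma\,\hat p\times B_\theta$ with $\hat p = (h' - u_{\text{d}}(h),\, \varepsilon\theta')^t$, together with a second cancellation at order $\varepsilon$ involving the a-connection, which is indispensable when $\alpha\geq 2$. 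Without exhibiting these cancellations — which require the multi-scale expansion of the Kirchhoff potentials and stream functions and repeated use of Lamb's lemma — the claim that the lower-order terms ``vanish'' is unsupported, and this is exactly where the massless case would break down.

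Second, your modulated energy is problematic on two counts. Including the fluid term $\tfrac12\int|u^\varepsilon-\widehat u|^2$ requires a renormalization you do not perform: the circulatory part of $u^\varepsilon$ scales like $\varepsilon^{-1}H(x/\varepsilon)$ near the body and is not square-integrable there uniformly in $\varepsilon$, which is precisely why the paper works with the purely solid modulated energy $\tfrac12(\varepsilon^\alpha \mathcal M_g+\varepsilon^2 \mathcal M_{a,\theta^\varepsilon})\tilde p^\varepsilon\cdot\tilde p^\varepsilon$ after the fluid has been eliminated, and renormalizes the conserved energy separately. More seriously, the conclusion ``$\mathcal E^\varepsilon\to 0$, hence strong convergence'' cannot hold as stated: in the massless case the initial velocity $\ell_0$ is arbitrary and in general differs from $u_{\text{d}}(h(0))$, so $(h^{\varepsilon})'$ cannot converge strongly to $u_{\text{d}}(h)$ — the paper makes exactly this point in a remark after its rotational zero-radius theorem. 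The modulated energy only controls $\varepsilon^{\alpha}|\tilde p^\varepsilon|^2$, and extracting a uniform bound on $\tilde p^\varepsilon$ itself requires the careful bookkeeping of the powers of $\varepsilon$ in front of each remainder (the $O(\varepsilon^{\min(\alpha,2)})$ terms and the ``weakly gyroscopic'' ones), after which the limit equation is obtained only by passing to the limit weakly-$\star$ in $W^{1,\infty}$ or $W^{2,\infty}$, not strongly.
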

\ \par \
\ \par \
Let us mention that in \cite[Chapter 3]{Friedrichs} Friedrichs already evoked a similar conjecture with a massive point vortex system in the case of two point vortices in the whole plane under the terminology of {\it bound vortices} (as opposed to {\it free vortices}). The gyroscopic force appearing in the right hand side of the first equation in \eqref{natoo} is a generalisation of the Kutta-Joukowski lift force which attracted a huge interest at the beginning of the 20th century during the first mathematical investigations in aeronautics, and which will be recalled in Section \ref{chap1}.

Observe that the conjecture above does not mention any sensitivity to the body's shape. Indeed it is expected that a more accurate description of the asymptotic behaviour 
thanks to a multi-scale asymptotic expansion of the body's dynamics in the limit $\varepsilon \rightarrow 0$
 will reveal an influence of the body's shape on some corrector terms which appears as  subprincal  in the limit $\varepsilon \rightarrow 0$ for coarse topologies. 
 The case where the circulation is assumed to vanish with  $\varepsilon$ as $\varepsilon \rightarrow 0$ is  another setting where such a dependence with respect to the  body's shape should appear. 
\ \par \
\ \par \
Indeed this conjecture has already been proved in a few cases and this is precisely the goal of these notes to give an account of these results.

\begin{itemize}
 
 \item \textbf{In Section \ref{chap1},}  we will start with a review of the case, well-known since more than one century,  of  the motion of one single rigid body immersed in an  irrotational  fluid filling the rest of the plane.
 In this setting the equations  at stake are the incompressible Euler equations  \eqref{Euler1} on the fluid domain $\mathcal{F} (t) := \mathbb{R}^2  \setminus \mathcal{S}(t) $, 
the Newton equations \eqref{Solide2}, the interface condition \eqref{bc-intro}  and the following condition of decay at infinity: $ \lim_{|x|\to \infty} |u(t,x)| =0 $.
Regarding the initial conditions we observe that there is no loss of generality in assuming that the center of mass (respectively rotation angle) of the solid
coincides at the initial time with the origin $(0,0)$ (resp. $0$) and we therefore prescribe some initial position and velocity of the solid of the form $(h,h' ,\theta,\theta' )  (0)= (0 , \ell_0 , 0 ,  r_0 )$. On the other hand we prescribe the initial value of the velocity $u |_{t= 0} = u_0 $ in the initial domain $ \mathcal{F}_0 = \mathcal{F} (0) = \mathbb{R}^2 \setminus {\mathcal{S}}_{0} $ occupied by the fluid. Of course since we aim at considering smooth solutions we assume that the fluid and solid initial data are compatible. We may 
therefore consider that the solid translation and rotation velocities $ \ell_0$ and $  r_0$ are  arbitrarily given and that the fluid velocity $u_0 $ is the unique vector field compatible in the sense of the following definition.

\begin{defn}[Compatible initial fluid velocity] \label{CompData}
Given the initial domain $ \mathcal{S}_0$ occupied by the body, $ \ell_0$ and $  r_0$ respectively in $ \mathbb{R}^2$ and $ \mathbb{R}$, and $\gamma$ in $\mathbb{R}$, 
we say that a vector field $u_0$ on the closure of $\mathcal{F}_0 =\mathbb{R}^2 \setminus  \mathcal{S}_0$ with values in $ \mathbb{R}^2$
is compatible if it 
is the unique vector field satisfying 
 the following div/curl type system:
\begin{gather*}
 \operatorname{div} u_0 = 0     \text{ and } 
 \operatorname{curl} u_0  =  0  \text{ in }   \mathcal{F}_0  , \, 
 \\  u_0 \cdot n =   \Big( \ell_0 +  r_0 x^{\perp} \Big)  \cdot n \text{ for }  x \in \partial \mathcal{S}_0  , \, \quad 
  \int_{ \partial \mathcal{S}_0} u_0  \cdot  \tau \, ds=  \gamma ,
\\ \lim_{x\to \infty} u_0 = 0 .
 \end{gather*}  
\end{defn}

  Indeed the zero vorticity condition propagates from $t=0$ according to Helmholtz's third theorem so that at any time $t >0$ 
  the fluid velocity $u(t,\cdot)$ can indeed be recovered from the solid's dynamics  by an elliptic-type problem similar to the one given above for the initial data. 
  Since time appears only as a parameter rather than in the differential operators, the fluid state may be seen as only solving an auxiliary steady problem rather than an evolution equation. 
  The Newton equations can therefore be rephrased as a second-order differential equation whose coefficients are determined by the auxiliary fluid problem. 
In particular the prefactor of the translation and angular accelerations is the sum of  the inertia of the solid and of  the so-called ``added inertia'' which is a symmetric  positive-semidefinite  matrix depending only on the body's shape and which encodes the amount of  incompressible fluid that the  rigid body  has also to accelerate
around itself. Remarkably enough in the case where the circulation is $0$ it turns out that the solid equations can be recast as a geodesic equation associated with the metric given by the total inertia.  Unlike the geodesic structure of the full system ``fluid + rigid body'' hinted above,  the configuration manifold only encodes here  the  solid's dynamics and is therefore of  finite dimensions. This echoes that the equations of motion of point vortices embedded in incompressible flow are usually thought as a reduction of an infinite-dimensional dynamical system, namely the incompressible Euler equation, to a finite-dimensional system.
Another celebrated  feature of the body's dynamics  is due to a  gyroscopic force, proportional to the circulation around the body, known as the
Kutta-Joukowski lift force. 
In order to make  these features appear, cf. Theorem \ref{pasdenom}, and to make as explicit as possible the quantities genuinely involved in this ODE two approaches were followed in the literature: the first one dates back to  Blasius, Kutta, Joukowski, Chaplygin and Sedov, cf. for instance \cite{Sedov}, and relies on complex analysis whereas the second one is real-analytic and was initiated by Lamb, cf. \cite{Lamb}.  We will report here these two methods.\footnote{On the one hand the presentation of the complex-analytic method is extracted from the use we made of it in our first investigations of  the rotational case, cf. Section \ref{chap3} which reports the results of \cite{GLS,GLS2}. On the other hand the  presentation of the real-analytic method is  extracted from the use we made of it in our investigation of the case where  fluid-solid system occupies a bounded domain, cf. Section \ref{chap2} which reports the results of   \cite{GMS}.
Arguably the length comparison and the temporary occurrence of  Archimedes' type quantities (like the volume of the body, its geometric center..) in some intermediate computations leading to  Lemma \ref{PropCia} (where they cancel out) emphasize the superiority of Lamb's method for our purposes. 
Indeed in our forthcoming paper  \cite{GLMS}  Lamb's approach is extended to tackle the  general case where several bodies move in a bounded rotational perfect flow when  some of the
rigid bodies shrink to pointwise particles, some of them with constant mass, the others with vanishing
mass. Still the complex-analytic method is known to be useful to deal with the case of a body whose boundary has singularities thanks to conformal mapping. 
It could be that it appears relevant as well to investigate the motion of a rigid curve resulting from an anisotropic shrinking. In this direction let us mention the paper \cite{Cricri} which deals with the influence of a fixed curve on the fluid around.}
A trivial consequence of this reformulation is that a global-in-time smooth solution to the Cauchy problem exists and is unique.
Therefore in Section \ref{chap1} (cf. below Proposition \ref{CP}) we will prove the following classical result.
\begin{thm}
\label{thm-intro-sec1-CP}
Given the initial domain $\mathcal{S}_0$ occupied by the body, the  initial solid translation and rotation velocities $( \ell_0 , r_0 )$ in $\mathbb{R}^2 \times \mathbb{R}  $, the circulation $ \gamma$ in $ \mathbb{R}$, 
and  $u_0$ the associated compatible initial fluid velocity (according to Definition \ref{CompData}), there exists a unique smooth global-in-time solution to the problem compound of  the incompressible Euler equations  \eqref{intrau1} on the fluid domain, of the Newton equations \eqref{intrau2}, of the interface condition \eqref{bc-intro}, of the  condition  at infinity: $ \lim_{|x|\to \infty} |u(t,x)| =0 $, 
and of the initial conditions $(h,h' ,\theta,\theta' )  (0)= (0 , \ell_0 , 0 ,  r_0 )$ and $u |_{t= 0} = u_0 $.
\end{thm}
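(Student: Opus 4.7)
The plan is to reduce the coupled PDE/ODE system to a finite-dimensional second-order ODE for the configuration variable $q(t) := (h(t), \theta(t)) \in \mathbb{R}^2 \times \mathbb{R}$, and then apply Cauchy--Lipschitz plus an energy estimate. The starting point is the observation already made in the excerpt: since $\operatorname{curl} u_0 = 0$ and vorticity is transported by the (smooth) fluid flow, Helmholtz's third theorem guarantees $\operatorname{curl} u(t,\cdot) = 0$ on $\mathcal{F}(t)$ for all $t$. Together with $\operatorname{div} u = 0$, the boundary condition \eqref{bc-intro}, the circulation constraint $\int_{\partial\mathcal{S}(t)} u \cdot \tau \, ds = \gamma$ (by Kelvin's theorem), and decay at infinity, this div/curl elliptic system has a unique solution. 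Hence $u(t,\cdot)$ is a linear functional of $(h'(t), \theta'(t), \gamma)$ whose coefficients depend smoothly on $q(t)$ through the domain $\mathcal{F}(t)$. Concretely, I would decompose $u = h'_1(t)\,\nabla\varphi_1 + h'_2(t)\,\nabla\varphi_2 + \theta'(t)\,\nabla\varphi_3 + \gamma\,H$, where $\varphi_i$ are the Kirchhoff potentials solving Neumann problems on $\mathcal{F}(t)$ and $H$ is the unique harmonic field carrying unit circulation and vanishing at infinity.

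Next I would compute the pressure on $\partial\mathcal{S}(t)$ using the Bernoulli-type relation $\nabla\pi = -\partial_t u - (u \cdot \nabla) u$, substitute into the right-hand sides of \eqref{intrau2}, and collect terms. The result is a second-order ODE of the form
\begin{equation*}
\bigl( \mathcal{M}_{\text{gen}}(q) + \mathcal{M}_{\text{a}}(q) \bigr)\, q'' = \mathcal{F}(q, q', \gamma),
\end{equation*}
where $\mathcal{M}_{\text{gen}}(q) = \operatorname{diag}(m, m, \mathcal{J})$ is the genuine solid inertia, $\mathcal{M}_{\text{a}}(q)$ is the symmetric positive semidefinite added-mass matrix built from $\int_{\mathcal{F}(t)} \nabla\varphi_i \cdot \nabla\varphi_j$, and $\mathcal{F}$ gathers the quadratic-in-$q'$ ``Christoffel'' terms plus the linear-in-$q'$ Kutta--Joukowski-type gyroscopic term proportional to $\gamma$. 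Smoothness of the $\varphi_i$ with respect to domain changes (via a fixed-domain change of variables using the rotation \eqref{2drot} and translation by $h$) makes the coefficients $C^\infty$ in $q$, and strict positivity of $\mathcal{M}_{\text{gen}}$ ensures the total inertia is invertible. The Cauchy--Lipschitz theorem then yields a unique local-in-time smooth solution.

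For global existence, I would invoke conservation of the total kinetic energy
\begin{equation*}
E(t) = \tfrac{1}{2} m |h'(t)|^2 + \tfrac{1}{2}\mathcal{J}\, \theta'(t)^2 + \tfrac{1}{2}\int_{\mathcal{F}(t)} |u(t,x)|^2 \, dx,
\end{equation*}
which follows from the standard energy identity for the coupled Euler--Newton system (multiplying \eqref{intrau1} by $u$, integrating by parts over $\mathcal{F}(t)$, and using \eqref{bc-intro} together with \eqref{intrau2}). Since all three terms in $E(t)$ are non-negative and $m, \mathcal{J} > 0$, $E(0) < \infty$ yields an a priori bound on $|q'(t)|$, and hence on $|q(t)|$ on any finite interval. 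This precludes finite-time blow-up and extends the local solution to all of $\mathbb{R}$.

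The main obstacle is verifying the smooth dependence of the reduced ODE's coefficients on $q$: one must show that the Kirchhoff potentials $\varphi_i$, defined on the moving domain $\mathcal{F}(t)$, transform smoothly under the rigid motion $x \mapsto h + R(\theta)x$ and that the resulting integrals over $\mathcal{F}(t)$ produce $C^\infty$ functions of $q$. This is where the fact that $\mathcal{S}_0$ is a smooth simply connected compact set is essential, and where the finiteness of the integrals (despite the unbounded domain) must be checked using the decay of $\nabla\varphi_i$ and the behaviour $H(x) \sim \frac{1}{2\pi}\, x^{\perp}/|x|^2$ at infinity, which keeps the circulation contribution integrable against the decaying Kirchhoff potentials. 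The detailed expression for $\mathcal{F}$ and the precise structure of the gyroscopic term are exactly the content of Theorem \ref{pasdenom} recalled below.
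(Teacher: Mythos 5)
Your overall strategy — reduce to a second-order ODE for $q=(h,\theta)$ via the decomposition $u = h_1'\nabla\varphi_1+h_2'\nabla\varphi_2+\theta'\nabla\varphi_3+\gamma H$, get local existence from Cauchy--Lipschitz, and globalize by an energy bound — is exactly the paper's route (Theorem \ref{pasdenom} followed by Proposition \ref{CP}). However, the globalization step as you wrote it fails when $\gamma\neq 0$: the quantity
\begin{equation*}
E(t) = \tfrac{1}{2} m |h'(t)|^2 + \tfrac{1}{2}\mathcal{J}\, \theta'(t)^2 + \tfrac{1}{2}\int_{\mathcal{F}(t)} |u(t,x)|^2 \, dx
\end{equation*}
is \emph{infinite}, because $H(x)\sim \frac{1}{2\pi}\,x^\perp/|x|^2 = O(1/|x|)$ so $|H|^2$ is only $O(1/|x|^2)$ and its integral over the exterior domain diverges logarithmically. (The cross terms $H\cdot\nabla\varphi_i$ are integrable since $\nabla\varphi_i=O(1/|x|^2)$, but the pure circulation term $\gamma^2|H|^2$ is not.) Thus your assertion ``$E(0)<\infty$ yields an a priori bound on $|q'|$'' has no content, and the global-existence argument breaks down precisely in the case the theorem is meant to cover.

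The fix, which is what the paper does, is to work with the \emph{renormalized} energy $\tfrac12\,(\mathcal{M}_g+\mathcal{M}_{a,\theta})\,q'\cdot q'$, i.e.\ to keep only the solid kinetic energy plus the added-inertia contribution $\tfrac12\int_{\mathcal{F}}|\tilde v|^2$ with $\tilde v = u-\gamma H$, discarding the (infinite but position- and velocity-independent) term $\tfrac12\gamma^2\int|H|^2$; the cross term vanishes by $L^2$-orthogonality of $\tilde v$ and $H$. Conservation of this quantity is then proved directly from the ODE \eqref{ODE_ext}: the a-connection term contributes $\langle\Gamma_{a,\theta},q',q'\rangle\cdot q' = \tfrac12(\partial_q\mathcal{M}_{a,\theta}\cdot q')q'\cdot q'$, which exactly cancels the commutator term from $(\mathcal{M}_{a,\theta}q')'\cdot q'$, and the Kutta--Joukowski term $\gamma\, q'\times B_\theta$ is gyroscopic, hence orthogonal to $q'$. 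Since $\mathcal{M}_g$ is positive definite this still gives the needed a priori bound on $q'$ and hence global existence. Everything else in your proposal (the div/curl reduction, Bernoulli, smooth dependence of the Kirchhoff potentials on $q$ — in fact only on $\theta$ here, by translation invariance of the exterior problem) matches the paper.
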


Moreover the structure of the reduced ODE hinted above allows to investigate the zero-radius limit quite easily and to obtain the following result.

\begin{thm}
\label{thm-intro-sec1-VO}
Let be given a rescaled initial domain $\mathcal{S}_0$ occupied by the body, some  initial solid translation and rotation velocities $( \ell_0 , r_0 )$  in $\mathbb R^2 \times \mathbb R$ and a  circulation  $\gamma $ in $ \mathbb R$ in the case of a  massive particle and in  $ \mathbb R^*$ in the  case of a massless particle, all of them independent of $\varepsilon$.
Let, for each $\varepsilon > 0$, 
 $u_0^\varepsilon$ the associated compatible initial fluid velocity associated (according to Definition \ref{CompData}) with the initial solid domain ${\mathcal S}_0^\varepsilon$ defined by \eqref{DomInit}, $( \ell_0 , r_0 )$  and $ \gamma$;
 and consider the  corresponding solution given by Theorem \ref{thm-intro-sec1-CP}.
 Then in the zero radius limit  $\varepsilon \rightarrow 0$, with the inertia scaling  described in Definition \ref{massiveP}, 
 one respectively obtains for the position $h(t)$ of the pointwise limit particle 
 the  equations $m h''  = \gamma (h^\prime)^\perp $  in the massive limit and $h^\prime = 0 $ in the massless limit. 
\end{thm}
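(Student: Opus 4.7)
The strategy is to invoke Theorem \ref{pasdenom}, which reduces the coupled fluid--solid system to a second-order ODE for the body's configuration $q^\varepsilon = (h^\varepsilon, \theta^\varepsilon)$, and then to perform a scaling analysis. The ODE has the schematic form
\[
\bigl(M_s^\varepsilon + M_a^\varepsilon(\theta^\varepsilon)\bigr)(q^\varepsilon)'' + \bigl\langle \Gamma^\varepsilon(\theta^\varepsilon), (q^\varepsilon)', (q^\varepsilon)' \bigr\rangle \;=\; \gamma\, E^\varepsilon(\theta^\varepsilon)\,(q^\varepsilon)',
\]
where $M_s^\varepsilon$ is the solid inertia, $M_a^\varepsilon$ the added-inertia matrix, $\Gamma^\varepsilon$ a Christoffel-type quadratic tensor encoding the $\theta$-derivative of $M_a^\varepsilon$, and $\gamma E^\varepsilon (q^\varepsilon)'$ the Kutta--Joukowski-type force. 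The self-similar shrinking $\mathcal S_0^\varepsilon = \varepsilon \mathcal S_0$ rescales the Kirchhoff potentials so that the $\varepsilon$-dependence of every coefficient becomes fully explicit: each block of $M_a^\varepsilon$ and $\Gamma^\varepsilon$ is an $\varepsilon$-independent quantity times a power of $\varepsilon$ dictated by the orders of its two indices, namely $\varepsilon^2$ for the translation--translation block, $\varepsilon^3$ for the mixed block, and $\varepsilon^4$ for the rotation--rotation block. In particular the translational component of $\gamma E^\varepsilon (q^\varepsilon)'$ equals $\gamma\bigl((h^\varepsilon)'\bigr)^\perp$ modulo $O(\varepsilon)$ corrections coupled to $(\theta^\varepsilon)'$.

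Next I would extract uniform bounds from conservation of the total kinetic energy $(q^\varepsilon)' \cdot (M_s^\varepsilon + M_a^\varepsilon)(q^\varepsilon)'$: with the initial velocities $(\ell_0, r_0)$ independent of $\varepsilon$, this conserved quantity is $O(1)$ in the massive case and $O(\varepsilon^{\min(\alpha, 2)})$ in the massless case; a spectral estimate on the positive-definite matrix $M_s^\varepsilon + M_a^\varepsilon$ then yields the uniform control $|(h^\varepsilon)'| + \varepsilon |(\theta^\varepsilon)'| \leq C$ on any bounded time interval, matching the scaling hinted at in the introduction. By Arzel\`a--Ascoli, $h^\varepsilon$ is precompact in $C^0([0,T];\mathbb{R}^2)$, so along a subsequence $h^\varepsilon \to h$. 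The scaling above combined with $\varepsilon|(\theta^\varepsilon)'| = O(1)$ shows that every contribution involving $M_a^\varepsilon$ or $\Gamma^\varepsilon$ in the translational block of the ODE is $O(\varepsilon)$, and so are the Kutta--Joukowski corrections. In the massive case only $m(h^\varepsilon)''$ and $\gamma\bigl((h^\varepsilon)'\bigr)^\perp$ survive, yielding $m h'' = \gamma (h')^\perp$ in the limit, with initial data $h(0) = 0$, $h'(0) = \ell_0$. In the massless case, integrating the translational equation once in time gives
\[
m^\varepsilon (h^\varepsilon)'(t) - m^\varepsilon \ell_0 \;=\; \gamma\, \bigl(h^\varepsilon(t)\bigr)^\perp + R^\varepsilon(t), \qquad R^\varepsilon(t) = O(\varepsilon^{\min(\alpha, 2)}),
\]
uniformly on $[0,T]$; dividing by $\gamma \ne 0$ forces $|h^\varepsilon(t)| = O(m^\varepsilon) + O(\varepsilon^{\min(\alpha, 2)}) \to 0$, so $h \equiv 0$ and in particular $h' = 0$. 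Uniqueness of each limit equation promotes subsequential convergence to convergence of the whole family.

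The hardest part will be the book-keeping of the quadratic Christoffel terms in the massless regime, where $(\theta^\varepsilon)'$ may blow up like $\varepsilon^{-1}$: one has to verify that no contribution of the form $\varepsilon^2 \bigl((\theta^\varepsilon)'\bigr)^2$ or $\varepsilon (h^\varepsilon)'(\theta^\varepsilon)'$ leaks into the translational equation and spoils the control on $R^\varepsilon$. This requires unpacking the explicit structure of $\Gamma^\varepsilon$ from the rescaled Kirchhoff potentials; fortunately the full system is conservative and possesses the geodesic structure mentioned in the introduction, which tightly constrains $\Gamma^\varepsilon$ and makes the required cancellations manifest. A minor subtlety is that the coefficients depend on $\theta^\varepsilon$ through $R(\theta^\varepsilon)$ and may therefore oscillate rapidly, but since they are multiplied by powers of $\varepsilon$ strictly exceeding the growth rate of $(\theta^\varepsilon)'$ they tend to zero in the sense of distributions regardless of the oscillation.
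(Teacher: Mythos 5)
Your overall strategy coincides with the paper's (Theorem \ref{pasdenom-VO}): reduce to the ODE of Theorem \ref{pasdenom}, make the $\varepsilon$-dependence of the coefficients explicit via the self-similar rescaling of the Kirchhoff potentials and the harmonic field (your block scalings $\varepsilon^2,\varepsilon^3,\varepsilon^4$ agree with $\mathcal{M}^{\varepsilon}_{a,\theta}=\varepsilon^2 I_\varepsilon \mathcal{M}_{a,\theta}I_\varepsilon$), extract the uniform bound $|(h^\varepsilon)'|+\varepsilon|(\theta^\varepsilon)'|\leq C$ from energy conservation, and pass to the limit. Your once-integrated formulation in the massless case is a pleasant variant of the paper's weak-$\star$ argument, and your spectral estimate implicitly uses the positive-definiteness of $\mathcal{M}_{a,\theta}$ for $\alpha\geq 2$, i.e.\ the standing assumption that $\mathcal S_0$ is not a disk — worth stating.

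There is, however, a genuine gap at the critical step. The translational component of the Kutta--Joukowski force is $\gamma\big(((h^\varepsilon)')^\perp-\varepsilon(\theta^\varepsilon)'R(\theta^\varepsilon)\xi\big)$, and the energy bound only gives $\varepsilon|(\theta^\varepsilon)'|\leq C$; hence the cross term $\varepsilon(\theta^\varepsilon)'R(\theta^\varepsilon)\xi$ is merely $O(1)$ in $L^\infty$, not $O(\varepsilon)$ as you assert. Your closing criterion — that the offending coefficients are ``multiplied by powers of $\varepsilon$ strictly exceeding the growth rate of $(\theta^\varepsilon)'$'' — fails for exactly this term: the power is $1$ and the admissible growth is $\varepsilon^{-1}$, so they balance, and a priori a nonzero forcing could survive in the limit equation (and, in your integrated massless argument, the quantity $\varepsilon\int_0^t(\theta^\varepsilon)'R(\theta^\varepsilon)\xi\,ds$ is a priori only $O(t)$, ruining the claimed bound on $R^\varepsilon$). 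The paper closes this hole with the algebraic identity \eqref{simplemaiscostaud}, namely $\varepsilon(\theta^\varepsilon)'R(\theta^\varepsilon)\xi=\varepsilon\big(R(\theta^\varepsilon-\pi/2)\xi\big)'$, i.e.\ $\varepsilon$ times an exact time derivative of a uniformly bounded quantity, which therefore converges to $0$ in $W^{-1,\infty}$ weak-$\star$ (and contributes $O(\varepsilon)$ after time integration). Some such structural observation — or a stationary-phase argument exploiting the rapid rotation — is indispensable; without it the proof does not go through. The same exact-derivative trick is also what lets you integrate by parts in the $\varepsilon^2\mathcal{M}_{a,\theta^\varepsilon}(p^\varepsilon)'$ term, where $\partial_\theta\mathcal{M}_{a,\theta^\varepsilon}\cdot(\theta^\varepsilon)'$ is again only $O(\varepsilon^{-1})$ pointwise.
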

Therefore, in this historical setting, Conjecture
$(\mathcal C)$ is validated with 
 $u_\text{d} = 0$.
 Of course  Theorem \ref{thm-intro-sec1-VO}
 is a quite informal statement put here for sake of exposition, we will provide a rigorous statement in Section \ref{chap1}, cf. Theorem \ref{pasdenom-VO}.

  \item \textbf{In Section \ref{chap2},}  we consider  the case where the fluid-solid system occupies a bounded domain  $\Omega$, still in the irrotational case. We assume that $\Omega$ is a bounded open regular  connected and simply connected  domain $\Omega$ of $\mathbb R^2$ and that the center of mass of the solid coincides at the initial time with the origin  $0 $ which is assumed to be in  $\Omega.$ 
  
  Again the fluid velocity can be recovered from the solid's dynamics by an elliptic-type problem for which time is only a parameter
 and the Newton equations can therefore be rephrased as a second-order differential equation with geodesic and gyroscopic features
 involving some coefficients determined by this auxiliary fluid problem. 
 
 Still some extra difficulties show up in this process. In particular the way the fluid domain depends on the body motion is more intricate and so are the variations of 
 the added inertia and therefore of the metric given by the total inertia. 
 Indeed even in the case of  zero circulation (i.e. when $\gamma = 0$) 
 the reformulation of the system as an geodesic equation  was  proven only recently in \cite{Munnier}. 
 The general case, with nonzero $\gamma $ is obtained in \cite{GLS}. 
 One another main new feature with respect to the unbounded case is that the Kutta-Joukowski lift force is superseded by a more sophisticated force term which has the form of  the Lorentz force in electromagnetism.  
  Indeed the magnetic part of the Lorentz force, being  gyroscopic and proportional
to the circulation around the body, is a quite natural  extension of
the  Kutta-Joukowski lift force of the  unbounded case. Still it depends on the body position in a more subtle way. 
 On the other hand the electric-type force which may seem very
damaging in order to obtain uniform estimates in the zero-radius  limit as it does not disappear in an energy estimate. 
 
 At least for fixed radius we will be able to infer  straightforwardly from this reformulation the 
  local-in-time existence and uniqueness of  smooth solution to the Cauchy problem. 
Unlike the unbounded case of Section  \ref{chap1}, cf. Theorem \ref{thm-intro-sec1-CP},  the result  is only local-in-time since collision of the body with the external may occur  in finite time (at least as far as it concerns smooth solutions), see  \cite{H,HM} for some examples of collisions  of a disk moving in a potential flow (that is in the case where the circulation  $\gamma$ satisfies $\gamma=0$) with the fixed external boundary of the fluid domain. 
Indeed an energy argument, cf. Corollary \ref{bd-loin}, proves that the life-time of such a smooth solution can only be limited by a collision. 
In order to obtain smooth solutions, even for small time, it is necessary to consider some compatible initial data.
  We therefore adapt the notion of compatible initial fluid velocity introduced in Definition \ref{CompData}
  to the bounded case. 
\begin{defn}[Compatible initial fluid velocity] \label{CompDataBd}
Given the open  regular  connected and simply connected bounded cavity $ \Omega$
and the initial regular closed domain $ \mathcal{S}_0 \subset  \Omega$ occupied by the body, $ \ell_0$ and $  r_0$ respectively in $ \mathbb{R}^2$ and $ \mathbb{R}$, and $\gamma$ in $\mathbb{R}$, we say that a vector field $u_0$ on the closure of $\mathcal{F}_0 =  \Omega \setminus  \mathcal{S}_0$ with values in $ \mathbb{R}^2$
is compatible if it  is the unique vector field satisfying the following div/curl type system:
\begin{gather*}
 \operatorname{div} u_0 = 0     \text{ and } 
 \operatorname{curl} u_0  =  0  \text{ in }   \mathcal{F}_0  , \, 
 \\  u_0 \cdot n =   \Big( \ell_0 +  r_0 x^{\perp} \Big)  \cdot n \text{ for }  x \in \partial \mathcal{S}_0  , \, \quad 
  \int_{ \partial \mathcal{S}_0} u_0  \cdot  \tau \, ds=  \gamma ,
\\  u_0 \cdot n =0  \text{ for }  x \in \partial   \Omega.
 \end{gather*}  
\end{defn}

  Therefore in Section \ref{chap2} we will prove the following.
\begin{thm}
\label{thm-intro-sec2-CP}
Given the open  regular  connected and simply connected bounded cavity $ \Omega$, 
the  initial closed domain $\mathcal{S}_0  \subset  \Omega$ occupied by the body, the  initial solid translation and rotation velocities $( \ell_0 , r_0 )$ in $\mathbb{R}^2 \times \mathbb{R}  $, the circulation $ \gamma$ in $ \mathbb{R}$, 
and  $u_0$ the associated compatible initial fluid velocity (according to Definition \ref{CompDataBd}),
there exists a unique smooth local-in-time solution to the problem compound of  the incompressible Euler equations  \eqref{intrau1} on the fluid domain, of the Newton equations \eqref{intrau2}, of the interface condition \eqref{bc-intro}, of the impermeability  condition  $u \cdot n =0 $ on $\partial \Omega$, 
and the initial conditions $(h,h' ,\theta,\theta' )  (0)= (0 , \ell_0 , 0 ,  r_0 )$ and $u |_{t= 0} = u_0 $.
 Moreover the life-time of such a smooth solution can only be limited by a collision. 
\end{thm}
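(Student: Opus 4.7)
The plan is to follow the strategy announced in the preceding paragraph of the excerpt: reduce the full PDE system to a finite-dimensional second-order ODE for the solid configuration $(h,\theta)$ and then invoke the Cauchy-Lipschitz theorem, with a separate energy argument for the collision criterion. First I would use Helmholtz's third theorem to observe that because $\operatorname{curl} u_{0} = 0$ and the fluid domain has only rigid boundaries, $\operatorname{curl} u(t,\cdot) = 0$ is preserved on any interval on which a smooth solution exists and $\mathcal{S}(t)\subset\Omega$. Combined with the impermeability conditions on $\partial\mathcal{S}(t)$ and $\partial\Omega$ and the Kelvin-conserved circulation $\gamma$, this forces $u(t,\cdot)$ to be the unique solution of the elliptic div/curl problem of Definition \ref{CompDataBd}, and therefore a smooth affine function of the instantaneous velocities $(\ell,r):=(h',\theta')$ via a decomposition
\[ u = \ell_{1}\, U^{1}_{h,\theta} + \ell_{2}\, U^{2}_{h,\theta} + r\, U^{3}_{h,\theta} + \gamma\, U^{\gamma}_{h,\theta}, \]
where each profile $U^{\bullet}_{h,\theta}$ is obtained by solving an elementary elliptic problem on $\Omega\setminus\mathcal{S}(h,\theta)$ and depends smoothly on $(h,\theta)$ in the open, collision-free configuration space $\mathcal{Q} := \{(h,\theta)\in\Omega\times\mathbb{R} : \mathcal{S}(h,\theta)\subset\Omega\}$.

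Next I would recover the pressure. Using the Euler momentum equation together with the boundary condition \eqref{bc-intro}, the pressure $\pi(t,\cdot)$ satisfies a Neumann problem on $\mathcal{F}(t)$ whose data depend linearly on the solid accelerations $(\ell',r')$ and quadratically on $(u,\ell,r)$. Splitting $\pi$ accordingly and inserting into the Newton equations \eqref{intrau2} produces an ODE of the form
\[ \bigl(\mathcal{M}_{b} + \mathcal{M}_{a}(h,\theta)\bigr)\begin{pmatrix}\ell'\\ r'\end{pmatrix} = F(h,\theta,\ell,r;\gamma), \]
where $\mathcal{M}_{b} = \mathrm{diag}(m\,I_{2},\mathcal{J})$ is the solid inertia and $\mathcal{M}_{a}(h,\theta)$ is the symmetric positive-semidefinite added-inertia matrix read off from the integrals of the profiles $U^{\bullet}_{h,\theta}$ against $n$ on $\partial\mathcal{S}(h,\theta)$. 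The right-hand side $F$ is smooth on $\mathcal{Q}\times\mathbb{R}^{3}$ and, as announced in the excerpt, carries a Lorentz-type gyroscopic contribution proportional to $\gamma$ together with an electric-type term encoded in the $(h,\theta)$-dependence of the profiles. Since $\mathcal{M}_{b}$ is already positive definite, the total inertia is invertible on $\mathcal{Q}$, and the system is equivalent to a $C^{\infty}$ first-order ODE on the open set $\mathcal{Q}\times\mathbb{R}^{3}$ of phase space.

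The Cauchy-Lipschitz theorem then yields a unique maximal $C^{\infty}$ solution $(h,\theta,\ell,r)$ on some interval $[0,T_{\max})$, from which $u$ and $\pi$ are reconstructed pointwise in time by the elliptic problems above; uniqueness at the PDE level follows because any smooth solution is forced by Helmholtz to satisfy the same ODE. For the final claim, conservation of the total kinetic energy
\[ \tfrac{1}{2}m|\ell|^{2} + \tfrac{1}{2}\mathcal{J}r^{2} + \tfrac{1}{2}\int_{\mathcal{F}(t)} |u|^{2}\,dx \]
combined with the fact that the $L^{2}$-norm of the circulation-generated part of $u$ remains controlled as long as $\operatorname{dist}(\mathcal{S}(t),\partial\Omega)$ stays bounded below yields a uniform bound on $(\ell,r)$ away from collision; this is precisely the announced Corollary \ref{bd-loin}. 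If $T_{\max}<\infty$ without collision occurring as $t\to T_{\max}^{-}$, the trajectory would remain in a compact subset of $\mathcal{Q}\times\mathbb{R}^{3}$ and could be extended by local existence, contradicting maximality. I expect the main obstacle to lie in rigorously establishing the smooth dependence of $\mathcal{M}_{a}$ and $F$ on $(h,\theta)$ in $\mathcal{Q}$ (which requires a careful study of how the profiles $U^{\bullet}_{h,\theta}$ vary with the moving boundary) and in identifying the gyroscopic part of $F$ with the exact Lorentz-type force mentioned in the introduction; once these regularity and algebraic facts are in place, the ODE reduction and the collision criterion are essentially routine.
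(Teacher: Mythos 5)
Your proposal is correct and follows essentially the same route as the paper: reconstruct $u$ from $(h,\theta,h',\theta')$ via the div/curl problem, reduce Newton's equations to a second-order ODE on the collision-free configuration space with smooth coefficients (the paper's Theorem \ref{THEO-intro}, obtained via Bernoulli's formula and a Lagrangian computation rather than your Neumann problem for the pressure, but to the same effect), apply Cauchy--Lipschitz, and use conservation of the total energy $\frac12(\mathcal M_g+\mathcal M_a(q))q'\cdot q'-\frac12\gamma^2C(q)$ together with the regularity of $C$ and $\mathcal M_a$ on $\mathcal Q_\delta$ to bound $q'$ away from collisions (Corollary \ref{bd-loin}). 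The only point you leave open — smooth dependence of the elliptic profiles on $(h,\theta)$ — is handled in the paper by classical shape-derivative theory, exactly as you anticipate.
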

  This result by itself belongs to the mathematical folklore.\footnote{Indeed a stronger result has been obtained in  \cite{GS-!}, where the rotational case (with $ \operatorname{curl} u_0$  in $L^\infty$)  is handled with pure PDE's methods.} Here it will be easily deduced from the normal form hinted above  
   in order to introduce the solutions which will be tackled in the zero radius limit and because this normal form is precisely the first step of our strategy in order to tackle the singular features of the body's dynamics in the zero radius limit.

Then we expand the coefficients of the previous normal form in the zero-radius limit and repeatedly use Lamb's lemma
to reformulate  under an asymptotic normal form closer to the one of the unbounded case where the leading terms of the electric-type force are absorbed in the other terms by a modulation of the unknown. 
 Indeed we will consider in particular as new unknown a quantity obtained by subtracting a drift velocity given by the leading terms of the electric-type potential
 from the translation velocity. 
 This will allow to extend to this case the vanishingly small limit, still  in both  cases of a limit pointwise particle which is massive or massless.
Indeed  we will obtain both the massive point vortex system and the classical point vortex system
 in a cavity as limit equations for respectively a massive and a massless particle, that is 
from the dynamics of a shrinking solid in the inertia regime mentioned above.

Let  us recall that the Kirchhoff-Routh velocity $u_\Omega$ is 
defined as $u_\Omega := \nabla^\perp \psi_\Omega $,
where $ \nabla^\perp := (-\partial_2 , \partial_1 )$ and where the Kirchhoff-Routh stream function $\psi_\Omega$ is defined as $\psi_\Omega (x) :=  \frac{1}{2} \psi^{0} (x,x) $, where $\psi^{0} (h,\cdot)$ is the solution to the Dirichlet problem: $\Delta \psi^{0} (h,\cdot) =  0 $ in $ \Omega$,  $\psi^{0} (h,\cdot) =  - \frac{1}{2\pi} \ln | \cdot-h |$   on  $\partial\Omega $.
Let us now introduce the limit equation for the case of a massive particle.
\begin{alignat}{3} \label{ODE-mass}
m h''  =\gamma \big(h^\prime- \gamma u_\Omega (h)\big)^\perp \, \text{ on }  [0,T), \, \text{ with }
(h , h' ) (0)= (0,\ell_0)  .
\end{alignat}

The existence of the maximal solution  $(h,T)$ follows from classical ODE theory. Moreover it follows from the conservation of the  energy $ \frac{1}{2} m h'  \cdot h' - \gamma^2 \psi_\Omega (h) $
for any $h \in C^\infty([0,T]; \Omega)$  satisfying \eqref{ODE-mass},
and from the continuity of the Kirchhoff-Routh stream function $\psi_\Omega$ in $\Omega$ 
that $T$ is the time of the first collision of $h$  with the outer boundary $\partial \Omega$ of the fluid domain. 
If there is no collision, then $T=+\infty$. 

Let us also recall the point vortex equation:
\begin{alignat}{3} \label{argh}
h^\prime = \gamma u_\Omega (h)  \text{ for }t>0,   \text{ with } h (0)= 0.
\end{alignat}
It is well-known that the solution $h $ is global in time, and in particular that there is no collision of the vortex point with the external boundary $\partial \Omega$.
This follows from the conservation of the energy $ \gamma^2    \psi_\Omega (h)$ 
 for any $h \in C^\infty([0,T]; \Omega)$ satisfying \eqref{argh}, 
and the fact that 
$\psi_\Omega (h) \rightarrow +\infty$ when $h$ comes close to $\partial \Omega$. \par

Next result states the convergence of $h^\varepsilon$ to the solutions to these equations \eqref{ODE-mass} and \eqref{argh}.

\begin{thm} 
\label{thm-intro-sec2-VO}
Let $\mathcal S_0 \subset \Omega$, $( \ell_0 , r_0  ) \in  \mathbb R^2 \times \mathbb R$,   and  $(m ,\mathcal J) \in  (0,+\infty) \times (0,+\infty)$.
Let,  in the case of a massive (respectively massless) particle, 
 $\gamma $ in $ \mathbb R$ (resp. in $ \mathbb R^*$).
Let  $(h,T)$ be the maximal solution to \eqref{ODE-mass}  (resp.  $h $ be the global solution to \eqref{argh}).
For every $\varepsilon \in (0,1]$ small enough to ensure that the set ${\mathcal S}_0^\varepsilon$ defined by \eqref{DomInit} satisfies
   ${\mathcal S}_0^\varepsilon \subset \Omega $, we consider 
 $u_0^\varepsilon$ the associated compatible initial fluid velocity associated (according to Definition \ref{CompDataBd}) with the initial solid domain ${\mathcal S}_0^\varepsilon$ defined by \eqref{DomInit}, $ \Omega$, $( \ell_0 , r_0 )$  and $ \gamma$,
and  we denote $T^\varepsilon$ the life-time of the associated smooth solution $(h^\varepsilon  ,\theta^\varepsilon  ,u^\varepsilon  )$ given by Theorem \ref{thm-intro-sec2-CP}, with the inertia scaling  described in Definition \ref{massiveP} 
and  the initial conditions $(h^\varepsilon ,(h^\varepsilon )' ,\theta^\varepsilon,(\theta^\varepsilon)' )  (0)= (0 , \ell_0 , 0 ,  r_0 )$ and $u^\varepsilon  |_{t= 0} = u^\varepsilon_0 $.
Then in the zero radius limit $\varepsilon\to 0$, there holds 
 $\liminf T^\varepsilon\geqslant T$ (resp.  $\liminf T^\varepsilon \to +\infty$) and
 $h^\varepsilon \relbar\joinrel\rightharpoonup h$ 
 in $W^{2,\infty}([0,T];\mathbb R^2)$ weak-$\star$  (resp. $W^{1,\infty}([0,T];\mathbb R^2)$ weak-$\star$).
\end{thm}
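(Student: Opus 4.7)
The plan is to exploit the normal-form reformulation underlying Theorem \ref{thm-intro-sec2-CP}: for each $\varepsilon>0$, the coupled PDE/ODE system reduces to a second-order ODE for the configuration $q^\varepsilon = (h^\varepsilon,\theta^\varepsilon)$ of the schematic form
\[
\bigl(M_s^\varepsilon + M_a^\varepsilon(q^\varepsilon)\bigr)(q^\varepsilon)'' + \Gamma^\varepsilon(q^\varepsilon)\bigl((q^\varepsilon)',(q^\varepsilon)'\bigr) = \gamma\,B^\varepsilon(q^\varepsilon)\,(q^\varepsilon)' + \gamma^2\,E^\varepsilon(q^\varepsilon),
\]
where $M_s^\varepsilon$ is the solid inertia (scaled as in Definition \ref{massiveP}), $M_a^\varepsilon$ is the volume-like added inertia (of order $\varepsilon^2$), $\Gamma^\varepsilon$ is the Christoffel symbol of the total metric, and $B^\varepsilon$, $E^\varepsilon$ are the magnetic (gyroscopic) and electric parts of the Lorentz-type force alluded to in the introduction to Section \ref{chap2}. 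The crux of the difficulty is that $E^\varepsilon$ does not vanish with $\varepsilon$, so a direct energy estimate cannot yield uniform bounds on $(h^\varepsilon)'$.

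Following Lamb's real-analytic method, one would systematically expand each coefficient in powers of $\varepsilon$ by splitting the Green function of $\Omega\setminus\mathcal S^\varepsilon(t)$ into a whole-plane Biot--Savart kernel and a harmonic remainder governed at leading order by the Kirchhoff--Routh stream function $\psi_\Omega$. A direct computation then identifies the leading part of $\gamma^2 E^\varepsilon(q^\varepsilon)$ with $\gamma^2 \nabla\psi_\Omega(h^\varepsilon)$, and this contribution can be absorbed into the gyroscopic term by the modulation
\[
\tilde\ell^\varepsilon := (h^\varepsilon)' - \gamma\,u_\Omega(h^\varepsilon), \qquad u_\Omega = \nabla^\perp\psi_\Omega.
\]
After this change of unknown, the equation for the translation variable takes the asymptotic normal form $m(h^\varepsilon)'' = \gamma(\tilde\ell^\varepsilon)^\perp + O(\varepsilon)$ in the massive case, and a balanced version with vanishing inertial prefactor in the massless case; crucially, the remainders are uniform as long as $h^\varepsilon$ stays in a compact subset of $\Omega$.

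From this normal form I would derive a modulated energy identity. In the massive regime, $\tfrac12 m|\tilde\ell^\varepsilon|^2 - \gamma^2\psi_\Omega(h^\varepsilon)$ is conserved modulo $O(\varepsilon)$, which, together with the blow-up of $\psi_\Omega$ near $\partial\Omega$, confines $h^\varepsilon$ in any compact subset of $\Omega$ on which the maximal solution of \eqref{ODE-mass} exists, and forces $\liminf T^\varepsilon \geq T$. An analogous argument in the massless case controls $\tilde\ell^\varepsilon$ directly and yields $\liminf T^\varepsilon \to+\infty$ thanks to the globality of \eqref{argh}. Weak-$\star$ compactness of $(h^\varepsilon)$ in the stated Sobolev spaces then follows from Banach--Alaoglu, and identification of the weak limit is obtained by passing to the limit in the modulated ODE using the continuity of $u_\Omega$ in $\Omega$, the uniqueness of the solution to the limit equation and a Gr\"onwall comparison against it.

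The main technical obstacle is the precise asymptotic expansion of $M_a^\varepsilon$, $\Gamma^\varepsilon$, $B^\varepsilon$ and especially $E^\varepsilon$: these quantities involve the stream function of the auxiliary elliptic problem on the $\varepsilon$-dependent fluid domain, and their naive expansions contain apparently singular $\log\varepsilon$ or $\varepsilon^{-1}$ contributions. The key computation, corresponding to Lemma \ref{PropCia} in the introduction, is exactly what ensures that these dangerous terms cancel when reassembled, so that the only surviving principal contribution is the Kirchhoff--Routh drift $\gamma u_\Omega(h^\varepsilon)$ and the remainders are genuinely $O(\varepsilon)$ uniformly on compact subsets of $\Omega$. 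Without this cancellation, neither the modulation nor the modulated energy estimate could be closed, and the whole scheme would collapse.
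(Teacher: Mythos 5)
Your overall strategy is the one the paper follows for Theorem \ref{Bounded-VO}: reduce to the ODE \eqref{ODE_intro_epsB}, expand the coefficients by Lamb's method, identify the leading part of $\gamma^2 E^\varepsilon$ with the Kirchhoff--Routh field, absorb it by the modulation $\tilde\ell^\varepsilon=(h^\varepsilon)'-\gamma u_\Omega(h^\varepsilon)$, run a modulated energy estimate, and pass to the limit with a bootstrap on the distance to $\partial\Omega$. Up to and including the massive case this is essentially the paper's argument.

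There is, however, a genuine gap in the massless case. After the first-order modulation your remainder is only $O(\varepsilon)$, while the inertia in front of $(\tilde p^\varepsilon)'$ is $\varepsilon^\alpha \mathcal{M}_g+\varepsilon^2 \mathcal{M}_{a,\theta^\varepsilon}=O(\varepsilon^{\min(\alpha,2)})$; for $\alpha\geq 2$ an $O(\varepsilon)$ non-gyroscopic remainder cannot be closed by Gr\"onwall (it becomes $O(\varepsilon^{-1})$ after dividing by the inertia). The paper must expand $E^\varepsilon$, $B^\varepsilon$ and $\Gamma_a^\varepsilon$ one order further (Proposition \ref{dev-added}) and exploit the algebraic identity \eqref{grav}: the order-$\varepsilon$ terms $\gamma^2\mathsf{E}_{{\it 1}}+\gamma I_\varepsilon (q^\varepsilon)'\times\mathsf{B}_{{\it 1}}-\langle\Gamma_{a,\theta^\varepsilon},I_\varepsilon(q^\varepsilon)',I_\varepsilon(q^\varepsilon)'\rangle$ recombine into $-\langle\Gamma_{a,\theta^\varepsilon},\hat p^\varepsilon,\hat p^\varepsilon\rangle+\gamma(u_c(q^\varepsilon),0)^t\times B_{\theta^\varepsilon}+O(\varepsilon)$, the explicit cross term being absorbed only through the second-order modulation \eqref{dmodu} with the corrector $\varepsilon u_c(q^\varepsilon)$. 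Only then is the remainder $O(\varepsilon^{\min(2,\alpha)})$ and the estimate based on the normal form \eqref{fnorm2} closes. Your proposal attributes the needed cancellation to Lemma \ref{PropCia}, which is the unbounded-plane computation where Archimedes-type quantities cancel, and frames the danger as $\log\varepsilon$ or $\varepsilon^{-1}$ singularities; the actual crux for $\alpha\geq2$ is this order-$\varepsilon$ gyroscopic recombination, which your scheme does not identify. A second, smaller point: in the massive case the approximate conservation of $\frac12 m|\tilde\ell^\varepsilon|^2-\gamma^2\psi_\Omega(h^\varepsilon)$ does not confine $h^\varepsilon$ away from $\partial\Omega$ --- the sign is wrong, since $\psi_\Omega\to+\infty$ at the boundary only lets the velocity grow there and collisions for \eqref{ODE-mass} can indeed occur in finite time, which is why the statement only claims $\liminf T^\varepsilon\geq T$. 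The confinement on $[0,T']$ with $T'<T$ must come from the Gr\"onwall comparison with the limit trajectory and a continuous-induction (bootstrap) argument, not from the energy alone.
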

Therefore, in the bounded setting, Conjecture
$(\mathcal C)$ is also true with   $u_\text{d} =  \gamma u_\Omega$ (and a easy byproduct of the analysis is that  $u_\text{bd} = K_{ \Omega} [\gamma \delta_h] $ where $K_{ \Omega}$ denotes the Biot-Savart law associated with the simply connected domain  $\Omega$).
Theorem  \ref{thm-intro-sec2-VO} will be proven as a consequence of  Theorem \ref{THEO-intro} and Theorem  \ref{Bounded-VO}.
This result was obtained in \cite{GMS}.


  \item \textbf{In Section \ref{chap3},}   we will consider the motion of a rigid body immersed in a two dimensional  incompressible perfect fluid with vorticity. 
  In order to focus on the interaction between the  rigid body and the fluid vorticity we go back to the unbounded setting of Section \ref{chap1} so that the fluid-solid system occupies again the whole plane.   We first recall a result obtained in  \cite{GLS} establishing a global in time existence and uniqueness result similar to the celebrated result by Yudovich about the case of a fluid alone. Yudovich's theory relies on the transport of the fluid vorticity in particular to the preservation of  $L^{\infty}$-in space bound of the vorticity when time proceeds and we therefore extend Definition \ref{CompData} to this setting.
%
\begin{defn}[Compatible initial fluid velocity] \label{CompDataY}
Given the initial domain $ \mathcal{S}_0$ occupied by the body, the initial solid velocities  $ \ell_0$ and $  r_0$ respectively in $ \mathbb{R}^2$ and $ \mathbb{R}$, an initial fluid vorticity  $\omega_0$ in $ L_c^{\infty}(\mathbb{R}^{2}\setminus\{0\})$ 
and $\gamma$ in $\mathbb{R}$, 
we say that a vector field $u_0$ on the closure of $\mathcal{F}_0 =\mathbb{R}^2 \setminus  \mathcal{S}_0$ with values in $ \mathbb{R}^2$
is compatible if it 
is the unique vector field in $C^{0}(\overline{\mathcal{F}_0};\mathbb{R}^{2})$
satisfying 
 the following div/curl type system:
\begin{gather*}
 \operatorname{div} u_0 = 0     \text{ and } 
 \operatorname{curl} u_0  =  \omega_0  \text{ in }   \mathcal{F}_0  , \, 
 \\  u_0 \cdot n =   \Big( \ell_0 +  r_0 x^{\perp} \Big)  \cdot n \text{ for }  x \in \partial \mathcal{S}_0  , \, \quad 
  \int_{ \partial \mathcal{S}_0} u_0  \cdot  \tau \, ds=  \gamma ,
\\ \lim_{x\to \infty} u_0 = 0 .
 \end{gather*}  
\end{defn}
We are now ready to state the existence and uniqueness result with bounded vorticity. 
\begin{thm} \label{ThmYudo-intro}
For any $(\ell_0,r_0) \in \mathbb{R}^2 \times \mathbb{R}$, $\omega_0  \in L_c^{\infty}(\overline{{\mathcal F}_0})$, 
there exists a unique solution to  the problem compound of  the incompressible Euler equations  \eqref{intrau1} on the fluid domain, of the Newton equations \eqref{intrau2}, of the interface condition \eqref{bc-intro}, of the  condition  at infinity: $ \lim_{|x|\to \infty} |u(t,x)| =0 $, 
and of the initial conditions $(h,h' ,\theta,\theta' )  (0)= (0 , \ell_0 , 0 ,  r_0 )$ and $u |_{t= 0} = u_0 $, with  $u_0$ the compatible initial velocity associated with  $\ell_0$, $r_0$ and $\omega_0$ by  Definition \ref{CompDataY}.
Moreover $(h, \theta) \in  C^2 (\mathbb{R}^+; \mathbb{R}^2 \times \mathbb{R}) $ and  for all $t>0$, $\omega (t):=\operatorname{curl} u(t) \in L^\infty_c(\overline{{\mathcal F} (t)})$.
\end{thm}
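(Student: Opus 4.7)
The plan is to adapt Yudovich's classical well-posedness strategy for the 2D incompressible Euler equations to the coupled fluid-solid system, handling the moving fluid domain via a change of variables to the reference configuration $\mathcal{F}_0$ through the time-dependent isometry $Q(t) : x \mapsto R(-\theta(t))(x - h(t))$. In these moving-frame coordinates the fluid equation is posed on a fixed domain and acquires inertial terms depending on $(h'(t), \theta'(t))$, while the pulled-back vorticity $\tilde\omega(t, \cdot) = \omega(t, Q(t)^{-1}\cdot)$ is still transported by a divergence-free velocity tangent to $\partial\mathcal{F}_0$.

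Existence would proceed by regularization and compactness. First, mollify $\omega_0$ into smooth compactly supported $\omega_0^n$ and construct the associated compatible initial velocities $u_0^n$ by solving the div-curl system of Definition \ref{CompDataY}. Classical smooth theory for the Euler/rigid-body system yields global-in-time smooth solutions $(h^n, \theta^n, u^n)$ on $\mathbb{R}^+$, since in the exterior setting no collision with an outer wall can occur. Three ingredients then produce uniform bounds on the sequence: (i) the transport equation for $\tilde\omega^n$ preserves every $L^p$ norm, so in particular $\|\omega^n(t)\|_{L^\infty} = \|\omega_0\|_{L^\infty}$; (ii) Kelvin's theorem preserves the circulation $\gamma$; (iii) a renormalized kinetic-energy identity, obtained after subtracting the irrotational mode carrying $\gamma$ (whose $L^2$ norm in the exterior of the body is infinite whenever $\gamma\neq 0$), yields uniform control of $|h^n{}'(t)|$, $|\theta^n{}'(t)|$, and the finite portion of the fluid energy. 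From the Biot-Savart representation in the exterior domain $\mathcal{F}^n(t)$ the velocity $u^n$ is uniformly bounded and log-Lipschitz in space, so the support of $\omega^n$ remains in a compact set of $\mathbb{R}^2$ on every $[0,T]$. Arzel\`a-Ascoli for $(h^n, \theta^n)$ combined with weak-$\star$ compactness of $\omega^n$ in $L^\infty$ extracts a convergent subsequence; the transported vorticity equation, the Newton equations and the interface condition pass to the limit using uniform continuity up to the moving boundary.

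For uniqueness, I would take two solutions $(h_i, \theta_i, u_i)$ and pull both back to $\mathcal{F}_0$ via $Q_i$, then study the coupled evolution of $\delta\tilde\omega = \tilde\omega_1 - \tilde\omega_2$ together with the solid differences $\delta h, \delta\theta, \delta\ell, \delta r$. The heart of the matter is a quantitative Lipschitz-in-data dependence of the velocity on $(\omega, h, \theta, \ell, r, \gamma)$ with a logarithmic loss, inherited from the standard Yudovich estimate for $L^\infty$-vorticity velocities and combined with smooth dependence of the harmonic complements on the solid configuration. Feeding this bound into the transport equation for $\delta\tilde\omega$ and the ODE for the solid differences would yield an Osgood-type inequality $\Phi'(t) \le C \Phi(t)(1 + |\log \Phi(t)|)$ for the composite error $\Phi(t) = \|\delta\tilde\omega(t)\|_{L^2}^2 + |\delta h|^2 + |\delta\theta|^2 + |\delta\ell|^2 + |\delta r|^2$, and Osgood's lemma then forces $\Phi \equiv 0$.

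The main obstacle is that the Biot-Savart operator in the exterior of the moving body depends nonlinearly and nonlocally on $(h(t), \theta(t))$, and the singular contribution of the circulation near $\partial \mathcal{S}(t)$ prevents the global fluid energy from being finite whenever $\gamma \neq 0$. Both points require a careful decomposition of the velocity into a part generated by the compactly supported vorticity, a harmonic potential enforcing the interface condition, and a logarithmic mode carrying the circulation; the sensitivity of each piece with respect to the solid configuration must be quantified both to close the a priori energy estimate and to run the Yudovich-type uniqueness argument. A conformal map from $\mathcal{F}(t)$ onto the exterior of the unit disk turns this sensitivity analysis into explicit computations, and is the route followed in \cite{GLS}.
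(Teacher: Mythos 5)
Your overall architecture (pass to the body frame so the fluid domain is the fixed exterior $\mathcal{F}_0$, exploit the transport of vorticity and conservation of circulation, renormalize the energy by subtracting the circulatory mode $\beta H$, and close with a Yudovich-type uniqueness argument) matches the paper's proof of Theorem \ref{ThmYudo0}. Two remarks on the existence part. First, the paper does \emph{not} regularize the data and invoke smooth theory: it builds weak solutions directly by a Schauder fixed point on the map $(\omega,\ell,r)\mapsto(\tilde\omega,\tilde\ell,\tilde r)$ defined by the transport equation and the added-mass reformulation \eqref{EqL2} of Newton's equations; your mollification route is viable but silently relies on a global smooth-solution theory for the coupled system, which is itself a nontrivial input. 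Second, the renormalized energy of Proposition \ref{PE2} is not an identity but a Gr\"onwall bound $\tilde{\mathcal{E}}(t)\leqslant\tilde{\mathcal{E}}(0)e^{C|\beta|t}$ (the cross terms between $\tilde v$ and $\beta H$ do not cancel exactly; only the pure $H$ self-interaction vanishes by Blasius/residues); the exactly conserved quantity is the different functional of Proposition \ref{PropKirchoffSueur} and is not needed for the Cauchy problem. These are differences of route, not errors.

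There is, however, a genuine gap in your uniqueness argument: you propose to run the estimate on $\Phi(t)=\|\delta\tilde\omega(t)\|_{L^2}^2+\cdots$ by ``feeding the Lipschitz-with-log-loss bound into the transport equation for $\delta\tilde\omega$.'' This does not close. The equation for $\delta\tilde\omega$ contains the term $(\delta v-\delta\ell-\delta r\,x^\perp)\cdot\nabla\tilde\omega_2$, and for Yudovich solutions $\tilde\omega_2$ is merely bounded, so $\nabla\tilde\omega_2$ is not controlled in any space that lets you estimate this term against $\|\delta\tilde\omega\|_{L^2}$; integrating by parts only moves the derivative onto $\delta\tilde\omega$, which is no better. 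This is precisely why Yudovich's method, and the paper's Section \ref{sec-coche}, perform the energy estimate on the \emph{velocity} difference $\breve v=v_1-v_2$, which lies in $L^{\infty}(0,T;L^2(\mathcal{F}_0))$ because the two solutions share the same circulation and initial vorticity (so the slowly decaying harmonic parts cancel and $\breve v=\mathcal{O}(|x|^{-2})$). The delicate term $\int\breve v\cdot[(\breve v-\breve\ell-\breve r x^\perp)\cdot\nabla v_2]$ is then handled via the Calder\'on--Zygmund growth $\|\nabla v_2\|_{L^p}\leqslant Cp$ and interpolation, yielding $y'\leqslant Cp\,y^{1/p'}$ and the conclusion $\breve v\equiv 0$ on $[0,1/C)$ by letting $p\to\infty$. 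If you prefer an Osgood formulation, the quantity to use is an $\dot H^{-1}$-type norm of $\delta\tilde\omega$ (equivalently the $L^2$ norm of $\breve v$), or alternatively a Lagrangian comparison of flow maps \`a la Marchioro--Pulvirenti; the plain $L^2$ norm of the vorticity difference is the wrong functional.
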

A key of Yudovich's approach is that a $L^{\infty}$-in space bound of the vorticity is enough to control the log-Lipschitz regularity of the fluid velocity.\footnote{Indeed the uniqueness part of the result above has to be understood to hold in the class of solutions with such a regularity.} 
We will see in Section \ref{chap3} that this is still true for the case with an immersed body. Moreover this amount of regularity is sufficient in order to insure convenient a priori bounds regarding the solid motion.  
  We refer to \cite{GS,GS2,Sueur}  for some other results regarding the existence of solutions with less regularity.  Indeed we will slightly modify the proof of Theorem \ref{ThmYudo-intro} given in  \cite{GLS} regarding the  a priori estimates of the rigid body's acceleration. 
 We will use here an argument from  \cite{GS2,Sueur}  which requires pretty much only a $L^2$ type a priori estimate of the fluid velocity. 

This setting will allow to investigate the zero radius limit. In  \cite{GLS} and \cite{GLS2} we have obtained respectively the following results corresponding to  the massive and  massless cases.

\begin{thm}
 \label{thm-intro-sec3-VO}
Let  be given
 a  circulation  $\gamma $ in $ \mathbb R$ in the case of a  massive particle and in  $ \mathbb R^*$ in the  case of a massless particle, 
 $(\ell_0 ,r_0 ) \in \mathbb{R}^3$,  $ \omega_0 $ in  $L_c^{\infty}(\mathbb{R}^{2}\setminus\{0\})$ and 
 consider as initial fluid velocity  $u_0^\varepsilon$ is then defined as the unique vector field  $u_0^\varepsilon$ compatible with $ \mathcal{S}_0^\varepsilon$,  $ \ell_0$,  $  r_0$,  $\gamma$ and $\omega_{0}^{\varepsilon} := \omega_{0|{\mathcal F}_{0}^{\varepsilon}}$.
   For any $\varepsilon \in (0,1]$, let us denote $(h^{\varepsilon},r^{\varepsilon},u^\varepsilon)$ the corresponding solution of the system.
  Then in the zero radius limit  $\varepsilon \rightarrow 0$, with the inertia scaling  described in Definition \ref{massiveP},  one respectively obtains the following equation for the position $h(t)$ of the pointwise limit particle:
 \begin{equation}
  \label{mls1}
 m h''(t) = \gamma \Big(h'(t) - K_{\mathbb{R}^{2}}[\omega(t,\cdot)] (h(t))  \Big)^\perp ,
\end{equation}
   in the massive limit  and 
 \begin{equation}
  \label{mls2}
     h'(t) =  K_{\mathbb{R}^{2}}[\omega(t,\cdot)] (h(t))  ,
     \end{equation}
  in the massless limit.
 Regarding the fluid state, one obtain at the limit the following transport equation for the fluid vorticity:
\begin{gather}
 \label{EulerPoint} 
  \frac{\partial \omega }{\partial t}+ \operatorname{div}
\big(   \omega  K_{\mathbb{R}^{2}}[\omega +  \gamma \delta_{h} ]   \big) = 0 .
\end{gather}
\end{thm}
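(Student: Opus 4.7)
The plan is to adapt the reformulation-and-expansion strategy used in Sections \ref{chap1}--\ref{chap2} to the rotational setting, and to couple it with a Yudovich-type control of the vorticity. For each $\varepsilon \in (0,1]$, Theorem \ref{ThmYudo-intro} provides a global smooth solution $(h^\varepsilon,\theta^\varepsilon,u^\varepsilon)$; the transport of $\omega^\varepsilon := \operatorname{curl} u^\varepsilon$ along a log-Lipschitz flow yields a uniform-in-$\varepsilon$ bound $\|\omega^\varepsilon(t,\cdot)\|_{L^\infty} \leqslant \|\omega_0\|_{L^\infty}$, with support remaining in a compact set on every finite time interval.

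Next, I would rewrite Newton's equations \eqref{intrau2} as a finite-dimensional second-order ODE for $(h^\varepsilon,\theta^\varepsilon)$. At each fixed time, decompose
\begin{equation*}
u^\varepsilon(t,\cdot) = u^\varepsilon_{\mathrm{pot}}[\ell^\varepsilon,r^\varepsilon,\gamma] + u^\varepsilon_{\mathrm{vort}}[\omega^\varepsilon],
\end{equation*}
where $(\ell^\varepsilon,r^\varepsilon) = ((h^\varepsilon)',(\theta^\varepsilon)')$ and the two summands solve the div/curl system of Definition \ref{CompDataY} with, respectively, zero vorticity (and the boundary/circulation data) and zero boundary/circulation data (but the vorticity $\omega^\varepsilon$). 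Plugging this decomposition in the pressure integrals and using the Lamb-type identities of Section \ref{chap2} recasts \eqref{intrau2} as
\begin{equation*}
(\mathcal{M}^\varepsilon + \mathcal{M}_a^\varepsilon(\theta^\varepsilon))\begin{pmatrix}(\ell^\varepsilon)'\\(r^\varepsilon)'\end{pmatrix} = \gamma\,\bigl(\ell^\varepsilon - u^\varepsilon_{\mathrm{vort}}(h^\varepsilon)\bigr)^\perp + R^\varepsilon(h^\varepsilon,\theta^\varepsilon,\ell^\varepsilon,r^\varepsilon,\omega^\varepsilon),
\end{equation*}
i.e., an inertia+added-inertia term balanced by a Kutta--Joukowski-type gyroscopic force relative to the background velocity induced by $\omega^\varepsilon$, plus a remainder $R^\varepsilon$ collecting body--vorticity interactions, rotational corrections, and nonlinear coupling.

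Third, I would carry out the zero-radius asymptotics of the coefficients by the scaling $x = \varepsilon y$. The added inertia $\mathcal{M}_a^\varepsilon$ is of order $\varepsilon^2$; the Biot--Savart integral over $\mathcal{S}^\varepsilon(t)$ is of vanishing order whenever $\omega_0$ is supported away from $0$; and, crucially, the vorticity-induced velocity satisfies $u^\varepsilon_{\mathrm{vort}}(h^\varepsilon) \to K_{\mathbb{R}^2}[\omega](h)$ (the singular self-contribution of the circulation $\gamma$ around the body is, by construction, absent from $u^\varepsilon_{\mathrm{vort}}$, which produces the ``desingularization'' announced by the conjecture). For the massive case $\alpha=0$ the total inertia $\mathcal{M}^\varepsilon + \mathcal{M}_a^\varepsilon$ converges to $m\,\mathrm{Id}_2$, and an energy estimate gives uniform $W^{2,\infty}$ control on $h^\varepsilon$. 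Passing to a weak-$\star$ limit yields \eqref{mls1}. For the massless case $\alpha>0$, the prefactor of $(\ell^\varepsilon)'$ vanishes, producing a singular-in-time perturbation. I would exploit the gyroscopic structure by working with the modulated unknown $v^\varepsilon := \ell^\varepsilon - u^\varepsilon_{\mathrm{vort}}(h^\varepsilon)$: the Kutta--Joukowski term becomes $\gamma(v^\varepsilon)^\perp$, and as in Section \ref{chap2} the conservation of a modulated energy $\tfrac{1}{2}\varepsilon^\alpha m |v^\varepsilon|^2 + O(\varepsilon^2|r^\varepsilon|^2) + \dots$ forces $v^\varepsilon \to 0$ uniformly on compact subsets of time, giving \eqref{mls2}.

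Finally, to close the system, one must pass to the limit in the vorticity transport equation $\partial_t \omega^\varepsilon + \operatorname{div}(\omega^\varepsilon u^\varepsilon) = 0$ in $\mathcal{F}^\varepsilon(t)$. Using uniform $L^\infty$ bounds together with the decomposition of $u^\varepsilon$ described above and DiPerna--Lions/Ascoli-type compactness in $L^p_{\mathrm{loc}}$ weak-$\star$, one shows $\omega^\varepsilon \rightharpoonup \omega$ and $u^\varepsilon \to K_{\mathbb{R}^2}[\omega + \gamma \delta_h]$ away from $h(t)$, whence \eqref{EulerPoint} in the sense of distributions. The main obstacle is the massless limit: proving that $v^\varepsilon \to 0$ despite the degenerate prefactor requires combining the gyroscopic cancellation with a careful control of $R^\varepsilon$ (in particular the body--vorticity interaction) showing that it remains $o(1)$ uniformly in time; a subsidiary difficulty is ruling out concentration of $\omega^\varepsilon$ onto the shrinking body, which one handles by propagating along the flow the initial separation between $\operatorname{supp}\omega_0$ and $0$.
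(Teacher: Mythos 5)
Your overall strategy --- reformulate Newton's equations as a finite-dimensional ODE via a potential/vortical decomposition of the velocity and Lamb-type identities, rescale the coefficients, modulate the translation velocity by the desingularized Biot--Savart velocity, run a modulated energy estimate, and bootstrap on the separation between the vorticity support and the shrinking body --- is exactly the strategy of the paper (Theorem \ref{enmarche}, Proposition \ref{Pro:NormalForm}, Lemma \ref{Pro:ModulatedEnergy}). Your decomposition is organized slightly differently (the paper attaches the circulation $\gamma$ to the vortical field $U_2$, with $U_1$ carrying the boundary data and zero circulation, whereas you attach $\gamma$ to the potential part), but this is cosmetic. Two imprecisions are worth flagging: the modulated energy is not conserved but only controlled by a Gr\"onwall argument, and it yields boundedness of the modulated velocity, not its convergence to zero; the limit equation $h'=K_{\mathbb{R}^2}[\omega](h)$ is then obtained by passing to the weak limit in the normal form, where the surviving gyroscopic term $\gamma\, \tilde{p}^{\varepsilon}\times B_{\theta^{\varepsilon}}$ forces the limit of the modulated velocity to vanish.

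The genuine gap is in the massless case when $\alpha\geq 2$. With only the first-order modulation $v^\varepsilon=\ell^\varepsilon-u^\varepsilon_{\mathrm{vort}}(h^\varepsilon)$, the remainder in the normal form still contains terms of order $\varepsilon$ (coming from the subprincipal part of the electric-type field and from the a-connection) which do not vanish when paired with the modulated velocity in the energy estimate. Since the inertia prefactor is $\varepsilon^{\alpha}\mathcal{M}_g+\varepsilon^2 \mathcal{M}_{a,\theta^\varepsilon}$, the Gr\"onwall argument only closes if the non-gyroscopic part of the remainder is $O(\varepsilon^{\min(\alpha,2)})$; the $o(1)$ control of $R^\varepsilon$ you propose yields at best $|\tilde{p}^{\varepsilon}|\lesssim \varepsilon^{1-\alpha}$, which is useless for $\alpha>1$. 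The paper resolves this by (i) expanding the coefficients to second order, (ii) showing that the order-$\varepsilon$ terms recombine algebraically into a gyroscopic a-connection term in the modulated variable plus a cross-product term (the analogue of \eqref{grav}), and (iii) absorbing the latter by the \emph{second-order} modulation $\tilde\ell^{\varepsilon}=(h^{\varepsilon})'-K_{\mathbb{R}^{2}}[\omega^{\varepsilon}](h^{\varepsilon})-\varepsilon\nabla K_{\mathbb{R}^{2}}[\omega^{\varepsilon}](h^{\varepsilon})\cdot R(\theta^\varepsilon)\xi$, which involves the conformal center $\xi$ of $\mathcal S_0$. Without this extra order of modulation and the exact recombination, the modulated energy estimate does not close and the massless limit is not justified; this is precisely the ``careful control of $R^\varepsilon$'' you defer, and it is the heart of the proof rather than a technical remainder estimate.
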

%

%
Therefore Conjecture
$(\mathcal C)$ is also true with 
 $u_\text{d} = K_{\mathbb{R}^{2}}[\omega]  $ and   $u_\text{bd} =  K_{\mathbb{R}^{2}}[\omega +  \gamma \delta_{h} ] $.

The fluid equation (\ref{EulerPoint}) is  the same whether the body shrinks to a massive or a massless pointwise particle.
Equation (\ref{EulerPoint}) describes the evolution of the vorticity of the fluid: it is transported by a velocity obtained by the usual Biot-Savart law in the plane, but from a vorticity which is the sum of the fluid vorticity and of a point vortex placed at the (time-dependent) position $h(t)$ where the solid shrinks, with a strength equal to the circulation $\gamma$ around the body. 
Equation  \eqref{EulerPoint} and its corresponding  initial condition hold in the sense that  for any test function $\psi\in C^\infty_c([0,T)\times\mathbb{R}^2)$ we have 
\begin{gather*} 
\int_0^\infty \int_{\mathbb{R}^2} \psi_t  \omega \, \, dx\, dt
+\int_0^\infty \int_{\mathbb{R}^2} \nabla_x \psi \cdot  K_{\mathbb{R}^{2}}[\omega +  \gamma \delta_{h} ]  \omega  \, dx\, dt
\\ +\int_{\mathbb{R}^2} \psi(0,x) \omega_0 (x) \, dx=0 .
\end{gather*} 
The uniqueness of the solution to the massive limit system\eqref{mls1}-\eqref{EulerPoint}
 is an interesting question. Observe that a putative uniqueness result would entail the convergence of the whole sequence.
This is the case for the massless limit system  \eqref{mls2}-\eqref{EulerPoint} for which uniqueness does hold according to a result due to 
 Marchioro and Pulvirenti,  cf. \cite{MP} and revisited by Lacave and Miot, cf. \cite{CricriLylyne}.
 
 We will  take advantage of the approach developed in  \cite{GLMS} and exposed in 
Section \ref{chap2} to provide a sketch of a more simple proof of the results claimed in Theorem \ref{thm-intro-sec3-VO} than the ones achieved  in \cite{GLS} for the massive case and most of all in \cite{GLS2} for the massless case. 
In order to do so we will start with an exact reformulation of the body's dynamics for fixed radius into an ODE 
with a geodesic feature, a Lorentz type force of the same form than the one mentioned above in the irrotational case, but with an extra dependence to the fluid vorticity,  and a new term describing a somehow more direct influence of the vorticity but which does not enjoy much structure, cf. Theorem  \ref{enmarche}.
Then we expand the coefficients of the previous ODE in the zero-radius limit
using in particular 
an irrotational approximation of the fluid velocity on the body's boundary in order to use 
Lamb's lemma. This provides in particular the leading terms in the expansion of the Lorentz type force with less effort than by the complex-analytic method used in  \cite{GLS} and \cite{GLS2}.
Another simplification comes from the extra term encoding a  direct influence of the vorticity to the body's dynamics which is more simple to estimate than its counterpart in \cite{GLS2}.\footnote{In particular it avoids again    the temporary occurrence of  Archimedes' type quantities.}
We thus obtain an asymptotic normal form once again close to the one of Section \ref{chap1} 
 where the leading terms of the electric-type force are absorbed in the other terms by a modulation of the solid translation velocity by a drift velocity of the particle under the influence of the fluid vorticity. 
 This will allow to extend to this case the vanishingly small limit, still  in both  cases of a limit pointwise particle which is massive or massless.
Indeed Theorem \ref{thm-intro-sec3-VO} will be proven in Section
\ref{sec3-VO} as a consequence of 
Theorem \ref{yudo-VO}.
\end{itemize}

We aim at extending this analysis to the case where both interactions of a body with an external boundary and with the fluid vorticity are considered in the same time,  for  several  massive and massless particles, which will provide a positive answer to  Conjecture $(\mathcal C)$ in a wide setting, cf. the ongoing work  \cite{GLMS}.


\section{Case of an unbounded irrotational flow}
\label{chap1}
In this section we assume that the system ``fluid + solid'' is unbounded so that the domain occupied by the fluid  at  time $t$ is 
$\mathcal{F} (t) := \mathbb{R}^2  \setminus \mathcal{S}(t) $
  starting from the initial domain 
 $\mathcal{F}_{0}  := \mathbb{R}^2 \setminus {\mathcal{S}}_{0} $.
The equations at stake  then read :
\begin{gather}  \label{Euler1}
\displaystyle \frac{\partial u }{\partial t}+(u  \cdot\nabla)u   + \nabla \pi =0  \quad \text{ and } \quad
\operatorname{div} u   = 0 \quad \text{for}\ x\in \mathcal{F}(t) ,
\\ m h'' (t) =  \int_{\partial  \mathcal{S} (t)} \pi n \, ds \quad \text{ and } \quad \mathcal{J}  \theta'' (t) =  \int_{\partial  \mathcal{S} (t)} (x-h(t))^{\perp} \cdot \pi n \, ds ,  \label{Solide2} 
\\ u  \cdot n =   \Big( h'(t) + r(t)(x-h(t))^{\perp} \Big)  \cdot n   \quad \text{for}\ \  x\in \partial \mathcal{S}  (t),   
\\  \lim_{|x|\to \infty} |u(t,x)| =0 , \label{Euler3b}
\\ u |_{t= 0} = u_0  \quad \text{for} \  x\in  \mathcal{F}_0   \quad \text{ and } \quad 
(h,h' ,\theta,\theta' )  (0)= (0 , \ell_0 , 0 ,    r_0 ).  \label{Solideci}
\end{gather}

\subsection{Reduction to an ODE. Statement of Theorem \ref{pasdenom}}

In the irrotational case, the system above can be recast as an ODE whose unknowns are the degrees of freedom of the solid, namely $h$ and  $\theta$.
In particular the motion of the fluid is completely determined by the solid position and velocity.
In order to state this, let us introduce the variable 
$q:=(h , \theta)\in\mathbb R^3.$
Since the fluid and solid domains only depend on $t$ through the solid position, we will rather denote them respectively 
$\mathcal{F}(q) $ and $\mathcal{S}(q) $ instead of $\mathcal{F} (t) $ and $\mathcal{S}(t) $.

Let us gather the mass and moment of inertia  of the solid into the following  matrix:
\begin{equation} \label{DefMG}
{\mathcal M}_{g} := \begin{pmatrix}
	m & 0 & 0 \\
	0 & m & 0 \\
	0 & 0 & {\mathcal J}
\end{pmatrix} .
\end{equation}
Observe that ${\mathcal M}_{g}$ is diagonal and in the set $S^{++}_3 (\mathbb{R})$  of the real symmetric positive definite $3\times 3$ matrices.
As already mentioned in the introduction the reformulation relies on the phenomenon of added mass, which, loosely speaking, measures how much the  surrounding fluid resists the acceleration as the body moves through it. This will be encoded by a matrix $\mathcal{M}_a$ in the set $S^{+}_3 (\mathbb{R})$  of the real symmetric positive-semidefinite $3\times 3$ matrices. The index $a$ refers to ``added'', by opposition to the genuine inertia ${\mathcal M}_{g}$.
This matrix  $\mathcal{M}_a$ depends on  the shape of the domain occupied by the solid and therefore on the solid position. 
Still since the system ``fluid + solid'' occupies the full plane the added inertia is invariant by translation and therefore only depends on $\theta$.
In order to measure its variations let us denote by 
$\mathcal{BL} (\mathbb{R}^3 \times \mathbb{R}^3 ; \mathbb{R}^3 )$  the space of bilinear mappings from  $ \mathbb{R}^3 \times \mathbb{R}^3 $ to $ \mathbb{R}^3$.
\begin{defn}[a-connection] \label{Christ}
Given a $C^{\infty}$ mapping
$ \theta \in \mathbb R \mapsto {\mathcal M}_{a,\,  \theta} \in S^{+}_3 (\mathbb{R})$, 
we say that the $C^{\infty}$ mapping
$ \theta \in \mathbb R \mapsto  \Gamma_{a,\theta}  \in  \mathcal{BL} (\mathbb{R}^3 \times \mathbb{R}^3 ; \mathbb{R}^3 ) $  is the  associated  a-connection  if for any 
$ p \in\mathbb R^3$,
\begin{subequations} 
\begin{equation}
\langle\Gamma_{a,\theta}  ,p,p\rangle :=\left(\sum_{1\leqslant i,j\leqslant 3} (\Gamma_{a,\theta} )^k_{i,j} p_i p_j  \right)_{1\leqslant k\leqslant 3}\in\mathbb R^3 ,
\end{equation}
with  for every $i,j,k \in \{1,2,3\}$  and for any $q=(h , \theta)$, 
\begin{equation} 
 \label{stanford}
(\Gamma_{a,\theta} )^k_{i,j} (q)  :=  \frac12
\Big(  ({\mathcal M}_{a, \theta})_{k,j}^{i} + ({\mathcal M}_{a, \theta})_{k,i}^{j} - ({\mathcal M}_{a, \theta})_{i,j}^{k}  \Big) (q) ,
\end{equation}
where $({\mathcal M}_{a, \theta})_{i,j}^{k} $ denotes the partial derivative with respect to $ q_{k}$ of the entry of indexes $(i,j)$ of the matrix ${\mathcal M}_{a, \theta}$, that is
\begin{equation} 
({\mathcal M}_{a, \theta})_{i,j}^{k} := \frac{\partial ({\mathcal M}_{a, \theta})_{i,j}}{\partial q_{k}} .
\end{equation}
\end{subequations}
\end{defn}

As already mentioned  in the introduction another celebrated  feature of the body's dynamics in the case of an unbounded irrotational flow is  the
Kutta-Joukowski force.  
This force  also depends on  the shape of the domain occupied by the solid and in particular on the solid position through $\theta$ only. 
Since this force is  gyroscopic, i.e. orthogonal to the velocity $q'$ (which gathers both  translation and rotation velocities), and proportional to the circulation around the body, it will be encoded by a vector $B_{\theta}$ in $\mathbb{R}^3$.

The first main result of this section is the following reformulation of the equations  \eqref{Euler1}-\eqref{Euler3b} into an ODE for  the degrees of freedom of the solid only.

\begin{thm}
\label{pasdenom}
There exists a $C^{\infty}$ mapping
$ \theta \in \mathbb R \mapsto \big({\mathcal M}_{a, \theta} ,  B_{\theta} \big)    \in  S^{+}_3 (\mathbb{R})  \times \mathbb{R}^3$  
depending only on $  \mathcal S_0 $
such that the equations  \eqref{Euler1}-\eqref{Euler3b} are equivalent to the following ODE for  $q=(h , \theta)$:
\begin{equation}  \label{ODE_ext}
 ({\mathcal M}_{g}   +  {\mathcal M}_{a, \theta} )  \, q''  
  + \langle  \Gamma_{a,\theta} ,q',q'\rangle
 =   \gamma q' \times B_{ \theta} ,
\end{equation}
where $ \Gamma_{a,\theta}$ denotes the  a-connection associated with ${\mathcal M}_{a, \theta}$,
the fluid velocity $u$ being given with respect to $q$ and $q' = (h' ,\theta')$ as the unique solution to the following div/curl type system:
\begin{gather*}
 \operatorname{div} u = 0     \text{ and } 
 \operatorname{curl} u  =  0  \text{ in }   \mathcal{F}(q)   , \, 
 \\  u \cdot n =   \Big( h' + \theta'(x-h)^{\perp} \Big)  \cdot n \text{ for }  x \in \partial \mathcal{S} (q)  , \, \quad 
  \int_{ \partial \mathcal{S} (q)} u  \cdot  \tau \, ds=  \gamma ,
\\ \lim_{x\to \infty} u = 0 .
 \end{gather*}  
\end{thm}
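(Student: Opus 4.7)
The plan is to first reduce the fluid equations to a purely elliptic problem driven by the solid's configuration, and then to compute explicitly the pressure integrals appearing in the Newton equations \eqref{Solide2}.

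First, since $u_0$ is compatible and has zero curl on $\mathcal F_0$, Helmholtz's third theorem (propagation of vorticity along the Lagrangian flow combined with Kelvin's conservation of circulation) ensures $\operatorname{curl} u(t,\cdot) \equiv 0$ on $\mathcal F(t)$ at all times. Therefore $u(t,\cdot)$ is the unique solution of the div/curl system stated in the theorem, and so depends on $t$ only through $(q(t),q'(t),\gamma)$. Exploiting linearity of that boundary value problem I decompose
\[
u = \sum_{i=1}^{3} q'_i\, \nabla \varphi_i(q, \cdot ) + \gamma\, H(q,\cdot),
\]
where the Kirchhoff potentials $\varphi_i(q,\cdot)$ are harmonic in $\mathcal F(q)$, decay at infinity, have zero circulation, and carry the unit translation (for $i=1,2$) or rotation (for $i=3$) Neumann data on $\partial\mathcal S(q)$; and $H(q,\cdot)$ is the unique harmonic divergence-free field in $\mathcal F(q)$ with vanishing normal trace on $\partial \mathcal S(q)$, decay at infinity, and unit circulation around the body. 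Translation invariance of $\mathbb R^2$ implies, after a change of variables to the body frame, that all these objects depend on $q$ only through $\theta$.

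Second, the added-mass matrix is defined by
\[
(\mathcal M_{a,\theta})_{i,j} := \int_{\mathcal F(q)} \nabla \varphi_i \cdot \nabla \varphi_j\, dx,
\]
which is manifestly symmetric and positive semi-definite and smooth in $\theta$; the gyroscopic vector $B_\theta$ is obtained from the cross-pairings between the $\nabla\varphi_i$ and $H$. To derive \eqref{ODE_ext} one must compute the pressure on $\partial \mathcal S(q)$. Since $u$ is irrotational, near the body one introduces a locally single-valued potential $\phi$ for $u$ (well defined locally even though $\mathcal F(q)$ is multiply connected), and Bernoulli's identity yields
\[
\pi = -\partial_t \phi - \tfrac{1}{2}|u|^2 + C(t) \quad \text{on } \partial \mathcal S(q).
\]
Inserting this into the right-hand sides of \eqref{Solide2} and using the Kirchhoff decomposition produces three kinds of contributions: the $\partial_t \phi$ term, after moving the time derivative outside the boundary integral via a Reynolds-type transport formula, gives $(\mathcal M_{a,\theta}\, q')'$; the quadratic $|u|^2$ term, after symmetrisation and combination with the commutator arising from the transport formula, produces exactly $\langle \Gamma_{a,\theta},q',q'\rangle$ with the Christoffel symbols defined by \eqref{stanford}; and the cross terms between $\nabla\phi$ and the circulatory part $\gamma H$ give the gyroscopic Kutta--Joukowski-type force $\gamma\, q'\times B_\theta$.

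The main obstacle is precisely this last pressure computation, and in particular checking that the boundary integrals of the Kirchhoff potentials and their derivatives reassemble exactly into the Christoffel symbols \eqref{stanford} and into a skew-symmetric $B_\theta$-pairing. This is the content of the Blasius--Kutta--Joukowski complex-analytic calculus and of Lamb's dual real-analytic identities that the introduction announces will be presented. Along the way one must verify that the ``Archimedes-type'' intermediate quantities (area of the body, geometric centre, \ldots) cancel out from the final expressions, so that the coefficients depend only on $\theta$ and not on $h$, in agreement with the translation invariance noted above. Once these identifications are made, \eqref{ODE_ext} follows; conversely, any smooth solution of \eqref{ODE_ext} yields a smooth solution of the full system \eqref{Euler1}--\eqref{Euler3b} by reconstructing $u$ from the div/curl characterization and retracing the pressure computation in reverse.
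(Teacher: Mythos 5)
Your overall strategy coincides with the paper's: reduce the fluid to the div/curl system, decompose $u$ into Kirchhoff potentials plus the circulatory harmonic field, eliminate the pressure via a Bernoulli-type identity, and sort the resulting boundary integrals into an added-inertia term, a quadratic connection term and a gyroscopic cross term. (The paper performs this in the body frame first --- Theorem \ref{reup} --- where $\mathcal M_a$, the $\Phi_i$ and $H$ are time-independent, so the transport/commutator bookkeeping you describe becomes trivial and the $\theta$-dependence is restored only at the end by conjugation with $\mathcal R(\theta)$; working directly in the moving frame as you propose is possible but makes the Reynolds-transport step and the identification of the derivatives $({\mathcal M}_{a,\theta})^k_{i,j}$ noticeably heavier.)

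As written, however, the proposal has a genuine gap: the decisive step --- verifying that the quadratic boundary integrals of $|u|^2 K_i$ and $(\ell+r x^\perp)\cdot u\, K_i$ reassemble \emph{exactly} into $\langle\Gamma_{a,\theta},q',q'\rangle - \gamma\, q'\times B_\theta$ --- is only announced (``this is the content of the Blasius--Kutta--Joukowski calculus\dots''), not carried out, and this is where essentially all the content of the theorem lies. Concretely, three computations are missing: (i) the vanishing of the self-interaction of the circulatory part, $\tfrac{\gamma^2}{2}\int_{\partial\mathcal S_0}|H|^2K_i\,ds=0$, which in the paper follows from the Laurent expansion $\widehat H(z)=\tfrac{1}{2i\pi z}+O(z^{-2})$ combined with Blasius' formula and the residue theorem (or from Lamb's lemma); (ii) the evaluation of the cross terms, which produce the conformal centre $\xi$ and the lift $-\gamma\ell^\perp$ and hence pin down $B_\theta$ as $\mathcal R(\theta)\,(\xi^{\perp},-1)^{t}$; and (iii) the evaluation of the potential self-interaction, where the Archimedes-type quantities $|\mathcal S_0|$ and $x_G$ appear in intermediate steps and must be shown to cancel (Lemma \ref{PropCia}) before one can recognize the explicit bilinear form \eqref{cla2} and then check --- a further nontrivial verification --- that it is indeed the a-connection associated with $\mathcal M_{a,\theta}$ by \eqref{stanford}. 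Without these, the equality \eqref{ODE_ext} is asserted rather than proved.
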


Indeed the matrix  $\mathcal{M}_{a,\,  \theta} $,  its associated a-connection $\Gamma_{a,\theta} $ and $B_{ \theta}$ will be given by precise formulas in the next section, cf. \eqref{AddedMass}, \eqref{cla2} and \eqref{DefB}. Let us already mention here that the dependence on $ \theta$ of $ \big({\mathcal M}_{a, \theta} ,  B_{\theta} \big) $ is quite simple since
\begin{equation*}
{\mathcal M}_{a, \theta}  = \mathcal{R}(\theta) {\mathcal M}_{a,0} \mathcal{R}(\theta)^t \text{ and } B_{\theta} = \mathcal{R}(\theta)B_{0} ,
\end{equation*}
where we associate 
 the $3 \times 3$ rotation matrix 
\begin{equation}
 \label{3dR}
\mathcal{R}(\theta) :=
\begin{pmatrix}
{R}(\theta)  & 0 
\\ 0 & 1
\end{pmatrix}   \in{\rm SO}(3) 
\end{equation}
with the $2 \times 2$ rotation matrix
$R(\theta)$ defined in \eqref{2drot}.

\begin{rem}
\label{rem-order}
Let us emphasize that the coefficients $ (\Gamma_{a,\theta} )^{k}_{i,j}$ defined in  \eqref{stanford}
are not the Christoffel symbols associated with ${\mathcal M}_{a, \theta}$ nor ${\mathcal M}_{g}   +  {\mathcal M}_{a,\,  \theta}$. Indeed, one should multiply by $({\mathcal M}_{g}   +  {\mathcal M}_{a,\,  \theta} )^{-1}$ the  column vector of the   $ (\Gamma_{a,\theta} )^{k}_{i,j}$ indexed by $k$ to get the standard Christoffel symbols:
\begin{equation} 
 \label{stan}
\Big( \Gamma^k_{i,j} (q) \Big)_{k \in \{1,2,3\}}   =  ({\mathcal M}_{g}   +  {\mathcal M}_{a,\,  \theta} )^{-1}  \Big( (\Gamma_{a,\theta} )^k_{i,j} (q) \Big)_{k \in \{1,2,3\}}  ,
\end{equation}
for any $i,j \in \{1,2,3\}$.
The reason for deviating from the standard notations is that we want to keep track of the two types of inertia for the subsequent asymptotic analysis of the zero-radius limit. 
Observe that the a-connection only involves the added inertia.
Still it is worth highlighting that  should its right hand side vanish \eqref{ODE_ext} would be the geodesic equation associated
with the metric ${\mathcal M}_{g}   +  {\mathcal M}_{a,\,  \theta} $: 
\begin{equation*}  
(q^k)''   +  \Gamma^k_{i,j}(q)  (q^i)' (q^j)'  =0 ,
\end{equation*}
where the $ \Gamma^k_{i,j} $ are the standard Christoffel symbols defined in  \eqref{stan} and where Einstein summation notation is used. 
\end{rem}

We will prove Theorem \ref{pasdenom} in the sequel but we first deduce and prove the following result.

 \begin{prop}
 \label{CP}
  Given some initial data 
$(q,q')  (0)= (0 , \ell_0 , 0 ,  r_0 )$ there exists a unique global solution  $q\in C^\infty([0,+\infty) ; \mathbb R^3)$ to  \eqref{ODE_ext}. 
Moreover  the quantity 
\begin{equation}
 \label{NR}
\frac12 ( {\mathcal M}_{g}   +  {\mathcal M}_{a, \theta} ) \, q' \cdot q' 
\end{equation}
 is conserved. 
 \end{prop}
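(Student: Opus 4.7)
The plan is to combine a standard Cauchy–Lipschitz argument with the energy identity, which in turn gives an a priori bound on $q'$ and hence global existence.

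\textbf{Local existence and uniqueness.} First I would rewrite \eqref{ODE_ext} in the normal form
\[
q'' = ({\mathcal M}_{g}+{\mathcal M}_{a,\theta})^{-1}\bigl(\gamma \, q' \times B_{\theta} - \langle \Gamma_{a,\theta},q',q'\rangle\bigr).
\]
The matrix ${\mathcal M}_{g}+{\mathcal M}_{a,\theta}$ is in $S^{++}_3(\mathbb R)$ since ${\mathcal M}_{g}\in S^{++}_3(\mathbb R)$ and ${\mathcal M}_{a,\theta}\in S^{+}_3(\mathbb R)$, and by Theorem~\ref{pasdenom} depends smoothly on $\theta$; hence so does its inverse. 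The right-hand side is therefore a $C^\infty$ function of $(q,q')$. The Cauchy–Lipschitz theorem then yields a unique maximal solution $q\in C^\infty([0,T_{\max});\mathbb R^3)$ for some $T_{\max}\in(0,+\infty]$.

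\textbf{Conservation of the energy \eqref{NR}.} The key computation is
\[
\frac{d}{dt}\Bigl[\tfrac12({\mathcal M}_{g}+{\mathcal M}_{a,\theta})q'\cdot q'\Bigr]
= ({\mathcal M}_{g}+{\mathcal M}_{a,\theta})q''\cdot q' + \tfrac12\,\theta'\,\partial_\theta{\mathcal M}_{a,\theta}\,q'\cdot q'.
\]
Taking the inner product of \eqref{ODE_ext} with $q'$, the Kutta–Joukowski term disappears since $(q'\times B_\theta)\cdot q'=0$. It then remains to verify the algebraic identity
\[
\langle\Gamma_{a,\theta},q',q'\rangle\cdot q' = \tfrac12\,\theta'\,\partial_\theta{\mathcal M}_{a,\theta}\,q'\cdot q',
\]
equivalently $\sum_{i,j,k}(\Gamma_{a,\theta})^k_{i,j}\,q'_i q'_j q'_k = \tfrac12\sum_{i,j,k}({\mathcal M}_{a,\theta})^k_{i,j}\,q'_i q'_j q'_k$. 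This follows from \eqref{stanford}: after symmetrization in the triple $(i,j,k)$ against $q'_i q'_j q'_k$, and using the symmetry $({\mathcal M}_{a,\theta})_{i,j}=({\mathcal M}_{a,\theta})_{j,i}$, the three terms in \eqref{stanford} each produce the same contribution, so the combination $\tfrac12(\,\cdot+\cdot-\cdot\,)$ yields exactly $\tfrac12$ of a single one of them. (This is the standard fact that a connection of the form \eqref{stanford} is compatible with the metric ${\mathcal M}_{a,\theta}$ in the sense that $\partial_t({\mathcal M}_{a,\theta}q'\cdot q')=2\langle\Gamma_{a,\theta},q',q'\rangle\cdot q'$ along curves.) Hence the energy \eqref{NR} is constant on $[0,T_{\max})$.

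\textbf{Global existence.} Because ${\mathcal M}_{a,\theta}\in S^{+}_3(\mathbb R)$ and ${\mathcal M}_{g}$ is diagonal with strictly positive entries, one has $({\mathcal M}_{g}+{\mathcal M}_{a,\theta})q'\cdot q'\geq \min(m,\mathcal J)\,|q'|^2$. The conservation of \eqref{NR} therefore provides a time-independent bound $|q'(t)|\leq C_0$ on $[0,T_{\max})$, with $C_0$ depending only on the initial data and $\mathcal S_0$. In particular $|q(t)|\leq |q(0)|+C_0 t$ stays bounded on every bounded subinterval, so the solution cannot escape every compact set of the phase space in finite time. The usual blow-up alternative then forces $T_{\max}=+\infty$, and smoothness on $[0,+\infty)$ follows by bootstrapping the ODE.

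The only non-routine point is the algebraic identity reconciling $\langle\Gamma_{a,\theta},q',q'\rangle\cdot q'$ with $\tfrac12\partial_t({\mathcal M}_{a,\theta}q'\cdot q')$; once this is established, both conservation of energy and global existence are immediate.
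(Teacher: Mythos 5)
Your proposal is correct and follows essentially the same route as the paper: local existence by Cauchy--Lipschitz after inverting ${\mathcal M}_{g}+{\mathcal M}_{a,\theta}$, the identity $\langle\Gamma_{a,\theta},q',q'\rangle\cdot q'=\tfrac12\bigl(\tfrac{\partial {\mathcal M}_{a,\theta}}{\partial q}\cdot q'\bigr)q'\cdot q'$ to cancel the commutator term in the energy derivative, vanishing of the gyroscopic contribution, and the blow-up alternative. The only cosmetic difference is that you obtain the key cubic identity by symmetrizing the three terms of \eqref{stanford} against $q'_iq'_jq'_k$, whereas the paper packages the same computation as the skew-symmetry of $\tfrac12\frac{\partial{\mathcal M}_{a,\theta}}{\partial q}\cdot p - S_{a,\theta}(p)$; the two are equivalent.
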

  
  Theorem \ref{thm-intro-sec1-CP} is then a consequence of Theorem \ref{pasdenom} and Proposition \ref{CP}.

\begin{rem}
\label{remarep}
The quantity  \eqref{NR} corresponds to the sum of the kinetic energy of the solid associated with its genuine inertia and of the one associated with the added inertia. 
It will become apparent in the sequel, cf. Section \ref{sec-decompo}, that the  kinetic energy  associated with the added inertia of the rigid body can be also interpreted as a renormalization of
 the kinetic energy of the fluid by retaining only the potential contribution  and discarding the term due to the circulation around the body. 
\end{rem}

\begin{proof}
Local existence and uniqueness follow from  classical ODE theory.  Global  existence  would be a consequence of the energy conservation.  
Indeed defining 
 for any $\theta $ in $\mathbb R$, for any $p$ in $\mathbb R^3$,  the matrix 
\begin{equation*} 
S_{a,\theta} (p) := \left(\sum_{1\leq i\leq 3} (\Gamma_{a,\theta} )^k_{i,j} p_i  \right)_{1\leq k,j\leq 3}  \text{ so that  }
\langle \Gamma_{a,\theta} , p, p \rangle = S_{a,\theta} (p)  p ,
\end{equation*}
then, an explicit computation proves that for any  $\theta $ in $\mathbb R$, for any $p$ in $\mathbb R^3$,  
\begin{equation*}
\frac{1}{2}    \frac{\partial  {\mathcal M}_{a, \theta}}{\partial q}    (\theta) \cdot  p
- S_{a,\theta} (p)   \text{ is skew-symmetric.}
\end{equation*}
Therefore for any  $\theta $ in $\mathbb R$, for any $p$ in $\mathbb R^3$,  
\begin{equation*} 
\langle \Gamma_{a,\theta} , p, p \rangle \cdot p = \big( S_{a,\theta} (p) p\big) \cdot p = \frac{1}{2}   \big( \frac{\partial  {\mathcal M}_{a, \theta}}{\partial q}    (\theta) p\big) \cdot  p , 
\end{equation*}
so that taking the inner product of  \eqref{ODE_ext} with $q'$ yields on the one hand 
\begin{equation*} 
 ({\mathcal M}_{g}   +  {\mathcal M}_{a, \theta} )  \, q''   \cdot q' = \frac{1}{2}  \big(    ({\mathcal M}_{g}   +  {\mathcal M}_{a, \theta} )  \, q'   \cdot q'  \big)'
 -  \frac{1}{2}   \big(    \frac{\partial {\mathcal M}_{a, \theta}}{\partial q}   (\theta)   \, q'  \big)  \cdot q' 
\end{equation*}
and on the other hand
\begin{equation*} 
\langle \Gamma_{a,\theta} , q', q' \rangle \cdot q' =  \frac{1}{2}   \big( \frac{\partial  {\mathcal M}_{a, \theta}}{\partial q}    (\theta) q'\big) \cdot  q' , 
\end{equation*}
Therefore when doing the energy estimate the term coming from the a-connection exactly compensates the term coming from the commutation of one time derivative in the acceleration term, and 
 the conservation of  the energy
  follows by observing that the contribution of the right hand side of \eqref{ODE_ext} is $0$.

\end{proof}
%


\subsection{Explicit definition of the ODE coefficients $\mathcal{M}_{a,\,  \theta} $,  $\Gamma_{a,\theta} $ and $B_{ \theta}$}
\label{sec-ex}

In this section we are going to provide some precise formulas for the matrix $\mathcal{M}_{a,\,  \theta} $ (and for its associated a-connection $\Gamma_{a,\theta} $  as well) and for the vector $B_{ \theta}$ thanks to some elementary flows corresponding respectively to potential and circulatory type flows. 
Indeed we will see in 
Section \ref{sec-decompo} (in a different frame, moving with the body)  that the real flow can be decomposed thanks to these elementary flows.

\subsubsection{Kirchhoff  potentials}
The following so-called Kirchhoff potentials $\Phi:=(\Phi_{i})_{i=1,2, 3}$ will play a major role. 
They are defined as the solutions of the following problems:
\begin{equation}\label{def Phi}
-\Delta \Phi_i = 0 \quad   \text{in } \mathcal{F}_{0} ,\quad
\Phi_i \longrightarrow 0 \quad  \text{when }  x \rightarrow  \infty, \quad
\frac{\partial \Phi_i}{\partial n}=K_i  \quad  \text{on }   \partial \mathcal{F}_{0}  ,
\end{equation}
where 
$(K_{1},\, K_{2}, \, K_{3}) :=(n_1,\, n_2 ,\, x^\perp \cdot n)$.
The compatibility condition for this Neumann Problem is satisfied 
i.e. we check that 
\begin{equation}
\label{Stokes0}
\int_{\partial  \mathcal{S}_{0}  } K_i \, ds = 0, 
\end{equation}
  for $i=1,2,3$, which is obvious by the Stokes formula :
\begin{gather*}
\int_{ \partial  \mathcal{S}_{0} } n_{1}\,ds= -\int_{  \mathcal{S}_{0} } \operatorname{div} (e_1) \,dx=0
  \text{  and similarly  } 
  \int_{ \partial  \mathcal{S}_{0} } n_{2}\,ds = 0 ,
\\  \text{ whereas } 
\int_{ \partial  \mathcal{S}_{0} } x^\perp \cdot n\,ds=  -\int_{  \mathcal{S}_{0} } \operatorname{div} (x^\perp )\,dx=0 .
\end{gather*}
Above $e_1$ and $e_2$ are the unit vectors of the canonical basis of $\mathbb{R}^2$.

We have that for all $i=1,2,3$:
\begin{equation} \label{ComportementPhii}
\Phi_{i}(x) = {\mathcal O}\left(\frac{1}{|x|}\right) \text{ and }
\nabla \Phi_{i}(x) = {\mathcal O}\left(\frac{1}{|x|^{2}}\right) \text{ as } |x| \rightarrow +\infty,
\end{equation}
and consequently that $\nabla \Phi_i$ are in $L^2 (\mathcal{F}_{0})$. \par
For instance in the case where $\mathcal{S}_{0}$  is a disk one has 
$\Phi_{1}(x) = - \frac{ x_{1} }{|x|^{2} } $, $\Phi_{2}(x) = - \frac{ x_{2} }{|x|^{2} } $ and $\Phi_{3}(x) = 0$.
If  $\mathcal{S}_{0}$  is not a disk then these three functions are linearly independent; this can be easily be proved by using 
 a smooth arc length
parameterization of the boundary and the usual Frenet equations, 
 see for instance Lemma 6.1. of  \cite{ort1}. 
%
%
%
\subsubsection{Added inertia}
Let us define the matrices
\begin{equation}
 \label{AddedMass}
\mathcal{M}_a 
:= \begin{pmatrix} m_{i,j} \end{pmatrix}_{i,j \in \{1,2,3\}}
  \text{ and } {\mathcal M}_{a, \theta}  := \mathcal{R}(\theta) {\mathcal M}_{a} \mathcal{R}(\theta)^t ,
\end{equation}
where for $i,j \in \{ 1,2,3\}$
\begin{equation} \label{DefMasse}
m_{i,j}:= \int_{{\mathcal F}_{0}} \nabla \Phi_{i} \cdot \nabla \Phi_{j}  \,dx,
\end{equation}
and
$ \mathcal{R}(\theta) $ is the $3 \times 3$ rotation matrix defined in \eqref{3dR}.
Let us mention from now on that 
 the matrix ${\mathcal M}_{a, \theta} $
 is positive definite if and only $\mathcal{S}_{0}$ 
is not a disk. When $\mathcal{S}_{0}$  is a disk then  ${\mathcal M}_{a, \theta} $ has the form $diag (m_{a} , m_{a} , 0)$ with $m_{a} >0$. 
The case where $\mathcal S_0$ is  disk is therefore peculiar, indeed by combining the translation and rotation equations on observe that 
${\mathcal J} \theta'' = m h'' \cdot (h_{c}-h)^{\perp}$, where  $h_{c}$  denotes of the position  of the center of the disk ${\mathcal S}(q)$, which can be different from $h$ if the body is not homogeneous. 
As a consequence, in this case where ${\mathcal S}_{0}$ is a disk,  a particular reduction of the dynamics is possible and is indeed very helpful in order to tackle the case of massless particles. 
In the sequel we will focus on the case where ${\mathcal S}_{0}$ is not a disk, and we refer to \cite{GMS} for a full treatment of the case where ${\mathcal S}_{0}$ is a disk.

\subsubsection{Added inertia connection}
Let us define for $p = (\ell , r)^t$, 
\begin{eqnarray}
\label{cla2}
 \langle \Gamma_{a,\theta} ,p,p\rangle  &:=&
- \begin{pmatrix} 
 P_{a,\,  \theta} \\ 0
\end{pmatrix}
 \times p
-  r {\mathcal M}_{a,\,  \theta}
\begin{pmatrix}
 0  \\ \ell^\perp   \end{pmatrix} ,
\end{eqnarray}
where $P_{a,\,  \theta} $ are the two first coordinates of  ${\mathcal M}_{a,\,  \theta} \, p $.
A tedious computation reveals that $ \Gamma_{a,\theta}$ is the  a-connection associated with ${\mathcal M}_{a,\,  \theta}$.

\subsubsection{Harmonic field}
\label{HF}
To take the velocity circulation around the body into account, we introduce the following harmonic field: let $H$ be the unique solution vanishing at infinity of 
\begin{gather*}
\operatorname{div} H = 0    \text{ and }   \operatorname{curl} H = 0 \,    \text{ in }   \mathcal{F}_{0}, \, 
H \cdot n = 0 \,    \text{ on }  \partial \mathcal{S}_0, \, 
\int_{\partial \mathcal{S}_0 } H \cdot \tau \, ds = 1 .
\end{gather*}
The vector field $H$ admits a harmonic stream function $ \Psi_{H} (x)$:
\begin{equation*}
H = \nabla^{\perp} \Psi_{H},
\end{equation*}
which vanishes on the boundary $ \partial \mathcal{S}_0$, and behaves like $\frac{1}{2\pi} \ln |x |$ as $x$ goes to infinity. 
One way to get more information on the far-field behaviour of $H$ is to use a little bit of complex analysis. We identify $\mathbb{C}$ and $\mathbb{R}^{2}$ through 
\begin{equation}
\label{bonnet}
(x_{1},x_{2})= x_{1} + i x_{2}=z.
\end{equation}
We also use the notation 
\begin{equation}
\label{chapeau}
\widehat f = f_{1}-if_{2} \text{  for any }  f=(f_{1},f_{2}) .
\end{equation}
The reason of this notation is the following: if $f$ is divergence and curl free if and only if $\widehat f$ is holomorphic.
In particular the function $\widehat{H}$ is holomorphic (as a function of $z=x_{1}+ix_{2}$), and can be decomposed in Laurent Series :
\begin{equation} \label{HSeriesLaurent}
\widehat{H}(z) = \frac{1}{2i\pi z} + {\mathcal O}(1/z^{2}) \ \text{ as } z \rightarrow \infty.
\end{equation}
Coming back to the variable $x\in \mathbb{R}^2$, the previous decomposition implies
\begin{equation} \label{HalInfini}
H(x) = {\mathcal O}\left(\frac{1}{|x|}\right) \text{ and } \nabla H = {\mathcal O}\left(\frac{1}{|x|^{2}}\right).
\end{equation}
Moreover, we deduce from \eqref{HSeriesLaurent} that
\begin{equation} \nonumber 
x^{\perp} \cdot H = \frac{1}{2 \pi} + {\mathcal O}\left(\frac{1}{|x|}\right)
\ \text{ and } \ 
(H)^{\perp} - x^\perp  \cdot\nabla H = {\mathcal O}\left(\frac{1}{|x|^2}\right) .
\end{equation}
\subsubsection{Conformal center}
The harmonic field $H$ allows to define the following geometric constant, known as the conformal center of ${\mathcal S}_{0}$:
\begin{equation} \label{DefXi}
\xi_{1} + i \xi_{2}: = \int_{ \partial  \mathcal{S}_{0}}  z \widehat{H}  \, dz ,
\end{equation}
which depends only on $\mathcal{S}_{0}$. 
In the particular case of a disk, the harmonic field $H $ is given by  $ \frac{1}{2 \pi} \frac{x^{\perp}}{| x |^{2}} $
so that the  conformal center $\xi$ of  a disk is obviously $0$. 
In the general case one proves the following real-analytic characterization. 

\begin{prop} \label{B/L}
There holds:
\begin{equation} \label{eqB/L}
{\xi} =   \int_{\partial  \mathcal{S}_{0}  } \big(  H \cdot  \tau \big) x \, ds  .
\end{equation}
\end{prop}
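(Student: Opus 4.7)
The plan is to unwind the complex contour integral defining $\xi$ in \eqref{DefXi} by passing back to real variables along $\partial\mathcal{S}_0$, and then exploit the impermeability boundary condition $H \cdot n = 0$ to eliminate the imaginary part.

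First I would use the identification \eqref{bonnet} and the fact that $\partial\mathcal{S}_0$ is oriented so that $\tau$ is the unit counterclockwise tangent, which exactly matches the standard orientation for complex contour integration. Consequently, along $\partial\mathcal{S}_0$ one has $dz = (\tau_1 + i\tau_2)\,ds$. Combining this with \eqref{chapeau} gives the pointwise identity on the boundary
\begin{equation*}
\widehat{H}(z)\,dz = (H_1 - iH_2)(\tau_1 + i\tau_2)\,ds = (H\cdot\tau)\,ds + i(H_1\tau_2 - H_2\tau_1)\,ds.
\end{equation*}

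Second, I would observe that the relation $n = \tau^\perp = (-\tau_2,\tau_1)$ gives $H\cdot n = -H_1\tau_2 + H_2\tau_1$, so the imaginary part above is precisely $-H\cdot n$. Since $H\cdot n = 0$ on $\partial\mathcal{S}_0$ (the third of the defining properties of $H$ listed in Section \ref{HF}), the imaginary contribution drops out and we obtain the clean identity
\begin{equation*}
\widehat{H}(z)\,dz = (H\cdot\tau)\,ds \quad \text{on } \partial\mathcal{S}_0.
\end{equation*}

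Third, plugging this into the definition \eqref{DefXi} and separating real and imaginary parts yields
\begin{equation*}
\xi_1 + i\xi_2 = \int_{\partial\mathcal{S}_0} z \widehat{H}\,dz = \int_{\partial\mathcal{S}_0}(x_1 + ix_2)(H\cdot\tau)\,ds,
\end{equation*}
which gives \eqref{eqB/L} componentwise. There is really no technical obstacle here: the only substantive ingredient beyond bookkeeping is the impermeability condition $H\cdot n = 0$, which is what converts the complex line element $\widehat{H}\,dz$ into the purely real tangential flux element $(H\cdot\tau)\,ds$.
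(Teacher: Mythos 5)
Your proof is correct and follows essentially the same route as the paper: both unwind $\int_{\partial\mathcal{S}_0} z\widehat{H}\,dz$ via the identity $\widehat{f}\,dz=(f\cdot\tau)\,ds-i(f\cdot n)\,ds$ (the paper's Lemma~\ref{lem-flux-circu}) and then invoke the tangency condition $H\cdot n=0$. The only difference is organizational — you apply the tangential/normal decomposition to $\widehat{H}\,dz$ pointwise \emph{before} multiplying by $z$, which is marginally more streamlined than the paper's application of the lemma to the auxiliary field $(x\cdot H,\,x^\perp\cdot H)$.
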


In order to prove Proposition~\ref{B/L}
 we will use the following result which relates the integral 
  $\int_{ \mathcal{C}} \widehat f   \, dz $ 
associated with a  tangent vector field
 $f$ to its flux and its circulation.
\begin{lem}\label{lem-flux-circu}
Let  $\mathcal{C}$ be a smooth Jordan curve, $f:=(f_1 , f_2)$ a smooth vector fields on $\mathcal{C}$:
\[
\int_{ \mathcal{C}} \widehat f  \, dz =  \int_{ \mathcal{C}} f\cdot \tau \, ds  -i \int_{ \mathcal{C}} f\cdot n \, ds.
\]
\end{lem}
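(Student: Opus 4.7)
The proof is a direct computation once the curve is parametrized and one tracks carefully the sign conventions introduced in \eqref{bonnet}--\eqref{chapeau}, together with the paper's convention $n = \tau^\perp$ where $y^\perp = (-y_{2},y_{1})$.

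The plan is to work locally in an arc-length parametrization $s \mapsto x(s) = (x_{1}(s),x_{2}(s))$ of $\mathcal{C}$, so that $\tau = (\tau_{1},\tau_{2}) = (x_{1}'(s),x_{2}'(s))$. With the identification \eqref{bonnet}, $dz = dx_{1} + i\, dx_{2} = (\tau_{1} + i\tau_{2})\,ds$. Then I would simply multiply out
\begin{equation*}
\widehat{f}\, dz = (f_{1}-if_{2})(\tau_{1}+i\tau_{2})\, ds = \bigl(f_{1}\tau_{1}+f_{2}\tau_{2}\bigr)\,ds + i\bigl(f_{1}\tau_{2}-f_{2}\tau_{1}\bigr)\,ds.
\end{equation*}
The real part is manifestly $f \cdot \tau\, ds$. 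For the imaginary part, I would use that $n = \tau^\perp = (-\tau_{2},\tau_{1})$, which gives $f \cdot n = -f_{1}\tau_{2} + f_{2}\tau_{1}$, hence $f_{1}\tau_{2} - f_{2}\tau_{1} = -f\cdot n$. Integrating yields
\begin{equation*}
\int_{\mathcal{C}} \widehat{f}\, dz = \int_{\mathcal{C}} f\cdot \tau\, ds - i \int_{\mathcal{C}} f\cdot n\, ds,
\end{equation*}
as desired.

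There is no real obstacle here; the only thing to be careful about is consistency of conventions. In particular one must check that the sign of the imaginary part matches the convention $y^\perp=(-y_{2},y_{1})$ used throughout the paper, and that the orientation of $\tau$ on $\mathcal{C}$ is compatible with the definition of $n$ (here the identity is purely local in $s$, so the statement does not depend on which of the two possible orientations of the Jordan curve is chosen, provided $n = \tau^\perp$ is taken with the same orientation). Since the computation is purely algebraic on each tangent vector, no smoothness beyond continuity of $f$ and piecewise $C^{1}$ regularity of $\mathcal{C}$ is really needed.
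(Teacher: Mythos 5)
Your proof is correct and follows essentially the same route as the paper's: parametrize the curve, write $dz=(\tau_1+i\tau_2)\,ds$, expand $(f_1-if_2)(\tau_1+i\tau_2)$, and identify the real and imaginary parts as $f\cdot\tau$ and $-f\cdot n$ via $n=\tau^\perp=(-\tau_2,\tau_1)$. Your sign bookkeeping matches the paper's conventions, so there is nothing to add.
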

\begin{proof}[Proof of Lemma~\ref{lem-flux-circu}]
Denoting by $\gamma=(\gamma_{1},\gamma_{2})$ a arc-lenght parametrization of $\mathcal{C}$ then $\tau=(\gamma_{1}',\gamma_{2}')/|\gamma'|$,  $n=(-\gamma_{2}',\gamma_{1}')/|\gamma'|$, $ds = |\gamma'(t)|dt$ and $dz = (\gamma_{1}'(t)+i\gamma_{2}'(t)) dt$. Hence the conclusion follows from
\begin{eqnarray*}
\int_{ \mathcal{C}} (f_1 - if_2)  \, dz = \int (f_{1} \gamma_{1}' + f_{2} \gamma_{2}')\, dt -i  \int (-f_{1} \gamma_{2}' + f_{2} \gamma_{1}')\, dt .
\end{eqnarray*}
\end{proof}
\begin{proof}[Proof of Proposition~\ref{B/L}]
Observe that  $z (H_1 - i H_2 ) = f_1 - if_2$ with
$f_1 = x\cdot H$
and $f_2 =x^\perp \cdot H$ so that applying  Lemma~\ref{lem-flux-circu}  we have that 
\begin{equation*}
 \int_{ \partial  \mathcal{S}_{0}}  z \widehat{H}  \, dz    =  \int_{   \partial \mathcal S_0} 
g  \, ds ,   \text{ with } g   :=
\begin{pmatrix} x\cdot H  \\ x^\perp \cdot H  \end{pmatrix} \cdot \tau 
-i \begin{pmatrix} x\cdot H  \\ x^\perp \cdot H  \end{pmatrix} \cdot n .
\end{equation*}
Moreover, for  $x \in   \partial \mathcal S_0$,  we have
\begin{eqnarray*}
g
&=&  (x_{1}H_{1}+x_{2}H_{2})\tau_{1} +  (-x_{2}H_{1}+x_{1}H_{2})\tau_{2} 
\\ && \quad  -i(x_{1}H_{1}+x_{2}H_{2})n_{1} -i  (-x_{2}H_{1}+x_{1}H_{2})n_{2}\\
&=&  x_{1}  (H_{1}\tau_{1}+H_{2}\tau_{2})+x_{2}(H_{2}\tau_{1}-H_{1}\tau_{2})
\\ && \quad  -ix_{1}(H_{1}n_{1}+H_{2}n_{2})-ix_{2}(H_{2}n_{1}-H_{1}n_{2}),
\end{eqnarray*}
and using that 
$(n_{1},n_{2})=(-\tau_{2},\tau_{1})$, we deduce that
%
$g = z (H\cdot \tau) - iz (H\cdot n) .$
%
It is then sufficient to recall that
 $H\cdot n = 0$ to  conclude.
\end{proof}
\subsubsection{Kutta-Joukowski field}
Then the vector field $B_\theta$ is defined by the following formula:
\begin{equation} \label{DefB}
B_\theta :=  \mathcal{R}(\theta) \begin{pmatrix} \xi^{\perp} \\ -1 \end{pmatrix} .
\end{equation}

Observe that the corresponding force in the left hand side of \eqref{ODE_ext} 
is therefore
 \begin{eqnarray*}
q' \times B_{ \theta} 
 =
\begin{pmatrix}
(h')^\perp - \theta' {R}(\theta)  \xi^{\perp}  \\
{R}(\theta) {\xi} \cdot  h'
\end{pmatrix}.
\end{eqnarray*}
%

%
\subsection{Reformulation as an ODE in the body frame. Statement of Theorem \ref{reup}}
\label{Subsec:COF}
In order to transfer the equations in the body frame we apply the following isometric change of variable:
\begin{equation}  \label{chgtvar}
\left\{
\begin{array}{l}
 v (t,x)=R(\theta(t))^T\  u(t,R(\theta(t))x+h(t)), \\
 \tilde{\pi} (t,x)=\pi(t,R(\theta(t))x+h(t)), \\
 {\ell} (t)=R(\theta(t))^T \ h' (t) ,
\end{array}\right.
\end{equation}
where we recall that $R(\theta(t))$ is the $2 \times 2$ rotation matrix defined in \eqref{2drot}
so that the equations  \eqref{Euler1}-\eqref{Solideci}  become
\begin{gather}
\label{Euler11}
\displaystyle \frac{\partial v}{\partial t}
+ \left[(v-\ell-r x^\perp)\cdot\nabla\right]v 
+ r v^\perp+\nabla \tilde{\pi} =0 
\text{ and } \operatorname{div} v = 0
 \quad \text{ for } x\in \mathcal{F}_{0} ,\\
\label{Solide11}
m \ell'(t)=\int_{\partial \mathcal{S}_0} \tilde{\pi} n \ ds-m r \ell^\perp
\text{ and } 
 \mathcal{J} r'(t)=\int_{\partial \mathcal{S}_0} x^\perp \cdot \tilde{\pi} n \ ds ,   \\
\label{Euler13}
v\cdot n = \left(\ell +r x^\perp\right)\cdot n   \quad \text{ for }   x\in \partial \mathcal{S}_0, \\
\label{Solide1ci}
v(0,x)= v_0 (x)  \, \text{ for }x\in \mathcal{F}_{0}  \text{ and } (\ell,r)(0)= (\ell_0,r_0 ),
\end{gather}
 where $r(t) = \theta'(t)$.

In order to state recast the system above as an ODE in the body frame we are going to introduce now a few objects.
Let 
 $\Gamma_{g}: \mathbb{R}^{3} \times \mathbb{R}^{3} \rightarrow \mathbb{R}^{3}$  and $\Gamma_{a}: \mathbb{R}^{3} \times \mathbb{R}^{3} \rightarrow \mathbb{R}^{3}$ be the bilinear symmetric mappings defined, for all ${p} = ( \ell \\ r ) \in \mathbb{R}^{3}$, by
\begin{equation} \label{DefGammag}
  \langle \Gamma_{g}  , {p}, p \rangle = m r \begin{pmatrix} \ell^{\perp} \\ 0 \end{pmatrix}
  \text{ and  }
 \langle  \Gamma_a , p, p \rangle = 
   \begin{pmatrix} r ({\mathcal M}_{\flat} \ell)^{\perp} \\ \ell^{\perp} \cdot {\mathcal M}_{\flat} \ell \end{pmatrix} 
    +  r p \times \mu ,
\end{equation}
where
\begin{equation} \label{VectMass}
{\mu} := \begin{pmatrix} m_{1,3} \\ m_{2,3}  \\ 0 \end{pmatrix}  \text{ and }
\mathcal{M}_{\flat}
:=\begin{pmatrix} m_{i,j} \end{pmatrix}_{i,j \in \{1,2\}} .
\end{equation}
Note that
\begin{equation} \label{AnnulationGamma}
\forall {p} \in \mathbb{R}^{3}, \ \ \ \langle \Gamma_{g}, {p}, p \rangle \cdot p =0    \text{  and } \langle \Gamma_{a}, {p}, p \rangle \cdot p =0 ,
\end{equation}
and that $\Gamma_a$ is depending only on ${\mathcal S}_{0}$.
Because of \eqref{AnnulationGamma}  we will refer to the quadratic mappings $\Gamma_{g}$ and $\Gamma_{a}$  as {\it gyroscopic terms}.

One will deduce   Theorem  \ref{pasdenom} from the following result  by going back in the original frame.

\begin{thm} \label{reup}
The equations  \eqref{Euler11}-\eqref{Solide11}  are equivalent to the following ODE for $p :=(\ell , r )^T$:
\begin{equation}
\label{eq}
\big[  {\mathcal M}_{g} +  {\mathcal M}_{a} \big]
  p' 
 + \langle  \Gamma_g ,  p ,  p \rangle
 +\langle  \Gamma_a ,  p ,  p \rangle
=  \gamma p \times B ,
\end{equation}
the fluid velocity $v$ being given as the unique solution to the following div/curl type system:
\begin{gather}
\label{r1}
 \operatorname{div} v = 0     \text{ and } 
 \operatorname{curl} v  =  0  \text{ in }   \mathcal{F}_{0}  , \, 
 \\ \label{r2}
 v \cdot n = \left(\ell+r x^\perp\right)\cdot n     \text{ on }  \partial \mathcal{S}_0  , \,    \quad   \int_{ \partial \mathcal{S}_0} v  \cdot  \tau \, ds=  \gamma ,
 \\ \label{r3}
\lim_{x\to \infty} v = 0 .
 \end{gather}  
\end{thm}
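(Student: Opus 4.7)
My plan is to prove Theorem \ref{reup} by the classical Lamb method: decompose $v$ into elementary flows, derive a Bernoulli-type identity in the body frame, and plug the resulting pressure into the Newton equations \eqref{Solide11}. I focus on the direct implication; the converse follows by reconstructing $v$ from \eqref{r1}--\eqref{r3} and $\tilde\pi$ from the Bernoulli formula below, and checking that the so-obtained pair solves \eqref{Euler11}.

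First, by linearity of the div/curl system \eqref{r1}--\eqref{r3} and by definition of the Kirchhoff potentials $\Phi_i$ in \eqref{def Phi} and of the harmonic field $H$ of Section \ref{HF}, I would write the unique decomposition
$$v = \ell_1 \nabla \Phi_1 + \ell_2 \nabla \Phi_2 + r \nabla \Phi_3 + \gamma H = \nabla \varphi + \gamma H ,$$
where $\varphi := p \cdot \Phi$ is single-valued; locally $v = \nabla \tilde \varphi$, with $\tilde \varphi = \varphi + \gamma \Theta$ and $\Theta$ a local primitive of $H$ whose period around $\partial \mathcal S_0$ equals $1$. Then, using that $v$ is curl-free, that $((\ell + r x^\perp)\cdot \nabla) v = \nabla((\ell + r x^\perp)\cdot v) + r v^\perp$ (a direct consequence of $\nabla(\ell + r x^\perp)$ being a multiple of a rotation), equation \eqref{Euler11} rewrites as $\nabla\bigl(\partial_t \tilde \varphi + \tfrac12|v|^2 - (\ell + r x^\perp)\cdot v + \tilde \pi\bigr) = 0$, so that up to an irrelevant function of $t$, whose contribution to \eqref{Solide11} vanishes thanks to \eqref{Stokes0},
$$\tilde \pi = -\partial_t \tilde \varphi - \tfrac12 |v|^2 + (\ell + r x^\perp)\cdot v .$$
Since in the body frame the potentials are time independent, one has simply $\partial_t \tilde \varphi = p' \cdot \Phi$.

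Second, I would plug this expression into the right-hand sides of \eqref{Solide11} and expand each boundary integral against the vector $K = (n_1,n_2,x^\perp\cdot n)^T$. The linear-in-$p'$ term $\partial_t \tilde \varphi$ contributes $\sum_j p_j' \int_{\partial \mathcal S_0} \Phi_j K_i \, ds$, which by Green's formula, \eqref{def Phi} and the decay \eqref{ComportementPhii} equals $m_{i,j}$ as in \eqref{DefMasse}, thereby assembling the added-inertia term ${\mathcal M}_a \, p'$ on the left-hand side of \eqref{eq}. The quadratic contribution from $\tfrac12|v|^2 - (\ell + r x^\perp)\cdot v$, after substituting $v = \nabla\varphi + \gamma H$, splits into three groups: a pure potential quadratic form in $p$, a cross term linear in both $p$ and $\gamma$, and a purely circulatory $\gamma^2$ term. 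The pure potential group, once combined with the $-mr\ell^\perp$ inertial term already present on the right of \eqref{Solide11} after transposition, is identified through another Green's formula with $\langle \Gamma_g, p, p\rangle + \langle \Gamma_a, p, p\rangle$ using the definitions \eqref{DefGammag}--\eqref{VectMass}; the $\gamma^2$ group is shown to vanish by integrating on the closed curve $\partial \mathcal S_0$ and using $H\cdot n = 0$ there.

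The technical heart of the argument is the identification of the cross terms with the Kutta-Joukowski force $\gamma p \times B$, where the conformal center $\xi$ of \eqref{DefXi} must emerge. To carry this out I would compute the integrals $\int_{\partial \mathcal S_0} (\nabla \Phi_i \cdot H)\, K_j \, ds$ and $\int_{\partial \mathcal S_0} ((\ell + r x^\perp)\cdot H)\, K_j\, ds$ using Proposition \ref{B/L} and the refined asymptotics \eqref{HSeriesLaurent}--\eqref{HalInfini} of $H$; alternatively, one can switch to complex coordinates via \eqref{bonnet}--\eqref{chapeau} and apply Blasius's formula to $\widehat v = \widehat{\nabla \varphi} + \gamma \widehat H$, in which the Kutta-Joukowski contribution appears as $\gamma$ times a residue involving the $1/z$ coefficient of $\widehat H$, hence exactly $\xi$. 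Matching the result with \eqref{DefB} yields the right-hand side of \eqref{eq}. Finally, returning to the inertial frame via \eqref{chgtvar} and using ${\mathcal M}_{a,\theta} = \mathcal R(\theta){\mathcal M}_a \mathcal R(\theta)^T$ and $B_\theta = \mathcal R(\theta) B_0$ yields Theorem \ref{pasdenom} as a corollary.
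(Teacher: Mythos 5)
Your proposal is correct and follows essentially the same route as the paper: eliminate the pressure via a Bernoulli identity in the body frame (the paper does this equivalently in Lemma \ref{Refo} by pairing $\nabla\tilde{\pi}$ with $\nabla\Phi_i$ over $\mathcal{F}_0$ rather than pairing $\tilde{\pi}$ with $K_i$ on $\partial\mathcal{S}_0$ through a multivalued velocity potential), decompose $v$ into the potential part plus $\gamma H$, and identify the added-inertia, gyroscopic, Kutta--Joukowski and vanishing $\gamma^2$ blocks by computing the resulting boundary integrals via Blasius/residues or via Lamb's lemma --- the paper carries out both variants. The only under-justified step in your sketch is the vanishing of the $\gamma^2$ block $\frac{\gamma^2}{2}\int_{\partial\mathcal{S}_0}|H|^2K_i\,ds$: tangency of $H$ alone does not annihilate a boundary integral of $(H\cdot\tau)^2K_i$ (for a generic tangent field on a closed curve this is nonzero); one needs either Lamb's lemma \eqref{blasius-formu}, which exploits the harmonicity and decay of $H$ in the exterior domain before $H\cdot n=0$ finishes the job, or the residue computation showing that $\int_{\partial\mathcal{S}_0}z(\widehat{H})^2\,dz=-i/(2\pi)$ is purely imaginary, as in \eqref{T7}.
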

Let us recall that ${\mathcal M}_{g} $ and $  {\mathcal M}_{a}$ are respectively  the genuine and  added inertia,  see \eqref{AddedMass}  and  \eqref{DefMG},
and that   $ B$ is a fixed vector defined in \eqref{DefB}.

Observe that one may also obtain from this formulation the 
conservative feature of the system since  it suffices to multiply \eqref{eq} by $p$,  to use the symmetry of the matrices 
 ${\mathcal M}_{g} $ and $  {\mathcal M}_{a}$ 
  and the properties 
  \eqref{AnnulationGamma}  
 to deduce  that the total energy 
$ \frac12 p \cdot \big({\mathcal M}_{g} + {\mathcal M}_{a} \big) p$  is conserved along time.

 The rest of the section is devoted to the proof of Theorem  \ref{reup}. 
 Indeed after a slight reformulation of the solid equations and the decomposition of the velocity into several pieces corresponding to the various sources in the right hand sides of the system \eqref{r1}-\eqref{r2}-\eqref{r3}, we will compare two methods, one based on complex argument and the other one on real analysis only.

\subsubsection{Reformulation of the solid equations}
The first step of the  proof of Theorem  \ref{reup} uses the Euler equations and the Kirchhoff potentials in order to get rid of the pressure and to make appear the added inertia.
\begin{lem} \label{Refo}
Equations \eqref{Solide11} can be rewritten in the form
\begin{equation}
\label{anoter}
({\mathcal M}_{g} + {\mathcal M}_{a} ) p ' 
 + \langle  \Gamma_g ,  p ,  p \rangle
= - ( \frac{1}{2} \int_{\partial  \mathcal{S}_{0}  } |{v}|^2  K_i \, ds
- \int_{\partial  \mathcal{S}_{0}  } (\ell + r x^\perp)\cdot {v}  K_i \, ds)_{i} ,
\end{equation}
where $i$ runs over the integers $1,2,3$.
\end{lem}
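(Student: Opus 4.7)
The first step is to gather the three components of \eqref{Solide11} into the compact form
\[
\bigl(\mathcal{M}_g p' + \langle\Gamma_g,p,p\rangle\bigr)_i = \int_{\partial\mathcal{S}_0} \tilde{\pi}\, K_i\, ds, \qquad i=1,2,3,
\]
where $(K_1,K_2,K_3) = (n_1, n_2, x^\perp\cdot n)$: components $i=1,2$ are the linear Newton equation (the $mr\ell^\perp$ term matching the first two coordinates of $\langle\Gamma_g,p,p\rangle$ by \eqref{DefGammag}), and $i=3$ is the angular one. The whole proof then reduces to computing the pressure integrals $\int_{\partial\mathcal{S}_0} \tilde\pi\, K_i\, ds$.

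For this I would derive a Bernoulli-type primitive for $\tilde\pi$. Setting $U := \ell + rx^\perp$, one has $\operatorname{curl} U = 2r$ and $(v\cdot\nabla)U = rv^\perp$; combined with $\operatorname{curl} v = 0$ (from \eqref{r1}), the 2D identity
\[
\nabla(U\cdot v) = (U\cdot\nabla)v + (v\cdot\nabla)U - \operatorname{curl}(v)\,U^\perp - \operatorname{curl}(U)\,v^\perp
\]
gives $(U\cdot\nabla)v = \nabla(U\cdot v) + rv^\perp$. Injecting this together with $(v\cdot\nabla)v = \nabla(\tfrac{1}{2}|v|^2)$ into \eqref{Euler11} yields
\[
\partial_t v + \nabla\bigl(\tilde\pi + \tfrac{1}{2}|v|^2 - U\cdot v\bigr) = 0.
\]
Now the elliptic system \eqref{r1}--\eqref{r3} is solved by the Kirchhoff decomposition $v = \sum_{j=1}^3 p_j \nabla\Phi_j + \gamma H$; since $\Phi_j$ and $H$ are time-independent and $\gamma'=0$ by Kelvin's theorem (preserved by the isometric change of frame \eqref{chgtvar}), one gets $\partial_t v = \nabla\bigl(\sum_j p_j'\Phi_j\bigr)$, so that
\[
\tilde\pi = -\sum_j p_j'\,\Phi_j - \tfrac{1}{2}|v|^2 + U\cdot v + c(t)
\]
for some scalar $c(t)$.

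The last step multiplies this identity by $K_i$ and integrates over $\partial\mathcal{S}_0$: the $c(t)$ contribution vanishes by \eqref{Stokes0}, and Green's identity together with $\Delta\Phi_i=0$ and the decay \eqref{ComportementPhii} at infinity yields
\[
\int_{\partial\mathcal{S}_0}\Phi_j\, K_i\, ds = \int_{\mathcal{F}_0} \nabla\Phi_j\cdot\nabla\Phi_i\, dx = m_{ij},
\]
so that $\sum_j p_j'\int_{\partial\mathcal{S}_0}\Phi_j\, K_i\, ds = (\mathcal{M}_a p')_i$. Substituting back into the compact form of the first step produces exactly \eqref{anoter}. The only subtlety worth flagging is that $\mathcal{F}_0$ is not simply connected, so $v$ itself has only a multivalued potential; the Bernoulli-type computation nevertheless goes through because the non-gradient piece $\gamma H$ of $v$ is frozen in time and therefore disappears upon differentiating in $t$.
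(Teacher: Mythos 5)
Your proof is correct and follows essentially the same route as the paper: both rest on the Bernoulli-type rewriting \eqref{looser} of the Euler equation (via the identity \eqref{vect}) and on the added-mass identity $\int_{\partial \mathcal{S}_{0}}\Phi_{j}K_{i}\,ds=m_{i,j}$. The only, harmless, difference is that you integrate the Bernoulli relation to an explicit formula for $\tilde{\pi}$ using the decomposition \eqref{vdecomp} and evaluate the boundary integrals directly, whereas the paper keeps $\nabla\tilde{\pi}$ and pairs it with $\nabla\Phi_{i}$ over $\mathcal{F}_{0}$, recovering the added-mass term through \eqref{vieux}.
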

\begin{proof}
Using the following equality for two vector fields $a$ and $b$ in $\mathbb{R}^2$:
\begin{equation} \label{vect}
\nabla(a\cdot b)=a\cdot \nabla b + b \cdot \nabla a - (a^\perp \operatorname{curl} b + b^\perp \operatorname{curl} a), 
\end{equation}
the equation \eqref{Euler11} can be written as
\begin{equation}
\label{looser}
\frac{\partial v}{\partial t}
+  \frac{1}{2} \nabla (v^2 )
- \nabla ( (\ell + r x^\perp)\cdot v )
+ \nabla \tilde{\pi} =0 .
\end{equation}
We use this equation do deduce the force/torque acting on the body:
\begin{equation*}
\left( \int_{ \partial \mathcal{S}_0} \tilde{\pi} n \, ds, \
\int_{ \partial \mathcal{S}_0} \tilde{\pi} x^{\perp} \cdot n \,ds \right) = 
\left( \int_{ \mathcal{F}_0} \nabla \tilde{\pi} \cdot \nabla \Phi_{i} \, dx \right)_{i=1,2,3} .
\end{equation*}
One can check that the above integration by parts is licit by using  the decay properties of $v$ and $\nabla \Phi_{i}$. 
Using an integration by parts, the boundary condition \eqref{Euler13} and another integration by parts, one observes that 
the contribution of $\partial_{t}  v $ is
\begin{equation}
\label{vieux}
\int_{  \mathcal{F}_{0}  }  \partial_{t}  v  \cdot  \nabla   \Phi_i (x) \, dx = {\mathcal M}_{a} \begin{pmatrix} \ell \\ r \end{pmatrix}', 
\end{equation}
and one obtains the result.
\end{proof}
%
%
%

%
 %
%
%
\subsubsection{Decomposition of the velocity field}
\label{sec-decompo}
Finally, for $\ell$ in $\mathbb{R}^2$, $r$ and $\gamma $ in  $\mathbb{R}$ given, there exists a unique vector field $v$ verifying \eqref{r1}-\eqref{r2}-\eqref{r3} 
and it is given by the  law:
\begin{equation}
\label{vdecomp}
v =  \gamma   H + \ell_1 \nabla \Phi_1 + \ell_2 \nabla \Phi_2
+ r \nabla \Phi_3 ,
\end{equation}
We will denote by  
\begin{equation} \label{allo1}
\tilde{v} : = v- \gamma H .
\end{equation}
the part without circulation, 
that we will decomposed sometimes into
\begin{equation}
 \label{diese99}
\tilde{v} = v_{\#} + r \nabla \Phi_{3}  , 
\end{equation}
with
\begin{equation}
 \label{diese}
 v_{\#}  :=  \ell_1 \nabla \Phi_1 + \ell_2 \nabla \Phi_2 , 
\end{equation}
in order to analyse separately the effects of the body translation and of the body rotation.

Observe that in the particular case  the fluid velocity is assumed to be globally a gradient (the so-called potential case corresponding to $\gamma =0$)
it may be expanded with respect to the Kirchhoff  potentials only.

Another crucial observation is that the first term in the right hand side of  \eqref{anoter} is quadratic in $v$ and that $v$ is decomposed into a potential part and a circulatory part, cf. \eqref{vdecomp}. Roughly speaking the a-connection (the last term in the right hand side of \eqref{eq}) will result from the quadratic self-interaction of the potential part and the Kutta-Joukowski term (the left hand side of \eqref{eq}) from the crossed interaction between the potential part and the circulatory part.  There will be a cancellation of the  quadratic self-interaction of the  circulatory part, cf. \eqref{T7} and Lemma \ref{lamb}.
Indeed this cancellation echoes the renormalization hinted in Remark \ref{remarep}. The kinetic energy of the fluid should be 
$\frac12  \int_{ \mathcal{F}_{0}  } v^2 \ dx $ but  the term $ \frac12  ( {\mathcal M}_{a, \theta}  \, q' )\cdot q' $ included in \eqref{NR} is equal to 
$\frac12  \int_{ \mathcal{F}_{0}  } \tilde{v}^2 \ dx $. Since $\tilde{v}$ and $H$ are orthogonal in $L^2 (\mathcal{F}_{0})$  this renormalization therefore  formally  consists in discarding
 the contribution to the fluid kinetic energy due to the fluid velocity associated with the circulation around the body. Observe that this contribution is infinite because of the slow decay at infinity of this part of the fluid velocity but depends only on the body's shape, not on its position or velocity.

\subsection{A complex-analytic  proof of Theorem \ref{reup}}

We will follow here a strategy based on complex analysis  after Blasius, Kutta, Joukowski, Chaplygin and Sedov.
A key lemma is the following  Blasius formula about tangent vector fields where we use the identifications \eqref{bonnet} and \eqref{chapeau}. 
\begin{lem}
\label{blasius}
Let  $\mathcal{C}$ be a smooth Jordan curve, $f:=(f_1 , f_2)$ and $g:=(g_1 ,g_2 )$ two smooth tangent vector fields on $\mathcal{C}$. 
Then 
\begin{gather} 
\label{bla1}
\int_{ \mathcal{C}} (f  \cdot g) n \, ds =  i \left( \int_{ \mathcal{C}} \widehat f \widehat g \, dz \right)^* , \\
\label{bla2}
\int_{ \mathcal{C}} (f  \cdot g) (x^{\perp} \cdot n)  \,ds =  {\rm Re} \left( \int_{ \mathcal{C}} z  \widehat f \widehat g \, dz \right).
\end{gather}
\end{lem}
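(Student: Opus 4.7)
The plan is purely computational: since both $f$ and $g$ are tangent to $\mathcal{C}$, the quantity $\widehat{f}\widehat{g}$ collapses to $(f\cdot g)$ times a purely geometric factor depending on the complex tangent, and then $dz = (\tau_{1}+i\tau_{2})\,ds$ combines with this factor to produce either the (complex) normal or the (real) tangential projection of $x$. No Cauchy-type argument is needed; everything reduces to real and imaginary parts of one elementary identity.

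More precisely, I would fix an arc-length parametrization $\gamma(s)$ of $\mathcal{C}$, so that $\tau=(\gamma_{1}',\gamma_{2}')$, $n=(-\gamma_{2}',\gamma_{1}')$, $ds = |\gamma'|\, ds$ with $|\gamma'|=1$, and $dz = (\tau_{1}+i\tau_{2})\,ds$. Denote by $T := \tau_{1}+i\tau_{2}$ the complex tangent (a unit complex number, since $|\tau|=1$). The tangency of $f$ on $\mathcal{C}$ means $f=(f\cdot\tau)\,\tau$, so under the identification \eqref{chapeau} one gets $\widehat{f} = (f\cdot\tau)(\tau_{1}-i\tau_{2}) = (f\cdot\tau)\,\overline{T}$ and similarly $\widehat{g} = (g\cdot\tau)\,\overline{T}$. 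Since $f\cdot g = (f\cdot\tau)(g\cdot\tau)$ for tangent vector fields, this gives the key identity
\begin{equation*}
\widehat{f}\,\widehat{g}\,dz \;=\; (f\cdot g)\,\overline{T}^{\,2}\cdot T\,ds \;=\; (f\cdot g)\,\overline{T}\,ds.
\end{equation*}

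For \eqref{bla1}, conjugation and multiplication by $i$ yield
\begin{equation*}
i\Bigl(\int_{\mathcal{C}}\widehat{f}\,\widehat{g}\,dz\Bigr)^{*}
= i\int_{\mathcal{C}}(f\cdot g)\,T\,ds
= \int_{\mathcal{C}}(f\cdot g)\,(-\tau_{2}+i\tau_{1})\,ds,
\end{equation*}
and $(-\tau_{2},\tau_{1}) = \tau^{\perp} = n$, so under the identification \eqref{bonnet} this is exactly $\int_{\mathcal{C}}(f\cdot g)\,n\,ds$. For \eqref{bla2}, one takes the real part of $\int_{\mathcal{C}} z\,\widehat{f}\,\widehat{g}\,dz = \int_{\mathcal{C}} z(f\cdot g)\,\overline{T}\,ds$ and uses $\mathrm{Re}(z\overline{T}) = x_{1}\tau_{1}+x_{2}\tau_{2} = x\cdot\tau$; finally $x\cdot\tau = -x_{1}n_{2}\cdot(-1)+\dots$ — more cleanly, $x\cdot\tau = x_{1}\tau_{1}+x_{2}\tau_{2} = x_{2}n_{1}\cdot(-1)+x_{1}n_{2} = x^{\perp}\cdot n$ since $n=(-\tau_{2},\tau_{1})$, which finishes the identification with $\int_{\mathcal{C}}(f\cdot g)(x^{\perp}\cdot n)\,ds$.

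The only real obstacle is bookkeeping: one must keep straight three overlapping uses of the symbol $\tau$ (the real tangent vector, its components, and the complex unit $T=\tau_{1}+i\tau_{2}$), the relation $n = \tau^{\perp}$ in the chosen orientation convention, and the two conjugations (that defining $\widehat{f}$ in \eqref{chapeau}, and the outer $(\,\cdot\,)^{*}$ in \eqref{bla1}). Once the single identity $\widehat{f}\,\widehat{g}\,dz = (f\cdot g)\overline{T}\,ds$ is written down, both formulas are immediate.
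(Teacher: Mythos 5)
Your proof is correct and follows essentially the same route as the paper's: a direct arc-length parametrization computation in which tangency is used to turn the complex factor $\widehat f\,\widehat g\,dz$ into $(f\cdot g)$ times a unit complex number. The only cosmetic difference is that the paper first polarizes to the case $f=g$ and then observes that the bracketed quantity $(f_1-if_2)(\gamma_1'+i\gamma_2')$ is real, whereas you encode the same fact via the factorization $\widehat f=(f\cdot\tau)\overline{T}$, which handles the bilinear case directly; both yield the identities immediately.
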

Above$(\cdot )^*$ denotes the complex conjugation.
\begin{proof}[Proof of Lemma \ref{blasius}]
Thanks to the polarization identity, it is sufficient to consider the case where $f=g$. Let us consider $\gamma= (\gamma_{1},\gamma_{2}): [0,1] \rightarrow \mathbb{R}^{2}$ a smooth 
arc length parameterization of the Jordan curve ${\mathcal C}$. On one side, one has
\begin{equation} \label{cotereel}
\int_{{\mathcal C}} (f \cdot f) n \, ds = \int_{0}^{1} \big( f_{1}(\gamma(t))^{2} + f_{2}(\gamma(t))^{2} \big) \begin{pmatrix} -{\gamma}_{2}'(t) \\ {\gamma}_{1} ' (t)\end{pmatrix} \, dt.
\end{equation}
On the other side, one has
\begin{gather*}
\int_{{\mathcal C}} (f_{1}(z) - i f_{2}(z))^{2} \, dz
 \\  = \int_{0}^1 \Big( f_{1}(\gamma(t)) - i f_{2}(\gamma(t)) \Big)
\Big[ \big( f_{1}(\gamma(t)) - i f_{2}(\gamma(t)) \big) ({\gamma}_{1}'(t) + i {\gamma}_{2}'(t)) \Big]  \, dt .
\end{gather*}
But since $f$ is tangent to ${\mathcal C}$, one sees that the expression inside the brackets  above  is real, and hence is equal to its complex conjugate. It follows that
\begin{equation*}
\int_{{\mathcal C}} (f_{1}(z) - i f_{2}(z))^{2} \, dz
= \int_{0}^1  \big| f_{1}(\gamma(t)) - i f_{2}(\gamma(t)) \big|^{2} ({\gamma}_{1}'(t) - i {\gamma}_{2}'(t))  \, dt ,
\end{equation*}
and \eqref{bla1} follows. \par
The proof of \eqref{bla2} is analogous: using again 
\begin{equation*}
(f_{1} (\gamma(t)) - i f_{2} (\gamma(t)))( {\gamma}_{1}'(t) + i {\gamma}_{2}'(t))
=(f_{1} (\gamma(t)) + i f_{2} (\gamma(t)))({\gamma}_{1}'(t) - i {\gamma}_{2}'(t)),
\end{equation*}
we deduce
\begin{gather*}
\int_{{\mathcal C}} (f_{1}(z) - i f_{2}(z))^{2} z \, dz
 \\ = \int_{0}^1  |f_{1}(\gamma(t)) - i f_{2}(\gamma(t))|^{2} (\gamma_{1}(t) + i \gamma_{2}(t)) ({\gamma}_{1}'(t) - i {\gamma}_{2}'(t))  \, dt ,
\end{gather*}
so that
\begin{eqnarray*}
&& {\rm Re} \left(\int_{{\mathcal C}} (f_{1}(z) - i f_{2}(z))^{2} z \, dz\right) \\
\quad &=& \int_{0}^1  \Big( f_{1}(\gamma(t))^{2} + f_{2}(\gamma(t))^{2} \Big) \big( \gamma_{1}(t) {\gamma}_{1}'(t) +  \gamma_{2}(t) {\gamma}_{2}'(t) \big)  \, dt \\
\quad &=& \int_{ \mathcal{C}} (f  \cdot f) (x^{\perp} \cdot n)  \,ds .
\end{eqnarray*}
\end{proof}
The idea of the complex-analytic approach of the computation of  
the terms in the right hand side of the equation in 
Lemma \ref{Refo}
 is to decompose  them in order to make appear some vector fields  tangent  to the boundary $\partial  \mathcal{S}_{0} $, to use Blasius' lemma and then Cauchy's residue theorem.
\ \par
\par
\ \par
In this process of computing the residue we will use the Laurent series of $\widehat{\nabla \Phi_{i}}$.
Because of the decay property at infinity in \eqref{def Phi} the Laurent series of $\widehat{\nabla \Phi_{i}}$ has to start at least with a term in $ \mathcal{O}(1/z)$, and this term is 
  $$ \frac{ 1 }{ 2 i \pi}  \int_{ \partial  \mathcal{S}_{0} } \widehat{\nabla \Phi_{i}} \, dz  .$$ 

  Thanks to Lemma \ref{lem-flux-circu}
   we get that $\int_{ \partial  \mathcal{S}_{0} } \widehat{\nabla \Phi_{i}} \, dz  = 0$
   since the circulation of a gradient around $\partial  \mathcal{S}_{0}$ vanishes and the flux as well according to \eqref{Stokes0}.

Moreover Lemma \ref{lem-flux-circu}
 also allows to compute the second term in the Laurent series:
\begin{cor}\label{cor}
Let  $\mathcal{C}$ be a smooth Jordan curve, $f:=(f_1 , f_2)$ a smooth vector fields on $\mathcal{C}$:
\begin{eqnarray*}
\int_{ \mathcal{C}} z\widehat f  \, dz &=&  \int_{ \mathcal{C}} \begin{pmatrix} x\cdot f \\ x^\perp \cdot f \end{pmatrix} \cdot \tau \, ds  -i \int_{ \mathcal{C}} \begin{pmatrix} x\cdot f \\ x^\perp \cdot f \end{pmatrix} \cdot n \, ds\\
&=& \int_{ \mathcal{C}} (x_{1} +i x_{2}) (f\cdot \tau)  \, ds -i \int_{ \mathcal{C}} (x_{1} +i x_{2}) (f\cdot n)  \, ds.
\end{eqnarray*}
\end{cor}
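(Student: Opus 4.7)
My plan is to reduce Corollary \ref{cor} to Lemma \ref{lem-flux-circu} by a judicious change of vector field. The key observation is that if one sets $g := (x\cdot f,\, x^\perp\cdot f)$, then under the identifications \eqref{bonnet}-\eqref{chapeau} one has $\widehat{g} = z\widehat{f}$. Indeed, expanding $x\cdot f = x_1 f_1 + x_2 f_2$ and $x^\perp\cdot f = -x_2 f_1 + x_1 f_2$, a direct computation gives
\[
\widehat{g} \ =\ (x\cdot f) - i(x^\perp \cdot f) \ =\ (x_1 + ix_2)(f_1 - if_2) \ =\ z\widehat{f}.
\]
Crucially, Lemma \ref{lem-flux-circu} does not require its input vector field to be tangent to $\mathcal{C}$, so one may apply it to $g$, which yields immediately the first equality of the corollary.

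For the second equality, I would verify the pointwise identity on $\mathcal{C}$
\[
\begin{pmatrix} x\cdot f \\ x^\perp\cdot f \end{pmatrix}\cdot \tau \ -\ i \begin{pmatrix} x\cdot f \\ x^\perp\cdot f \end{pmatrix}\cdot n \ =\ (x_1+ix_2)(f\cdot\tau) - i(x_1+ix_2)(f\cdot n).
\]
Using $n = \tau^\perp$, so that $n_1 = -\tau_2$ and $n_2 = \tau_1$, a short algebraic expansion shows that
$(x\cdot f,\, x^\perp\cdot f)\cdot\tau = x_1(f\cdot\tau) + x_2(f\cdot n)$ and
$(x\cdot f,\, x^\perp\cdot f)\cdot n = x_1(f\cdot n) - x_2(f\cdot\tau)$; combining these via the factors $1$ and $-i$ and regrouping produces the right-hand side.

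There is no real obstacle here: once the auxiliary vector field $g$ with $\widehat g = z\widehat f$ is identified, both equalities follow by mechanical algebra. As an alternative, more direct route, one may bypass Lemma \ref{lem-flux-circu} altogether by using an arc-length parametrization $\gamma$ of $\mathcal{C}$: since $\tau = \gamma'$, one has $dz = (\tau_1 + i\tau_2)\,ds$, whence $z\widehat{f}\,dz = z\bigl[(f\cdot\tau) - i(f\cdot n)\bigr]\,ds$, and integrating yields the second equality at once; the first then follows by expanding $z = x_1 + ix_2$ and separating real and imaginary parts as above.
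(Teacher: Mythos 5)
Your proposal is correct and follows essentially the same route as the paper: identify the auxiliary field $g=(x\cdot f,\,x^\perp\cdot f)$ with $\widehat g = z\widehat f$, apply Lemma \ref{lem-flux-circu}, and then obtain the second equality by the algebraic expansion using $(n_1,n_2)=(-\tau_2,\tau_1)$. The extra parametrization-based shortcut you mention is a harmless variant of the same computation.
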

\begin{proof}
To apply the previous lemma, we have to identify a function $g$ such that $z(f_1 - if_2)=g_{1}-ig_{2}$. Hence, to get the first equality, it is sufficient to check that
\[
(x_{1}+ix_{2})(f_{1}-if_{2})= (x_{1} f_{1} + x_{2} f_{2})-i(-x_{2}f_{1}+x_{1}f_{2})= (x\cdot f) -i(x^\perp \cdot f).
\]
To obtain the second equality, we simply use $(n_{1},n_{2})=(-\tau_{2},\tau_{1})$:
\begin{eqnarray*}
\begin{pmatrix} x\cdot f \\ x^\perp \cdot f \end{pmatrix} \cdot \tau  -i \begin{pmatrix} x\cdot f \\ x^\perp \cdot f \end{pmatrix} \cdot n &=&  (x_{1}f_{1}+x_{2}f_{2})\tau_{1} +  (-x_{2}f_{1}+x_{1}f_{2})\tau_{2}
\\ &&   -i(x_{1}f_{1}+x_{2}f_{2})n_{1} -i  (-x_{2}f_{1}+x_{1}f_{2})n_{2}\\
&=&  x_{1}  (f_{1}\tau_{1}+f_{2}\tau_{2})+x_{2}(f_{2}\tau_{1}-f_{1}\tau_{2})
\\ &&   -ix_{1}(f_{1}n_{1}+f_{2}n_{2})-ix_{2}(f_{2}n_{1}-f_{1}n_{2})\\
&=&  x_{1}  (f_{1}\tau_{1}+f_{2}\tau_{2})+x_{2}(f_{2}n_{2}+f_{1}n_{1}) 
\\ &&  -ix_{1}(f_{1}n_{1}+f_{2}n_{2})-ix_{2}(-f_{2}\tau_{2}-f_{1}\tau_{1})\\
&=& (x_{1}+ix_{2})  (f_{1}\tau_{1}+f_{2}\tau_{2})
\\ &&   -i(x_{1}+ix_{2})(f_{1}n_{1}+f_{2}n_{2})
\end{eqnarray*}
which ends the proof.
\end{proof}
Replacing $x_{2}$ by $-x_{2}$ in the previous proof, we note that we obtain
\begin{equation}
\label{Rembarz}
\int_{ \mathcal{C}} \bar{z} \widehat f  \, dz
=\int_{ \mathcal{C}} (x_{1} -i x_{2}) (f\cdot \tau)  \, ds -i \int_{ \mathcal{C}} (x_{1} -i x_{2}) (f\cdot n)  \, ds.
\end{equation}
We apply the previous results to the function $\nabla \Phi_{i}$:
\begin{lem}\label{lemzphi}
One has:
\begin{eqnarray*}
\int_{ \partial  \mathcal{S}_{0}} z\widehat{\nabla \Phi_{i}}  \, dz &=&-(m_{i,2}+|\mathcal{S}_{0} |\delta_{i,2})+i(m_{i,1}+|\mathcal{S}_{0} |\delta_{i,1}),\quad \text{for }i=1,2;\\
\int_{ \partial  \mathcal{S}_{0}} z\widehat{ \nabla\Phi_{3}}   \, dz &=&-(m_{3,2}+|\mathcal{S}_{0} |x_{G,1})+i(m_{3,1}-|\mathcal{S}_{0} |x_{G,2});\\
\end{eqnarray*}
where $m_{i,j}$ is defined in \eqref{DefMasse}.
\end{lem}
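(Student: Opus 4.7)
The plan is to apply Corollary~\ref{cor} with $f=\nabla\Phi_i$ and then evaluate each of the two resulting boundary integrals separately. Namely, I would write
\begin{equation*}
\int_{\partial\mathcal{S}_0} z\,\widehat{\nabla\Phi_i}\,dz
=\int_{\partial\mathcal{S}_0}(x_1+ix_2)\,(\nabla\Phi_i\cdot\tau)\,ds
-i\int_{\partial\mathcal{S}_0}(x_1+ix_2)\,(\nabla\Phi_i\cdot n)\,ds ,
\end{equation*}
and treat the ``normal'' piece (second term) and the ``tangential'' piece (first term) by different means.

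The normal piece is explicit because $\nabla\Phi_i\cdot n=K_i$ on $\partial\mathcal{S}_0$ by the very definition \eqref{def Phi} of the Kirchhoff potentials. For $i=1,2$ this is just $n_i$, and applying the divergence theorem on $\mathcal{S}_0$, paying attention to the fact that $n$ points outside the fluid hence \emph{into} $\mathcal{S}_0$ (so that $\int_{\partial\mathcal{S}_0}f\cdot n\,ds=-\int_{\mathcal{S}_0}\operatorname{div}f\,dx$), yields $\int_{\partial\mathcal{S}_0}x_jn_k\,ds=-|\mathcal{S}_0|\delta_{j,k}$. For $i=3$, $K_3=-x_2n_1+x_1n_2$, and the same divergence theorem applied to the vector fields $(x_1x_2,0)$, $(x_1^2,0)$, $(x_2^2,0)$ and $(0,x_1x_2)$ brings out the coordinates of the geometric center $x_G$ of $\mathcal{S}_0$.

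For the tangential piece, since $\partial\mathcal{S}_0$ is a smooth closed curve I would integrate by parts on it (no boundary terms), using the relation $\tau=(n_2,-n_1)$ coming from $n=\tau^\perp$. This gives
\begin{equation*}
\int_{\partial\mathcal{S}_0}(x_1+ix_2)\,\partial_\tau\Phi_i\,ds
=-\int_{\partial\mathcal{S}_0}\Phi_i\,(\tau_1+i\tau_2)\,ds
=-\int_{\partial\mathcal{S}_0}\Phi_i\,n_2\,ds+i\int_{\partial\mathcal{S}_0}\Phi_i\,n_1\,ds .
\end{equation*}
The key observation is then that, since $\Phi_1$ and $\Phi_2$ are harmonic in $\mathcal{F}_0$ with Neumann data $n_1$ and $n_2$ respectively, Green's identity on $\mathcal{F}_0$ gives
\begin{equation*}
\int_{\partial\mathcal{S}_0}\Phi_i\,n_j\,ds=\int_{\partial\mathcal{S}_0}\Phi_i\,\partial_n\Phi_j\,ds=\int_{\mathcal{F}_0}\nabla\Phi_i\cdot\nabla\Phi_j\,dx=m_{i,j}\qquad(j=1,2),
\end{equation*}
where the contribution from the sphere at infinity vanishes thanks to the decay \eqref{ComportementPhii}, namely $\Phi_i=\mathcal{O}(|x|^{-1})$ and $\nabla\Phi_j=\mathcal{O}(|x|^{-2})$, which makes the corresponding boundary integral $\mathcal{O}(R^{-2})$ on a circle of radius $R$.

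Combining the two pieces then gives the stated formulas. The main obstacle is purely bookkeeping: first, keeping track of the orientation convention (the normal $n$ points into $\mathcal{S}_0$, not out of it, which flips the sign in each application of the divergence theorem); second, not confusing the two ways in which $|\mathcal{S}_0|$ enters the formula for $i=1,2$ (from the normal piece, via the divergence theorem) versus the way $m_{i,1},m_{i,2}$ enter (from the tangential piece, via Green's identity); and third, correctly identifying $\int_{\mathcal{S}_0}x_k\,dx=|\mathcal{S}_0|x_{G,k}$ in the $i=3$ case to recover the Archimedes-type term $|\mathcal{S}_0|x_G$ appearing in the statement.
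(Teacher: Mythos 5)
Your proposal is correct and follows essentially the same route as the paper: apply Corollary~\ref{cor} with $f=\nabla\Phi_i$, compute the normal piece from the Neumann data $\partial_n\Phi_i=K_i$ via the Stokes formulas \eqref{Stokes1}, and handle the tangential piece by integrating by parts along $\partial\mathcal{S}_0$ and converting $\int_{\partial\mathcal{S}_0}\Phi_i\,n_j\,ds$ into $m_{i,j}$ by Green's identity. The sign conventions (inward-pointing $n$, $\tau=(n_2,-n_1)$) and the decay argument at infinity are all handled as in the paper's proof.
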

\begin{proof}
We use the previous corollary with $f=\nabla \Phi_{i}$:
\begin{eqnarray*}
\int_{ \partial  \mathcal{S}_{0}} z\widehat{ \nabla\Phi_{i}}   \, dz
&=& \int_{ \partial  \mathcal{S}_{0}} (x_{1} +i x_{2}) \partial_{\tau}\Phi_{i}  \, ds 
-i \int_{ \partial  \mathcal{S}_{0}} (x_{1} +i x_{2}) \partial_{n}\Phi_{i}  \, ds.
\end{eqnarray*}
We can integrate by part in the first integral:
\begin{eqnarray*}
\int_{ \partial  \mathcal{S}_{0}} (x_{1} +i x_{2}) \partial_{\tau}\Phi_{i}  \, ds
&=&  -\int_{ \partial  \mathcal{S}_{0}} \partial_{\tau}(x_{1} +i x_{2}) \Phi_{i}  \, ds\\
&=&  -\int_{ \partial  \mathcal{S}_{0}}(\tau_{1} +i \tau_{2}) \Phi_{i}  \, ds\\
&=& -\int_{ \partial  \mathcal{S}_{0}}(n_{2} -i n_{1}) \Phi_{i}  \, ds\\
&=& -\int_{ \mathcal{F}_{0}}\nabla\Phi_{2} \cdot  \nabla  \Phi_{i}   \, ds
+i \int_{ \mathcal{F}_{0}}\nabla\Phi_{1} \cdot  \nabla  \Phi_{i}  \, ds \\
&=& -m_{i,2} +i m_{i,1} .
\end{eqnarray*}
The second integral can be computed thanks to the boundary condition and \eqref{Stokes1}:
\begin{equation*}\begin{split}
\int_{ \partial  \mathcal{S}_{0}} x_{j}  \partial_{n}\Phi_{i}  \, ds  
= \int_{ \partial  \mathcal{S}_{0}} x_{j} n_{i}  \, ds  
= -\delta_{i,j} |\mathcal{S}_{0}| \quad &\text{for }i,j=1,2,\\
\int_{ \partial  \mathcal{S}_{0}} x_{1}  \partial_{n}\Phi_{3}  \, ds  
= \int_{ \partial  \mathcal{S}_{0}} x_{1} (x^\perp \cdot n)  \, ds  
=  |\mathcal{S}_{0}| x_{G,2} 
& \text{ and } \\
\int_{ \partial  \mathcal{S}_{0}} x_{2}  \partial_{n}\Phi_{3}  \, ds  
= \int_{ \partial  \mathcal{S}_{0}} x_{2} (x^\perp \cdot n)  \, ds
=  - |\mathcal{S}_{0}| x_{G,1} .
\end{split}\end{equation*}
\end{proof}
Using  \eqref{Rembarz}, we can reproduce exactly the previous proof to establish that:
\begin{eqnarray*}
\int_{ \partial  \mathcal{S}_{0}} \bar{z}\widehat{\nabla \Phi_{i}}  \, dz &=&(-m_{i,2}+|\mathcal{S}_{0} |\delta_{i,2})+i(-m_{i,1}+|\mathcal{S}_{0} |\delta_{i,1}),\quad \text{for }i=1,2;\\
\int_{ \partial  \mathcal{S}_{0}} \bar{z}\widehat{\nabla \Phi_{3}}   \, dz &=&(-m_{3,2}+|\mathcal{S}_{0} |x_{G,1})-i(m_{3,1}+|\mathcal{S}_{0} |x_{G,2}) .
\end{eqnarray*}

We are now almost ready to start the proof by itself. 
The last preparation consists in using  the decomposition \eqref{allo1} to deduce from 
Lemma \ref{Refo} that Equations \eqref{Solide11} can be rewritten in the form
\begin{equation} \label{clacla}
({\mathcal M}_{g} + {\mathcal M}_{a} ) p ' 
 + \langle  \Gamma_g ,  p ,  p \rangle 
=  - (A_{i}+ B_{i} + C_i)_{i=1,2,3} ,
\end{equation}
where for $i=1,2,3$, 
\begin{eqnarray}
\nonumber
A_{i} &:=& \frac{1}{2} \int_{\partial  \mathcal{S}_{0}  } |\tilde{v}|^2  K_i \, ds
- \int_{\partial  \mathcal{S}_{0}  } (\ell + r x^\perp)\cdot \tilde{v}  K_i \, ds, \\ 
\label{Cib}
B_{i} &:=& \gamma  \int_{\partial  \mathcal{S}_{0}  } (\tilde{v} - (\ell + r x^\perp))\cdot  H  K_i \, ds, \\
\label{Cic}
C_{i} &:=& \frac{\gamma^2}{2}   \int_{\partial  \mathcal{S}_{0} } |H|^2  K_i \, ds.
\end{eqnarray}

We start with the computation of the terms $(A_{i})_{i=1,2,3}$.

\begin{lem} \label{PropCia}
\begin{equation} \label{Ciad}
\begin{split}
 \begin{pmatrix}A_{1} \\A_{2} \end{pmatrix} 
& = r^2  \begin{pmatrix}-m_{3,2}\\m_{3,1}\end{pmatrix} 
 + r\Big(\mathcal{M}_{\flat} \ell    \Big)^\perp
\end{split}
\end{equation}
and
\begin{equation} \label{C3ad}
\begin{split}
A_{3} 
 = \ell^\perp \mathcal{M}_{\flat} \ell 
- r\ell \cdot \begin{pmatrix} -m_{3,2} \\m_{3,1} \end{pmatrix}  .
 \end{split}
\end{equation}
\end{lem}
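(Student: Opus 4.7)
\medskip

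The cornerstone of the proof is that on $\partial \mathcal S_0$ the field $f := \tilde v - (\ell + r x^\perp)$ is tangent. Indeed, using the explicit decomposition $\tilde v = \ell_1 \nabla \Phi_1 + \ell_2 \nabla \Phi_2 + r \nabla \Phi_3$ from \eqref{allo1}--\eqref{diese} together with the Neumann data \eqref{def Phi} for the Kirchhoff potentials, one checks that $\tilde v \cdot n = \ell_1 K_1 + \ell_2 K_2 + r K_3 = (\ell + r x^\perp) \cdot n$ on $\partial \mathcal S_0$. The elementary identity
\[
\tfrac12 |\tilde v|^2 - (\ell + r x^\perp) \cdot \tilde v = \tfrac12 |f|^2 - \tfrac12 |\ell + r x^\perp|^2
\]
then yields the splitting
\[
A_i = \tfrac12 \int_{\partial \mathcal S_0} |f|^2 K_i \, ds - \tfrac12 \int_{\partial \mathcal S_0} |\ell + r x^\perp|^2 K_i \, ds .
\]

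\medskip

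The second integral is computed directly: one expands $|\ell + r x^\perp|^2 = |\ell|^2 + 2 r \ell \cdot x^\perp + r^2 |x|^2$ and applies Stokes' formula, producing volume integrals over $\mathcal S_0$ in terms of the area $|\mathcal S_0|$, the center of mass $x_G$ and the second moments of $\mathcal S_0$ --- all of them Archimedes-type quantities extrinsic to the added-mass structure. The first integral is where Blasius' formula (Lemma \ref{blasius}) is deployed: the tangency of $f$ gives
\[
\int_{\partial \mathcal S_0} |f|^2 n \, ds = i \left( \int_{\partial \mathcal S_0} \hat f^{\,2}\, dz \right)^{\!*} , \qquad \int_{\partial \mathcal S_0} |f|^2 (x^\perp \cdot n) \, ds = \mathrm{Re} \int_{\partial \mathcal S_0} z \hat f^{\,2}\, dz .
\]
Writing $\hat f = \widehat{\tilde v} - (\hat \ell - i r \bar z)$ with $\hat \ell := \ell_1 - i \ell_2$ and $\widehat{\tilde v} = \ell_1 \widehat{\nabla \Phi_1} + \ell_2 \widehat{\nabla \Phi_2} + r \widehat{\nabla \Phi_3}$, I would expand $\hat f^{\,2}$ and $z \hat f^{\,2}$ and evaluate each resulting contour integral term by term. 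The purely holomorphic contributions --- products of $\widehat{\nabla \Phi_j}$'s, possibly multiplied by $z$ --- are computed via the residue theorem at infinity, using the fact that each $\widehat{\nabla \Phi_j}$ is $O(1/z^2)$ at infinity so that $\int_{\partial \mathcal S_0} \widehat{\nabla \Phi_j} \widehat{\nabla \Phi_k}\, dz = 0$, while the mixed terms of the form $\int z \widehat{\nabla \Phi_j}\, dz$ are precisely what Lemma \ref{lemzphi} computes in terms of the $m_{j,k}$'s (plus Archimedes corrections). The $\bar z$-dependent contributions are handled either by the companion formulas for $\int_{\partial \mathcal S_0} \bar z \widehat{\nabla \Phi_i}\, dz$ recalled just before Lemma \ref{lemzphi}, or, for the purely geometric pieces $\int \bar z^k\, dz$ and $\int z \bar z^k\, dz$, by Green's theorem, which again produces $|\mathcal S_0|$ and $x_G$.

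\medskip

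The real obstacle is not any individual residue computation but the bookkeeping: at intermediate stages the Archimedes-type quantities $|\mathcal S_0|$, $x_G$ (and, for the $r^2$ terms, the second moments) appear on both sides of the splitting, yet the statement of the lemma involves only the added masses $m_{i,j}$. The systematic cancellation of these geometric quantities --- precisely the phenomenon highlighted in the footnote of the introduction as exposing the superiority of Lamb's approach --- would be organized by homogeneity in $(\ell, r)$: treating the three blocks $|\ell|^2$, $\ell r$ and $r^2$ separately, one verifies block by block that the Archimedes contributions arising from the Blasius side cancel exactly those coming from the direct Stokes computation, leaving only the combinations of $m_{j,k}$ prescribed by \eqref{Ciad}--\eqref{C3ad}.
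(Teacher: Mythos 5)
Your proposal follows the paper's own proof essentially verbatim: the same splitting of $A_i$ into $\tfrac12\int_{\partial\mathcal S_0}|f|^2K_i\,ds$ with $f=\tilde v-(\ell+rx^\perp)$ tangent, minus the Archimedes-type term; Blasius' formula and residues at infinity for the former (via Lemma \ref{lemzphi}, the $\bar z$ companion formulas and the $O(1/z^2)$ decay of the $\widehat{\nabla\Phi_j}$), Stokes for the latter; and the block-by-block cancellation of $|\mathcal S_0|$ and $x_G$. The only cosmetic difference is that the paper first decomposes $f$ into the two tangent pieces $v_\#-\ell$ and $r(\nabla\Phi_3-x^\perp)$ before applying Blasius, which by polarization is equivalent to your direct expansion of $\hat f^{\,2}$ and $z\hat f^{\,2}$ inside the contour integrals.
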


\begin{proof}
We start with the following observation:
\begin{eqnarray*}
A_{i} &=&\frac{1}{2} \int_{\partial  \mathcal{S}_{0}  } |\tilde{v}-(\ell + r x^\perp)|^2  K_i \, ds - \frac{1}{2} \int_{\partial  \mathcal{S}_{0}  } |\ell + r x^\perp|^2  K_i \, ds ,
\end{eqnarray*}
which makes appear at least one term with the wished tangence property. 
Since Blasius' lemma is different for the torque, we
 replace 
 $\tilde{v} $ by the decomposition \eqref{diese99} to get
\begin{eqnarray*}
A_{i} 
&=&  \frac{1}{2} \int_{\partial  \mathcal{S}_{0}  } |v_{\#} -\ell |^2  K_i \, ds + \frac{1}{2} \int_{\partial  \mathcal{S}_{0}  } | r(\nabla \Phi_{3} -x^\perp) |^2  K_i \, ds
\\ &&  + \int_{\partial  \mathcal{S}_{0}  }  r(v_{\#}-\ell) \cdot (\nabla \Phi_{3} -x^\perp)  K_i \, ds
 - \frac{1}{2} \int_{\partial  \mathcal{S}_{0}  } |\ell + r x^\perp|^2  K_i \, ds\\
 &=:&A_{i,a}+A_{i,b}+A_{i,c}+A_{i,d}.
\end{eqnarray*}
The first three terms have a form appropriated for the strategy mentioned above. 
One may worry above the last one but it benefits from a special structure resembling the Archimedes' force (despite the absence of gravity in our setting). 
Let us see first how this term can be simply computed thanks to the Stokes formula so that we will then be serene to implement the complex-analytic approach to the three other terms.

\textbf{An Archimedes-type term}. In order to compute the  term $A_{i,d}$ we first expand 
\begin{eqnarray*}
\int_{\partial  \mathcal{S}_{0}  } |\ell + r x^\perp|^2  K_i \, ds 
&=& |\ell|^2  \int_{\partial  \mathcal{S}_{0}  } K_i \, ds 
- 2 \ell_1 r \int_{\partial  \mathcal{S}_{0}  } x_2  K_i \, ds 
\\ &&  +2 \ell_2 r \int_{\partial  \mathcal{S}_{0}  } x_1  K_i \, ds 
+ r^2  \int_{\partial  \mathcal{S}_{0}  } |x|^2 K_i \, ds .
\end{eqnarray*}
Thanks to the Stokes formula:
\begin{subequations}
\label{Stokes1}
\begin{alignat}{3}
\int_{ \partial  \mathcal{S}_{0}} x_{j} n_{i}  \, ds
&=& -\int_{ \mathcal{S}_{0}} \operatorname{div}(x_{j} e_{i})  \, dx  =-\delta_{i,j} |\mathcal{S}_{0} |,\quad \text{for }i,j=1,2 ;\\
\int_{ \partial  \mathcal{S}_{0}} x_{1} (x^\perp \cdot n)  \, ds 
&=& -\int_{ \mathcal{S}_{0}} \operatorname{div}(x_{1} x^\perp)  \, dx 
= -\int_{ \mathcal{S}_{0}} (-x_2)  \, dx =  |\mathcal{S}_{0} |x_{G,2};\\
\int_{ \partial  \mathcal{S}_{0}} x_{2} (x^\perp \cdot n)  \, ds
&=& -\int_{ \mathcal{S}_{0}} \operatorname{div}(x_{2} x^\perp)  \, dx 
= -\int_{ \mathcal{S}_{0}} x_1  \, dx = - |\mathcal{S}_{0} |x_{G,1} ;
\end{alignat}
\end{subequations}
and
\begin{subequations}
\label{Stokes2}
\begin{alignat}{1}
\nonumber
\int_{\partial  \mathcal{S}_{0}  }|x|^2 n_{i}  \, ds 
= -\int_{  \mathcal{S}_{0}  } \operatorname{div}(|x|^2e_{i}) \, dx
= -\int_{   \mathcal{S}_{0}} 2x_{i}  \, dx
\\ \qquad =-2x_{G,i} |\mathcal{S}_{0} |,  \quad \text{for }i=1,2;\\
\int_{\partial  \mathcal{S}_{0}  }|x|^2x^\perp\cdot n  \, ds
= -\int_{  \mathcal{S}_{0}  } \operatorname{div}(|x|^2x^\perp) \, dx=0 ,
\end{alignat}
\end{subequations}
where $|\mathcal{S}_{0} |$ is the Lebesgue measure of $\mathcal{S}_{0}$ and $x_{G} = (x_{G,1} , x_{G,2} )$ is the position of the geometrical center of $\mathcal{S}_{0}$ (which can be different of the mass center $0$ if the solid is not homogenous):
\begin{equation} \label{DefXg}
x_{G} := \frac{1}{|{\mathcal S}_{0}|} \int_{{\mathcal S}_{0}} x \, dx .
\end{equation}
\par

Then, using also \eqref{Stokes0}, we check easily that
\begin{equation}
  \label{T1}
\Big( A_{i,d}\Big)_{i=1,2} 
=-r \ell^\perp |\mathcal{S}_{0} | + r^2x_{G} |\mathcal{S}_{0} |
  \text{ and } 
A_{3,d} 
=-r (\ell\cdot x_{G}) |\mathcal{S}_{0} |.
\end{equation}

\textbf{Computation of the three other terms}. Recalling the definition 
 \eqref{diese} and using the notation 
\eqref{chapeau} 
 we first remark that
\begin{equation}\label{v-l}
\widehat{v_{\#} -\ell}(z)= -\ell_{1}  + i \ell_{2} +\ell_{1}  \widehat{\nabla \Phi_{1}}+ \ell_{2} \widehat{\nabla \Phi_{2}}  .
\end{equation}
\ \par
\noindent
\textbf{Computation of $A_{i,a} $}.
We compute separately the case where $i=1,2$ and the case where $i=3$.
 \ \par \
   $\bullet$  As $v_{\#}-\ell$ is tangent to the boundary, we can apply the Blasius formula (see Lemma \ref{blasius}), \eqref{v-l}, Cauchy's residue theorem, to obtain
\begin{equation}
  \label{T2}
\Big(A_{i,a}\Big)_{i=1,2} = 0 .
\end{equation}
$\bullet$ We proceed in the same way for $i=3$:
\begin{equation*}\begin{split}
A_{3,a}=\frac12\int_{\partial  \mathcal{S}_{0}  } |v_{\#} -\ell |^2  K_3 \, ds=\frac12{\rm Re} \left( \int_{ \partial  \mathcal{S}_{0}} z (\widehat{v_{\#} -\ell})^2 \, dz \right) 
\\ = {\rm Re} \Bigg( \Big[(-\ell_{1})-i(-\ell_{2})\Big]
\int_{ \partial  \mathcal{S}_{0}} z\Big( \ell_{1}  \widehat{\nabla \Phi_{1}}+ \ell_{2}  \widehat{\nabla \Phi_{2}}  \Big) \, dz 
\Bigg) .
\end{split}\end{equation*}
so that, thanks to Lemma \ref{lemzphi},
\begin{equation*}\begin{split}
A_{3,a}
=&(-\ell_{1})\Big[
-\ell_{1} m_{1,2}-\ell_{2} (m_{2,2}+|\mathcal{S}_{0} |) \Big]
+(-\ell_{2})\Big[
\ell_{1} (m_{1,1}+|\mathcal{S}_{0}|) +\ell_{2} m_{2,1} \Big]
\end{split}\end{equation*}
which finally can be simplified as follows:
\begin{equation}
  \label{T3}
A_{3,a}
= \ell^\perp \mathcal{M}_{\flat} \ell .
\end{equation}
\par
\ \par
\textbf{Computation of $A_{i,b} $}.
Once again we distinguish the case where $i=1,2$ and the case where $i=3$.
 \ \par \
$\bullet$  As $\nabla \Phi_{3} -x^\perp$  is tangent to the boundary, we can write for $i=1,2$ by Lemma \ref{blasius} and by the Cauchy's residue theorem:
\begin{align*}
\Big(A_{i,b}\Big)_{i=1,2} 
= \frac{r^2}{2} \int_{\partial  \mathcal{S}_{0}  } | \nabla \Phi_{3} -x^\perp |^2  n \, ds
&=  \frac{i r^2}2 \left( \int_{ \partial  \mathcal{S}_{0}} (\widehat{\nabla \Phi_{3} -x^\perp})^2 \, dz \right)^*
\\ &= \frac{i r^2}2 \left( \int_{ \partial  \mathcal{S}_{0}} 2i\bar{z} \widehat{\nabla \Phi_{3} } \, dz 
- \int_{ \partial  \mathcal{S}_{0}} \bar{z}^2  \, dz\right)^*,
\end{align*}
where we have noted that $-\widehat{x^\perp}=i\bar{z}$. 
Let us observe that 
\begin{eqnarray*}
\nonumber
\int_{ \partial  \mathcal{S}_{0}} \bar{z}^2  \, dz
&=& \int (\gamma_{1}-i\gamma_{2})^2(\gamma_{1}'+i\gamma_{2}') \\
\nonumber
&=& \int(\gamma_{1}^2\gamma_{1}'-\gamma_{2}^2\gamma_{1}'+2\gamma_{1}\gamma_{2}\gamma_{2}')
+ i \int(\gamma_{1}^2\gamma_{2}'-\gamma_{2}^2\gamma_{2}'-2\gamma_{1}\gamma_{2}\gamma_{1}') \\
\nonumber
&=& - \int_{ \partial  \mathcal{S}_{0}} \begin{pmatrix} 2x_{1}x_{2}\\x_{2}^2\end{pmatrix}\cdot n \,ds
- i \int_{ \partial  \mathcal{S}_{0}} \begin{pmatrix} x_{1}^2\\2x_{1}x_{2}\end{pmatrix}\cdot n \,ds \\
\nonumber
&=& \int_{   \mathcal{S}_{0}} \operatorname{div}\begin{pmatrix} 2x_{1}x_{2} \\ x_{2}^2\end{pmatrix} \,dx 
+ i \int_{   \mathcal{S}_{0}} \operatorname{div}\begin{pmatrix} x_{1}^2\\2x_{1}x_{2}\end{pmatrix} \,dx \\
& = & 4 |\mathcal{S}_{0} |x_{G,2}+4i|\mathcal{S}_{0} |x_{G,1}.
\end{eqnarray*}

Therefore thanks to Lemma \ref{lemzphi}  we state that:
\begin{eqnarray*}
\Big(A_{i,b}\Big)_{i=1,2}
&=&i r^2\Big(   i(-m_{3,2}+|\mathcal{S}_{0} |x_{G,1})+(m_{3,1}+|\mathcal{S}_{0} |x_{G,2}) 
 \\ &&-2( |\mathcal{S}_{0} |x_{G,2}+i|\mathcal{S}_{0} |x_{G,1} )\Big)^*\\
&=&r^2\Big( (-m_{3,2}-|\mathcal{S}_{0} |x_{G,1})+i(m_{3,1}-|\mathcal{S}_{0} |x_{G,2})\Big)\\
&=& r^2 \Big( \begin{pmatrix}-m_{3,2}\\m_{3,1}\end{pmatrix}- |\mathcal{S}_{0} |x_{G}\Big),
\end{eqnarray*}
and thus
\begin{equation}
  \label{T4}
\Big(A_{i,b}\Big)_{i=1,2} = 0 .
\end{equation}

$\bullet$  For $i=3$, we have that:
\begin{equation*}\begin{split}
A_{3,b}&=\int_{\partial  \mathcal{S}_{0}  } |\nabla \Phi_{3} -x^\perp |^2  (  x^\perp \cdot n) \, ds
 \\ &= \int_{\partial  \mathcal{S}_{0}  } |\nabla \Phi_{3} |^2  x^\perp \cdot n \, ds - 2\int_{\partial  \mathcal{S}_{0}  } (\nabla \Phi_{3} \cdot x^\perp  )(  x^\perp \cdot n) \, ds +\int_{\partial  \mathcal{S}_{0}  } |x |^2  x^\perp \cdot n \, ds\\
&= \int_{  \mathcal{F}_{0}  } \operatorname{div}(|\nabla \Phi_{3} |^2  x^\perp) \, dx - 2\int_{  \mathcal{F}_{0}  }\nabla (\nabla \Phi_{3} \cdot x^\perp )  \cdot\nabla  \Phi_{3}\, dx -\int_{  \mathcal{S}_{0}  } \operatorname{div}(|x |^2  x^\perp) \, dx,
\end{split}\end{equation*}
where there is no boundary term at infinity because $\nabla \Phi_{3}=\mathcal{O}(1/|x|^2)$. Next we use the general relation \eqref{vect}
to obtain that
\begin{equation*}\begin{split}
\nabla(\nabla \Phi_{3} \cdot x^\perp )  \cdot\nabla  \Phi_{3} &= \Big[(\nabla \Phi_{3} \cdot \nabla)x^\perp+(x^\perp\cdot \nabla)\nabla \Phi_{3}\Big] \cdot\nabla  \Phi_{3}
  \\ &= -(\nabla  \Phi_{3})^\perp\cdot \nabla  \Phi_{3}+\frac12 (x^\perp \cdot \nabla) |\nabla  \Phi_{3} |^2= \frac12 \operatorname{div} (x^\perp |\nabla  \Phi_{3} |^2).
\end{split}\end{equation*}
Hence,
\begin{equation}
 \label{T5}
A_{3,b}=0.
\end{equation}
\textbf{Computation of $A_{i,c} $}.
 \ \par \
$\bullet$  We use again the Blasius formula together with \eqref{v-l} and the Cauchy's residue theorem:
\begin{equation*}\begin{split}
\Big( A_{i,c}\Big)_{i=1,2} 
&=  \int_{\partial  \mathcal{S}_{0}  }  r(v_{\#}-\ell) \cdot (\nabla \Phi_{3} -x^\perp)  n \, ds  \\
&= i r\left( \int_{ \partial  \mathcal{S}_{0}} (\widehat{v_{\#} -\ell})(  \widehat{\nabla \Phi_{3}} 
+ i \bar{z}) \, dz \right)^* \\
&=i r\Bigg(  i\Big(-\ell_{1} +i\ell_{2} \Big)\int_{ \partial  \mathcal{S}_{0}}  \bar{z} \, dz 
+i\ell_{1}  \int_{ \partial  \mathcal{S}_{0}} \widehat{\nabla \Phi_{1} } \bar{z} \, dz
+i\ell_{2}  \int_{ \partial  \mathcal{S}_{0}} \widehat{\nabla \Phi_{2} } \bar{z} \, dz
 \Bigg)^*.
\end{split}
\end{equation*}

Let us observe that 
\begin{eqnarray}
\nonumber
\int_{ \partial  \mathcal{S}_{0}} \bar{z}  \, dz
&=& \int (\gamma_{1}-i\gamma_{2})(\gamma_{1}'+i\gamma_{2}')=\int(\gamma_{1}\gamma_{1}'+\gamma_{2}\gamma_{2}' )
+ i\int(\gamma_{1}\gamma_{2}'-\gamma_{2}\gamma_{1}')  \\
\nonumber
&=& -i \int_{ \partial  \mathcal{S}_{0}} \begin{pmatrix} x_{1}\\x_{2}\end{pmatrix}\cdot n \,ds \\
\label{compu2}
&=& i \int_{ \mathcal{S}_{0}} \operatorname{div}\begin{pmatrix} x_{1}\\x_{2}\end{pmatrix} \,dx
= 2 i |\mathcal{S}_{0} |; 
\end{eqnarray}
\begin{eqnarray}
\nonumber
\int_{ \partial  \mathcal{S}_{0}} |z|^2  \, dz &=& \int (\gamma_{1}^2 + \gamma_{2}^2)(\gamma_{1}'+i\gamma_{2}') \\
\nonumber
&=& \int_{ \partial  \mathcal{S}_{0}} \begin{pmatrix} 0\\x_{2}^2\end{pmatrix}\cdot n \,ds
- i \int_{ \partial  \mathcal{S}_{0}} \begin{pmatrix} x_{1}^2\\0\end{pmatrix}\cdot n \,ds \\
\nonumber
&=&  -\int_{   \mathcal{S}_{0}} \operatorname{div} \begin{pmatrix} 0\\x_{2}^2\end{pmatrix} \,dx
+ i \int_{   \mathcal{S}_{0}} \operatorname{div} \begin{pmatrix} x_{1}^2\\0\end{pmatrix} \,dx \\
\label{compu3}
&=&  -2 |\mathcal{S}_{0} |x_{G,2}+2 i|\mathcal{S}_{0} |x_{G,1}.
\end{eqnarray}

Therefore, it suffices to write the value obtained in Lemma \ref{lemzphi}  to get:
\begin{equation*}\begin{split}
\Big( A_{i,c}\Big)_{i=1,2} 
=& r \Big[
 (-\ell_{2}) 2 |\mathcal{S}_{0} |
 -\ell_{1} m_{1,2} + \ell_{2}  (-m_{2,2}+|\mathcal{S}_{0} |)  \Big]
\\
&+ ir \Big[
\ell_{1} 2 |\mathcal{S}_{0} | 
 -\ell_{1}  (-m_{1,1}+|\mathcal{S}_{0} |)  +\ell_{2} m_{2,1} \Big],
\end{split}\end{equation*}
which can be simplified as
\begin{equation}
  \label{T5.5}
\Big( A_{i,c}\Big)_{i=1,2} 
=  r\Bigg[(\mathcal{M}_{\flat} + |\mathcal{S}_{0} | {\rm I}_{2})\ell  \Big)^\perp \Bigg]
.
\end{equation}
\par
$\bullet$ For $i=3$,  Lemma \ref{blasius}, \eqref{v-l} and Cauchy's residue theorem imply that
\begin{equation*}\begin{split}
A_{3,c} =& \int_{\partial  \mathcal{S}_{0}  }  r(v_{\#}-\ell) \cdot (\nabla \Phi_{3} -x^\perp)  K_3 \, ds
\\
=&r{\rm Re} \left( \int_{ \partial  \mathcal{S}_{0}} z (\widehat{v_{\#} -\ell})(  \widehat{\nabla \Phi_{3}} + i \bar{z})  \, dz \right) \\
=&r{\rm Re}\Bigg[ \Big(-\ell_{1} +i\ell_{2} \Big) \int_{ \partial  \mathcal{S}_{0}} (z \widehat{\nabla \Phi_{3}} + i|z|^2)\, dz   \\
&+\ell_{1}  i \int_{ \partial  \mathcal{S}_{0}} \widehat{\nabla \Phi_{1}} |z|^2\, dz
+\ell_{2}   i \int_{ \partial  \mathcal{S}_{0}} \widehat{\nabla \Phi_{2}} |z|^2\, dz 
  \Bigg].
\end{split}
\end{equation*}
Now applying Lemma \ref{lem-flux-circu} to $(|z|^2\partial_1 \Phi_{i},|z|^2\partial_2 \Phi_{i})$ we have
\[
\int_{ \partial  \mathcal{S}_{0}} |z|^2(\partial_1 \Phi_{i} - i \partial_2 \Phi_{i})  \, dz = 
\int_{ \partial  \mathcal{S}_{0}} |x|^2\partial_{\tau} \Phi_{i}  \, ds
-i\int_{ \partial  \mathcal{S}_{0}} |x|^2\partial_{n} \Phi_{i}  \, ds
\]
where we easily verify that
\begin{eqnarray*}
\int_{ \partial  \mathcal{S}_{0}} |x|^2\partial_{\tau} \Phi_{i}  \, ds 
= -\int_{ \partial  \mathcal{S}_{0}}  \Phi_{i} 2x\cdot\tau  \, ds
= -\int_{ \partial  \mathcal{S}_{0}}  \Phi_{i} 2(x^\perp \cdot n)  \, ds = -2m_{i,3}.
\end{eqnarray*}
The value of $\int_{ \partial  \mathcal{S}_{0}} |x|^2\partial_{n} \Phi_{i}  \, ds$ has already been computed in  \eqref{Stokes2}.
We therefore obtain: 
\begin{gather*}
\label{lem|z|2phi}
\int_{ \partial  \mathcal{S}_{0}} |z|^2\widehat{\nabla \Phi_{i}}   \, dz =-2m_{i,3}+2i|\mathcal{S}_{0} |x_{G,i}\,  \text{for }i=1,2 \text{ and }
\int_{ \partial  \mathcal{S}_{0}} |z|^2\widehat{\nabla \Phi_{3}}   \, dz =-2m_{3,3} .
\end{gather*}

Hence, we deduce  from this, \eqref{compu3} and Lemma \ref{lemzphi} that
\begin{equation*}\begin{split}
A_{3,c}=&r\Bigg[ -(-\ell_{1}) (m_{3,2}+|\mathcal{S}_{0} |x_{G,1} + 2 |\mathcal{S}_{0} |x_{G,1})
\\
&\quad 
(-\ell_{2}) (m_{3,1}-|\mathcal{S}_{0} |x_{G,2} -2 |\mathcal{S}_{0} |x_{G,2})\\
&\quad 
-\ell_{1} 2|\mathcal{S}_{0} |x_{G,1} -\ell_{2} 2|\mathcal{S}_{0} |x_{G,2} 
  \Bigg],
  \end{split}\end{equation*}
so that 
\begin{equation}
  \label{T6}
A_{3,c}
=  -r \ell  \cdot \Big(\begin{pmatrix} -m_{3,2} \\m_{3,1} \end{pmatrix} -|\mathcal{S}_{0} |x_{G}\Big) .
\end{equation}
\textbf{Conclusion}. Gathering  \eqref{T1},  \eqref{T2},  \eqref{T3},  \eqref{T4},  \eqref{T5},  \eqref{T5.5} and  \eqref{T6}  we obtain \eqref{Ciad} and  \eqref{C3ad}.
This ends the proof of  Lemma \ref{PropCia}.

\end{proof}

 Let us continue with the term $B_{i}$. Let us prove the following.
\begin{lem} \label{PropCib}
One has
\begin{equation} \label{C12b}
 \begin{pmatrix}B_{1} \\B_{2} \end{pmatrix}
= -  \gamma \ell^\perp 
+ \gamma  r \xi
\text{ and }
B_{3} = - \gamma  \, \xi \cdot    \ell ,
\end{equation}
where $\xi$ was defined in \eqref{DefXi}.
\end{lem}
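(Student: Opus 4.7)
The plan is to apply the complex-analytic machinery just employed in Lemma \ref{PropCia}. The key preliminary observation is that both $H$ and the vector field $f := \tilde v - (\ell + r x^\perp)$ are tangent to $\partial\mathcal S_0$: for $H$ this is its defining property (Section \ref{HF}), and for $f$ it follows from $\tilde v = v - \gamma H$ together with $H\cdot n = 0$ and the boundary condition $v\cdot n = (\ell + rx^\perp)\cdot n$. Hence Blasius' identities \eqref{bla1}--\eqref{bla2} apply directly to the integrand $(\tilde v - \ell - rx^\perp)\cdot H\, K_i$ in \eqref{Cib}.

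Using \eqref{vdecomp} together with the elementary identities $\widehat{-\ell} = -\ell_1 + i\ell_2$ and $\widehat{-rx^\perp} = ir\bar z$, one gets
$$\hat f = (-\ell_1 + i\ell_2) + ir\bar z + \ell_1\widehat{\nabla\Phi_1} + \ell_2\widehat{\nabla\Phi_2} + r\widehat{\nabla\Phi_3}.$$
For the horizontal components, Blasius \eqref{bla1} and linearity reduce the computation to three boundary integrals. First, $\int_{\partial\mathcal S_0}\hat H\, dz = 1$, by Lemma \ref{lem-flux-circu} combined with $H\cdot n = 0$ and the unit circulation of $H$. Second, $\int_{\partial\mathcal S_0}\bar z\,\hat H\, dz = \xi_1 - i\xi_2$, obtained exactly as in the proof of Proposition \ref{B/L} by applying \eqref{Rembarz} to $H$ and invoking once again $H\cdot n = 0$. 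Third, $\int_{\partial\mathcal S_0}\widehat{\nabla\Phi_j}\,\hat H\, dz = 0$, since the integrand is holomorphic in $\mathcal F_0$ and decays as $|z|^{-3}$ at infinity, so the contour can be swept to the circle at infinity. Assembling these three pieces and separating real and imaginary parts produces the stated formula for $(B_1, B_2)$ in \eqref{C12b}.

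For the angular component $B_3$, Blasius \eqref{bla2} requires $\gamma\,\mathrm{Re}\int_{\partial\mathcal S_0} z\hat f\,\hat H\, dz$. The $\widehat{\nabla\Phi_j}$ contributions again vanish since $z\,\widehat{\nabla\Phi_j}\,\hat H = O(|z|^{-2})$ is holomorphic outside $\mathcal S_0$. The constant piece contributes $(-\ell_1 + i\ell_2)\int z\hat H\, dz = (-\ell_1 + i\ell_2)(\xi_1 + i\xi_2)$ by the very definition \eqref{DefXi} of $\xi$, and its real part is exactly $-\ell\cdot\xi$. The potentially dangerous term is $ir\int |z|^2\hat H\, dz$; one notices that $|z|^2\hat H = \widehat{|x|^2 H}$ and applies Lemma \ref{lem-flux-circu} to $|x|^2 H$, which, thanks to $H\cdot n = 0$, reduces this integral to the real boundary quantity $\int_{\partial\mathcal S_0}|x|^2\,(H\cdot\tau)\, ds$. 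Multiplication by $i$ turns it into a purely imaginary number, and it is therefore discarded by $\mathrm{Re}$. Collecting everything yields $B_3 = -\gamma\,\xi\cdot\ell$.

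The main subtlety is the anti-holomorphic summand $ir\bar z$ in $\hat f$ generated by the rigid-rotation term: it forbids a naive deformation of contours to infinity, and for $B_3$ it formally entangles the answer with the divergence-sensitive integral $\int |z|^2\hat H\, dz$ (since $\hat H$ only decays like $1/z$, Laurent-residue evaluation at infinity would be delicate). The escape route, provided by the tangentiality of $H$, is that every such mixed integral reduces through Lemma \ref{lem-flux-circu} to a real boundary integral, so its imaginary part vanishes and it contributes nothing to the real part selected by Blasius' formula. Unlike in Lemma \ref{PropCia}, no spurious Archimedes-type quantities such as $|\mathcal S_0|$ or $x_G$ appear at any intermediate step.
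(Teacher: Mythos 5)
Your proof is correct and follows essentially the same complex-analytic route as the paper: Blasius' lemma applied to tangent fields, Cauchy's residue theorem for the $\widehat{\nabla\Phi_j}\,\widehat{H}$ products, and the characterization \eqref{DefXi}--\eqref{eqB/L} of the conformal center. The only (harmless) organizational difference is that you treat $\tilde v-\ell-rx^\perp$ as a single tangent field rather than splitting it into $(v_\#-\ell)$ and $r(\nabla\Phi_3-x^\perp)$ as the paper does, and your reduction of $\int|z|^2\widehat{H}\,dz$ to a real boundary integral via Lemma \ref{lem-flux-circu} makes explicit what the paper dispatches with ``shown by using a parametrization.''
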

\begin{proof}
Putting  the decomposition \eqref{diese99} in the definition of $B_{i}$, we write:
\begin{equation*}
B_{i} =   \gamma  \int_{\partial  \mathcal{S}_{0}  } (v_{\#}  - \ell )\cdot  H  K_i \, ds
+ \gamma  \int_{\partial  \mathcal{S}_{0}  }r (\nabla \Phi_{3} - x^\perp)\cdot  H  K_i \, ds.
\end{equation*}
\ \par
Concerning the second integral, as $v_{\#}-\ell$ and $H$ are tangent to the boundary, we apply the Blasius formula (see Lemma \ref{blasius}), then we compute by \eqref{v-l} and Cauchy's residue theorem and \eqref{HSeriesLaurent}:
\begin{equation*}\begin{split}
\left(\gamma \int_{\partial  \mathcal{S}_{0}  }(v_{\#}  - \ell )\cdot  H   K_i \, ds\right)_{i=1,2} =& \gamma\int_{\partial  \mathcal{S}_{0}  }(v_{\#}  - \ell )\cdot  H   n \, ds \\
=& i\gamma \left( \int_{ \partial  \mathcal{S}_{0}} \widehat{(v_{\#} -\ell)}\widehat{H} \, dz \right)^*\\
=& i\gamma \Bigg( \Big(-\ell_{1} +i\ell_{2} \Big) \int_{ \partial  \mathcal{S}_{0}}\widehat{H} \, dz   \Bigg)^*\\
=& i\gamma \Bigg( \Big(-\ell_{1} +i\ell_{2} \Big) \Bigg)^*\\
=& -\gamma \ell^\perp .
\end{split}
\end{equation*}

For $i=3$, we compute by Lemma \ref{blasius} and the Cauchy's residue theorem that
\begin{equation*}\begin{split}
\gamma\int_{\partial  \mathcal{S}_{0}  }(v_{\#}  - \ell )\cdot  H   K_3 \, ds=&\gamma{\rm Re} \left( \int_{ \partial  \mathcal{S}_{0}} z (\widehat{v_{\#} -\ell})\widehat{H}  \, dz \right) \\
=& \gamma{\rm Re} \left(  \Big(-\ell_{1} +i\ell_{2} \Big) \int_{ \partial  \mathcal{S}_{0}}z\widehat{H} \, dz \right)\\
=& \gamma{\rm Re} \left(  \Big(-\ell_{1} +i\ell_{2} \Big) (\xi_{1}+i\xi_{2})   \right)\\
=&-  \gamma  \ell \cdot \xi .
\end{split}\end{equation*}
\ \par
For the last term, we use that $\nabla \Phi_{3} -x^\perp$ and $H$ are tangent to the boundary, and we write by Lemma \ref{blasius} and by the Cauchy's residue theorem:
\begin{eqnarray*}
\left( \gamma r  \int_{\partial  \mathcal{S}_{0}  } (\nabla \Phi_{3} - x^\perp)\cdot  H   K_i \, ds\right)_{i=1,2}
&=&  i \gamma r  \left( \int_{ \partial  \mathcal{S}_{0}} (\widehat{\nabla \Phi_{3} -x^\perp})\widehat{H} \, dz \right)^*  \\
&=&  i \gamma r  \left(  i\int_{ \partial  \mathcal{S}_{0}} \bar{z}\widehat{H}  \, dz\right)^* \\
&=& \gamma r  \left(  \int_{ \partial  \mathcal{S}_{0}} \bar{z}\widehat{H}  \, dz\right)^*
 = \gamma  r \xi,
\end{eqnarray*}
where we have used that $-\widehat{x^\perp}=i\bar{z}$ and 
\begin{equation}\label{zH}
 \left(  \int_{ \partial  \mathcal{S}_{0}} \bar{z}\widehat{H}  \, dz \right)^* = \int_{ \partial  \mathcal{S}_{0}} z \widehat{H}  \, dz, 
\end{equation}
the latter being easily shown by using a parametrization.
For $i=3$, we have that:
\begin{eqnarray*}
\gamma r  \int_{\partial  \mathcal{S}_{0}  } (\nabla \Phi_{3} - x^\perp)\cdot  H  K_3 \, ds&=& \gamma  r {\rm Re} \left( \int_{ \partial  \mathcal{S}_{0}} z (\widehat{\nabla \Phi_{3} -x^\perp})\widehat{H}  \, dz \right) \\
&=&  \gamma  r {\rm Re} \left( i \int_{ \partial  \mathcal{S}_{0}} z \bar{z}\widehat{H}  \, dz \right)=0,
\end{eqnarray*}
which can also be shown by using a parametrization.
%
%
%
Gathering the equalities above yields \eqref{C12b}.
This ends the proof of Lemma \ref{PropCib}. 
\end{proof} 
We now turn to $C_{i}$ (see \eqref{Cic}). From Lemma \ref{blasius}, \eqref{HSeriesLaurent} and Cauchy's Residue Theorem, we deduce that 
\begin{equation}
  \label{T7}
C_{1}=C_{2}=C_{3}=0 .
\end{equation}
Indeed, we can verify that $\int_{\partial  \mathcal{S}_{0} } z (\widehat{H})^2  \, dz=-i/(2\pi) $ is purely imaginary.
\par

Plugging  \eqref{Ciad}, \eqref{C3ad},   \eqref{C12b} and    \eqref{T7}
in \eqref{clacla}  and using that for any $p_a := (\ell_a, \omega_a)$ and $p_b := ({\ell}_b, \omega_b) $ in  $  \mathbb{R}^{2}  \times  \mathbb{R} $, 
$p_a \times  {p}_b = (  \, \omega_a  \;  {\ell}^{\perp}_b -    {\omega}_b \;   \ell^{\perp}_a ,  \ell^{\perp}_a \cdot {\ell}_b) $, yields \eqref{eq}. 
This ends the complex-analytic proof of Theorem  \ref{reup}.

 \subsection{A real-analytic  proof of Theorem \ref{reup}}
 \label{nabedian}

We now consider another approach which dates back to Lamb.
We  therefore go back to Lemma \ref{Refo} and provide an alternative real-analytic  end of the   proof of Theorem \ref{reup}. 
Of crucial importance is the following identity which we will  use instead of Blasius' lemma though to different terms. 

Let 
\begin{equation*} 
\zeta_{1}  (x) := e_{1} ,
 \quad
\zeta_{2}  (x) := e_{2}    \quad
\text{ and } \quad
\zeta_{3}  (x) := x^\perp .
\end{equation*}
denote the elementary rigid velocities.
The following lemma seems to originate from Lamb's work. %
\begin{lem} \label{lamb}
For any pair of vector fields $(u,v)$ in $C^\infty (\overline{\mathbb{R}^2 \setminus \mathcal S_0} ; \mathbb{R}^2 )$ satisfying 
\begin{itemize}
\item $\operatorname{div} u = \operatorname{div} v = \operatorname{curl} u  = \operatorname{curl} v = 0$, 
\item $u(x) = O(1/|x|)$ and $v(x) = O(1/|x|)$ as $|x| \rightarrow + \infty$,
\end{itemize}
one has, for any $i=1,2,3$, 
\begin{equation} \label{blasius-formu}
\int_{\partial \mathcal S_0} (u\cdot v) K_{i} {\rm d}s 
= \int_{\partial  \mathcal S_0} \zeta_{i}   \cdot \Big(  (u \cdot n) v +  (v \cdot n)  u  \Big){\rm d}s .
\end{equation}
\end{lem}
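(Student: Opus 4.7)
The plan is to recast Lemma \ref{lamb} as a vanishing-flux statement for a divergence-free auxiliary field in $\mathcal{F}_0$. Setting
\[
w := (u \cdot v)\zeta_i - (\zeta_i \cdot v)\, u - (\zeta_i \cdot u)\, v ,
\]
and using $K_i = \zeta_i \cdot n$, the identity \eqref{blasius-formu} becomes $\int_{\partial \mathcal{S}_0} w \cdot n \, ds = 0$. The first step is to show that $\operatorname{div} w = 0$ in $\mathcal{F}_0$, and the second is to apply the divergence theorem on $\Omega_R := B_R \cap \mathcal{F}_0$ and pass to the limit $R \to \infty$.

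For the divergence computation, since $\operatorname{div} \zeta_i = 0$ for $i = 1, 2, 3$ (trivial for $i = 1, 2$; for $i = 3$ one checks $\operatorname{div}(x^\perp) = 0$) and $\operatorname{div} u = \operatorname{div} v = 0$, the expression reduces to
\[
\operatorname{div} w = \nabla(u \cdot v) \cdot \zeta_i - \nabla(\zeta_i \cdot v) \cdot u - \nabla(\zeta_i \cdot u) \cdot v.
\]
Applying the vector-calculus identity \eqref{vect} together with $\operatorname{curl} u = \operatorname{curl} v = 0$ and, for $i = 3$ only, the elementary facts $v \cdot \nabla(x^\perp) = v^\perp$ and $\operatorname{curl}(x^\perp) = 2$, the computation boils down to the symmetry $(a \cdot \nabla v) \cdot b = (b \cdot \nabla v) \cdot a$, valid for any curl-free $v$ since it is just the restatement of $\partial_j v^k = \partial_k v^j$; this kills all $\nabla u, \nabla v$ contributions. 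The residue for $i = 3$ is $u \cdot v^\perp + v \cdot u^\perp$, which vanishes by the antisymmetry $a \cdot b^\perp = -b \cdot a^\perp$.

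Since the paper's convention has $n$ pointing out of the fluid and hence into $\mathcal{S}_0$, it coincides with the outward normal to $\Omega_R$ on $\partial \mathcal{S}_0$, so the divergence theorem gives
\[
0 = \int_{\partial B_R} w \cdot \frac{x}{R} \, ds + \int_{\partial \mathcal{S}_0} w \cdot n \, ds.
\]
The conclusion will follow once the integral on $\partial B_R$ is shown to vanish as $R \to \infty$. For $i = 1, 2$ this is immediate: $\zeta_i$ is bounded so $|w| = O(|x|^{-2})$ and the integral on $\partial B_R$ is $O(R^{-1})$. For $i = 3$, the contribution $(u \cdot v)(\zeta_3 \cdot x/R)$ vanishes pointwise since $x^\perp \cdot x = 0$; for the remaining two terms one uses that any div-free, curl-free field in the exterior of $\mathcal{S}_0$ decaying at infinity admits a Laurent-type expansion whose leading coefficient encodes the flux $\int_{\partial \mathcal{S}_0} u \cdot n \, ds$ (zero in the intended applications, namely the Kirchhoff potentials via \eqref{Stokes0} and the harmonic field $H$ via $H \cdot n|_{\partial \mathcal{S}_0} = 0$), so that $u \cdot x/R = O(R^{-2})$ and the integral vanishes in the limit.

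The main obstacle is precisely this last point: the bare pointwise hypothesis $u, v = O(1/|x|)$ gives only $O(1)$ for the boundary integral in the torque case $i = 3$, and one genuinely needs the finer asymptotic structure of harmonic two-dimensional vector fields (equivalently, the implicit vanishing of the flux across $\partial \mathcal{S}_0$) to conclude. By contrast, the divergence computation itself is routine once one has identified the auxiliary field $w$; the whole proof then amounts to the observation that the difference between the two sides of \eqref{blasius-formu} is the boundary flux of a divergence-free field in $\mathcal{F}_0$.
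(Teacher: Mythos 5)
Your argument is correct and is, at bottom, the paper's own proof repackaged: the paper passes from $\int_{\partial\mathcal S_0}(u\cdot v)\,\zeta_i\cdot n\,{\rm d}s$ to $\int_{\mathcal F_0}\zeta_i\cdot(u\cdot\nabla v+v\cdot\nabla u)\,{\rm d}x$ via the divergence theorem and the curl-free identity \eqref{vect}, then integrates by parts back to the boundary; your field $w$ is exactly the vector field whose flux is computed in those two steps, and your symmetry $(a\cdot\nabla v)\cdot b=(b\cdot\nabla v)\cdot a$ for curl-free $v$ is the same cancellation. Where you genuinely add something is the contribution at infinity for $i=3$, and your concern there is well founded: the paper dismisses it on the grounds that $\zeta_3$ is orthogonal to the normal of a centered circle, but that only kills the term $(u\cdot v)(\zeta_3\cdot x/R)$, not the cross terms $(\zeta_3\cdot v)(u\cdot x/R)$ and $(\zeta_3\cdot u)(v\cdot x/R)$ produced by the final integration by parts. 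Under the stated hypotheses alone these tend to a limit proportional to $\gamma_v\Phi_u+\gamma_u\Phi_v$, where $\gamma_f$ and $\Phi_f$ denote the circulation and the flux of $f$ across $\partial\mathcal S_0$; for instance $u=x/|x|^2$ and $v=x^\perp/(2\pi|x|^2)$ outside the unit disk give $0$ on the left of \eqref{blasius-formu} and $-1$ on the right, so the $i=3$ identity really does require the additional information that $u$ and $v$ have zero flux across $\partial\mathcal S_0$ (equivalently, that the $1/z$ term of the Laurent expansion of $\widehat u$, $\widehat v$ is purely circulatory, whence $u\cdot x/R=O(R^{-2})$ as you use). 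As you observe, this holds for every field to which the lemma is applied — the Kirchhoff potentials by \eqref{Stokes0}, the harmonic field $H$ by tangency, and the full velocity $v$ because its normal trace $(\ell+rx^\perp)\cdot n$ integrates to zero over $\partial\mathcal S_0$ — so your resolution is the right one; it should simply be recorded as an extra hypothesis (or a remark) accompanying the statement for $i=3$.
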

\begin{proof}[Proof of Lemma~\ref{lamb}]
Let us  start with the case where $i=1$ or $2$. 
Then 
\begin{equation} \label{Bla1}
\int_{\partial  \mathcal S_0} (u\cdot v) K_{i} {\rm d}s  
= \int_{\partial  \mathcal S_0} \big( (u\cdot v) \zeta_{i} \big) \cdot n{\rm d}s 
= \int_{\mathbb{R}^2 \setminus\mathcal S_0} \operatorname{div} \big( (u\cdot v) \zeta_{i} \big)    \, {\rm d}x ,
\end{equation}
by using that $u(x) = O(1/|x|)$ and $v(x) = O(1/|x|)$ when $|x| \rightarrow + \infty$.
Therefore
\begin{equation} \label{Bla2}
\int_{\partial  \mathcal S_0} (u\cdot v) K_{i} {\rm d}s  
= \int_{\mathbb{R}^2 \setminus\mathcal S_0} \zeta_{i} \cdot \nabla (u\cdot v)     \, {\rm d}x 
= \int_{\mathbb{R}^2 \setminus\mathcal S_0} \zeta_{i}  \cdot (  u \cdot \nabla  v + v\cdot \nabla u )  \, {\rm d}x ,
\end{equation}
using that $ \operatorname{curl} u  = \operatorname{curl} v = 0$.
Now, integrating by parts, using that $\operatorname{div} u = \operatorname{div} v = 0$ and once again that $u(x) = O(1/|x|)$ and $v(x) = O(1/|x|)$ as $|x| \rightarrow + \infty$, we obtain \eqref{blasius-formu} when $i=1$ or $2$. \par
We now tackle the case where $i=3$.
We follow the same lines as above, with two precisions.
First we observe that there is no contribution at infinity in \eqref{Bla1} and \eqref{Bla2} when $i=3$ as well.
Indeed $ \zeta_{3}$ and the normal to a centered circle are orthogonal. 
Moreover there is no additional distributed term coming from the integration by parts in \eqref{Bla2} when $i=3$  since
\begin{equation*}
\int_{\mathbb{R}^2 \setminus\mathcal S_0}
v \cdot (  u \cdot \nabla_x  \zeta_{i}  )+  u \cdot (v\cdot \nabla_x  \zeta_{i}  )     \, {\rm d}x
= \int_{\mathbb{R}^2 \setminus\mathcal S_0} ( v \cdot   u^\perp  +  u \cdot v^\perp )   \, {\rm d}x = 0.
\end{equation*}
\end{proof}
As a consequence, using Lamb's lemma and the boundary conditions \eqref{Euler13} we obtain for any $i=1,2,3$, 
\begin{eqnarray*}
 \frac{1}{2} \int_{\partial  \mathcal{S}_{0}  } |{v}|^2  K_i \, ds 
 &=&
\int_{\partial  \mathcal{S}_{0} }  (v \cdot n) (v \cdot \zeta_{i} )  \, ds
\\ &=& \int_{\partial  \mathcal{S}_{0}  } \big( (\ell + r x^\perp)\cdot  n \big)  \big( v \cdot \zeta_{i}  \big)  \, ds
\\ &=& \int_{\partial  \mathcal{S}_{0}  } \big( (\ell + r x^\perp)\cdot  n \big)  \big( v \cdot n \big)  K_{i}  \, ds
\\ && \quad + \int_{\partial  \mathcal{S}_{0}  } \big( (\ell + r x^\perp)\cdot  n \big)  \big( v \cdot \tau  \big) \big(  \zeta_{i} \cdot  \tau \big)  \, ds 
\end{eqnarray*}
so that the right hand side of \eqref{anoter} can be recast as follows:
\begin{eqnarray*}
&& \frac{1}{2} \int_{\partial  \mathcal{S}_{0}  } |{v}|^2  K_i \, ds  - \int_{\partial  \mathcal{S}_{0}  } (\ell + r x^\perp)\cdot {v}  K_i \, ds 
 \\  &&\quad = - \int_{\partial  \mathcal{S}_{0}  } \big( (\ell + r x^\perp)\cdot \tau  \big)  \big( v \cdot \tau \big)  K_{i}  \, ds
+ \int_{\partial  \mathcal{S}_{0}  } \big( (\ell + r x^\perp)\cdot  n \big)  \big( v \cdot \tau  \big) \big(  \zeta_{i} \cdot  \tau \big)  \, ds 
\\ && \quad = 
 \sum_{k}  \, p_{k}  \,  \int_{\partial  \mathcal{S}_{0}  } \,   \big( v\cdot \tau  \big) 
  [ \big(  \zeta_{i} \cdot  \tau \big) K_{k}  -  \big(  \zeta_{k} \cdot  \tau \big)   K_{i}   ] \, ds ,
 \end{eqnarray*}
for any $i=1,2,3$, where the sum runs for $k$ over $1,2,3$.

 \subsubsection{Computation of the brackets}

We now compute the brackets $ [ \big(  \zeta_{i} \cdot  \tau \big) K_{k}  -  \big(  \zeta_{k} \cdot  \tau \big)   K_{i}   ]$, for $i,k =1,2,3$,  in order to make explicit the previous integrals.
\ \par \ 
$\bullet$ For $i=1$, we therefore obtain that
\begin{eqnarray*}
 && \frac{1}{2} \int_{\partial  \mathcal{S}_{0}  } |{v}|^2  K_i \, ds  - \int_{\partial  \mathcal{S}_{0}  } (\ell + r x^\perp)\cdot {v}  K_i \, ds 
  = 
   \\ && \quad   p_{2}  \,  \int_{\partial  \mathcal{S}_{0}  } \,   \big( v \cdot \tau  \big) 
  [ \big(  \zeta_{1} \cdot  \tau \big) K_{2}  -  \big(  \zeta_{2} \cdot  \tau \big)   K_{1}   ] \, ds 
 \\ && \quad +  p_{3}  \,  \int_{\partial  \mathcal{S}_{0}  } \,   \big( v \cdot \tau  \big) 
  [ \big(  \zeta_{1} \cdot  \tau \big) K_{3}  -  \big(  \zeta_{3} \cdot  \tau \big)   K_{1}   ] \, ds .
\end{eqnarray*}
Using that 
\begin{eqnarray*}
   \big(  \zeta_{1} \cdot  \tau \big) K_{2}  -  \big(  \zeta_{2} \cdot  \tau \big)   K_{1}   
&=&   \big(  \zeta_{1} \cdot  \tau \big) \big(  \zeta_{2} \cdot  n \big)    -  \big(  \zeta_{2} \cdot  \tau \big)       \big(  \zeta_{1} \cdot  n \big) 
\\ &=& 
 \big(  \zeta_{2} \cdot n\big)^2  + \big(  \zeta_{2} \cdot  \tau \big)^2    
 \\ &=& 1
\end{eqnarray*}
and
\begin{eqnarray*}
 \big(  \zeta_{1} \cdot  \tau \big) \big(  \zeta_{3} \cdot  n \big)
   -  \big(  \zeta_{3} \cdot  \tau \big)   \big(  \zeta_{1} \cdot  n \big)
 &=&
   \big(  \zeta_{2} \cdot n \big) \big(  \zeta_{3} \cdot  n \big)
   +  \big(  \zeta_{3} \cdot  \tau \big)   \big(  \zeta_{2} \cdot  \tau \big)
    \\ &=& \zeta_{2} \cdot  \zeta_{3} ,
\end{eqnarray*}
we infer from  the decomposition \eqref{vdecomp} and  \eqref{eqB/L} that 
\begin{eqnarray*}
 && \frac{1}{2} \int_{\partial  \mathcal{S}_{0}  } |{v}|^2  K_i \, ds  - \int_{\partial  \mathcal{S}_{0}  } (\ell + r x^\perp)\cdot {v}  K_i \, ds 
   =  \gamma \ell_{2} +  \gamma r  \zeta_{2} \cdot  \xi^\perp 
 \\ && \qquad \qquad    + r   \sum_{j=1}^3  \, p_{j}  \, \zeta_{2} \cdot  \int_{\partial  \mathcal{S}_{0}  }  \big(   \nabla \Phi_{j}  \cdot  \tau \big)  x^\perp\, ds .
\end{eqnarray*}
\ \par \ 
$\bullet$ For $i=2$, 
\begin{eqnarray*}
 &&  \frac{1}{2} \int_{\partial  \mathcal{S}_{0}  } |{v}|^2  K_i \, ds  - \int_{\partial  \mathcal{S}_{0}  } (\ell + r x^\perp)\cdot {v}  K_i \, ds 
  = 
   \\ && \quad   p_{1}  \,  \int_{\partial  \mathcal{S}_{0}  } \,   \big( v \cdot \tau  \big) 
  [ \big(  \zeta_{2} \cdot  \tau \big) K_{1}  -  \big(  \zeta_{1} \cdot  \tau \big)   K_{2}   ] \, ds 
 \\ && \quad +  p_{3}  \,  \int_{\partial  \mathcal{S}_{0}  } \,   \big( v \cdot \tau  \big) 
  [ \big(  \zeta_{2} \cdot  \tau \big) K_{3}  -  \big(  \zeta_{3} \cdot  \tau \big)   K_{2}   ] \, ds 
\end{eqnarray*}
But
$$ 
\big(  \zeta_{2} \cdot  \tau \big) K_{1}  -  \big(  \zeta_{1} \cdot  \tau \big)   K_{2}  
= 
- \big(  \zeta_{2} \cdot  \tau \big)^2 -  \big(  \zeta_{1} \cdot  \tau \big)^2 
=-1 ,
$$
and
$$
\big(  \zeta_{2} \cdot  \tau \big) K_{3}  -  \big(  \zeta_{3} \cdot  \tau \big)   K_{2} 
= -\big(  \zeta_{1} \cdot n \big) \big(  \zeta_{3} \cdot  n \big)    -  \big(  \zeta_{3} \cdot  \tau \big)   \big(  \zeta_{1} \cdot \tau \big)
= - \zeta_{1} \cdot  \zeta_{3} , 
$$
so that 
\begin{eqnarray*}
 && \frac{1}{2} \int_{\partial  \mathcal{S}_{0}  } |{v}|^2  K_i \, ds  - \int_{\partial  \mathcal{S}_{0}  } (\ell + r x^\perp)\cdot {v}  K_i \, ds 
  = 
-  \gamma \ell_{1} -   \gamma r  \zeta_{1} \cdot  \xi^\perp 
   \\ && \qquad \qquad    - r   \sum_{j=1}^3  \, p_{j}  \, \zeta_{1} \cdot  \int_{\partial  \mathcal{S}_{0}  }  \big(   \nabla \Phi_{j}  \cdot  \tau \big)  x^\perp\, ds .
\end{eqnarray*}
Moreover, by an integration by parts, 
$$
\int_{\partial  \mathcal{S}_{0}  }  \big(   \nabla \Phi_{j}  \cdot  \tau \big)  x^\perp\, ds 
= - 
\int_{\partial  \mathcal{S}_{0}  } \Phi_{j}  \cdot n \, ds  
=
- 
   \big( \int_{\partial  \mathcal{S}_{0}  } \Phi_{j} \partial_n  \Phi_{i}  \, ds  )_{i} 
   =
-    \big( m_{i,j} \big)_{i} ,
$$
where $i$ runs over $1,2$, 
so that 
\begin{eqnarray*}
-  \big( \frac{1}{2} \int_{\partial  \mathcal{S}_{0}  } |{v}|^2  K_i \, ds  - \int_{\partial  \mathcal{S}_{0}  } (\ell + r x^\perp)\cdot {v}  K_i \, ds   \big)_{i=1,2} 
    &=& - r  \big(   \mathcal{M}^{\flat} \ell + r  \begin{pmatrix} m_{1,3} \\ m_{2,3}   \end{pmatrix}  \big)^{\perp}  
  \\ && \quad +  \gamma  \big(  \ell^{\perp} - r  \xi \big) .
\end{eqnarray*}
$\bullet$ Proceeding in the same way for $i=3$ and using the definitions  \eqref{DefGammag} and  \eqref{VectMass} we finally arrive at \eqref{eq}.
This ends the real-analytic proof of Theorem  \ref{reup}.


 %

\subsection{Zero radius limit}
\label{sec-zr-unbounded}

We now assume that, for every $\varepsilon \in (0,1]$, the domain occupies 
\eqref{DomInit} and for every $q=(h,\theta)\in\mathbb R^3$, 
\begin{equation} \label{def-solide-scaled}
\mathcal S^\varepsilon(q) :=R(\theta)\mathcal S^\varepsilon_0 +h \text{ and }\mathcal F^\varepsilon(q)=\mathbb R^2 \setminus\bar{\mathcal S}^\varepsilon(q).
\end{equation}
 We will treat at once both the  massive and massless cases. 
 The following statement implies Theorem \ref{thm-intro-sec1-VO}.

\begin{thm} 
\label{pasdenom-VO}
We consider a rescaled initial domain $\mathcal{S}_0$ occupied by the body, some  initial solid translation and rotation velocities $( \ell_0 , r_0 )$  and a  circulation $ \gamma$  in $ \mathbb R$ in the case of a  massive particle and in  $ \mathbb R^*$ in the  case of a massless particle, all 
  independent of $\varepsilon$.
Let, for each $\varepsilon > 0$, the solution $h^\varepsilon$  associated with an initial solid domain ${\mathcal S}_0^\varepsilon$ defined by \eqref{DomInit}
 with the inertia scaling  described in Definition \ref{massiveP}, 
 and initial data $q(0)= 0$ and $q'  (0) = (\ell_0 ,  r_0 ) $, 
given by Proposition \ref{CP}. 
Then for all $T>0$, as $\varepsilon \to 0$ one has in the case of massive particle (respectively massless particle) 
 $h^\varepsilon \relbar\joinrel\rightharpoonup h$ 
 in $W^{2,\infty}([0,T];\mathbb R^2)$ weak-$\star$  (resp. $W^{1,\infty}([0,T];\mathbb R^2)$ weak-$\star$) 
and 
 $\varepsilon\theta^\varepsilon \relbar\joinrel\rightharpoonup 0$ in $W^{2,\infty}([0,T];\mathbb R)$ weak-$\star$  (resp. $W^{1,\infty}([0,T];\mathbb R)$ weak-$\star$) .
 Moreover the limit time-dependent vector $h$ satisfies  the  equations $m h''  = \gamma (h^\prime)^\perp $  (resp.  $h^\prime = 0 $). 
\end{thm}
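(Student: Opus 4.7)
The plan is to work directly with the ODE of Theorem \ref{pasdenom} applied to $\mathcal{S}_0^\varepsilon=\varepsilon\mathcal{S}_0$, tracking how each coefficient depends on $\varepsilon$ and then passing to the limit. By uniqueness for \eqref{def Phi}, the Kirchhoff potentials rescale as $\Phi_i^\varepsilon(x)=\varepsilon\Phi_i(x/\varepsilon)$ for $i=1,2$ and $\Phi_3^\varepsilon(x)=\varepsilon^2\Phi_3(x/\varepsilon)$. Together with \eqref{DefMasse} and \eqref{DefXi} this yields, for the entries of the added inertia, $m_{i,j}^\varepsilon=\varepsilon^2 m_{i,j}$ when $i,j\in\{1,2\}$, $m_{i,3}^\varepsilon=\varepsilon^3 m_{i,3}$ when $i\in\{1,2\}$, and $m_{3,3}^\varepsilon=\varepsilon^4 m_{3,3}$, and further $\xi^\varepsilon=\varepsilon\xi$, so that $B_\theta^\varepsilon=\mathcal{R}(\theta)(\varepsilon\xi^\perp,-1)^T$; the a-connection $\Gamma_{a,\theta}^\varepsilon$ inherits the same orders by differentiation in $\theta$. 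Combining with Definition \ref{massiveP}, the conservation of the energy \eqref{NR} evaluated at $t=0$ gives
\begin{equation*}
\tfrac12(\mathcal{M}_g^\varepsilon+\mathcal{M}_{a,\theta^\varepsilon}^\varepsilon)q^{\varepsilon\prime}\cdot q^{\varepsilon\prime}=O(\varepsilon^{\min(\alpha,2)}),
\end{equation*}
and the two-sided block bound for $\mathcal{M}_g^\varepsilon+\mathcal{M}_a^\varepsilon$ (whose off-diagonal blocks are under control since $\mathcal{S}_0$ is not a disk, so $\mathcal{M}_\flat$ is positive definite) yields the uniform velocity estimates $|h^{\varepsilon\prime}|+|\varepsilon\theta^{\varepsilon\prime}|\leq C$ in both regimes; these in turn make $\langle\Gamma_{a,\theta^\varepsilon}^\varepsilon,q^{\varepsilon\prime},q^{\varepsilon\prime}\rangle=O(\varepsilon^2)$.

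The key observation is that the leading-order Kutta--Joukowski contribution in the first two components of the original-frame equation \eqref{ODE_ext} reads
\begin{equation*}
\bigl(\gamma q^{\varepsilon\prime}\times B_{\theta^\varepsilon}^\varepsilon\bigr)_{1,2}=\gamma(h^{\varepsilon\prime})^\perp-\gamma\varepsilon\theta^{\varepsilon\prime}R(\theta^\varepsilon)\xi=\gamma(h^{\varepsilon\prime})^\perp+\gamma\varepsilon\tfrac{d}{dt}\bigl[R(\theta^\varepsilon)\xi^\perp\bigr],
\end{equation*}
the second form exhibiting an $O(1)$ amplitude as an exact total time derivative. In the massive case $\alpha=0$, absorbing this term via the modulated unknown $\tilde h^\varepsilon:=h^\varepsilon-\tfrac{\gamma\varepsilon}{m}R(\theta^\varepsilon)\xi^\perp$ gives $m\tilde h^{\varepsilon\prime\prime}=\gamma(h^{\varepsilon\prime})^\perp+O(\varepsilon)$, so $h^\varepsilon=\tilde h^\varepsilon+O(\varepsilon)$ is uniformly bounded in $W^{2,\infty}$; likewise the third component of \eqref{ODE_ext} divided by $\varepsilon$ yields $\mathcal{J}(\varepsilon\theta^\varepsilon)''=\gamma h^{\varepsilon\prime}\cdot R(\theta^\varepsilon)\xi+O(\varepsilon)$, bounded in $L^\infty$, so $\varepsilon\theta^\varepsilon$ is bounded in $W^{2,\infty}$ as well. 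Banach--Alaoglu extracts weak-$\star$ limits, and integrating by parts against test functions sends the total-derivative term to zero, giving the limit equation $mh''=\gamma(h')^\perp$.

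In the massless case $\alpha>0$, $h^{\varepsilon\prime\prime}$ is no longer uniformly bounded, so I would instead integrate the translation equation on $(0,t)$: with $h^\varepsilon(0)=0$ and the modulation identity above,
\begin{equation*}
\bigl(\varepsilon^\alpha m+O(\varepsilon^2)\bigr)\bigl(h^{\varepsilon\prime}(t)-\ell_0\bigr)=\gamma\bigl(h^\varepsilon(t)\bigr)^\perp+O(\varepsilon),
\end{equation*}
so the left-hand side being $o(1)$ forces $h^\varepsilon\to 0$ uniformly on $[0,T]$; combined with the velocity bound this gives $h^\varepsilon\rightharpoonup 0$ in $W^{1,\infty}$ weak-$\star$ and identifies $h'=0$. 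An analogous time-integration of the third component handles $\varepsilon\theta^\varepsilon$ in this regime.

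The main obstacle is identifying the weak-$\star$ limit of $\varepsilon\theta^\varepsilon$ as exactly $0$ in the massive case, rather than some nontrivial function (note that the pointwise value $(\varepsilon\theta^\varepsilon)''(0)\to\gamma\ell_0\cdot\xi/\mathcal{J}$ need not vanish). This reduces to the weak-$\star$ averaging $h^{\varepsilon\prime}\cdot R(\theta^\varepsilon)\xi\rightharpoonup 0$ in $L^\infty([0,T])$: since $\theta^{\varepsilon\prime}$ is generically of order $1/\varepsilon$, one uses the identity $R(\theta^\varepsilon)\xi=-(\theta^{\varepsilon\prime})^{-1}\tfrac{d}{dt}[R(\theta^\varepsilon)\xi^\perp]$ and integration by parts against any $L^1$ test function to gain a factor $\varepsilon$, modulo establishing a pointwise lower bound $|\theta^{\varepsilon\prime}|\gtrsim\varepsilon^{-1}$ on a large set via a continuation argument from the initial datum (with a separate treatment of the degenerate case $r_0=0$, where $\varepsilon\theta^\varepsilon$ starts off already small). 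Once this averaging is in hand, $\mathcal{J}(\varepsilon\theta^\varepsilon)''\rightharpoonup 0$ combined with the vanishing initial data $(\varepsilon\theta^\varepsilon(0),(\varepsilon\theta^\varepsilon)'(0))=(0,\varepsilon r_0)\to(0,0)$ forces $\varepsilon\theta^\varepsilon\rightharpoonup 0$ weakly-$\star$ in $W^{2,\infty}$.
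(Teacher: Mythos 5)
Your overall route is the one the paper takes: rescale the Kirchhoff potentials and the harmonic field to get $\mathcal{M}^\varepsilon_{a,\theta}=\varepsilon^2 I_\varepsilon\mathcal{M}_{a,\theta}I_\varepsilon$, $\xi^\varepsilon=\varepsilon\xi$, work with $p^\varepsilon=((h^\varepsilon)',\varepsilon(\theta^\varepsilon)')$, use energy conservation for the uniform velocity bound, exploit that $\varepsilon(\theta^\varepsilon)'R(\theta^\varepsilon)\xi$ is $\varepsilon$ times an exact time derivative of a bounded quantity so that it vanishes in $W^{-1,\infty}$ weak-$\star$, and in the massless case use $\gamma\neq 0$ to force $(h^\varepsilon)'\to 0$ (your time-integration of the equation is equivalent to the paper's passage to the limit in $W^{-1,\infty}$). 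Two steps, however, do not work as written. First, the modulated unknown $\tilde h^\varepsilon:=h^\varepsilon-\tfrac{\gamma\varepsilon}{m}R(\theta^\varepsilon)\xi^\perp$ is off by one time integration: $m\tilde h^{\varepsilon\prime\prime}$ differs from $mh^{\varepsilon\prime\prime}$ by $\gamma\varepsilon\tfrac{d^2}{dt^2}[R(\theta^\varepsilon)\xi^\perp]$, which involves $(\theta^{\varepsilon\prime})^2=O(\varepsilon^{-2})$ and is therefore of size $\varepsilon^{-1}$, not $O(\varepsilon)$; you would need to subtract $\tfrac{\gamma\varepsilon}{m}\int_0^t R(\theta^\varepsilon)\xi^\perp$ (or modulate the velocity, not the position). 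In fact no modulation is needed here: since $\gamma q^{\varepsilon\prime}\times B^\varepsilon_{\theta^\varepsilon}$ is already bounded in $L^\infty$ by the energy estimate, the $W^{2,\infty}$ bound on $h^\varepsilon$ follows directly, and the total-derivative term is simply passed to the weak limit.

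Second, and more seriously, your mechanism for proving $\Theta=0$ in the massive case is not viable. You propose to establish a pointwise lower bound $|\theta^{\varepsilon\prime}|\gtrsim\varepsilon^{-1}$ ``via a continuation argument from the initial datum'', but the initial datum gives $\theta^{\varepsilon\prime}(0)=r_0=O(1)$, so no continuation from $t=0$ can produce such a bound, and the energy estimate only gives the \emph{upper} bound $|\theta^{\varepsilon\prime}|\leq C/\varepsilon$. The correct argument (the stationary-phase lemma the paper cites, Lemma 10 of \cite{GLS}) runs in the opposite direction: one argues on the putative limit $\Theta$ of $\varepsilon\theta^\varepsilon$, noting that wherever $\Theta'\neq 0$ one has $\theta^{\varepsilon\prime}\approx\Theta'/\varepsilon$ large, so the non-stationary-phase integration by parts you describe applies there and yields $\Theta''=0$ on $\{\Theta'\neq 0\}$; combined with $\Theta(0)=\Theta'(0)=0$ this forces $\Theta\equiv 0$. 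Your integration-by-parts identity is the right tool, but the logic that licenses its use must be this self-consistency argument on the limit, not a lower bound propagated from the initial data.
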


\begin{proof}
In order to compare the influence of  the circulation and of the solid velocity in the zero-radius limit we first consider the harmonic field   $H^{\varepsilon}$ and  the Kirchhoff potentials  $(\nabla \Phi_{i}^\varepsilon)_{i=1,2, 3}$ associated with the rigid body $\mathcal S^\varepsilon(0)$  as the harmonic field   $H$ and  the Kirchhoff potentials  $(\nabla \Phi_{i})_{i=1,2, 3}$ were associated with the rigid body $\mathcal S_0$ in Section \ref{sec-ex}.
They satisfy the following scaling law
$H^{\varepsilon}(x) = \varepsilon^{-1} H \left(x/\varepsilon\right)$,
whereas obey:
$\Phi_{i}^\varepsilon(x)= \varepsilon \Phi_{i}^1(x/\varepsilon)$ for $ i=1,2$, 
 $\Phi_{3}^\varepsilon(x)= \varepsilon^{2} \Phi_{3}^1(x/\varepsilon)$.
Therefore  the harmonic field  $H^{\varepsilon}$ is more singular than  the Kirchhoff potentials $\nabla \Phi_{i}^\varepsilon(x)$ in the vanishingly small limit. 
On the other hand we deduce that the added inertia  is given by the following matrix
\begin{equation}
  \label{wemerde}
\mathcal{M}^{\varepsilon}_{a,\theta}
= \varepsilon ^2 \, I_\varepsilon \mathcal{M}_{a,\theta}I_\varepsilon ,
\end{equation}
where $I_{\varepsilon}$ is the diagonal matrix 
$I_{\varepsilon} :=     \text{diag } (1,1,\varepsilon)$.
This has to be compared with the genuine inertia matrix which, according to Definition \ref{massiveP}, scales as follows:
\begin{equation}
  \label{genouine}
 \mathcal{M}^{\varepsilon}_g :=    \text{diag } (m^\varepsilon  , m^\varepsilon  , \mathcal{J}^{\varepsilon} ) =
\varepsilon^\alpha  I_\varepsilon \mathcal{M}_g I_\varepsilon ,
\end{equation}
where the matrix $\mathcal{M}_g$ is given in terms of $m>0$  and  $\mathcal J>0$  by \eqref{DefMG}.
Recall  that $ \alpha > 0$, $m>0$  and  $\mathcal J>0$ are defined in Definition \ref{massiveP}
and fixed  independent of $\varepsilon$.

Two remarks are in order.
\begin{itemize}
\item First we observe from the comparison of  \eqref{wemerde} and  \eqref{genouine} 
 that the physical case $\alpha = 2$ appears as critical. 
\item Secondly because of the matrix $ I_\varepsilon$ in the right hand sides of the two inertia matrices $ \mathcal{M}^{\varepsilon}_g$ and $\mathcal{M}^{\varepsilon}_{a,\theta}$, it is natural to introduce the vector
$ p^\varepsilon = ( (h^\varepsilon)'  ,  \varepsilon (\theta^\varepsilon)'  )^t $.
Hence the natural counterpart to $(h^\varepsilon)'  $ for what concerns the angular velocity is rather $\varepsilon (\theta^\varepsilon)'  $ than $(\theta^\varepsilon)' $. 
This can also be seen on the boundary condition \eqref{souslab}: when $x$ belongs to $\partial\mathcal S^{\varepsilon} (t)$, 
the term $ (\theta^\varepsilon)'  (x-h^\varepsilon)^\perp$ is of order $\varepsilon (\theta^\varepsilon)'  $ and is added to $ (h^\varepsilon)' $. 
\end{itemize}
 Examining  how the other terms scale with $ \varepsilon$
one obtains:
\begin{equation}  \label{ODE_ext-eps}
 ( \varepsilon^\alpha  \,  \mathcal{M}_g   + \varepsilon^2 \mathcal{M}_{a,\,  \theta^\varepsilon} )  \, (p^\varepsilon)'
  + \varepsilon \langle  \Gamma_{a,\theta^\varepsilon} ,p^\varepsilon,p^\varepsilon\rangle
 =  \gamma p^\varepsilon \times B_{ \theta^\varepsilon}  .
\end{equation}
The energy associated with this scaling is twice 
$( \varepsilon^\alpha \mathcal{M}_g  +\varepsilon^2  \mathcal{M}_{a,\,  \theta^\varepsilon} )  p^\varepsilon \cdot p^\varepsilon $
and its  conservation provides that $p^\varepsilon$ is bounded uniformly with respect to  $\varepsilon$ on the time interval $ [0,+\infty)$, whatever is $\alpha$. 
Now our goal is to pass to the limit in   \eqref{ODE_ext-eps}. Let $T> 0$. 
 Computing the right-hand-side of  \eqref{ODE_ext-eps} gives 
\begin{equation}
 \label{ODE_ext-eps2}
 ( \varepsilon^\alpha  \,  \mathcal{M}_g   + \varepsilon^2 \mathcal{M}_{a,\,  \theta^\varepsilon} )  \, (p^\varepsilon)'
  + \varepsilon\langle  \Gamma_{a,\theta^\varepsilon} ,p^\varepsilon,p^\varepsilon\rangle
 =  \gamma
\begin{pmatrix}
((h^\varepsilon)')^\perp -  \varepsilon (\theta^\varepsilon)'  {R}(\theta^\varepsilon) \xi\\
{R}(\theta^\varepsilon) \xi\cdot (h^\varepsilon)'
\end{pmatrix} .
\end{equation}

We start with the massive case for which $\alpha=0$. Using the equation we deduce some uniform $W^{2,\infty}$ bounds on $h_{\varepsilon}$ and $\varepsilon \theta^\varepsilon$ and this entails the existence of a  subsequence
of $(h^{\varepsilon}, \varepsilon \theta^{\varepsilon})$ converging to $(h,\Theta) $ in $W^{2,\infty}$ weak-$\star$.
Moreover the left hand side of  \eqref{ODE_ext-eps2} (with $\alpha=0$) converges to $ \mathcal{M}_g  ( h''  ,\Theta'' )^t$            
 in $ L^{\infty}   $ weak-$\star$, and using that 
 \begin{equation}
  \label{simplemaiscostaud}
 \varepsilon (\theta^\varepsilon)'  {R}(\theta^\varepsilon) \xi = \varepsilon \big( {R}(\theta^\varepsilon- \pi /2) \xi \big)'
\end{equation}
  converges in  $W^{-1,\infty} $  weak-$\star$ up to a subsequence and that 
the weak-$\star$ convergence in $W^{2,\infty}$ entails the strong $W^{1,\infty}$ one,  we get  from the two first lines of  \eqref{ODE_ext-eps2}
that $m h''= \gamma (h' )^{\perp}$ and $ \big( h(0) , h'(0)  \big) =  \big( 0, \ell_{0}  \big) $.
In order to  prove that $\Theta=0$ one may use a stationary phase argument, cf. Lemma $10$ in \cite{GLS} for more on this.

In the massless case, that is when  $\alpha > 0$, a few modifications in the arguments are in order. 
 First, thanks to the energy estimate, $\varepsilon  \mathcal{M}_{a,\, \theta^\varepsilon}$ is bounded in $W^{1,\infty}$ and 
since   $\mathcal{M}_g $ is constant and $(p^\varepsilon)'$ is bounded uniformly with respect to  $\varepsilon$ in $W^{-1,\infty}$, we can conclude that the left hand side of \eqref{ODE_ext-eps2}
converges to $0$ in $W^{-1,\infty}$ due to the extra powers of $\varepsilon$.
Next, concerning the right hand side, the term $\varepsilon \langle \Gamma_{a,\theta^\varepsilon}, {(p^\varepsilon)}, {(p^\varepsilon)} \rangle$ converges to $0$ in $L^{\infty}$ since the terms inside the brackets are bounded. 
As before the last term in the two first lines of the equation \eqref{ODE_ext-eps2}, converges weakly to $0$ in $W^{-1,\infty}$ 
Hence we infer that $(h^\varepsilon)'$ converges weakly-$\star$ to $0$ in $W^{-1,\infty}$. Due to the a priori estimate, this convergences occurs in $L^{\infty}$ weak-$\star$.
Again this is sufficient to deduce  the strong convergence of $h^{\varepsilon}$ towards some $h$   in $L^{\infty}$, and that
$h'=0$  and $ h (0) = 0 $. This concludes the proof of Theorem \ref{pasdenom-VO}.

\end{proof}

%
\section{Case of a bounded domain}
\label{chap2}

We consider now the case where the system fluid-solid occupies a 
 bounded open regular  connected and simply connected  domain $\Omega$ of $\mathbb R^2$. 
We assume that the body initially occupies  the closed domain $ \mathcal{S}_0 \subset  \Omega$, so that the  domain of the fluid is $\mathcal F_0 =\Omega\setminus{\mathcal S}_0$ 
at the initial time, and 
(without loss of generality) that the center of mass of the solid coincides at the initial time with the origin and that  $0 \in \Omega.$ 
The  domain of the fluid is denoted by  $\mathcal F(t) = \Omega\setminus{\mathcal S}(t)$ at  time $t>0$.
The fluid-solid system is governed by the following set of coupled equations:
\begin{subequations} \label{SYS_full_system}
\begin{gather} 
\frac{\partial u}{\partial t}+(u\cdot\nabla)u +\nabla \pi=0\quad\text{in }\mathcal F(t) , \label{EEE1} \\
\operatorname{div} u=0 \quad \text{in }\mathcal F(t), \label{E2} \\
m h'' =\int_{\partial\mathcal S(t)}\pi n\, {\rm d}s  \quad \text{ and } \quad 
\mathcal J \theta''=\int_{\partial\mathcal S(t)}(x-h(t))^\perp\cdot \pi n\, {\rm d}s,  \label{EqRot} \\
u\cdot n=\big( \ell  + r (\cdot-h)^\perp\big) \cdot n   \quad  \text{on }\partial\mathcal S(t) ,\label{souslab}\\
u\cdot n=0  \quad  \text{on }\partial\Omega, \label{souslabis} \\
u_{t=0}=u_0  \quad  \text{in }\mathcal F_0   \quad \text{ and } \quad 
(h,h' ,\theta,\theta' )  (0)= (0 , \ell_0 , 0 ,    r_0 ).
\end{gather}
\end{subequations}
Above we have denoted by $( \ell_0 , r_0 )$ in $\mathbb{R}^2 \times \mathbb{R}  $ the   initial solid translation and rotation velocities, 
and  by $u_0$ the  compatible initial fluid velocity  associated with $( \ell_0 , r_0 )$ and with the initial circulation  $ \gamma$ in $ \mathbb{R}$
according to Definition \ref{CompDataBd}. 
In particular we still consider the case without any initial vorticity  that is we assume that  the initial velocity $ u_0$ satisfies $\operatorname{curl} u_0 = 0$ in $\mathcal F_0$ (cf. Definition \ref{CompDataBd}),
so that it will remain irrotational for every time, that is 
\begin{equation} \label{irr}
\operatorname{curl} u (t,\cdot) = 0 \text{ in } \mathcal F(t) .
\end{equation}
On the other hand the circulation around the body is constant in time equal to $ \gamma$ 
according to Kelvin's theorem.
Since the domains $\mathcal S(t)$ and $\mathcal F(t)$ depend on $q:=(h , \theta)\in\mathbb R^3.$ only, we shall rather denote them $\mathcal S(q)$ and $\mathcal F(q)$ in the sequel.
Since  we will not consider any collision here, we introduce:
$\mathcal Q :=\{q\in\mathbb R^3\,:\,d(\mathcal S(q),\partial\Omega)>0\}$, where $d(A,B)$ denotes for two sets $A$ and $B$ in the plane that is 
$d(A,B) := \min \, \{  | x-y  |_{\mathbb{R}^{2}} , \ x \in A, \ y \in B \}.$
Above  the notation stands $ | \cdot |_{\mathbb{R}^{d}}$ for the Euclidean norm in $\mathbb{R}^{d}$.
Since $\mathcal S_0$ is a closed subset in the open set $ \Omega$  the initial position $q(0)=0$ of the solid belongs to $\mathcal Q$.

As in the unbounded case of the previous section, our analysis here will rely on a reformulation of the system above as an second-order differential ODE for $q$ together with an auxiliary div-curl type system for the fluid velocity.  
Indeed, again, the solid drives  the dynamics of the coupled system  as a consequence of the added inertia phenomenon. 
However in the case where the system occupies a bounded domain 
the matrix $\mathcal{M}_a$ encoding the added inertia depends not only on $\theta$ but  on the three components of  $q$. 
We therefore extend the definition \ref{Christ} to this new setting. 

\begin{defn}[a-connection] \label{Christq}
Given a $C^{\infty}$ mapping  
$ q\in \mathcal Q  \mapsto {\mathcal M}_{a}(q) \in S^{+}_3 (\mathbb{R})$, 
we say that the  $C^{\infty}$ mapping
$ q\in \mathcal Q  \mapsto  \Gamma_{a} (q)  \in \mathcal{BL} (\mathbb{R}^3 \times \mathbb{R}^3 ; \mathbb{R}^3 ) $  is the a-connection associated with this mapping if for any 
$ p \in\mathbb R^3$,

\begin{equation}
\langle\Gamma_{a} (q)  ,p,p\rangle :=\left(\sum_{1\leqslant i,j\leqslant 3} (\Gamma_{a} (q) )^k_{i,j} p_i p_j  \right)_{1\leqslant k\leqslant 3}\in\mathbb R^3 ,
\end{equation}
with  for every $i,j,k \in\{1,2,3\}$, 
\begin{equation} 
(\Gamma_{a} (q) )^k_{i,j} (q)  :=  \frac12
\Big(  ({\mathcal M}_{a} (q))_{k,j}^{i} + ({\mathcal M}_{a} (q))_{k,i}^{j} - ({\mathcal M}_{a} (q))_{i,j}^{k}  \Big) (q) ,
\end{equation}
where $({\mathcal M}_{a} (q))_{i,j}^{k} $ denotes the partial derivative with respect to $ q_{k}$ of the entry of indexes $(i,j)$ of the matrix ${\mathcal M}_{a} (q)$, that is
\begin{equation} 
\label{PBX}
({\mathcal M}_{a} (q))_{i,j}^{k} := \frac{\partial ({\mathcal M}_{a} (q))_{i,j}}{\partial q_{k}} .
\end{equation}
\end{defn}

Remark \ref{rem-order} is still in order for the definition above. 

\subsection{Reduction to an ODE in the case where $\gamma =0$. Munnier's theorem.}

Let us start with the case where the circulation  $\gamma$ is zero. 
Then the initial fluid velocity  and therefore the velocity at any time is potential (that is a gradient globally on  $\mathcal F(q)$).
The following result  was proven  surprisingly only recently, by Munnier in \cite{Munnier}.
This result consists in a  reformulation of the system~\eqref{SYS_full_system} in terms of an ordinary differential equation for the motion of the rigid body which corresponds to 
geodesics associated with the Riemann metric induced on $\mathcal Q$ by the matrix $ {\mathcal M}_{g}   +\mathcal{M}_{a}(q)$, where we recall that ${\mathcal M}_{g} $ is the genuine inertia defined in Definition \ref{DefMG}.
This reformulation therefore establishes an equivalence of the Newtonian and the Lagrangian points of view in the potential case.
\begin{thm}
 \label{THEO-mumu}
Let be given the open  regular  connected and simply connected bounded cavity $ \Omega$,  the  initial closed domain $\mathcal{S}_0  \subset  \Omega$ occupied by the body, the  initial solid translation and rotation velocities $( \ell_0 , r_0 )$ in $\mathbb{R}^2 \times \mathbb{R}  $.
Assume that the circulation  $ \gamma$ is $0$. 
Let $u_0$ be the associated compatible initial fluid velocity according to Definition \ref{CompDataBd}.
Then there exists  a $C^{\infty}$ mapping $ q\in\mathcal Q  \mapsto {\mathcal M}_{a}(q) \in S^{+}_3 (\mathbb{R})$,  depending only on $ \mathcal S_0$ and $ \Omega$,  
such that  up to the first collision, System~\eqref{SYS_full_system} is equivalent to  the second order differential equation: 
$$ ({\mathcal M}_{g}   + \mathcal{M}_{a}(q) )  q''  + \langle \Gamma_{a}  (q),q',q'\rangle =  0,$$
with Cauchy data $q(0)=0 \in \mathcal Q,  \ q'(0)= (\ell_0 ,r_0)\in   \mathbb R^2 \times \mathbb R$, where $\Gamma_{a}  $  is the a-connection associated with ${\mathcal M}_{a}$ by 
Definition \ref{Christq}.
For any $q \in \mathcal Q$ 
the fluid velocity $u (q,\cdot)$ is the unique solution of  the div-curl type system in the doubly-connected domain $\mathcal F(q)$, constituted of 
\eqref{E2},
\eqref{irr},
\eqref{souslab},
\eqref{souslabis},
together with the prescription of zero circulation. 
\end{thm}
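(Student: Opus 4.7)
The plan is to exploit that, under the zero-circulation hypothesis, Kelvin's theorem forces the fluid to remain irrotational with zero circulation around the body at all times, so that $u(t,\cdot) = \nabla\varphi(t,\cdot)$ for a single-valued potential $\varphi$. This reduces the fluid state to a finite number of degrees of freedom parametrized by $(q,q')$, and the full system collapses to a second-order ODE on $\mathcal Q$.

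First, for each $q\in\mathcal Q$ I would define Kirchhoff-type potentials $\Phi_i(q,\cdot)$, $i=1,2,3$, as the unique (up to an additive constant) harmonic functions on $\mathcal F(q)$ satisfying $\partial_n\Phi_i=K_i$ on $\partial\mathcal S(q)$, with $(K_1,K_2,K_3)=(n_1,n_2,(x-h)^\perp\cdot n)$, and $\partial_n\Phi_i=0$ on $\partial\Omega$; the compatibility condition reduces to \eqref{Stokes0} (up to a translation), and smoothness of $q\mapsto\Phi_i(q,\cdot)$ follows from standard shape calculus for the Neumann problem. I would then set $(\mathcal M_a(q))_{i,j} := \int_{\mathcal F(q)} \nabla\Phi_i(q,\cdot)\cdot\nabla\Phi_j(q,\cdot)\,dx$, which is manifestly symmetric positive semi-definite. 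Given the boundary conditions \eqref{souslab}--\eqref{souslabis} and the harmonicity of $\varphi$ on $\mathcal F(q)$, uniqueness for the Neumann problem on the doubly-connected domain $\mathcal F(q)$ with zero circulation forces $\varphi = q'_1\Phi_1+q'_2\Phi_2+q'_3\Phi_3$ at each time, so the auxiliary div--curl system in the statement admits $u(q,\cdot)=\sum_i q'_i\nabla\Phi_i(q,\cdot)$ as its unique solution.

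Next, I would extract the ODE from \eqref{EqRot} via Bernoulli's law. Applying the identity \eqref{vect} to $u=\nabla\varphi$ yields $\pi = -\partial_t\varphi - \tfrac12|\nabla\varphi|^2 + c(t)$, so substituting and using $\partial_n\Phi_i = K_i$ turns each force/torque integral into $-\int_{\partial\mathcal S(q)}(\partial_t\varphi)\,\partial_n\Phi_i\,ds - \tfrac12\int_{\partial\mathcal S(q)}|\nabla\varphi|^2 K_i\,ds$. Green's identity together with the homogeneous Neumann condition on $\partial\Omega$ turns the first integral into $\int_{\mathcal F(q)}\nabla(\partial_t\varphi)\cdot\nabla\Phi_i\,dx$; writing $\partial_t\varphi = \sum_k q''_k\Phi_k + \sum_{j,k}q'_jq'_k\,(\partial_{q_j}\Phi_k)$ identifies the $q''$-contribution as $\mathcal M_a(q)q''$ and, combined with the genuine inertia $\mathcal M_g q''$ coming from the left-hand side of \eqref{EqRot}, produces the expected linear part of the ODE.

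The hard part will be to organize the quadratic-in-$q'$ remainder into the a-connection form $\langle\Gamma_a(q),q',q'\rangle$ of Definition \ref{Christq}. A direct attack would require taming the shape derivatives $\partial_{q_k}\Phi_i$, which solve nontrivial Neumann problems on $\mathcal F(q)$ whose data encode the infinitesimal deformation of $\partial\mathcal S(q)$. To bypass this bookkeeping I would argue via the variational principle: the ``fluid + solid'' system enjoys the Arnold-type geodesic structure recalled in the introduction (cf.~\cite{GS-Arnold}), and since in the present potential setting the fluid is entirely enslaved by $(q,q')$ through the Kirchhoff representation, this infinite-dimensional variational principle reduces to the finite-dimensional action $\int \tfrac12(\mathcal M_g+\mathcal M_a(q))q'\cdot q'\,dt$, whose Euler--Lagrange equations are, by the standard Riemannian computation, precisely $(\mathcal M_g+\mathcal M_a(q))q''+\langle\Gamma_a(q),q',q'\rangle=0$ with $\Gamma_a$ as in Definition \ref{Christq}. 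The converse direction --- a classical solution of the ODE yields, via the Kirchhoff representation, a classical solution of \eqref{SYS_full_system} as long as $q$ remains in $\mathcal Q$ --- follows by reversing the derivation and invoking standard ODE theory together with the positive-definiteness of $\mathcal M_g+\mathcal M_a(q)$.
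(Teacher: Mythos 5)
Your setup coincides with the paper's: Kirchhoff potentials $\Phi_i(q,\cdot)$ on $\mathcal F(q)$ with homogeneous Neumann data on $\partial\Omega$, the added-mass matrix $(\mathcal M_a(q))_{i,j}=\int_{\mathcal F(q)}\nabla\Phi_i\cdot\nabla\Phi_j\,dx$, the representation $u=\nabla(\boldsymbol\Phi(q,\cdot)\cdot q')$, elimination of the pressure via Bernoulli, and the identification of the $q''$-part of the hydrodynamic force as $\mathcal M_a(q)q''$. The gap is in how you treat the quadratic-in-$q'$ remainder. You propose to bypass the shape-derivative bookkeeping by invoking the Arnold-type variational principle for the full ``fluid + solid'' system and asserting that, since the fluid is enslaved to $(q,q')$, this principle \emph{reduces} to the finite-dimensional action $\int\frac12(\mathcal M_g+\mathcal M_a(q))q'\cdot q'\,dt$. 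That reduction is essentially the content of the theorem and does not follow from the cited principle without further argument. Concretely, the variational principle of \cite{GS-Arnold} characterizes solutions as critical points of the action among variations of the \emph{pair} of flow maps with both endpoint configurations (fluid and solid) prescribed. A variation of $q(\cdot)$ alone, with the fluid slaved to it through the Kirchhoff representation, does not fix the terminal fluid flow map, so such curves are not admissible competitors for the full principle; conversely, criticality of the reduced action on this non-admissible family does not a priori imply anything about the full system. Repairing this requires either an Euler--Poincar\'e/relabelling-symmetry reduction combined with Kelvin's minimum-energy characterization of the potential flow (a Routh-type reduction by minimization over the fluid variables), or a direct computation. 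The paper itself stresses that this geodesic reformulation in a bounded cavity was proven only recently by Munnier, which signals that it cannot be waved through.

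What the paper actually does is prove the self-contained identity of Lemma \ref{LEM_1}: for any smooth curve $q(t)$ in $\mathcal Q$ and any $p^\ast$,
\begin{equation*}
\int_{\mathcal F(q)}\Big(\frac{\partial u_1}{\partial t}+\tfrac12\nabla|u_1|^2\Big)\cdot\nabla(\boldsymbol\Phi\cdot p^\ast)\,{\rm d}x=\Big(\frac{d}{dt}\frac{\partial\mathcal E_1}{\partial p}-\frac{\partial\mathcal E_1}{\partial q}\Big)\cdot p^\ast,\qquad \mathcal E_1(q,p)=\tfrac12\,\mathcal M_a(q)p\cdot p,
\end{equation*}
established by repeated use of the Reynolds transport theorem and integrations by parts; the a-connection then drops out of the elementary Euler--Lagrange computation for the quadratic form $\mathcal E_1$ (Lemma \ref{niou}). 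If you wish to keep the variational route you must supply the reduction argument; otherwise you need to carry out the Reynolds-transport computation, which is precisely the bookkeeping you set out to avoid.
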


Indeed we are going to provide a quite explicit  expression of ${\mathcal M}_{a}(q)$.
Consider the functions $\zeta_{j}$, for $j=1,2,3$, defined for $q=(h,\theta) \in \mathcal Q$ and $x \in  \mathcal F(q) $, by the formula
$\zeta_{j}  (q,x) := e_{j}, \text{ for } j=1,2
\text{ and } \zeta_{3}  (q,x) := (x-h)^\perp $.
Above $e_1$ and $e_2$ are the unit vectors of the canonical basis.
We introduce $ \boldsymbol\Phi :=(\Phi_1,\Phi_2,\Phi_3)^{t}$ where 
 the Kirchhoff's potentials $\Phi_j(q,\cdot)$, for $j=1,2,3$,  are the unique (up to an additive constant) solutions in $\mathcal F(q)$ of the following Neumann problem:
\begin{equation} \label{Kir}
\Delta \Phi_j = 0   \text{ in } \mathcal F(q),\quad
\frac{\partial \Phi_j}{\partial n} (q,\cdot)= n \cdot  \zeta_{j} (q,\cdot)  \text{ on }\partial\mathcal S(q), \quad
\frac{\partial\Phi_j}{\partial n}(q,\cdot)  =0  \text{ on } \partial \Omega.
\end{equation}
We can now define the added inertia
\begin{gather}
\label{def-MAq}
\mathcal{M}_a (q)  := \int_{\partial \mathcal S(q)}\boldsymbol\Phi(q,\cdot)\otimes\frac{\partial\boldsymbol\Phi}{\partial n}(q,\cdot){\rm d}s
= \Big( \int_{ {\mathcal F}(q) } \nabla \Phi_{i}  \cdot  \nabla\Phi_{j} {\rm d}x 
  \Big)_{1  \leqslant i,j  \leqslant  3} .
\end{gather}

The added inertia matrix  $\mathcal{M}_{a} (q)$ is  symmetric  positive-semidefinite. 
%

%
%
\subsection{Proof of Munnier's result: Theorem~\ref{THEO-mumu}}
\label{proofMumu}

The first step of the proof of Theorem~\ref{THEO-mumu} consists in a trade of the fluid pressure against the fluid velocity and its first order derivatives in the body's dynamics.
Indeed we start with the observation that the equations (\ref{SYS_full_system}g-h) can be summarized in the variational form:
\begin{equation}
\label{bern_1}
m h''\cdot\ell^\ast+\mathcal J\theta'' r^\ast=
\int_{\partial\mathcal S(q)}\pi(r^\ast(x-h)^\perp+\ell^\ast)\cdot n{\rm d}s,\quad\forall\,p^\ast=(\ell^\ast ,r^\ast)\in\mathbb R^3.
\end{equation}
Let us associate with $(q,p^\ast) \in \mathcal Q \times \mathbb{R}^3$, with $p^\ast =(p^\ast_1 , p^\ast_2 , p^\ast_3 )$,  the potential vector field 
\begin{equation}
  \label{pot*}
u^\ast :=\nabla (\boldsymbol\Phi(q,\cdot)\cdot p^\ast) = \nabla ( \sum_{j=1}^3 \Phi_j (q,\cdot) p^\ast_j ),
\end{equation}
which is defined on $\mathcal F(q)$. 
The pressure $\pi$ can be recovered by means of Bernoulli's formula which is obtained by combining \eqref{EEE1} and  \eqref{irr}, and which reads:
\begin{equation}
\label{EQ_bernoulli}
\nabla \pi =-\left(\frac{\partial u}{\partial t}+\frac{1}{2}\nabla (u^2)\right)\quad\text{in }\mathcal F(q).
\end{equation}
According to Bernoulli's formula  \eqref{EQ_bernoulli} and upon an integration by parts, identity \eqref{bern_1} can be turned into:
\begin{equation}
\label{bern_2}
mh''\cdot\ell^\ast+\mathcal J\theta''r^\ast=-\int_{\mathcal F(q)}\left(\frac{\partial u}{\partial t}+
\frac{1}{2}\nabla (u^2) \right)\cdot u^\ast{\rm d}x,\quad\forall\,p^\ast=(\ell^\ast ,r^\ast)\in\mathbb R^3.
\end{equation}

So far we have only used that the fluid velocity $u$ is irrotational. 
Let us now use that it is potential and therefore reads as $u=u_1$ with $u_1$ as follows:
\begin{equation} \label{DecompU1}
u_1  (q,\cdot) :=\nabla (\boldsymbol\Phi(q,\cdot)\cdot q') = \nabla ( \sum_{j=1}^3 \Phi_j (q,\cdot)q'_j ) ,
\end{equation}
where $q\in\mathcal Q$. 
For any $q\in\mathcal Q$ this vector field $u_1  (q,\cdot)$  is the only solution to  the div-curl type system in the doubly-connected domain $\mathcal F(q)$, constituted of 
\eqref{E2},
\eqref{irr},
\eqref{souslab},
\eqref{souslabis},
together with the prescription of zero circulation. 
 Observe that besides the dependence with respect to  $\mathcal S_0$, to $\Omega$ and to the space variable, $u_1$ 
depends on $q$ and linearly on $q'$. 
 
 Then   Theorem~\ref{THEO-mumu} will follow from the  following lemma.
\begin{lem} \label{LEM_3}
For any smooth curve $q(t)$ in $\mathcal Q$ and every $p^\ast =(\ell^\ast ,r^\ast) \in\mathbb R^3$, the following identity holds:
\begin{gather} \label{lagrange_1}
m h''\cdot\ell^\ast+\mathcal J\theta'' r^\ast
+ \int_{\mathcal F(q)} \left(\frac{\partial u_1}{\partial t}+\frac{1}{2}\nabla|u_1|^2\right)\cdot u^\ast{\rm d}x
\\ \nonumber =  ({\mathcal M}_{g}   + \mathcal{M}_{a}(q) ) q'' \cdot p^\ast 
+
\langle \Gamma_{a} (q),q',q'\rangle \cdot p^\ast ,
\end{gather}
where $u^\ast$ is given by \eqref{pot*}, $u_{1}$ is given by  \eqref{DecompU1},
$\mathcal{M}_a(q)$ and $\Gamma_{a} (q)$ are defined in \eqref{def-MAq} and  Definition \ref{Christq}.
\end{lem}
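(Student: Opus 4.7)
\emph{Plan.} The plan is to recognise that the right-hand side of \eqref{lagrange_1} is precisely the Euler--Lagrange derivative of the total kinetic energy $T(q,q')=\frac{1}{2}(\mathcal M_g+\mathcal M_a(q))q'\cdot q'$ tested against $p^\ast$. Indeed, a short chain-rule computation starting from $T_f(q,q')=\frac12\mathcal M_a(q)q'\cdot q'$ gives
\[
\frac{d}{dt}\frac{\partial T_f}{\partial q'} - \frac{\partial T_f}{\partial q} = \mathcal M_a(q) q'' + \langle\Gamma_a(q),q',q'\rangle ,
\]
the three Christoffel-type terms in the definition of $(\Gamma_a)^k_{i,j}$ being produced by the symmetrisation of the indices of $q'_iq'_j$. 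Since $mh''\cdot\ell^\ast+\mathcal J\theta''r^\ast=\mathcal M_g q''\cdot p^\ast$ trivially, matters reduce to the fluid identity
\[
\int_{\mathcal F(q)}\Bigl(\frac{\partial u_1}{\partial t}+\tfrac12\nabla|u_1|^2\Bigr)\cdot u^\ast\,dx = \mathcal M_a(q)q''\cdot p^\ast+\langle\Gamma_a(q),q',q'\rangle\cdot p^\ast .
\]

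To establish this I would expand everything in the Kirchhoff basis. With $u_1=\sum_j q'_j\nabla\Phi_j(q,\cdot)$ and $u^\ast=\sum_l p^\ast_l\nabla\Phi_l(q,\cdot)$, one has $\partial_t u_1 = \sum_j q''_j\nabla\Phi_j + \sum_{j,k} q'_jq'_k\nabla\partial_{q_k}\Phi_j$; integrating the $q''$-part against $u^\ast$ yields $\mathcal M_a(q)q''\cdot p^\ast$ at once from \eqref{def-MAq}. For the nonlinear piece, integration by parts against $u^\ast$, using $\operatorname{div} u^\ast=0$, the Neumann data $u^\ast\cdot n=V^\ast\cdot n$ on $\partial\mathcal S(q)$ with $V^\ast:=\sum_l p^\ast_l\zeta_l$, and $u^\ast\cdot n=0$ on $\partial\Omega$, gives
\[
\tfrac12\int_{\mathcal F(q)}\nabla|u_1|^2\cdot u^\ast\,dx = -\tfrac12\sum_{i,j,l} q'_iq'_j p^\ast_l \int_{\partial\mathcal S(q)}(\nabla\Phi_i\cdot\nabla\Phi_j)(\zeta_l\cdot n)\,ds .
\]

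The central tool to match these expressions with $\langle\Gamma_a(q),q',q'\rangle\cdot p^\ast$ is a Reynolds-transport identity for the added inertia. Differentiating \eqref{def-MAq} with respect to $q_k$ and accounting for the fact that $\partial\mathcal S(q)$ moves with infinitesimal velocity $\zeta_k$ while $\partial\Omega$ is fixed yields
\[
\frac{\partial (\mathcal M_a)_{i,j}}{\partial q_k}(q) = \int_{\mathcal F(q)}\nabla\partial_{q_k}\Phi_i\cdot\nabla\Phi_j\,dx + \int_{\mathcal F(q)}\nabla\Phi_i\cdot\nabla\partial_{q_k}\Phi_j\,dx - \int_{\partial\mathcal S(q)}(\nabla\Phi_i\cdot\nabla\Phi_j)(\zeta_k\cdot n)\,ds .
\]
Substituting this formula into $2(\Gamma_a)^l_{j,k}=\partial_{q_j}(\mathcal M_a)_{l,k}+\partial_{q_k}(\mathcal M_a)_{l,j}-\partial_{q_l}(\mathcal M_a)_{j,k}$ and comparing, after symmetrising in $(j,k)$ as permitted by the factor $q'_jq'_k$, with the coefficient of $q'_jq'_k p^\ast_l$ produced on the left-hand side by the two previous steps, the identity follows.

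The main obstacle is the combinatorial bookkeeping: the volume integrals $A_{j,k,l}:=\int_{\mathcal F(q)}\nabla\partial_{q_k}\Phi_j\cdot\nabla\Phi_l\,dx$ carry no a priori symmetry in $(j,k)$, and matching them against the Christoffel combination requires careful use of the Reynolds identity together with the symmetry of $\mathcal M_a$. A useful simplification I would exploit is that, since $\Phi_l$ is harmonic in $\mathcal F(q)$ with $\partial_n\Phi_l=0$ on $\partial\Omega$, integration by parts yields $A_{j,k,l}=-\int_{\partial\mathcal S(q)}(\partial_{q_k}\Phi_j)(n\cdot\zeta_l)\,ds$; this recasts every term in the identity as a boundary integral on $\partial\mathcal S(q)$, after which the verification reduces to a finite algebraic manipulation based on a differentiated form of the Neumann condition $\partial_n\Phi_j=n\cdot\zeta_j$ along the moving boundary.
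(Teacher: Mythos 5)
Your architecture is half right and half shared with the paper: the chain-rule computation showing that the Euler--Lagrange derivative of $T_f(q,p)=\frac12\mathcal M_a(q)p\cdot p$ equals $\mathcal M_a(q)q''+\langle\Gamma_a(q),q',q'\rangle$ is exactly Lemma \ref{niou}, and $mh''\cdot\ell^\ast+\mathcal J\theta''r^\ast=\mathcal M_g q''\cdot p^\ast$ is \eqref{trotriv}. Where you diverge is the fluid integral, and that is where there is a genuine gap. After your Kirchhoff-basis expansion, the $q''$-part does give $\mathcal M_a(q)q''\cdot p^\ast$, but the remaining terms must match $\langle\Gamma_a(q),q',q'\rangle\cdot p^\ast$. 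Substituting your Reynolds formula for $\partial_{q_k}(\mathcal M_a)_{i,j}$ into the definition of $(\Gamma_a)^l_{j,k}$ and symmetrising in $(j,k)$, one finds that everything cancels except the residual identity
\[
\sum_{j,k}q_j'q_k'\bigl(A_{l,j,k}-A_{j,l,k}\bigr)=\sum_{j,k}q_j'q_k'\bigl(B_{l,k,j}-B_{j,k,l}\bigr),
\]
where $A_{i,k,m}:=\int_{\mathcal F(q)}\nabla\partial_{q_k}\Phi_i\cdot\nabla\Phi_m\,{\rm d}x$ and $B_{i,m,k}:=\int_{\partial\mathcal S(q)}(\nabla\Phi_i\cdot\nabla\Phi_m)(\zeta_k\cdot n)\,{\rm d}s$. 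This is the entire content of the lemma, and your proposal does not prove it. The tools you invoke (harmonicity of $\Phi_l$, the undifferentiated Neumann data) only convert each $A$ into a boundary integral of $\partial_{q_k}\Phi_j$ against $\zeta_l\cdot n$; since the left side involves the non-symmetric combination $\partial_{q_j}\Phi_l-\partial_{q_l}\Phi_j$ on $\partial\mathcal S(q)$, closing the identity requires the Hadamard shape derivative of the Neumann condition $\partial_n\Phi_j=\zeta_j\cdot n$ along the moving boundary (an explicit formula for $\partial_n(\partial_{q_k}\Phi_j)$ on $\partial\mathcal S(q)$ involving tangential derivatives, and, for $j=3$, the extra dependence of $\zeta_3=(x-h)^\perp$ on $q$). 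That computation is the hard part, and calling it ``a finite algebraic manipulation'' does not discharge it.

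The paper's proof is engineered precisely so that this never arises. In Lemma \ref{LEM_1}, Reynolds' theorem is applied not entrywise to $\mathcal M_a$ but to the scalar quantities $\frac{\partial \mathcal E_1}{\partial p}\cdot p^\ast=\int_{\partial\mathcal S(q)}(\boldsymbol\Phi\cdot p)(u^\ast\cdot n)\,{\rm d}s$ and to $\int_{\mathcal F(q)}(\boldsymbol\Phi\cdot p)\,{\rm d}x$; the volume integrals containing $\frac{\partial}{\partial q}\bigl(\partial_t(\boldsymbol\Phi\cdot p)\bigr)$ then cancel exactly between \eqref{8} and \eqref{9}, so no individual shape derivative $\partial_{q_k}\Phi_j$ is ever evaluated, and the fluid integral collapses to $\int_{\partial\mathcal S(q)}\bigl[\partial_t(\boldsymbol\Phi\cdot p)+\frac12|u_1|^2\bigr](u^\ast\cdot n)\,{\rm d}s$, which integrates back to the left-hand side of \eqref{lagrange}. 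If you wish to keep your direct route you must actually establish the residual identity above via the shape-derivative formula for the Neumann data; otherwise, reorganise the argument at the level of the energy functional as the paper does.
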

\begin{proof}[Proof of Lemma~\ref{LEM_3}]
We start with observing that, under the assumptions of Lemma  \ref{LEM_3}, 
\begin{equation} \label{trotriv}
m h''\cdot\ell^\ast+\mathcal J\theta'' r^\ast
 =  \mathcal{M}_g\,  q''\cdot p^\ast .
\end{equation} 
Now in order to deal with the last term of the left hand side of \eqref{lagrange_1} we use a Lagrangian strategy.
For any  $q$ in $\mathcal Q$ and every $p =(p_{1} , p_{2} , p_{3}) $ in $\mathbb R ^3$, let us denote
\begin{equation}
\label{cococo}
\mathcal{E}_1(q,p) :=\frac{1}{2}\int_{\mathcal F(q)}| \nabla (\boldsymbol\Phi(q,\cdot)\cdot p) |^2{\rm d}x .
\end{equation} 
Thus $\mathcal{E}_1(q,p)$ denotes the kinetic energy  of the potential part of the flow associated with a body at position $q$ with velocity $p$.
It follows from classical shape derivative theory that $\mathcal{E}_1 \in C^{\infty} \big(\mathcal Q \times  \mathbb R ^3 ; \lbrack 0,+\infty ) \big)$.

\begin{lem}
\label{LEM_1}
For any smooth curve $t \mapsto q(t)$ in $\mathcal Q$ , for  every $p^\ast\in\mathbb R ^3$, we have:
\begin{equation}
\label{lagrange}
\int_{\mathcal F(q)}
\left(\frac{\partial u_1}{\partial t}+\frac{1}{2}\nabla|u_1|^2\right)\cdot u^\ast{\rm d}x =
\mathcal E  \mathcal L 
\end{equation}
where   $u_{1}$ is given by  \eqref{DecompU}, $u^\ast$ is given by \eqref{pot*} and $\mathcal E  \mathcal L $ denotes the time-dependent smooth real-valued function:
\begin{equation}
  \label{def-EL}
\mathcal E  \mathcal L  :=  \Big( \frac{d}{dt}  \big(  \frac{\partial \mathcal{E}_1 }{\partial p} (q(t),q'(t)) \big)   - \frac{\partial \mathcal{E}_1 }{\partial q}   (q(t),q'(t))   \Big)\cdot p^\ast .
\end{equation}
\end{lem}

The name of the function $\mathcal E  \mathcal L $ refers to  Euler and Lagrange. For sake of simplicity below we will simply denote 
$$\mathcal E  \mathcal L  = \left(\frac{d}{dt}\frac{\partial \mathcal{E}_1 }{\partial p}-\frac{\partial \mathcal{E}_1 }{\partial q}\right)\cdot p^\ast  .$$
Let us also introduce a slight abuse of notations which simplifies the presentation of the proof of Lemma \ref{LEM_1}.
For a smooth function $I(q,p)$, where $(q,p)$ is running into $\mathcal Q \times \mathbb R ^3$, and  a smooth curve $q(t)$ in $\mathcal Q$ let us denote
\begin{equation*}
\left( \frac{\partial}{\partial q} \frac{d}{dt} I(q,p) \right) (t) := (\frac{\partial}{\partial q} J) \big(q(t),q'(t),q''(t))\big), 
\end{equation*}
where, for $(q,p,r)$ in  $\mathcal Q \times \mathbb R ^3 \times \mathbb R ^3$,
\begin{equation} \label{chain}
J(q,p,r) = p  \frac{\partial I}{\partial q} (q,p) + r \frac{\partial I}{\partial p} (q,p) .
\end{equation}
Observe in particular that
\begin{equation*}
\frac{d}{dt} \big(   I (q(t),q'(t)) \big) =  J \big(q(t),q'(t),q''(t))\big), 
\end{equation*}
and
\begin{equation} \label{ty}
\frac{d}{dt} \left( \frac{\partial I}{\partial q}   (q(t),q'(t)) \right) 
=  \left( \frac{\partial}{\partial q} \frac{d}{dt} I(q,p) \right) (t) .
\end{equation}
Below, in such circumstances,  it will be comfortable to write 
\begin{equation*}
\frac{\partial}{\partial q}  \left[ J \big(q(t),q'(t),q''(t))\big)  \right] \text{ instead of }
\left(\frac{\partial J}{\partial q} \right) \big(q(t),q'(t),q''(t))\big) , 
\end{equation*}
and it will be understood that $J$ is  extended  from $\big(q(t),q'(t),q''(t))\big) $ to general $(q,p,r)$ by \eqref{chain}.

\begin{proof}[Proof of Lemma~\ref{LEM_1}]
We start with computing the right hand side of  \eqref{lagrange}.
On the one hand the linearity of $u_1$ with respect to $p$ and then an integration by parts  leads to:
$$
\frac{\partial \mathcal{E}_1 }{\partial p}\cdot p^\ast =\int_{\mathcal F(q)} u_1\cdot u^\ast{\rm d}x  =\int_{\partial\mathcal S(q)}(\boldsymbol\Phi\cdot p )(u^\ast\cdot n)\,{\rm d}s .
$$
Then, invoking the Reynold transport theorem, we get:
\begin{equation}
\label{theV}
\frac{\partial \mathcal{E}_1 }{\partial p} \cdot p^\ast 
=\frac{\partial}{\partial q}\left(\int_{\mathcal F(q)}(\boldsymbol\Phi\cdot p )\,{\rm d}x\right)\cdot p^\ast-
\int_{\mathcal F(q)}(\frac{\partial \boldsymbol\Phi }{\partial q}\cdot p)\cdot p^\ast{\rm d}x.
\end{equation}
On the other hand, again using Reynold's formula, we have:
 \begin{equation}
\label{7}
\frac{\partial \mathcal{E}_1}{\partial q}  \cdot p^\ast=\int_{\mathcal F(q)}\left(\frac{\partial u_1}{\partial q}\cdot p^\ast\right)\cdot u_1\,{\rm d}x+\frac{1}{2}\int_{\partial\mathcal S(q)}|u_1|^2(u^\ast\cdot n)\,{\rm d}s .
\end{equation}

Differentiating  \eqref{theV} with respect to $t$, we obtain:
\begin{equation} \label{PPP}
\frac{d}{dt}\frac{\partial \mathcal{E}_1 }{\partial p} \cdot p^\ast =\frac{d}{dt}\frac{\partial}{\partial q}\left(\int_{\mathcal F(q)}(\boldsymbol\Phi\cdot p )\,{\rm d}x\right)\cdot p^\ast-\frac{d}{dt}\left(\int_{\mathcal F(q)}(\frac{\partial \boldsymbol\Phi }{\partial q}\cdot p)\cdot p^\ast{\rm d}x\right) .
\end{equation}
With the abuse of notations  mentioned above we commute the derivatives involved in the first term of the right hand side, so that 
 the identity \eqref{PPP} can be rewritten as follows:
\begin{equation} \label{DD0}
\frac{d}{dt}\frac{\partial \mathcal{E}_1 }{\partial p} \cdot p^\ast 
= \frac{\partial}{\partial q} \frac{d}{dt}\left(\int_{\mathcal F(q)}(\boldsymbol\Phi\cdot p )\,{\rm d}x\right)\cdot p^\ast - \frac{d}{dt}\left(\int_{\mathcal F(q)}(\frac{\partial \boldsymbol\Phi }{\partial q}\cdot p)\cdot p^\ast{\rm d}x\right).
\end{equation}
Moreover, using again Reynold's formula, we have:
\begin{align} \label{DD1}
\frac{d}{dt}\left(\int_{\mathcal F(q)}(\boldsymbol\Phi\cdot p )\,{\rm d}x\right) &= \int_{\mathcal F(q)}\partial_t(\boldsymbol\Phi\cdot p )\,{\rm d}x + \int_{\partial\mathcal S(q)}(\boldsymbol\Phi\cdot p )(u_1\cdot n)\,{\rm d}s \\
&\quad =\int_{\mathcal F(q)}\partial_t(\boldsymbol\Phi\cdot p )\,{\rm d}x + 2 \mathcal{E}_1 (q,p),
\end{align}
by integration by parts.

We infer from \eqref{DD0} and \eqref{DD1}, again with the abuse of notations  mentioned above, that:
\begin{align}
\label{6}
 \mathcal E  \mathcal L 
=&
\frac{\partial \mathcal{E}_1}{\partial q} 
+ \frac{\partial}{\partial q} 
\left[ \int_{\mathcal F(q)}
\partial_t(\boldsymbol\Phi\cdot p )\,{\rm d}x\right]\cdot p^\ast
 -\frac{d}{dt}\left(\int_{\mathcal F(q)}(\frac{\partial \boldsymbol\Phi }{\partial q}\cdot p)\cdot p^\ast{\rm d}x\right).
\end{align}
Thanks to Reynold's formula, we get for the second term of the right hand side 
\begin{equation} \label{8}
\frac{\partial}{\partial q} \left[ \int_{\mathcal F(q)}
\partial_t(\boldsymbol\Phi\cdot p )\,{\rm d}x\right] \cdot p^\ast
= \int_{\mathcal F(q)}\frac{\partial}{\partial q}(\partial_t(\boldsymbol\Phi\cdot p ))\cdot p^\ast\,{\rm d}x
+ \int_{\partial\mathcal S(q)}\partial_t(\boldsymbol\Phi\cdot p )(u^\ast\cdot n)\,{\rm d}s,
\end{equation}
and for the last one:
\begin{align} \nonumber
\frac{d}{dt} \left( \int_{\mathcal F(q)} \left(\frac{\partial \boldsymbol\Phi }{\partial q}\cdot p\right) \cdot p^\ast{\rm d}x\right)
&= \int_{\mathcal F(q)}\partial_t\left( \left(\frac{\partial \boldsymbol\Phi }{\partial q}\cdot p\right)\cdot p^\ast\right)\,{\rm d}x
\\ & \quad +\int_{\partial\mathcal S(q)}\left(\left(\frac{\partial \boldsymbol\Phi }{\partial q}\cdot p\right)\cdot p^\ast\right)(u_1 \cdot n)\,{\rm d}s .
\end{align}
Using again \eqref{ty} for the first term and integrating by parts the second one, we obtain: 
\begin{align} \label{9} 
\frac{d}{dt}\left(\int_{\mathcal F(q)} \left(\frac{\partial \boldsymbol\Phi }{\partial q}\cdot p\right)\cdot p^\ast{\rm d}x\right)
&=  \int_{\mathcal F(q)} \frac{\partial }{\partial q} \big( \partial_t (\boldsymbol\Phi\cdot p ) \big)\cdot p^\ast \,{\rm d}x
\\ \nonumber & \quad +\int_{\mathcal F(q)} \left(\frac{\partial u_1}{\partial q}\cdot p^\ast\right) \cdot u_1 \,{\rm d}x .
\end{align}
Plugging the expressions \eqref{7}, \eqref{8} and \eqref{9} into \eqref{6} and simplifying, we end up with:
\begin{equation*}
 \mathcal E  \mathcal L 
= \int_{\partial\mathcal S(q)}\left[\partial_t(\boldsymbol\Phi\cdot p )+\frac{1}{2}|u_1|^2\right](u^\ast\cdot n)\,{\rm d}s.
\end{equation*}
Upon an integration by parts, we recover \eqref{lagrange} and the proof is then completed.
\end{proof}
%
%
%

Now, we observe that   $\mathcal{E}_1(q,p)$, as defined by \eqref{cococo}, can be rewritten as:
\begin{equation} \label{E1}
\mathcal{E}_1 (q,p) =\frac{1}{2} \mathcal{M}_a(q) p \cdot p,
\end{equation}
where $ \mathcal{M}_a (q)$ is  defined by \eqref{def-MAq}.
Indeed this allows us to prove the following result.
%
%
\begin{lem} \label{niou}
For any smooth curve $q(t)$ in $\mathcal Q$, for  every $p^\ast\in\mathbb R ^3$, we have:
\begin{equation} \label{full_RHS}
 \mathcal E  \mathcal L  = \mathcal{M}_a(q)q'' \cdot p^\ast + \langle \Gamma_{a}  (q),q',q'\rangle\cdot p^\ast .
\end{equation}
\end{lem}
%
%
%

%
\begin{proof}[Proof of Lemma~\ref{niou}]
Using \eqref{E1} in the definition   \eqref{def-EL} of $ \mathcal E  \mathcal L $ we have 
\begin{align*}
 \mathcal E  \mathcal L 
&= \mathcal{M}_a (q)q''  \cdot p^\ast
+  \Big( \big( D\mathcal{M}_a (q)\cdot q' \big) q' \Big) \cdot p^\ast
- \frac12 \Big( \big( D\mathcal{M}_a  (q)\cdot p^\ast \big) q'\Big)  \cdot q' .
\end{align*}
Then
\begin{align*}
 \mathcal E  \mathcal L 
= \mathcal{M}_a \, q''  \cdot p^\ast
+  \sum  (\mathcal{M}_a)_{i,j}^{k}  \,  q'_{k} q'_{j}  p^\ast_{i} 
- \frac12  \sum  (\mathcal{M}_a)_{i,j}^{k}  \,  q'_{i} q'_{j}  p^\ast_{k} ,
\end{align*}
where the sums are over $1 \leqslant i,j,k \leqslant 3$. Let us recall  the notation $(\mathcal{M}_a)_{i,j}^{k} (q)$   in \eqref{PBX}.
A symmetrization with respect to $j$ and $k$ 
of the second term and an exchange of $i$ and $k$ in the last sum of the right hand side 
 above leads to the result.
\end{proof}

Then Lemma~\ref{LEM_3} straightforwardly results from the combination of \eqref{trotriv}, Lemma \ref{LEM_1} and  Lemma  \ref{niou}. 
\end{proof}
%
%

\subsection{Reduction to an ODE in the general case.  Statement of Theorem \ref{THEO-intro}}

Now let us deal with the general case of a nonzero circulation $\gamma$. Next result,  obtained in \cite{GLS}, extends Theorem \ref{THEO-mumu} 
and  establishes a reformulation of the system in terms of an ordinary differential equation in the general case of a circulation $\gamma \in \mathbb R$.

\begin{thm} \label{THEO-intro}
Let be given the open  regular  connected and simply connected bounded cavity $ \Omega$,  the  initial closed domain $\mathcal{S}_0  \subset  \Omega$ occupied by the body, the  initial solid translation and rotation velocities $( \ell_0 , r_0 )$ in $\mathbb{R}^2 \times \mathbb{R}  $, the circulation $ \gamma$ in $ \mathbb{R}$,  and  $u_0$ the associated compatible initial fluid velocity according to Definition \ref{CompDataBd}.
There exists $F$ in $C^{\infty}(\mathcal Q \times \mathbb{R}^3 ; \mathbb{R}^3)$  depending only on
  $\mathcal S_0, \gamma$  and $ \Omega$,  and vanishing when  $\gamma =0$,
such that, up to the first collision, System~\eqref{SYS_full_system} is equivalent to  the second order ODE: 
\begin{equation} \label{ODE_intro}
 ({\mathcal M}_{g}   +\mathcal{M}_{a}(q) )  q''  + \langle \Gamma_{a}  (q),q',q'\rangle = {F} (q,q') ,
\end{equation}
with Cauchy data $q(0)=0 \in \mathcal Q,  \ q'(0)=   (\ell_0 ,r_0)\in   \mathbb R^2 \times \mathbb R  $, where  $ {\mathcal M}_{a}(q)$  and its associated a-connection  $\Gamma_{a} (q) $ 
are given by Theorem~\ref{THEO-mumu}.
For a solid position $q \in \mathcal Q$ 
the fluid velocity $u (q,\cdot)$ is uniquely determined as the solution of  a div-curl type system in the doubly-connected domain $\mathcal F(q)$, constituted of 
\eqref{E2},
\eqref{irr},
\eqref{souslab},
\eqref{souslabis},
together with the prescription of the circulation $ \gamma$. 
\end{thm}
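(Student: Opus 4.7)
The plan is to adapt the variational strategy of Section~\ref{proofMumu} by decomposing the fluid velocity so as to isolate the effect of the circulation. For each $q\in\mathcal{Q}$, introduce the harmonic field $H(q,\cdot)$, defined as the unique vector field on $\mathcal{F}(q)$ satisfying $\operatorname{div} H = \operatorname{curl} H = 0$, $H\cdot n = 0$ on $\partial\mathcal{S}(q)\cup\partial\Omega$, and $\int_{\partial\mathcal{S}(q)} H\cdot\tau\, ds = 1$; existence and uniqueness follow from standard div-curl theory in a doubly-connected planar domain. By linearity, the unique solution of the prescribed auxiliary div-curl system is then
\begin{equation*}
u(q,\cdot) = u_1(q,\cdot) + \gamma H(q,\cdot),
\end{equation*}
where $u_1$ is the potential part \eqref{DecompU1} of Munnier's decomposition; this establishes the last assertion of the theorem.

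Next, combining the variational form \eqref{bern_1} of the Newton equations with Bernoulli's formula \eqref{EQ_bernoulli}, I would obtain, as in \eqref{bern_2}, the identity
\begin{equation*}
\mathcal{M}_g q''\cdot p^* = -\int_{\mathcal{F}(q)}\Big(\partial_t u + \tfrac{1}{2}\nabla(|u|^2)\Big)\cdot u^*\, dx,\qquad \forall\, p^*=(\ell^*,r^*)\in\mathbb{R}^3,
\end{equation*}
with $u^* = \nabla(\boldsymbol\Phi(q,\cdot)\cdot p^*)$. Substituting $u = u_1 + \gamma H$ and expanding $|u|^2 = |u_1|^2 + 2\gamma\, u_1\cdot H + \gamma^2 |H|^2$ decomposes the right-hand side into three contributions: (a) the $u_1$ self-interaction, (b) a cross term of order $\gamma$, and (c) a term of order $\gamma^2$. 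Lemma~\ref{LEM_3} processes (a) into $\mathcal{M}_a(q)q''\cdot p^* + \langle\Gamma_a(q),q',q'\rangle\cdot p^*$; moving this to the left-hand side and setting $F(q,q')\cdot p^*$ to be the negative of the sum of (b) and (c) yields \eqref{ODE_intro}.

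It then remains to verify that $F(q,q')\cdot p^*$ is linear in $p^*$, depends only on $(q,q')$ (not on higher derivatives of $q$), and is smooth. Since $u^*$ is a gradient, one has $\operatorname{div} u^* = 0$, so the terms $\int \nabla(u_1\cdot H)\cdot u^*\, dx$ and $\int\nabla(|H|^2)\cdot u^*\, dx$ reduce after integration by parts to boundary integrals on $\partial\mathcal{S}(q)$ (using $u^*\cdot n = n\cdot(\ell^* + r^*(x-h)^\perp)$) and give no contribution on $\partial\Omega$ (where $u^*\cdot n = 0$). The remaining time-dependent term $\int \partial_t H\cdot u^*\, dx$ is handled via the shape-derivative identity $\partial_t H(q(t),\cdot) = \sum_k q_k'(t)\,\partial_{q_k}\! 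H(q,\cdot)$, which exhibits its linearity in $q'$. Altogether, (b) is linear in $q'$ (the \emph{magnetic}, Kutta--Joukowski-like part of the Lorentz force) and (c) is independent of $q'$ (the \emph{electric} part, absent in the unbounded case of Section~\ref{chap1}, where its analogue vanishes, cf.~\eqref{T7}); both depend linearly on $p^*$ and smoothly on $q$.

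The main obstacle is the rigorous justification of the shape derivatives of $H$ and $\boldsymbol\Phi$ with respect to $q\in\mathcal{Q}$, together with the commutation of $\partial_t$ with $\partial_q$ and with the domain integrals, analogous to the manipulations in the proof of Lemma~\ref{LEM_1}. This is best carried out by pulling every integral back to the fixed reference domain $\mathcal{F}_0 = \Omega\setminus\mathcal{S}_0$ through a smooth family of diffeomorphisms indexed by $q\in\mathcal{Q}$, after which the classical $C^\infty$-regularity theory for elliptic Neumann problems with smoothly $q$-dependent coefficients yields $F\in C^\infty(\mathcal{Q}\times\mathbb{R}^3;\mathbb{R}^3)$. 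Finally, the vanishing of $F$ when $\gamma=0$ is immediate from the prefactors $\gamma$ and $\gamma^2$ in the contributions (b) and (c).
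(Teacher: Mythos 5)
Your proposal is correct and follows essentially the same route as the paper: the paper decomposes $u=u_1+u_2$ with $u_2=\gamma\nabla^\perp\psi(q,\cdot)$ (your harmonic field $H(q,\cdot)$ is exactly $\nabla^\perp\psi(q,\cdot)$ up to the normalization \eqref{circ-norma}), inserts this into the Bernoulli/variational identity \eqref{bern_2}, handles the $u_1$ self-interaction with Lemma~\ref{LEM_3}, and identifies the $\gamma^2$ term as the electric part $\gamma^2E(q)$ and the cross term as the magnetic part $\gamma\,q'\times B(q)$. Like you, the paper does not carry out the detailed shape-derivative computation for the cross term in these notes (it defers \eqref{last_one} to \cite{GMS}), so your level of justification matches the paper's.
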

The  local-in-time existence and uniqueness of  smooth solutions stated  in 
Theorem  \ref{thm-intro-sec2-CP} therefore simply follows from Theorem  \ref{THEO-intro} and  the Cauchy-Lipschitz theorem. 
That the  life-time of such a smooth solution can only be limited by a collision will follow from an energy argument below, cf. Section \ref{cla}.

Indeed we are going to provide a rather explicit definition of  the force term $F(q,q')$.
Let us first introduce a normalized stream function for the circulation term: for every $q \in \mathcal Q$, there exists a unique $C(q)$   in\footnote{A simple computation shows that the function $C(q)$ is actually the opposite of the inverse of the condenser capacity  of $\mathcal S(q)$ in $\Omega$.}  $\mathbb R$ such that the unique solution 
$\psi(q,\cdot)$ of the Dirichlet problem:
\begin{subequations} 
\label{def_stream}
\begin{alignat}{3}
\Delta \psi(q,\cdot)  =0  \,   \text{ in } \mathcal F(q) \, \quad 
\psi(q,\cdot)  =C(q)  \,   \text{ on } \partial \mathcal S(q)\, \quad 
\psi(q,\cdot)  =0 \,   \text{ on } \partial\Omega,
\end{alignat}
satisfies
\begin{equation} \label{circ-norma}
\int_{\partial\mathcal S(q)} \frac{\partial\psi}{\partial n} (q,\cdot) {\rm d}s=-1.
\end{equation}
\end{subequations}

Observe that for any $q \in \mathcal Q$, $ C(q) < 0 $ and that 
$C \in C^\infty   ( \mathcal Q ; (-\infty ,0))$   and depends on $ \mathcal S_0  \text{ and } \Omega$.
Eventually, we define:
\begin{align}
\label{B-def}
B(q) & :=  \int_{\partial\mathcal S(q)} \left( \frac{\partial\psi}{\partial n}  \left( \frac{\partial\boldsymbol\Phi}{\partial n}  \times 
\frac{\partial\boldsymbol\Phi}{\partial \tau} \right)  \right)(q,\cdot) \, {\rm d}s,\\
\label{E-def}
E(q) &:= - \frac{1}{2} \int_{\partial\mathcal S(q)}  
\left( \left| \frac{\partial\psi}{\partial n} \right|^2 \frac{\partial\boldsymbol\Phi}{\partial n} \right) (q,\cdot)\,  {\rm d}s ,
\end{align}
and, for  $(q,p)$  in $\mathcal Q \times \mathbb{R}^3 $,
 the force term
\begin{equation} \label{def-upsilon}
F (q,p) :=  \gamma^2 E(q) +  \gamma \, p \times B(q) .
\end{equation}

The notations $E$ and $B$ are chosen on purpose to highlight the analogy with the Lorentz force acting on a charged particle moving under the influence of a couple of electromagnetic fields $E$ and $B$.

%
%
\subsection{Proof of Theorem~\ref{THEO-intro}}
As mentioned above  \eqref{bern_2} only relies on the fact  that the fluid velocity is irrotational  and  is therefore still granted. 
However the fluid velocity  $u (q,\cdot)$  now involves an extra term due to the nonzero circulation.  
 Indeed, for any $q\in\mathcal Q$, one obtains, 
 using \eqref{def_stream} and  \eqref{Kir}, that
 the solution  $u (q,\cdot)$
 to  the div-curl type system in the doubly-connected domain $\mathcal F(q)$, constituted of 
\eqref{E2},
\eqref{irr},
\eqref{souslab},
\eqref{souslabis},
together with the prescription of  circulation $\gamma$ 
   takes the form:
\begin{equation}
\label{EQ_irrotational_flow}
u (q,\cdot) = u_1 (q,\cdot) + u_2 (q,\cdot) , 
\end{equation}
where  $u_1 (q,\cdot) $ is given by \eqref{DecompU1} as in the potential case and the new contribution $u_2 (q,\cdot) $ is defined by
\begin{equation} \label{DecompU}
u_2  (q,\cdot) :=\gamma \nabla^\perp\psi (q,\cdot) .
\end{equation}
 So besides the dependence with respect to  $\mathcal S_0$, to $\Omega$ and to the space variable, $u_2$ depends on $q$ and linearly on $\gamma$. 
Therefore plugging the decomposition  \eqref{EQ_irrotational_flow} into \eqref{bern_2} leads to
\begin{gather}
 \nonumber
m\ell'\cdot\ell^\ast+\mathcal Jr'r^\ast 
+ \int_{\mathcal F(q)} \Big( \frac{\partial u_1}{\partial t}+\frac{1}{2}\nabla|u_1|^2 \Big) \cdot u^\ast{\rm d}x
= 
- \int_{\mathcal F(q)} \big( \frac{1}{2}\nabla|u_2|^2 \big) \cdot u^\ast{\rm d}x \\
\label{bern_20bis}  \qquad-
\int_{\mathcal F(q)}\big(\frac{\partial u_2}{\partial t} + \frac{1}{2}\nabla (u_{1} \cdot u_{2} ) \big) \cdot u^\ast{\rm d}x ,
\end{gather}
for all $p^\ast :=(\ell^\ast ,r^\ast) \in \mathbb R^3$, with $u^\ast$ given by \eqref{pot*}.

By a simple  integration by parts, on obtains that the first term in the right hand side above satisfies:
\begin{equation} \label{rab}
- \int_{\mathcal F(q)}\left(\frac{1}{2}\nabla|u_2|^2\right)\cdot u^\ast{\rm d}x=\gamma^2 E(q)\cdot p^\ast,
\end{equation}
where  $E(q)$  defined in \eqref{E-def}. 

Then the reformulation of Equations (\ref{SYS_full_system}g-h) mentioned in  Theorem~\ref{THEO-intro} will follow from \eqref{bern_20bis},  \eqref{rab}, 
Lemma \ref{LEM_3} and from the following identity: 
\begin{equation} \label{last_one}
- \int_{\mathcal F(q)}\left(\frac{\partial u_2}{\partial t}+
\nabla (u_{1} \cdot u_{2} )\right)\cdot u^\ast{\rm d}x=\gamma\big(q' \times B(q)\big)\cdot p^\ast,
\end{equation}
 where $B(q)$ is defined in \eqref{B-def}. We refer to \cite{GMS} for the proof of  \eqref{last_one}.
%
%
%
\subsection{The role of the energy}
\label{cla}
An important feature of the system \eqref{ODE_intro} is that it is conservative. 
Let us denote for any $(q,p) $ in $\mathcal Q \times \mathbb R^3$, 
\begin{equation} \label{conserv}
\mathcal{E} (q,p) := \frac{1}{2}  ({\mathcal M}_{g}   + \mathcal{M}_{a}(q) ) p \cdot p - \frac{1}{2} \gamma^{2}  C (q) ,
\end{equation}
 with $C(q)$ given by \eqref{def_stream}.
 Indeed one can prove that  for any $q \in \mathcal Q$,  
 \begin{equation}
  \label{atten}
 E(q)   = \frac{1}{2} DC (q) , 
\end{equation}
where  the notation $ DC (q)$ stands for the derivative of $C(q)$ with respect to $q$, cf. Lemma $2.4$ in \cite{GMS} for a proof, so that the second term in the right-hand-side of 
 \eqref{conserv} can be seen as a potential energy related to the first  term in the right-hand-side of 
 \eqref{def-upsilon}. Observe that $\mathcal{E} (q,p)$ is the sum of two positive terms and  that in addition to its dependence on $q$ and $p$, the energy $ \mathcal{E}$ depends on $\mathcal S_0 , m ,\mathcal{J}, \gamma$ and $\Omega$. 
Next result proves that $\mathcal E(q,q')$ is indeed the natural  total kinetic energy of the ``fluid+solid'' system.
\begin{prop}
\label{equienergy}
For any $q=(h,\theta) \in C^\infty([0,T] ;\mathcal Q  )$ satisfying  \eqref{ODE_intro}, 
as far as there is no collision, 
\begin{equation*}
\mathcal E (q,q') = \frac{1}{2} \int_{\mathcal   F(q)} u(q,\cdot)^{2}   \, {\rm d}x +  \frac{1}{2} m (h')^{2} + \frac{1}{2} \mathcal  J (\theta')^{2} .
\end{equation*}
\end{prop}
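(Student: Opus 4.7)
The plan is to establish this as a pointwise-in-time identity which holds for any $q\in\mathcal Q$ together with the associated $u(q,\cdot)$ solving the div-curl system \eqref{E2}, \eqref{irr}, \eqref{souslab}, \eqref{souslabis} with prescribed circulation $\gamma$; the ODE \eqref{ODE_intro} plays no role in the identity itself (it only guarantees that both sides are \emph{conserved} in $t$). The starting point is the decomposition $u = u_1 + u_2$ from \eqref{EQ_irrotational_flow}, with $u_1 = \nabla(\boldsymbol\Phi\cdot q')$ carrying the solid-velocity contribution (cf.\ \eqref{DecompU1}) and $u_2 = \gamma\nabla^\perp\psi$ carrying the circulation (cf.\ \eqref{DecompU}). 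Expanding $|u|^2 = |u_1|^2 + 2\,u_1\cdot u_2 + |u_2|^2$ in the fluid integral, I would treat the three terms separately.

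For the first one, combining the definition \eqref{cococo} of $\mathcal{E}_1$ with the identity \eqref{E1} gives at once
\[
\tfrac{1}{2}\int_{\mathcal F(q)}|u_1|^2\,{\rm d}x = \mathcal{E}_1(q,q') = \tfrac{1}{2}\mathcal{M}_a(q)\,q'\cdot q'.
\]
For the third one, $|u_2|^2 = \gamma^2|\nabla\psi|^2$, and an integration by parts based on $\Delta\psi = 0$ in $\mathcal F(q)$ gives
\[
\int_{\mathcal F(q)}|\nabla\psi|^2\,{\rm d}x = \int_{\partial\mathcal S(q)}\psi\,\partial_n\psi\,{\rm d}s + \int_{\partial\Omega}\psi\,\partial_n\psi\,{\rm d}s,
\]
with $n$ pointing outside of $\mathcal F(q)$. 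The boundary conditions \eqref{def_stream} force $\psi = 0$ on $\partial\Omega$ and $\psi = C(q)$ on $\partial\mathcal S(q)$, and the normalization \eqref{circ-norma} yields $\int_{\partial\mathcal S(q)}\partial_n\psi\,{\rm d}s = -1$; hence $\tfrac{1}{2}\int|u_2|^2\,{\rm d}x = -\tfrac{1}{2}\gamma^2 C(q)$, which is precisely the potential-energy term in the definition \eqref{conserv} of $\mathcal{E}$.

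For the cross term, setting $f := \boldsymbol\Phi\cdot q'$ and using that $\nabla^\perp\psi$ is divergence-free, I would rewrite $\nabla f\cdot \nabla^\perp\psi = \operatorname{div}(f\nabla^\perp\psi)$, so that
\[
\int_{\mathcal F(q)} u_1\cdot u_2\,{\rm d}x = \gamma\int_{\partial\mathcal F(q)} f\,(\nabla^\perp\psi\cdot n)\,{\rm d}s.
\]
Using $n = \tau^\perp$ and the identity $a^\perp\cdot b^\perp = a\cdot b$ one finds $\nabla^\perp\psi\cdot n = \partial_\tau\psi$, which vanishes on each connected component of $\partial\mathcal F(q)$ since $\psi$ is constant there by \eqref{def_stream}. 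Thus the cross term is zero, and combining with the trivial identity $\tfrac{1}{2}\mathcal{M}_g\,q'\cdot q' = \tfrac{1}{2}m|h'|^2 + \tfrac{1}{2}\mathcal{J}(\theta')^2$ yields the claim.

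There is no serious obstacle here: once the splitting $u = u_1 + u_2$ is exploited, the proof reduces to bookkeeping. The only point that requires care is the orientation of $n$ in the two integrations by parts together with the sign convention in \eqref{circ-norma}; a consistency check is that $\int|\nabla\psi|^2 > 0$ while $C(q)<0$, which makes the sign in $-\gamma^2 C(q)$ automatic.
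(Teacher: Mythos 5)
Your proof is correct and follows essentially the same route as the paper's: decompose $u=u_1+u_2$, identify $\tfrac12\int|u_1|^2$ with $\tfrac12\mathcal M_a(q)q'\cdot q'$ via \eqref{E1}, and use integrations by parts to get $\tfrac12\int|u_2|^2=-\tfrac12\gamma^2C(q)$ and the vanishing of the cross term. The paper merely states these two integration-by-parts facts without the details you supply.
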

\begin{proof}
First we have by integrations by parts that
\begin{gather*}
\frac{1}{2} \int_{\mathcal   F(q)} u_2^{2}   \, {\rm d}x
= - \frac{1}{2}  \gamma^2 C(q) 
  \text{ and }
\int_{\mathcal  F(q)} u_1 \cdot u_2   \, {\rm d}x  = 0.
\end{gather*}
Then we use  \eqref{E1} and the decomposition \eqref{EQ_irrotational_flow} to conclude.
\end{proof}

The following result is therefore very natural.
\begin{prop}[] \label{energy}
For any $q \in C^\infty([0,T] ;\mathcal Q  )$ satisfying  \eqref{ODE_intro}, 
as far as there is no collision, $\mathcal{E} (q,q') $ is constant in time. 
\end{prop}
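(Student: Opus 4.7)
The plan is to differentiate $\mathcal{E}(q(t),q'(t))$ in time along a smooth solution of \eqref{ODE_intro} and show that the four resulting contributions cancel in pairs, using respectively the structure of the $a$-connection and the identity \eqref{atten}.

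First I would compute
\begin{equation*}
\frac{d}{dt}\mathcal{E}(q,q') = (\mathcal{M}_g + \mathcal{M}_a(q))\,q'' \cdot q'
+ \tfrac{1}{2}\bigl(D\mathcal{M}_a(q)\cdot q'\bigr) q' \cdot q'
- \tfrac{1}{2}\gamma^2\, DC(q)\cdot q' ,
\end{equation*}
using that $\mathcal{M}_g$ is constant and that $\mathcal{M}_a(q)$ is symmetric. Then I would substitute the ODE \eqref{ODE_intro} to replace $(\mathcal{M}_g + \mathcal{M}_a(q))q''$ by $F(q,q') - \langle \Gamma_a(q), q', q'\rangle$, so that
\begin{equation*}
\frac{d}{dt}\mathcal{E}(q,q') = F(q,q')\cdot q'
- \langle \Gamma_a(q), q', q'\rangle \cdot q'
+ \tfrac{1}{2}\bigl(D\mathcal{M}_a(q)\cdot q'\bigr) q' \cdot q'
- \tfrac{1}{2}\gamma^2\, DC(q)\cdot q' .
\end{equation*}

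The second and third terms cancel: indeed, from the explicit formula for the $a$-connection in Definition \ref{Christq}, a direct symmetrization of the indices (of exactly the type performed in the proof of Lemma \ref{niou}) yields the pointwise identity
\begin{equation*}
\langle \Gamma_a(q), p, p\rangle \cdot p = \tfrac{1}{2} \bigl(D\mathcal{M}_a(q)\cdot p\bigr)p \cdot p
\qquad \text{for every } (q,p)\in \mathcal{Q}\times\mathbb{R}^3,
\end{equation*}
which is exactly the bounded-domain analogue of the skew-symmetry property exploited in the proof of Proposition~\ref{CP}.

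For the remaining two terms I would use the explicit form \eqref{def-upsilon} of $F$: the magnetic contribution $\gamma(q'\times B(q))\cdot q'$ vanishes because the cross product is orthogonal to $q'$ (this is why $B$ was called gyroscopic), and the electric contribution equals $\gamma^2 E(q)\cdot q'$, which by \eqref{atten} is precisely $\tfrac{1}{2}\gamma^2 DC(q)\cdot q'$. Hence $F(q,q')\cdot q' - \tfrac{1}{2}\gamma^2 DC(q)\cdot q' = 0$, and combining the two cancellations gives $\frac{d}{dt}\mathcal{E}(q,q')=0$.

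The only nontrivial step is the identity $\langle \Gamma_a(q),p,p\rangle\cdot p = \tfrac{1}{2}(D\mathcal{M}_a(q)\cdot p)p\cdot p$; everything else is algebraic manipulation using already-established formulas. Since both this identity and \eqref{atten} have been recorded earlier in the text, the proof reduces to the short chain of equalities above, and as a by-product one obtains the a priori bound justifying the claim in Theorem~\ref{thm-intro-sec2-CP} that the lifespan of a smooth solution can only be limited by a collision.
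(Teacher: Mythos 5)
Your proposal is correct and follows essentially the same route as the paper: differentiate $\mathcal{E}(q,q')$, substitute the ODE, kill the gyroscopic term by orthogonality, cancel the electric term against $-\frac{1}{2}\gamma^2 DC(q)\cdot q'$ via \eqref{atten}, and cancel the connection term against $\frac{1}{2}(D\mathcal{M}_a(q)\cdot q')q'\cdot q'$. The paper packages this last cancellation as the skew-symmetry of $\frac{1}{2}D\mathcal{M}_a(q)\cdot p - S_a(q,p)$ (Lemma \ref{antis}), whereas you obtain the equivalent identity $\langle\Gamma_a(q),p,p\rangle\cdot p=\frac{1}{2}(D\mathcal{M}_a(q)\cdot p)p\cdot p$ directly by index symmetrization; the two are the same computation.
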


\begin{proof}
Let us give a proof of Proposition \ref{energy} which  uses the ODE formulation \eqref{ODE_intro}.
We start with the observation that the energy
$\mathcal{E} (q,q') $ as defined in  \eqref{conserv} has for time derivative
\begin{equation} \label{conserv-proof1}
\big( \mathcal{E} (q,q') \big)' = ({\mathcal M}_{g}   + \mathcal{M}_{a}(q) )q'' \cdot q' 
+ \frac{1}{2} (D\mathcal{M}_a(q) \cdot q') q' \cdot q'  - \frac{1}{2} \gamma^{2}   DC (q) \cdot q'  .
\end{equation}

Now, thanks to \eqref{ODE_intro} and \eqref{def-upsilon},  we have 
\begin{equation} \label{conserv-proof2}
 ({\mathcal M}_{g}   + \mathcal{M}_{a}(q) ) q'' \cdot q' = - \langle \Gamma_{a}  (q),q',q'\rangle  \cdot q' + F(q,q')  \cdot q' ,
\end{equation}
and
\begin{equation}
\label{dima}
F (q,q')   \cdot q'=  \gamma^2 E(q)   \cdot q' .
\end{equation}
We introduce the matrix 
\begin{equation} \label{pad1}
S_a (q,q') := \left(\sum_{1\leqslant i\leqslant 3} (\Gamma_{a})^k_{i,j}(q) q'_i  \right)_{1\leqslant k,j\leqslant 3} ,
\end{equation}
so that
\begin{equation}
\label{pad2}
\langle \Gamma_{a}  (q), q', q' \rangle = S_a (q,q') q' .
\end{equation}

Combining \eqref{conserv-proof1}, \eqref{conserv-proof2}, \eqref{dima},  \eqref{pad1} and  \eqref{pad2} we obtain
\begin{equation*}
\big( \mathcal{E} (q,q') \big)'  = \gamma^2  \big(  E(q)  - \frac{1}{2} DC (q) \big)   \cdot q'
+ \big( \frac{1}{2}  D\mathcal{M}_a(q) \cdot q' - S_a (q,q') \big) q' \cdot q'   .
\end{equation*}
The first term of the right hand side vanishes thanks to \eqref{atten} and the proof of Proposition~\ref{energy} then follows  from the following result.
\begin{lem} \label{antis}
For any $(q,p) \in \mathcal Q \times \mathbb{R}^{3}$, the matrix $\frac{1}{2}  D\mathcal{M}_a(q) \cdot p - S_a (q,p)  $  is skew-symmetric.%
\end{lem}
\begin{proof}[Proof of Lemma \ref{antis}]
We start with the observation that $D\mathcal{M}_a(q) \cdot p$ is the $3 \times 3$ matrix containing the entries
\begin{equation*}
\sum_{1\leqslant k \leqslant 3} (\mathcal{M}_a)_{i,j}^{k}  (q)  \, p_{k} , \text{ for } 1\leqslant i,j\leqslant 3 .
\end{equation*}
On the other hand, the  $3 \times 3$ matrix $S_a (q,p)$ contains the entries
\begin{equation*}
\frac{1}{2} \sum_{1\leqslant k \leqslant 3} \Big(  (\mathcal{M}_a)_{i,j}^{k}  +    (\mathcal{M}_a)_{i,k}^{j}   -    (\mathcal{M}_a)_{k,j}^{i} \Big) (q) \, p_{k} ,
\end{equation*}
for $1\leqslant i,j\leqslant 3$.
Therefore, the $3 \times 3$ matrix $D\mathcal{M}_a(q) \cdot p - S_a (q,p)$  contains  the entries
\begin{equation*}
c_{ij} (q,p)  = -  \frac12 \sum_{1\leqslant k \leqslant 3}    \Big(   (\mathcal{M}_a)_{i,k}^{j}   -    (\mathcal{M}_a)_{k,j}^{i}  \Big) (q) \,  p_{k},
\end{equation*}
for $1\leqslant i,j\leqslant 3$.
Using that the matrix $\mathcal{M}_a (q)$ is symmetric, we get that $c_{ij}(q,p)  = - c_{ji} (q,p) $ for $1\leqslant i,j\leqslant 3$, which ends the proof.
\end{proof}
This ends the proof of Proposition  \ref{energy}.\footnote{It is also possible to achieve an alternative  proof  of Proposition  \ref{energy} thanks to the original PDE formulation of  the ``fluid+solid'' system, relying on the equivalence between the ODE and PDE  formulations obtained in Theorem \ref{THEO-intro}
and on the reformulation of the energy obtained in Proposition  \ref{equienergy}.}
\end{proof}

If we assume that the body stays at distance at least $\delta > 0$ from the boundary we may infer from Proposition \ref{energy}  a bound of the body velocity depending only on the data and on $\delta$.
Indeed  we have the following immediate corollary of Proposition \ref{energy} and of the regularity properties of the functions $C(q)$ and $\mathcal{M}_a(q)$.
We denote $\mathcal Q_{\delta} := \{q\in\mathbb R^3\,:\,d(\mathcal S(q),\partial\Omega)> \delta\} $.

\begin{cor}[]
\label{bd-loin}
Let
 $\mathcal S_0 \subset \Omega$,  $p_0 \in  \mathbb R^3$ and  $(\gamma ,m ,\mathcal J) \in \mathbb R \times (0,+\infty) \times (0,+\infty)$;
 $\delta > 0$;  $q \in C^\infty([0,T] ;\mathcal Q_{\delta})$ satisfying  \eqref{ODE_intro} with the Cauchy data
$(q , q')(0)= (0, p_0 )$.
Then there exists $K>0$ depending only on 
 $\mathcal S_0 ,  \Omega,  p_0 , \gamma ,m ,\mathcal J , \delta $
  such that 
$ | q' |_{\mathbb{R}^{3}}  \leqslant K$ on $[0,T]$.
\end{cor}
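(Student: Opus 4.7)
The plan is to read this off directly from the energy conservation Proposition~\ref{energy}, exploiting the fact that both summands in the conserved energy $\mathcal{E}(q,q')$ have a favourable sign. Recall from \eqref{conserv} that
\[
\mathcal{E}(q,p) = \tfrac{1}{2}\bigl(\mathcal{M}_{g}+\mathcal{M}_{a}(q)\bigr)p\cdot p \;-\; \tfrac{1}{2}\gamma^{2}\,C(q),
\]
where $\mathcal{M}_{a}(q)$ is positive semi-definite, $\mathcal{M}_{g}=\mathrm{diag}(m,m,\mathcal J)$ is positive definite, and $C(q)<0$ for every $q\in\mathcal Q$. Thus both terms on the right are non-negative, and the conserved total is controlled by the initial kinetic plus potential pieces alone.

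The proof proceeds in three short steps. First, since $q\in C^\infty([0,T];\mathcal Q_\delta)$ no collision occurs on $[0,T]$, so Proposition~\ref{energy} applies and gives $\mathcal{E}(q(t),q'(t))=\mathcal{E}(0,p_0)$ for every $t\in[0,T]$. Second, using the sign of $C(q(t))$ to drop the non-negative potential term from the left, and then bounding below the kinetic term by discarding $\mathcal{M}_{a}(q(t))\geqslant 0$, one obtains
\[
\tfrac{1}{2}\min(m,\mathcal J)\,|q'(t)|_{\mathbb R^3}^{2} \;\leqslant\; \tfrac{1}{2}\mathcal{M}_{g}\,q'(t)\cdot q'(t) \;\leqslant\; \mathcal{E}(0,p_0).
\]
Third, one evaluates the right-hand side explicitly: $\mathcal{E}(0,p_0)=\tfrac{1}{2}(\mathcal{M}_g+\mathcal{M}_a(0))p_0\cdot p_0-\tfrac{1}{2}\gamma^{2}C(0)$, a quantity depending only on $p_0, m, \mathcal J, \gamma$ and on the data $\mathcal S_0,\Omega$ (through the smooth functions $\mathcal{M}_a$ and $C$ evaluated at the initial position $q=0\in\mathcal Q$). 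Setting $K:=\bigl(2\mathcal{E}(0,p_0)/\min(m,\mathcal J)\bigr)^{1/2}$ yields the conclusion.

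There is no real obstacle: the argument never uses the geometry of $\mathcal Q_\delta$ beyond the fact that the solution stays in $\mathcal Q$ long enough to invoke the energy identity. In particular the bound $K$ produced by this proof does not actually depend on $\delta$; the role of the hypothesis $q\in\mathcal Q_\delta$ is solely to guarantee the applicability of Proposition~\ref{energy}. If one wished, one could also write the estimate in terms of the PDE-side total kinetic energy via Proposition~\ref{equienergy}, but this is not needed for the stated bound.
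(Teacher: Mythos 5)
Your proof is correct and is exactly the argument the paper has in mind: the corollary is stated as an immediate consequence of the energy conservation in Proposition~\ref{energy}, using that both summands of $\mathcal{E}(q,p)$ in \eqref{conserv} are non-negative (since $\mathcal{M}_a(q)\geqslant 0$ and $C(q)<0$) so that $\tfrac12\mathcal{M}_g q'\cdot q'\leqslant \mathcal{E}(0,p_0)$. Your observation that the resulting constant $K$ is in fact independent of $\delta$ is accurate and harmless, since a constant depending on fewer parameters a fortiori ``depends only on'' the list given in the statement.
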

This entails in particular that  the life-time of a smooth solution to \eqref{ODE_intro}
can only be limited by a collision and therefore completes the proof of  Theorem \ref{thm-intro-sec2-CP}.
%
%
%
%
%
%
%

%
%
\ \par
\noindent

\subsection{Zero radius limit}
\label{sec3-VO}

Let us now turn our attention to the limit of the dynamics  when the size of the solid goes to 0 that is considering an initial domain for the body of the form 
\eqref{DomInit} with the inertia scaling  described in Definition \ref{massiveP}.  This aims to extend the analysis performed in Section  \ref{sec-zr-unbounded} to the case where the  ``fluid+solid'' system occupies a bounded domain rather than the whole plane.
 
Below, we will use the following notation: for $\mathcal S_0 \subset \Omega$;
$p_0  = ( \ell_0 , r_0  ) \in  \mathbb R^3$,   $(m ,\mathcal J) \in  (0,+\infty) \times (0,+\infty)$, 
 $\gamma $ in $ \mathbb R$ (resp. in $ \mathbb R^*$)  in the case of a massive (respectively massless) particle, 
  for every $\varepsilon \in (0,1]$ small enough to ensure that the set ${\mathcal S}_0^\varepsilon$ defined by \eqref{DomInit} satisfies
   ${\mathcal S}_0^\varepsilon \subset \Omega $, 
 we denote $(q^\varepsilon,T^\varepsilon)$   the maximal solution to  \eqref{ODE_intro} 
 associated with the coefficients $\mathcal{M}^\varepsilon$, $\Gamma_{a}^\varepsilon$ and ${F}^\varepsilon$ 
which are themselves  associated with $\mathcal S_0^\varepsilon, m^\varepsilon ,\mathcal{J}^\varepsilon $ and $\gamma $
(as $\mathcal{M}$, $\Gamma_{a}$ and $F$ were associated with $\mathcal S_0, m ,\mathcal{J} $ and $\gamma $)
where $m^\varepsilon ,\mathcal{J}^\varepsilon$ are given  in Definition \ref{massiveP}, 
and with the initial data $(q^\varepsilon , (q^\varepsilon)')(0)= (0, p_0 ).$

\begin{thm} \label{Bounded-VO}
Let $\mathcal S_0 \subset \Omega$;
$p_0  = ( \ell_0 , r_0  ) \in  \mathbb R^3$,   $(m ,\mathcal J) \in  (0,+\infty) \times (0,+\infty)$, 
 $\gamma $ in $ \mathbb R$ (resp. in $ \mathbb R^*$)  in the case of a massive (respectively massless) particle.
Let   $(h,T)$ be the maximal solution to \eqref{ODE-mass} (resp.  $h $ be the global solution to \eqref{argh}).
Then, as $\varepsilon\to 0$, 
  $\liminf T^\varepsilon\geqslant T$ (resp. $T^\varepsilon \longrightarrow +\infty$) and 
 $h^\varepsilon \relbar\joinrel\rightharpoonup h$ in $W^{2,\infty}([0,T'];\mathbb R^2)$ (resp.  in $W^{1,\infty}([0,T'];\mathbb R^2)$) weak-$\star$ for all $T'\in (0,T)$  (resp. for all $T' >0$).
 Furthermore in the case a massive particle, one also has that 
 $\varepsilon\theta^\varepsilon \relbar\joinrel\rightharpoonup 0$ in $W^{2,\infty}([0,T'];\mathbb R)$ weak-$\star$ for all $T'\in (0,T)$.
\end{thm}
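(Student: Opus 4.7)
The plan is to extend the strategy of Section~\ref{sec-zr-unbounded} to the bounded setting, starting from the exact ODE reformulation of the fluid--solid system provided by Theorem~\ref{THEO-intro}. The key new difficulty, compared to the unbounded case, is that the electric-type force $\gamma^2 E^\varepsilon(q^\varepsilon)$ arising from the Dirichlet stream function $\psi^\varepsilon$ is singular as $\varepsilon\to 0$ and does not vanish in the energy identity; the remedy, as announced in the introduction, is to absorb its leading part via a \emph{modulation} of the translation velocity by the Kirchhoff--Routh drift $\gamma u_\Omega$, so that the resulting asymptotic normal form becomes structurally identical to the one of Section~\ref{chap1}.

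Concretely, I would first introduce the naturally rescaled velocity $p^\varepsilon = ((h^\varepsilon)',\varepsilon(\theta^\varepsilon)')^t$ suggested by the scalings \eqref{wemerde}--\eqref{genouine}, and then derive asymptotic expansions in $\varepsilon$ of every coefficient of \eqref{ODE_intro}. Using an irrotational approximation of the fluid velocity near $\partial\mathcal S^\varepsilon(q^\varepsilon)$ together with repeated applications of Lamb's identity (Lemma~\ref{lamb}), one should establish that (i)~the added inertia satisfies $\mathcal M_a^\varepsilon(q^\varepsilon) = \varepsilon^2 I_\varepsilon \mathcal M_{a,\theta^\varepsilon} I_\varepsilon + O(\varepsilon^3)$, reducing at leading order to the translation-invariant matrix of Section~\ref{chap1}; (ii)~the magnetic-type vector $\gamma B^\varepsilon(q^\varepsilon)$ agrees at leading order with the Kutta--Joukowski vector of Theorem~\ref{pasdenom}, with a bounded correction proportional to $u_\Omega(h^\varepsilon)$ coming from the interaction between the potential flow and the cavity boundary; and (iii), using \eqref{atten} together with the standard expansion of the condenser capacity of a vanishing obstacle, $\gamma^2 E^\varepsilon(q^\varepsilon) = -\gamma^2 u_\Omega^\perp(h^\varepsilon) + o_{\varepsilon \to 0}(1)$. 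Setting then $\tilde\ell^\varepsilon := (h^\varepsilon)' - \gamma u_\Omega(h^\varepsilon)$ and $\tilde p^\varepsilon := (\tilde\ell^\varepsilon,\varepsilon(\theta^\varepsilon)')^t$, the leading contributions of (ii) and (iii) combine to give exactly $\gamma (\tilde p^\varepsilon \times B_{\theta^\varepsilon})$, so that the ODE rewritten for $\tilde p^\varepsilon$ admits an asymptotic normal form identical, up to $O(\varepsilon)$ remainders, to \eqref{ODE_ext-eps2}.

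The final step is to pass to the limit along the lines of the proof of Theorem~\ref{pasdenom-VO}. Conservation of the renormalized energy of Proposition~\ref{energy}, combined with the preceding expansions, provides uniform bounds on $\tilde p^\varepsilon$ on any compact subset of $\mathcal Q$; together with a bootstrap argument comparing $h^\varepsilon$ to the limit trajectory $h$, which itself is trapped away from $\partial\Omega$ thanks to the conservation laws recalled just before \eqref{ODE-mass} and \eqref{argh}, this yields $\liminf T^\varepsilon\geqslant T$. When $\alpha=0$ the matrix $\mathcal M_g$ is fixed, so the modulated equation passes to the weak-$\star$ limit in $W^{2,\infty}$ and yields exactly \eqref{ODE-mass}; when $\alpha>0$, every inertial contribution vanishes in $W^{-1,\infty}$ thanks to the extra powers of $\varepsilon$, and the leading Lorentz-type balance forces $\tilde\ell^\varepsilon\to 0$ in $L^\infty$ weak-$\star$, which is \eqref{argh}. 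The convergence $\varepsilon\theta^\varepsilon\rightharpoonup 0$ in the massive case follows from the stationary phase argument on the rotation component of the ODE, as in Lemma~10 of \cite{GLS} used in Section~\ref{sec-zr-unbounded}. The hardest step will be to establish the expansion (iii) together with its precise cancellation against (ii): quantifying the $o(1)$ remainder in $\gamma^2 E^\varepsilon = -\gamma^2 u_\Omega^\perp + o(1)$ and proving that no spurious leading-order terms survive in the modulated equation requires a careful uniform-in-$q^\varepsilon$ comparison, again via Lamb's lemma, of the bounded-domain potentials $\Phi_j^\varepsilon$ and stream function $\psi^\varepsilon$ with their full-plane counterparts suitably corrected by the Kirchhoff--Routh stream function.
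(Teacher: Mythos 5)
Your overall strategy --- rescale via $I_\varepsilon$, expand the coefficients of \eqref{ODE_intro} using Lamb's lemma, absorb the leading part of the electric-type force by modulating the translation velocity with the Kirchhoff--Routh drift $\gamma u_\Omega$, and then pass to the limit as in the unbounded case --- is exactly the strategy of the paper, and it does close the massive case $\alpha=0$. But there is a genuine gap in the massless case when $\alpha\geq 2$ (which Definition \ref{massiveP} allows and which is the delicate regime): a normal form ``identical up to $O(\varepsilon)$ remainders'' to \eqref{ODE_ext-eps2} is not sufficient. After modulation the inertia in front of $(\tilde p^\varepsilon)'$ is $\varepsilon^\alpha \mathcal M_g+\varepsilon^2\mathcal M_{a,\theta^\varepsilon}$, of size $\varepsilon^{\min(\alpha,2)}=\varepsilon^2$; in the modulated energy estimate the gyroscopic term $\gamma\,\tilde p^\varepsilon\times B_{\theta^\varepsilon}$ and the a-connection drop out, so Gr\"onwall closes only if the remainder is $O(\varepsilon^{\min(\alpha,2)})=O(\varepsilon^2)$ (possibly after time integration against $\tilde p^\varepsilon$). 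An uncontrolled $O(\varepsilon)$ term contributes $O(\varepsilon)|\tilde p^\varepsilon|$, which is too large compared with the $O(\varepsilon^2)$ inertia, and the a priori bound on $\tilde p^\varepsilon$ is lost. Relatedly, your appeal to the conservation of the energy of Proposition \ref{energy} for ``uniform bounds on $\tilde p^\varepsilon$'' cannot work as stated: even after renormalizing the logarithmically divergent potential part, the conserved energy only controls $\varepsilon^{\min(1,\alpha/2)}\,|((h^\varepsilon)',\varepsilon(\theta^\varepsilon)')|_{\mathbb R^3}$, which degenerates for $\alpha>0$.

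This is precisely why the paper expands $\mathcal M_a^\varepsilon$, $\Gamma_a^\varepsilon$, $E^\varepsilon$ and $B^\varepsilon$ up to order $\varepsilon^2$ (Proposition \ref{dev-added}) and exhibits, beyond the leading recombination \eqref{EB1} that you do have, a second algebraic cancellation \eqref{grav} at order $\varepsilon$: the subprincipal profiles $\gamma^2\mathsf E_{\it 1}+\gamma\, I_\varepsilon(q^\varepsilon)'\times\mathsf B_{\it 1}$ combine with $-\langle\Gamma_{a,\theta^\varepsilon},I_\varepsilon(q^\varepsilon)',I_\varepsilon(q^\varepsilon)'\rangle$ to reproduce the a-connection applied to the \emph{modulated} variable plus a term $\gamma\,(u_c(q^\varepsilon),0)^t\times B_{\theta^\varepsilon}$, which is in turn absorbed by correcting the drift to second order, $\tilde p^\varepsilon=\bigl((h^\varepsilon)'-\gamma[u_\Omega(h^\varepsilon)+\varepsilon u_c(q^\varepsilon)],\varepsilon(\theta^\varepsilon)'\bigr)^t$ as in \eqref{dmodu}; the surviving order-$\varepsilon$ pieces are tolerated only because they are weakly gyroscopic, i.e.\ gain a factor $\varepsilon$ after time integration against $\tilde p^\varepsilon$. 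Your single modulation $\tilde\ell^\varepsilon=(h^\varepsilon)'-\gamma u_\Omega(h^\varepsilon)$ and leading-order-only expansion omit all of this, so the normal form you obtain cannot deliver the uniform velocity bounds needed to pass to the limit when $\alpha\geq 2$. Completing the argument requires computing $\mathsf E_{\it 1}$ and $\mathsf B_{\it 1}$ explicitly and verifying the cancellation \eqref{grav}.
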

In the statement above it is understood that $q^\varepsilon$ was decomposed into $q^\varepsilon = (h^\varepsilon , \theta^\varepsilon)$.
It follows from Theorem \ref{THEO-intro} that Theorem  \ref{Bounded-VO} implies Theorem  \ref{thm-intro-sec2-VO}.

Let us provide a scheme of proof of Theorem  \ref{Bounded-VO}.

\begin{proof}[Scheme of proof of Theorem  \ref{Bounded-VO}]
Using that for the inertia regimes considered in Definition \ref{massiveP} 
 the genuine inertia matrix scales as in  \eqref{genouine}, 
   the equation for  $q^\varepsilon$ reads: 
\begin{equation}
 \label{ODE_intro_epsB}
\Big(  \varepsilon^\alpha  I_\varepsilon \mathcal{M}_g I_\varepsilon +  \mathcal{M}^\varepsilon_{a}  (q^\varepsilon)\Big)   (q^\varepsilon)'' 
+  \langle \Gamma_{a}^\varepsilon  (q^\varepsilon ),(q^\varepsilon)',(q^\varepsilon)'\rangle
 =  \gamma^2 E^\varepsilon (q^\varepsilon) + \gamma (q^\varepsilon)' \times B^\varepsilon (q^\varepsilon)    ,
\end{equation}
where  the added inertia matrix $\mathcal{M}_{a}^\varepsilon$,  the  a-connection $\Gamma_{a}^\varepsilon$  and  the electric and magnetic type terms $E^\varepsilon$ and $B^\varepsilon$ are 
associated with the body of size $\varepsilon$ as mentioned above. 
Here one crucial step in passing to the limit in \eqref{ODE_intro_epsB} is to find some uniform bounds in $\varepsilon$. 
The energy is of course a natural candidate in order to get such estimates. In particular one may turn toward 
an appropriate modification of Corollary \ref{bd-loin} in the zero radius limit.
A difficulty is  that the potential part of the energy (corresponding to the second term in \eqref{conserv})  diverges logarithmically as $\varepsilon \rightarrow 0^{+}$.
However such a contribution can be discarded from the energy conservation 
 since it does not depend on the solid position and velocity. 
 Indeed  an appropriate renormalization of the energy provides an uniform estimate of  $\varepsilon^{\min(1,\frac{ \alpha }{2 })} \, | (h^\varepsilon)' , \varepsilon (\theta^\varepsilon)' )    | _{\mathbb{R}^{3}} $ at least till the solid stays away from the external boundary. 
Unfortunately  in the massless case the coefficient $ \alpha$ satisfies $ \alpha >0$ and the previous estimate is not sufficient.\footnote{Indeed the case where $\alpha \geq 2$ is the most delicate and we will focus on it.}

One then turns toward the search for an asymptotic normal form of  \eqref{ODE_intro_epsB} with the hope that more structure shows up in the zero radius limit and reveals another candidate in order to obtain some uniform bounds in $\varepsilon$. 
In order to do so we first establish some expansions in the limit $\varepsilon \rightarrow 0$ 
of  $\mathcal{M}_{a}^\varepsilon$,  $\Gamma_{a}^\varepsilon$, $E^\varepsilon$ and $B^\varepsilon$.
These expansions are obtained by a multi-scale analysis of the Kirchhoff potentials and of the stream functions and repeated use of Lamb's lemma, that is Lemma \ref{lamb}.
More precisely these expansions involve two scales corresponding
respectively to variations over length $O (1)$ and $O (\varepsilon )$ respectively on $\partial \Omega$ and $\partial {\mathcal S}^\varepsilon (q)$.
The profiles appearing in these expansions are obtained by successive corrections, considering alternatively at their respective scales the body boundary from which the external boundary seems remote and the external boundary from which the body seems tiny, so that good approximations are given respectively by the case without external boundary and without the body. 
We refer to  \cite{GMS}  for more details on this intricate process  and sum up the results below. 
The leading term of the expansions of  $\mathcal{M}_{a}^\varepsilon $, $\Gamma_{a}^\varepsilon$ and $B^\varepsilon$  in the zero-radius limit are given, up to an appropriate scaling, by the terms obtained in the case where the rigid body is of size $\varepsilon = 1$ and is immersed in a fluid filling the whole plane, that is in the case tackled in Theorem \ref{pasdenom}. On the other hand the  leading term of the expansion of $E^\varepsilon$ in the zero-radius limit is given, up to an appropriate scaling, by the field 
\begin{equation} 
\label{tical}
\mathsf{E}_{{\it 0}} (q) :=
-
\begin{pmatrix}
  u_\Omega (h)^{\perp} \\
 u_\Omega (h) \cdot  R(\theta) \xi
\end{pmatrix} ,    \text{ where  }  q=(h,\theta ).
\end{equation}
We recall that $u_\Omega$ and $\xi$ were defined respectively above \eqref{ODE-mass} and  in  \eqref{DefXi}. 
 Given $\delta > 0$ and $\varepsilon_{0} $ in $(0,1)$, we define the bundle of shrinking body positions at distance $\delta$ from the boundary for a radius of order $\varepsilon$ with 
 $0<\varepsilon<\varepsilon_0$: 
$$\mathfrak Q_{\delta,\varepsilon_{0}} :=  \{(\varepsilon, q) \in (0,\varepsilon_{0}) \times \mathbb R^3 \ /\  d(\mathcal S^{\varepsilon}(q),\partial\Omega)> \delta \}  .$$
\begin{prop} \label{dev-added}
Let  $\delta > 0$. There exists $\varepsilon_{0} $ in $(0,1)$,
$\mathsf{E}_{{\it 1}} (q) $ and $\mathsf{B}_{{\it 1}} (q) $ in $ L^\infty ( \mathcal Q_{\delta} ; \mathbb{R}^{3})$, 
  $\mathcal{M}_{r}$ in $ L^\infty (\mathfrak Q_{\delta,\varepsilon_{0}} ; \mathbb{R}^{3 \times 3})$,
 $\Gamma_{r} $ in $ L^\infty (\mathfrak Q_{\delta,\varepsilon_{0}} ;  \mathcal{BL} (\mathbb{R}^3 \times \mathbb{R}^3 ; \mathbb{R}^3 ))$,
 and   $E_{r} $ and $B_{r}$ in  $L^\infty (\mathfrak Q_{\delta,\varepsilon_{0}}; \mathbb{R}^3)$ 
 such that, for all $(\varepsilon ,q ) $ in $\mathfrak Q_{\delta,\varepsilon_{0}}$,  with $q=(h,\theta )$, 
\begin{gather} \label{dev-mat-ii-proof}
\mathcal{M}_{a}^\varepsilon (q) = 
\varepsilon^2 I_\varepsilon  
\Big( \mathcal{M}_{a, \theta} 
+ \varepsilon^2 \mathcal{M}_{r} (\varepsilon ,q )
\Big) I_\varepsilon  ,
\\  \label{exp-GammaS}
\langle \Gamma_{a}^\varepsilon (q),\cdot,\cdot\rangle 
=  \varepsilon I_\varepsilon \big(\langle \Gamma_{a,\theta} , I_{\varepsilon} \cdot , I_{\varepsilon} \cdot \rangle
 + \varepsilon^{2} \langle \Gamma_{r} (\varepsilon,q), I_{\varepsilon} \cdot , I_{\varepsilon} \cdot \rangle  \big), 
 \\ \label{E-dev} E^\varepsilon(q) =  I_\varepsilon  \Big(   \mathsf{E}_{{\it 0}} (q) + \varepsilon \mathsf{E}_{{\it 1}} (q) + \varepsilon^2  E_{r}  (\varepsilon,q)  \Big) ,
\\ \label{B-dev} B^\varepsilon  (q) =  \varepsilon I_\varepsilon^{-1}   \Big( B_{\theta} + \varepsilon \mathsf{B}_{{\it 1}} (q) + \varepsilon^2 B_{r} (\varepsilon,q)  \Big).
\end{gather}
\end{prop}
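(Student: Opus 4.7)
The strategy is to perform a matched asymptotic analysis of the elliptic problems \eqref{Kir} and \eqref{def_stream} defining the Kirchhoff potentials $\boldsymbol{\Phi}^{\varepsilon}(q,\cdot)$ and the stream function $\psi^{\varepsilon}(q,\cdot)$ on the fluid domain $\mathcal{F}^{\varepsilon}(q) = \Omega \setminus \mathcal{S}^{\varepsilon}(q)$, and then to insert these expansions into the boundary-integral representations \eqref{def-MAq}, \eqref{B-def}, \eqref{E-def} and exploit Lamb's lemma (Lemma \ref{lamb}). The domain $\mathcal{F}^{\varepsilon}(q)$ carries two geometric scales --- the body of size $\varepsilon$ and the cavity of size $O(1)$ --- so on $\mathfrak{Q}_{\delta,\varepsilon_{0}}$ one constructs \emph{inner profiles}, functions of the rescaled variable $y=(x-h)/\varepsilon$, as harmonic functions on $\mathbb{R}^{2}\setminus R(\theta)\mathcal{S}_{0}$ with the prescribed Neumann or Dirichlet data on the rescaled body and controlled growth at infinity, together with \emph{outer profiles} defined on $\Omega$, harmonic with the prescribed data on $\partial\Omega$ and with a singular source at $h$ determined by the matching with the inner expansion.

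For the Kirchhoff potential $\Phi^{\varepsilon}_{j}$ the leading inner profile is, after rescaling and a rotation by $\theta$, exactly the Kirchhoff potential $\Phi_{j}$ of Section \ref{chap1} associated with $\mathcal{S}_{0}$; the rigid field $\zeta_{3}=(x-h)^{\perp}$ carries an extra factor $\varepsilon$ under the change of variable, which is precisely what produces the framing matrix $I_{\varepsilon}$ in \eqref{dev-mat-ii-proof}. Inserting this expansion into $\mathcal{M}_{a}^{\varepsilon}(q) = \int_{\partial \mathcal{S}^{\varepsilon}(q)} \boldsymbol{\Phi}^{\varepsilon}\otimes \partial_{n}\boldsymbol{\Phi}^{\varepsilon}\,ds$ and using the multipole decay $\Phi_{j}(y) = O(1/|y|)$, $\nabla\Phi_{j}(y)=O(1/|y|^{2})$ from \eqref{ComportementPhii}, one checks that $\partial\Omega$ corrects the inner profile only to relative order $\varepsilon^{2}$ near $\partial\mathcal{S}^{\varepsilon}(q)$, giving the remainder $\varepsilon^{2} I_{\varepsilon}\mathcal{M}_{r}(\varepsilon,q) I_{\varepsilon}$. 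The expansion \eqref{exp-GammaS} of $\Gamma_{a}^{\varepsilon}$ follows by differentiating \eqref{dev-mat-ii-proof} in $q$ via Definition \ref{Christq}: $\theta$-derivatives act on $\mathcal{M}_{a,\theta}$ and reproduce $\Gamma_{a,\theta}$, whereas $h$-derivatives hit only the smooth $\varepsilon^{2}$ remainder.

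The stream function $\psi^{\varepsilon}$ is treated symmetrically. Its leading inner profile is the stream function $\Psi_{H}$ of the harmonic field $H$ of Section \ref{HF} associated with $R(\theta)\mathcal{S}_{0}$, since both vanish on the body boundary, carry the unit circulation forced by \eqref{circ-norma}, and have logarithmic behaviour at infinity compatible via matching with the Green kernel of $\Omega$; the leading outer profile is correspondingly the solution $\psi^{0}(h,\cdot)$ of the Dirichlet problem defining the Kirchhoff--Routh stream function introduced above \eqref{ODE-mass}. Feeding these two profiles into \eqref{B-def} and reproducing the computation of Theorem \ref{reup} via Lamb's lemma yields, for the leading term of $B^{\varepsilon}$, precisely the Kutta--Joukowski vector $B_{\theta}$ of \eqref{DefB}, while the factor $\varepsilon I_{\varepsilon}^{-1}$ in \eqref{B-dev} records again the rescaling of $\zeta_{3}$ and the Jacobian of the inner change of variables on $\partial\mathcal{S}^{\varepsilon}(q)$. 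For $E^{\varepsilon}$, an integration by parts together with the identity \eqref{Stokes0} recasts \eqref{E-def} in terms of the gradient of the outer profile evaluated on $\partial\mathcal{S}^{\varepsilon}(q)$; to leading order this gradient is $\nabla^{\perp}\psi_{\Omega}(h)=u_{\Omega}(h)$, and projecting against $K_{i}$ one reads off exactly $\mathsf{E}_{0}(q)$ as in \eqref{tical}.

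The main technical obstacle is to justify the uniform $O(\varepsilon^{2})$ remainder bounds throughout $\mathfrak{Q}_{\delta,\varepsilon_{0}}$. This is a twofold difficulty: one must prove that inner--outer matching closes at two orders rather than one (so that a possible $O(\varepsilon)$ correction vanishes, leaving $O(\varepsilon^{2})$), which forces one to carry the multi-scale construction explicitly up to two orders and rely on the multipolar decay \eqref{ComportementPhii}, \eqref{HalInfini} of the inner profiles; and one must obtain elliptic regularity estimates on the $\varepsilon$-dependent fluid domain $\mathcal{F}^{\varepsilon}(q)$ whose constants depend on $q$ only through $\delta$, which is where the smoothness of the Green kernel of $\Omega$ away from the diagonal combined with standard Schauder estimates near $\partial\mathcal{S}^{\varepsilon}(q)$ comes in, and the uniformity of the various bounds in $(\varepsilon,q) \in \mathfrak{Q}_{\delta,\varepsilon_{0}}$ must be tracked throughout.
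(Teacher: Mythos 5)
Your proposal follows the same route as the paper, which itself only sketches this result and defers the details to \cite{GMS}: a two-scale (inner/outer) expansion of the Kirchhoff potentials and of the circulation stream function, whose leading inner profiles are the whole-plane objects of Section~\ref{chap1}, inserted into the boundary-integral formulas for $\mathcal{M}_{a}^\varepsilon$, $E^\varepsilon$, $B^\varepsilon$ and evaluated via repeated use of Lamb's lemma (Lemma~\ref{lamb}), with the technical obstacles you flag (matching to two orders, uniformity of the remainders over $\mathfrak Q_{\delta,\varepsilon_0}$) being exactly the ones the paper points to. The one step I would state more carefully is the origin of $\mathsf{E}_{{\it 0}}$: projecting the constant $|u_\Omega(h)|^2$ against $K_i$ gives zero by \eqref{Stokes0}, and the leading contribution to $E^\varepsilon$ is instead the cross term between the $O(1/\varepsilon)$ circulatory inner profile and the $O(1)$ outer field $u_\Omega(h)$, the pure circulatory self-interaction cancelling exactly as the terms $C_i$ do in the unbounded case.
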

We recall that ${\mathcal M}_{a, \theta} $ and 
$B_{ \theta}$ are given by Theorem \ref{pasdenom} as associated with the rigid body of size $\varepsilon = 1$ and as if the body was immersed in a fluid filling the whole plane,
 $ \Gamma_{a,\theta}$ denotes the  a-connection associated with ${\mathcal M}_{a, \theta}$,
  $I_{\varepsilon}$ is the diagonal matrix 
$I_{\varepsilon} :=     \text{diag } (1,1,\varepsilon)$ and let us avoid any confusion by highlighting that the $\cdot$ in  \eqref{exp-GammaS} stands for the application to any   ${p} $ in $\mathbb{R}^3$ (which determines completely the bilinear symmetric mapping). Let us also recall that quite explicit expressions of  ${\mathcal M}_{a, \theta} $,
$B_{ \theta}$  and   $ \Gamma_{a,\theta}$ are given in  Section \ref{sec-ex}.

Therefore, using \eqref{E-dev}, \eqref{B-dev}, \eqref{tical} and \eqref{DefB}, one obtains that the leading part of the expansion of  the right hand side 
 of \eqref{ODE_intro_epsB} is 
\begin{equation} \label{EB1}
I_\varepsilon  \Big(\gamma^2 \mathsf{E}_{{\it 0}}  (q^\varepsilon) + \gamma \big( I_{\varepsilon} (q^{\varepsilon})' \big) \times B_{\theta^\varepsilon}   \Big)
=  I_\varepsilon  \Big(\gamma {\hat{p}}^\varepsilon \times B_{\theta^\varepsilon}   \Big),
\end{equation}
for any ${\hat{p}}^\varepsilon$  of the form 
\begin{equation} \label{smodu}
{\hat{p}}^\varepsilon = (  (h^{\varepsilon})'  - \gamma  u_\Omega (h^\varepsilon ) ,\varepsilon (\theta^\varepsilon)' )^{t} + \eta  B_{\theta^\varepsilon} ,
\end{equation}
with $ \eta $ in $\mathbb R$.
\paragraph{An instructive digression.} The identities \eqref{EB1} and \eqref{smodu}
 remind a well-known modulation strategy  used by Berkowitz and Gardner, cf. \cite{BG}, in order the tackle the zero-mass limit of the following dynamics of a light particle in a smooth electro-magnetic field:
\begin{equation}
  \label{GR0}
\varepsilon^2 q''   =  E(q) +   q' \times B(q)    \text{ with the condition  } E(q)  \cdot B(q)  = 0 .
\end{equation}
Here we have dropped the index $\varepsilon$ of $q$ for sake of clarity and we will assume that the fields $ E(q)$ and $B(q) $ (which actually stand here for electric and magnetic fields) smoothly depend on its argument  $q$ but not on $\varepsilon$ otherwise. 
The setting of \cite{BG} is slightly more general but the toy-system above will be sufficient for the exposition of the gain obtained by modulation  in the analysis. 
The starting point is that a naive application of the Cauchy-Lipshitz theorem does only provide existence of a solution over a time which may vanish as $\varepsilon$ converges to $0$. 
The difficulty resides within the lack of sign or structure of the $E(q)$ term which prevents from obtaining straightforwardly some uniform estimates by energy. 
To overcome this difficulty Berkowitz and Gardner introduced the modulated variable:
\begin{equation}
  \label{GR1}
 \tilde{p} = q' - u(q)    \text{ where  }   u(q)   \text{ satisfies } E(q) + u(q)  \times B(q) = 0.
\end{equation}
Observe that the existence for any $q$ of such a vector $u(q)$ is guaranteed by the condition $E(q)  \cdot B(q)  = 0$ and that the set of such vectors is an one-dimensional  affine space. 
Indeed in \cite{BG} Berkowitz and Gardner makes use of the following explicit  field 
\begin{equation}
  \label{explicit}
 u(q)  := \vert B(q)  \vert^{-2}  \,E(q)  \times B(q) ,
\end{equation}
 which  satisfies the condition in   \eqref{GR1} and which 
turns to be the actual physical drift velocity for this system.

Using the chain-rule, one obtains $ \tilde{p}' = q'' - q' \cdot \nabla u(q)$,
and then, by using  \eqref{GR0} and \eqref{GR1},
\begin{align*}
\varepsilon^2  \tilde{p}'    =   E(q) +   q' \times B(q) -   \varepsilon^2 q' \cdot \nabla u(q) =    \tilde{p}  \times B(q) -   \varepsilon^2 ( \tilde{p}  + u(q) ) \cdot \nabla u(q) .
\end{align*}
Therefore, one obtains the following gyroscopic normal form: 
\begin{equation}
  \label{GR2}
  \tilde{p}'  = \frac{ 1 }{\varepsilon^2  }    \, \tilde{p}  \times B(q) -   (  \tilde{p}  + u(q) ) \cdot \nabla u(q)
  \end{equation}
Now that the $ E(q)$ has been absorbed by the choice of the modulated variable, the only factor with a  singular (i.e. negative) power of $\varepsilon$ is in front of the  $B(q)$ term and this term disappears when taking the inner product of  \eqref{GR2} with $p$ in an energy-type estimate. 
Some Gronwall estimates on  \eqref{GR2} and   \eqref{GR1} then provide uniforms bound of $q$ and $q'$. 
In particular the second Gronwall estimate allows to estimate $q$ and $q'$ from $p$ thanks to  \eqref{GR1} and therefore 
relies on the fact that the modulation $ u(q)$ involves one less time derivative than $q'$.

\
\par
\
Let us now go back to our search for an asymptotic normal form of  \eqref{ODE_intro_epsB}  and let see how to extend the analysis performed above. 
We first observe that the drift velocity (naively) computed as in \eqref{explicit} with $ \mathsf{E}_{{\it 0}} (q^\varepsilon)$ and $  B_{\theta^\varepsilon} $  instead of  $E(q)$ and $B(q)$
corresponds to a nonzero $\eta$ in  \eqref{smodu}.
Still in the case of  \eqref{EB1} one observes as we already did in the  proof of Theorem   \ref{pasdenom-VO}
 that the natural counterpart to $(h^\varepsilon)'  $ for what concerns the angular velocity is rather $\varepsilon (\theta^\varepsilon)'  $ than $(\theta^\varepsilon)' $. 
Moreover we will benefit from the fact that the contribution due to $\varepsilon (\theta^\varepsilon)'  $  in the first two coordinates of the result of the cross product in the right hand side of  \eqref{EB1} provides the term \eqref{simplemaiscostaud} whose special structure somehow allows to gain one factor $\varepsilon$.
It turns out that  the leading part of the relevant drift velocity in order to pass to the limit in \eqref{ODE_intro_epsB}  
is given by  \eqref{smodu} with  $\eta=0$, that is by 
\begin{equation}
  \label{mod0}
{\hat{p}}^\varepsilon = (  (h^{\varepsilon})'  - \gamma  u_\Omega (h^\varepsilon ) ,\varepsilon (\theta^\varepsilon)' )^{t} .
\end{equation}

Still the leading terms of the inertia matrix $  \varepsilon^\alpha  I_\varepsilon \mathcal{M}_g I_\varepsilon +  \mathcal{M}^\varepsilon_{a}  (q^\varepsilon) $ in front of $(q^\varepsilon)'' $ in \eqref{ODE_intro_epsB} is\footnote{Observe that one recovers the same inertia for the leading terms (for which the hierarchy depends on whether $\alpha\geq 2 $ or $\alpha\leq 2$) than in Section  \ref{sec-zr-unbounded} for the case where the  ``fluid+solid'' system occupies the full plane, cf.  \eqref{wemerde}.} 
$ I_\varepsilon  (  \varepsilon^\alpha \mathcal{M}_g + \varepsilon^2  \mathcal{M}_{a, \theta}   )   I_\varepsilon $, and therefore,  in order to cover the case where $\alpha  \geq 2$, 
 one has to investigate further the structure of the other terms of the equation  \eqref{ODE_intro_epsB},  and to hope that a recombination as nice  as in \eqref{EB1} occurs at the next order.
This is actually  why we had expanded up to order $\varepsilon^2 $ in Proposition  \ref{dev-added}.
One observes in particular from  \eqref{exp-GammaS} that 
at order $\varepsilon$ the a-connection $\Gamma_{a}^\varepsilon$ comes into play. 
Indeed combining the previous expansions of $\Gamma_{a}^\varepsilon$, $E^\varepsilon$ and $B^\varepsilon$
one obtains 
\begin{gather} \label{mesaoulent}
\gamma^2 E^\varepsilon (q^\varepsilon) + \gamma (q^\varepsilon)' \times B^\varepsilon (q^\varepsilon)  
 - \langle\Gamma_{a}^\varepsilon  (q^\varepsilon), (q^{\varepsilon})', (q^{\varepsilon})'\rangle 
  \\  \nonumber = I_\varepsilon  \Big[ \gamma {\hat{p}}^\varepsilon \times B_{\theta^\varepsilon}  
 \\  \nonumber + \varepsilon \Big(  \gamma^2 \mathsf{E}_{{\it 1}}  (q^\varepsilon) +\gamma  I_{\varepsilon} (q^{\varepsilon})' \times \mathsf{B}_{{\it 1}}  (q^\varepsilon)  
- \langle \Gamma_{a,\theta^\varepsilon} ,  I_{\varepsilon} (q^{\varepsilon})' ,  I_{\varepsilon} (q^{\varepsilon})' \rangle \Big) 
 + O(\varepsilon^2 ) \Big], 
\end{gather}
Above and thereafter  the notation $O(\varepsilon^2) $ holds for a term of the form $ \varepsilon^2 F(\varepsilon, q^{\varepsilon} , {\hat{p}}^\varepsilon )$ where 
 $F$ is  a vector field  which is weakly nonlinear in the sense that 
there exists  $\delta>0$,  $\varepsilon_0\in(0,1)$ and 
 $K>0$  
such that for any $(\varepsilon, q, p) $ in $\mathfrak Q_{\delta,\varepsilon_{0}} \times \mathbb R^3 $, 
$| F  (\varepsilon , q, p) |_{\mathbb R^{3}} \leq K ( 1 + | p|_{\mathbb R^{3}}  + \varepsilon | p|_{\mathbb R^{3}}^{2} ) $.
 Indeed the way \eqref{mesaoulent} has to be understood  is even more intricate because among the terms hidden in the $ O(\varepsilon^2 )$ there is a term for which one obtains such an order only when performing a Gronwall estimate for an energy-type method. More precisely one term abusively included in   the notation $O(\varepsilon^2) $ in \eqref{mesaoulent}  
  is of the form  $ O(\varepsilon ) F(q^\varepsilon)$, where  $F$  is  a vector field  in $C^{\infty}(\mathbb R \times \Omega ; \mathbb R^3)$  weakly gyroscopic in the sense that for any  $\delta>0$ and $\varepsilon_0\in (0,1)$  there exists $K>0$ depending on  $\mathcal S_0$, $\Omega$, $\gamma $ and $\delta$ such that for any smooth curve $q(t) = ( h(t),\theta (t))$ in $ \{x \in  \Omega \ / \  d (x, \partial \Omega ) > \delta \} \times \mathbb R$,  we have, for any $t\geq 0$ and any $\varepsilon \in (0,\varepsilon_{0})$,
$| \int_0^t \, 
\tilde{p} \cdot F (q) |
 \leq \varepsilon K  ( 1 + t +  \int_0^t \, |  \tilde{p}|^{2}_{ \mathbb R^3} ) $, with $\tilde{p}= (h' - \gamma u_\Omega (h), \varepsilon \theta')^t$.

A striking and crucial phenomenon  is that some subprincipal contributions (that is, of order  $\varepsilon$)  of the right hand side of  \eqref{mesaoulent} can be gathered  into an a-connection term involving the bilinear mapping $\Gamma_{a}^\varepsilon$  obtained in the case where the rigid body is of size $\varepsilon = 1$ and is immersed in a fluid filling the whole plane, but 
applied to the modulated variable as follows\footnote{As for \eqref{EB1}, this relation is algebraic, in the sense that it does not rely on the fact that $q^\varepsilon$ satisfies \eqref{ODE_intro}.}:
\begin{multline} \label{grav}
\gamma^2 \mathsf{E}_{{\it 1}}  (q^\varepsilon) + \gamma  I_{\varepsilon} (q^{\varepsilon})'  \times {\mathsf{B}}_{{\it 1}}  (q^\varepsilon)
- \langle \Gamma_{a,\theta^\varepsilon},  I_{\varepsilon} (q^{\varepsilon})' ,  I_{\varepsilon} (q^{\varepsilon})' \rangle  
 \\ =    - \langle \Gamma_{a,\theta^\varepsilon}, \hat{p}^\varepsilon, \hat{p}^{\varepsilon} \rangle +  \gamma  (u_{c} (q^{\varepsilon}),0)^t \times   B_{\theta^\varepsilon} 
+ O (\varepsilon), 
\end{multline}
where ${\hat{p}}^\varepsilon $ is given by  \eqref{mod0},
$ u_c $ is a smooth vector field on  $\mathcal Q$ with values in $\mathbb{R}^2$ which depends on $\Omega$ and $ \mathcal S_0$.
 Indeed a quite explicit expression can be given by 
$ u_c :=\nabla^\perp_{h} \big(  D_h \psi_{\Omega} (h) \cdot R(\theta) \xi  \big) $, where  $D_h$ denotes the derivative with respect to $h$.
We refer here again to  \cite{GMS}  for a proof of  \eqref{grav}; it relies on  explicit computations of the profiles $\mathsf{E}_{{\it 1}} (q) $ and $\mathsf{B}_{{\it 1}} (q) $ thanks to geometric quantities and some tedious algebraic computations. 

Next the second term in the right hand side of  \eqref{grav}  can be absorbed by the principal term in the right hand side of  \eqref{mesaoulent} 
up to a modification of size $\varepsilon$ of the arguments that is, thanks to 
 the following second order modulation: 
\begin{equation}  \label{dmodu}
{\tilde{p}}_{\varepsilon} := \big( h_\varepsilon'  - \gamma [ u_\Omega (h_\varepsilon ) + \varepsilon   u_c (q_\varepsilon) ], \varepsilon \vartheta_\varepsilon'  \big)^t .
\end{equation}
Observe also that, as long as the solid does not touch the boundary, the  drift term in the velocity of the center of mass
is bounded. Indeed one may easily proves that there exists $\delta > 0$,  $\varepsilon_{0} $ in $(0,1)$ and $K>0$ such that for any $(\varepsilon ,q) $ in $ \mathfrak Q_{\delta,\varepsilon_{0}} $
with $q=(h,\theta)$, $| u_\Omega (h) + \varepsilon   u_c (q) |_{\mathbb R^{3}} \leq K$.

Thus we deduce from   \eqref{mesaoulent}  and \eqref{grav} that 
\begin{multline} \label{AUX-tot}
\gamma^2 E^\varepsilon (q^\varepsilon) + \gamma (q^\varepsilon)' \times B^\varepsilon (q^\varepsilon)  
- \langle \Gamma_{a}^\varepsilon  (q^\varepsilon)  , (q^\varepsilon)', (q^\varepsilon)' \rangle \\
= I_\varepsilon  \Big[  \gamma \tilde{p}^\varepsilon     \times B_{\theta^\varepsilon} 
- \varepsilon \langle \Gamma_{a,\theta^\varepsilon}, \tilde{p}^\varepsilon, \tilde{p}^\varepsilon \rangle
+ O(\varepsilon^2 )  \Big],
\end{multline}
Using now \eqref{dev-mat-ii-proof}  and a few further tedious manipulations  
 the equation \eqref{ODE_intro_epsB}  
can now be recast into the following geodesic-gyroscopic normal form: 
\begin{multline}
\label{fnorm2}
\Big( \varepsilon^{\alpha}\,  M_g + \varepsilon^2 \, M_{a, \theta_\varepsilon} \Big) \tilde{p}_\varepsilon '
+ \varepsilon \langle \Gamma_{a,\theta_\varepsilon} ,{\tilde{p}}_\varepsilon,{\tilde{p}}_\varepsilon\rangle 
 = \gamma \tilde{p}^\varepsilon     \times B_{\theta^\varepsilon} 
 + O(\varepsilon^{\min(2,\alpha)} )   .
\end{multline}
Observe how \eqref{fnorm2}  is close to \eqref{ODE_ext-eps}: the only two differences are the modulation of $p^\varepsilon$ into $\tilde{p}^\varepsilon$ and the remainder $O(\varepsilon^{\min(2,\alpha)} )$, which actually suffers from the same abuse of notation than the term $O(\varepsilon^2)$
described below \eqref{mesaoulent}.
At least till the solid stays away from the external boundary one may take advantage of 
this normal form to obtain an estimate of the modulated energy 
$$\frac12 \Big( \varepsilon^{\alpha}\,  M_g + \varepsilon^2 \, M_{a, \theta_\varepsilon} \Big) \tilde{p}_\varepsilon \cdot  \tilde{p}_\varepsilon  ,$$
thanks to a Gronwall estimate.
This provides uniform bounds of  $ | ( (h^\varepsilon)' , \varepsilon (\theta^\varepsilon)' )|_{\mathbb{R}^{3}} $.  This estimate in turn allows to pass to the limit proceeding as in the proof of Theorem \ref{pasdenom-VO}.
The issue of a possible collision is then tackled in a bootstrapping argument thanks to the behavior of the limit systems.

More precisely we first  prove that  the lifetime $T^{\varepsilon}$ of the solution $q^\varepsilon$, which can be only limited by a possible encounter between the solid and the boundary $\partial \Omega$, satisfies the following: 
there exist $\varepsilon_{0}>0$, $\underline{T}>0$ and $\underline{\delta} >0$, such that for any $\varepsilon $ in $(0,\varepsilon_{0})$, we have
$T^{\varepsilon} \geq \underline{T}$ and moreovoer on $ [0,\underline{T} ]$, one has 
$(\varepsilon,q^\varepsilon )  \in  \mathfrak Q_{\underline{\delta}  ,\varepsilon_{0}} $.

Then, using again the uniform estimates obtained thanks to the asymptotic geodesic-gyroscopic normal form \eqref{fnorm2} one 
  establishes the desired convergence on any time interval during which we have a minimal distance between ${\mathcal S}_{\varepsilon}(q)$ and $\partial \Omega$, uniform for small $\varepsilon$.
This  consists in passing to the weak limit, with the help of all a priori bounds, in the two first components of each term of \eqref{fnorm2}.
 It finally only remains to extend the time interval on which the above convergences are valid to any time interval.
\end{proof}

 
\section{Case of an unbounded  flow with vorticity}
\label{chap3}

In this section we investigate the case of a rigid body immersed in an unbounded  flow with vorticity. 

\subsection{Statement of a theorem \`a la Yudovich in the body frame}
For the Cauchy problem  it is more convenient to consider the body frame which does not depend on time, as we did in Section \ref{chap1}.
We will therefore start back from the equations  \eqref{Euler11}-\eqref{Solide11}. 
In the sequel we will use an abuse of notation and still denote by $\omega$ the vorticity  in the body frame given by 
$\omega(t,x) :=  \operatorname{curl} v(t,x).$
Taking the curl of the equation \eqref{Euler11} we get
\begin{equation}
\label{vorty}
\partial_t  \omega + \left[(v-\ell-r x^\perp)\cdot\nabla\right]  \omega =0 \text{ for }
x \in \mathcal{F}_{0} .
\end{equation}
Due to the equation of vorticity \eqref{vorty} 
the following quantities are conserved as time proceeds, at least for smooth solutions: for any $t>0$, for any $p $ in $\left[ 1,+\infty \right]$,
\begin{eqnarray} \label{ConsOmega}
\| \omega  (t, \cdot) \|_{L^{p}(\mathcal{F}_{0})} 
= \| \omega_{0} \|_{ L^{p}( \mathcal{F}_{0} )}  .
\end{eqnarray}
In the case of a fluid alone, the conservation laws \eqref{ConsOmega} allowed Yudovich, and  DiPerna and Majda to construct some global-in-time solutions of the $2$d Euler equations in the case of a velocity with finite local energy and  $L^{p}$ initial vorticity, with $p>1$. In the case $p= +\infty$
Yudovich  also obtained a uniqueness result using in particular that the corresponding fluid velocity is in the space  $\mathcal{LL}({\mathcal F}_0)$ of log-Lipschitz $\mathbb R^2$-valued vector fields on ${\mathcal F}_0$, that is the set of functions $f \in L^{\infty}({\mathcal F}_0)$ such that
\begin{equation} \label{DefLL}
\| f \|_{\mathcal{LL}({\mathcal F}_0)} := \| f\|_{L^{\infty}({\mathcal F}_0)} + \sup_{x\not = y} \frac{|f(x)-f(y)|}{|(x-y)(1+ \ln^{-}|x-y|)|} < +\infty.
\end{equation}

These results can be adapted to the case where there is a rigid body. 
In theses notes we will focus on  a result  of global in time existence and uniqueness similar to the celebrated result by 
Yudovich about a fluid alone. 

Let us first give a global weak formulation of the problem by considering (for the solution as well as for test functions) a velocity field on the whole plane, with the constraint to be rigid on $ \mathcal{S}_{0} $. We introduce the following space
$$\mathcal{H}: = \left\{\Psi\in L^2_{loc} (\mathbb{R}^2)  \, \Big/ \,   \operatorname{div} \Psi =0 \, \text{ in } \, \mathbb{R}^2 \, \text{ and } \
D \Psi =0 \, \text{ in } \, \mathcal{S}_0 \right\},$$
  where $ D\Psi := \nabla \Psi + (\nabla \Psi)^{T} $.
It is classical that the space $\mathcal{H}  $ can be recast thanks to the property:
 \begin{eqnarray}
  \label{EEE} 
\exists (\ell_\Psi , r_\Psi ) \in \mathbb{R}^2 \times \mathbb{R},  \  \forall x \in  \mathcal{S}_0 , \
\Psi  (x) = \ell_\Psi +  r_\Psi x^\perp .
\end{eqnarray}
More precisely, $\mathcal{H} = \left\{\Psi\in L^2_{loc}  (\mathbb{R}^2)  \ \Big/ \   \operatorname{div} \Psi =0 \ \text{ in } \ \mathbb{R}^2 
  \text{ and satisfies }   \eqref{EEE}  \right\} $,  and the ordered pair $(\ell_\Psi , r_\Psi )$ above is unique. 
Let us also introduce
$$\tilde{\mathcal{H}}:= \left\{ \Psi \in \mathcal{H} \ \Big/ \ \Psi |_{ \overline{\mathcal{F}_{0}}} \in C^1_c ( \overline{\mathcal{F}_{0}}) \right\} ,$$
where $\Psi |_{ \overline{\mathcal{F}_{0}}}$ denotes the restriction of $\Psi$ to the closure of the fluid domain. We also introduce for $T>0$, 
$\tilde{\mathcal{H} }_{T} := C^{1}([0,T];\tilde{{\mathcal H}}).$
When $(\overline{u},\overline{v}) \in \mathcal{H} \times \tilde{\mathcal{H} }$, we denote by 
\begin{equation*}
<\overline{u},\overline{v} >
:= m \, \ell_u \, \cdot \ell_v + \mathcal{J} r_u \,  r_v +  \int_{\mathcal{F}_{0} } u \cdot v \, dx ,
\end{equation*}
where we use the notations $u$ and $v$ for the restrictions of $\overline{u}$ and $\overline{v} $ to $\overline{\mathcal{F}_{0}}$.
Our definition of a weak solution is the following.
\begin{defn}[Weak Solution] \label{DefWS}
Let us be given  $ \overline{v}_0 \in \mathcal{H}$ and $T>0$.
We say that  $ \overline{v} \in C ([0,T]; \mathcal{H}-w)$ is a weak solution to \eqref{Euler11}--\eqref{Solide1ci} in $[0,T]$ if for any test function $\Psi \in \tilde{\mathcal{H} }_{T}$,
\begin{multline} \label{FormulationFaible}
<  \Psi(T,\cdot ) , \overline{v}(T,\cdot) >
- <  \Psi (0,\cdot ) , \overline{v}_0 >_\rho
= \int_0^T < \frac{\partial \Psi}{\partial t},\overline{v} > \, dt \\ 
+ \int_0^T \!\! \int_{\mathcal{F}_0} v\cdot\left(\left(v-\ell_v  - r_v x^\perp  \right)\cdot\nabla\right)\Psi \, dx\, dt
- \int_0^T \!\! \int_{\mathcal{F}_0} r_v\,  v^\perp   \cdot \Psi \, dx \, dt \\ 
-  \int_0^T    m r_v \, \ell_v^\perp   \cdot  \ell_{\Psi}   \, dt .
\end{multline}
We say that $ \overline{v} \in C ([0,+\infty); \mathcal{H}-w)$ is a weak solution to \eqref{Euler11}--\eqref{Solide1ci} in $[0,+\infty)$ if it satisfies \eqref{FormulationFaible} for all $T>0$.
\end{defn}
Definition \ref{DefWS} is legitimate since a classical solution to \eqref{Euler11}--\eqref{Solide1ci} in $[0,T]$ is also a weak solution. This follows easily from an integration by parts in space which provides the identity on $[0,T]$:
\begin{equation} \label{SemiW}
<\partial_{t} \overline{v} , \Psi  > 
= \int_{\mathcal{F}_0} v \cdot\left(\left(v- \ell_v - r_v x^\perp \right)\cdot\nabla\right)\Psi \, dx
- \int_{\mathcal{F}_0} r_v \, v^\perp   \cdot \Psi  \, dx -  m r_v \,  \ell_v^\perp   \cdot  \ell_{\Psi} ,
\end{equation}
and then from an integration by parts in time.

In the sequel we will often drop the index of $\ell_v$ and $r_v$ and we will therefore rather write $\ell$ and $r$.
We will equivalently say that $(\ell,r,v)$ is a weak solution to \eqref{Euler11}--\eqref{Solide1ci}.

One has the following result of existence of weak solutions for the above system, the initial position of the solid being given.

\begin{thm} \label{ThmYudo0}
For any   $ \overline{v}_0 \in \mathcal{H}$ such that the  restriction  of 
$\operatorname{curl} \overline{v}_0 $ to $\overline{{\mathcal F}_0}$ is in $ L_c^{\infty}(\overline{{\mathcal F}_0})$, 
there exists a unique  weak solution  $ \overline{v} \in C ([0,+\infty); \mathcal{H}-w)$ to \eqref{Euler11}--\eqref{Solide1ci} in $[0,+\infty)$.
Moreover 
$(\ell ,r)$ is in $C^1 (\mathbb{R}^+; \mathbb{R}^2 \times \mathbb{R})$, $v$ is in 
$L^{\infty}(\mathbb{R}^+ ; \mathcal{LL}({\mathcal F}_0))$ and  $\operatorname{curl} v $ is in $ L^{\infty}(\mathbb{R}^+ ;  L^\infty_c(\overline{{\mathcal F}_0}))$.
\end{thm}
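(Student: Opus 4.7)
The plan is to follow Yudovich's classical scheme for the $2$D Euler equations, adapted to the present fluid-solid coupling and with the acceleration estimate from \cite{GS2, Sueur} alluded to in the text. As a preliminary, given $(\ell, r) \in C^0([0,T]; \mathbb{R}^2 \times \mathbb{R})$, circulation $\gamma$ and $\omega \in L_c^\infty(\overline{\mathcal{F}_0})$, the div-curl system \eqref{r1}--\eqref{r3} with the additional constraint $\operatorname{curl} v = \omega$ has a unique decaying solution
\[ v = \ell_1 \nabla \Phi_1 + \ell_2 \nabla \Phi_2 + r \nabla \Phi_3 + \gamma H + K_{\mathcal{F}_0}[\omega], \]
where $\Phi_j, H$ are the objects of Section \ref{sec-ex} and $K_{\mathcal{F}_0}$ is the Biot-Savart operator in $\mathcal{F}_0$ with zero normal flux and zero circulation on $\partial \mathcal{S}_0$. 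The vorticity equation \eqref{vorty} is then a closed transport equation for $\omega$, with transport velocity $v - \ell - rx^\perp$ tangent to $\partial\mathcal{F}_0$ by \eqref{Euler13}, so the $L^p$ norms \eqref{ConsOmega} are conserved and $\operatorname{supp}\omega$ stays in $\overline{\mathcal{F}_0}$, with at worst linear spreading controlled by $\|v\|_{L^\infty}$.

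I would first regularize the initial vorticity, picking $\omega_0^n \in C_c^\infty(\overline{\mathcal{F}_0})$ with uniform compact support, converging to $\omega_0$ strongly in every $L^p$ for $p < \infty$, and uniformly bounded in $L^\infty$. For each $n$, a Picard iteration on the coupled ODE-transport-elliptic system yields a local classical solution $(\ell^n, r^n, v^n)$. The transport structure gives $\|\omega^n(t,\cdot)\|_{L^\infty} = \|\omega_0^n\|_{L^\infty}$ and uniform control of $\operatorname{supp}\omega^n$ on bounded time intervals; the renormalized energy $\tfrac{1}{2}(m|\ell^n|^2 + \mathcal{J}(r^n)^2) + \tfrac{1}{2}\int_{\mathcal{F}_0}|v^n - \gamma H|^2 \, dx$, obtained by discarding the logarithmically divergent self-contribution of $\gamma H$ in the spirit of Remark \ref{remarep}, is conserved and provides uniform $L^\infty_t$ bounds on $(\ell^n, r^n)$. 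The Biot-Savart decomposition above, combined with standard elliptic estimates for $K_{\mathcal{F}_0}[\omega^n]$ on a set containing the evolving support of the vorticity, then yields a uniform bound of $v^n(t,\cdot)$ in $\mathcal{LL}(\mathcal{F}_0)$ in terms of $\|\omega_0\|_{L^\infty}$, the support of $\omega^n$, $|\ell^n|, |r^n|$ and $|\gamma|$; in particular the regularized solutions extend globally in time.

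The delicate step is the uniform bound on the solid accelerations $((\ell^n)', (r^n)')$. Rather than differentiating \eqref{Solide11} directly, which would call for $H^1$ control of $v^n$, I would follow \cite{GS2, Sueur} and test the weak formulation \eqref{SemiW} against suitable rigid-extension vector fields $\Psi \in \tilde{\mathcal{H}}$ corresponding to a pure translation or rotation inside $\mathcal{S}_0$ and to a compactly supported solenoidal extension in $\mathcal{F}_0$; this reads $(\ell^n)'$ and $(r^n)'$ as quadratic integrals of $v^n$ against fixed smooth vector fields on $\mathcal{F}_0$, so only the $L^2$ energy bound enters. Combined with the $L^\infty_t$ bound on $(\ell^n, r^n)$, this gives Arzel\`a--Ascoli compactness in $C^0$ for the solid motion; in parallel $\omega^n$ is weakly-$\star$ relatively compact in $L^\infty$, and $v^n$ is strongly relatively compact in $L^2_{loc}(\mathcal{F}_0)$ via Aubin--Lions, with the log-Lipschitz bound as spatial regularity and $\partial_t v^n$ bounded in an appropriate negative Sobolev norm. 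These compactnesses suffice to pass to the limit in every term of \eqref{FormulationFaible}, producing the sought global weak solution with the stated regularity.

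For uniqueness, given two such solutions with the same data, I would set $\delta v, \delta \omega, \delta \ell, \delta r$ and derive an Osgood-type inequality in the Yudovich spirit: the log-Lipschitz regularity of the $v^i$ controls the growth of $\|\delta \omega\|_{L^p}$ with constants linear in $p$, the Biot-Savart decomposition controls $\|\delta v\|_{L^2}$ by $|\delta \ell|, |\delta r|$ and $\|\delta \omega\|_{L^2}$, and the variational identity used in the previous step also expresses $(\delta \ell)'$ and $(\delta r)'$ linearly in $\delta v$. Assembling these into an Osgood differential inequality on a composite functional such as $|\delta \ell|^2 + |\delta r|^2 + \|\delta \omega\|_{L^p}^p$, with $p$ optimized as a function of time, forces the differences to vanish. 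The main obstacle along the plan is the acceleration estimate of the third paragraph: it is the only genuinely new ingredient with respect to the Yudovich theory for a fluid alone, and once it is in hand the rest of the argument is a routine adaptation to the coupled setting.
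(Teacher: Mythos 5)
Your overall scheme is the one the paper follows: the same Biot--Savart decomposition of the velocity, an energy-type bound giving $L^\infty_{loc}$-in-time control of $(\ell,r)$, an acceleration bound obtained by testing the weak formulation so that only $L^2$ information on the velocity enters, a fixed-point/compactness argument for existence (the paper uses Schauder rather than regularization plus Picard, which is immaterial), and Yudovich's $L^p$-interpolation/Osgood argument for uniqueness. Two steps are, however, stated incorrectly as written.

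First, the renormalized energy. The quantity $\tfrac12\bigl(m|\ell|^2+\mathcal J r^2\bigr)+\tfrac12\int_{\mathcal F_0}|v-\gamma H|^2\,dx$ is in general infinite: with your normalization of the Biot--Savart part (zero circulation around $\partial\mathcal S_0$), the field $K_{\mathcal F_0}[\omega]$ has circulation $\int_{\mathcal F_0}\omega$ at infinity and hence behaves like $\bigl(\int_{\mathcal F_0}\omega\bigr)\tfrac{x^\perp}{2\pi|x|^2}$ there, so $v-\gamma H$ fails to be square integrable unless $\int_{\mathcal F_0}\omega_0=0$. The correct subtraction is $\beta H$ with $\beta=\gamma+\int_{\mathcal F_0}\omega_0$, as in the decomposition \eqref{allo2}; this is precisely what makes $\tilde v$ decay like $|x|^{-2}$. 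Moreover, even the corrected quantity $\tilde{\mathcal E}$ is \emph{not} conserved when $\beta\neq0$: in the energy identity only the purely circulatory self-interaction cancels (via Blasius and the residue theorem), while the cross terms between $\tilde v$ and $\beta H$ survive and yield only the Gr\"onwall bound $\tilde{\mathcal E}(t)\leqslant\tilde{\mathcal E}(0)e^{C|\beta|t}$ of Proposition \ref{PE2}. That bound is still sufficient for your purposes, but the claim of exact conservation is false; the exactly conserved renormalized energy is the one of Section \ref{NRJ}, which is not a sum of positive terms and is not what drives the a priori estimates here.

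Second, the acceleration bound. You cannot test against an arbitrary compactly supported solenoidal extension of the rigid velocities: for a generic $\Psi\in\tilde{\mathcal H}$ the term $\int_{\mathcal F_0}\partial_t v\cdot\Psi\,dx$ in \eqref{SemiW} is not expressible through $(\ell',r')$, so the identity does not close on the solid acceleration. The point of the argument (Proposition \ref{Toascoli}) is to take $\Psi_i=\nabla\Phi_i$ in $\mathcal F_0$, extended rigidly inside $\mathcal S_0$ and reached by truncation since the Kirchhoff potentials are not compactly supported: then $\int_{\mathcal F_0}\partial_t v\cdot\nabla\Phi_i\,dx$ reconstitutes $\mathcal M_a(\ell,r)'$ by \eqref{vieux}, the left-hand side becomes $(\mathcal M_g+\mathcal M_a)(\ell,r)'$ with an invertible total inertia, and the right-hand side is quadratic in $v$ against the decaying fields $\nabla\Phi_i$ and their gradients, so that the $L^2$ bound suffices. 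With these two corrections your plan coincides with the paper's proof.
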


Going back to the original frame Theorem \ref{ThmYudo0} implies Theorem \ref{ThmYudo-intro}. 
Regarding the initial data, let us observe that 
with any $(\ell_0,r_0) \in \mathbb{R}^2 \times \mathbb{R}$, $\omega_0  \in L_c^{\infty}(\overline{{\mathcal F}_0})$, 
one may associate  $ \overline{v}_0 \in \mathcal{H}$ by setting 
$ \overline{v}_0 =  \ell_0 +  r_0 x^\perp$ in $\mathcal{S}_{0}$ and $ \overline{v}_0 =   u_0 $, where  $u_0$ is the compatible initial velocity associated with  $\ell_0$, $r_0$ and $\omega_0$ by  Definition \ref{CompDataY}.

\subsection{Proof of Theorem \ref{ThmYudo0}}
\label{sec-coche}

In order to take into account the velocity contribution due to the vorticity we consider  the Green's function  $G (x,y)$ of $\mathcal{F}_{0}$ with Dirichlet boundary conditions. 
We also introduce the function $K (x,y)=\nabla^\perp G (x,y)$ known as the kernel of the Biot-Savart operator  $K [\omega]$ which therefore acts on $\omega \in L^\infty_c (\overline{\mathcal{F}_{0}})$  through the formula 
\begin{equation*}
  K[\omega](x)= \int_{\mathcal{F}_{0}}K (x,y) \omega(y) \, dy.
\end{equation*}
It is classical that  $K[\omega]$ is
 in $\mathcal{LL}(\mathcal{F}_{0})$, divergence-free, tangent to the boundary and satisfies $\operatorname{curl} K[\omega]=\omega$ and 
$K[\omega](x) = {\mathcal O}\left( |x|^{-2}\right) $ as $ x \rightarrow \infty$,
(so that it  is  square-integrable), and its circulation around $\partial \mathcal{S}_0$
is given by $ \int_{\partial \mathcal{S}_0  }   K [ \omega] \cdot {\tau} \, ds = -  \int_{\mathcal{F}_{0}  }  \omega \, dx .$
Then,   given $\omega$ in $L^\infty_c (\mathcal{F}_{0})$, $\ell$ in $\mathbb{R}^2$, $r$ and $\gamma $ in  $\mathbb{R}$, 
 there is a unique solution $v$ in $\mathcal{LL}({\mathcal F}_{0})$  to the following system:
\begin{gather*}
 \operatorname{div} v = 0   \text{ and }  \operatorname{curl} v  = \omega   \quad   \text{for}  \ x\in  \mathcal{F}_{0} , \\
 v \cdot n = \left(\ell+r x^\perp\right)\cdot n  \quad   \text{for}  \ x\in \partial \mathcal{S}_0    \text{ and }  \int_{ \partial \mathcal{S}_0} v  \cdot  \tau \, ds=  \gamma , \\
 v \longrightarrow 0 \quad  \text{as}  \ x \rightarrow  \infty .
\end{gather*}
Moreover $v$ is given by 
\begin{gather} \label{allo2}
v = \tilde{v} + \beta H  ,   \\ \nonumber \text{ with }  \tilde{v} := K [\omega ]  + \ell_1 \nabla \Phi_1 + \ell_2 \nabla \Phi_2
+ r \nabla \Phi_3    \text{ and  }   \beta := \gamma +   \int_{\mathcal{F}_{0}  }  \omega \, dx .
\end{gather}
We start with looking for some  a priori estimates that is to some estimates satisfied by  smooth solutions  to \eqref{Euler11}--\eqref{Solide1ci}. 
We already mentioned above  the a priori estimates \eqref{ConsOmega} regarding the vorticity. 
From Kelvin's theorem and the vorticity equation \eqref{vorty}, one also has, at least for smooth solutions,  the following: 
\begin{equation*}
\gamma  = \int_{  \partial \mathcal{S}_{0}} v_{0}  \cdot  \tau \, ds  \quad  \text{ and } \quad  \int_{ \mathcal{F}_0} \omega(t,x) \, dx  = \int_{\mathcal{F}_{0}} \omega_{0}(x) \, dx.
\end{equation*}
In particular it follows from these two conservation laws that  the coefficient $ \beta$ in  \eqref{allo2} is constant in time.
Regarding the energy observe that $ \tilde{v}$ is in  $L^2 ({\mathcal F}_0)$ whereas $v$ is not\footnote{It is interesting to compare the decomposition above with the one used in Section \ref{chap1}, cf.  \eqref{allo1}.} unless $\beta =0$.
Still we have the following result.
\begin{prop} \label{PE2}
There exists a constant $C > 0$ (depending only on $\mathcal{S}_0 $, $m $ and ${\mathcal J}$) such that for any smooth solution $(\ell ,r ,v)$ of the problem  \eqref{Euler11}--\eqref{Solide1ci} on the time interval $\lbrack 0, T \rbrack$,  with compactly supported fluid vorticity, the energy-like quantity defined by:
\begin{equation*}
\tilde{\mathcal{E}} (t) := \frac{1}{2} \left( m  |\ell(t)|^2 +   \mathcal{J} r(t)^2 +  \int_{ \mathcal{F}_{0} }  \tilde{v} (t,\cdot )^2 dx \right),
\end{equation*}
satisfies the inequality $ \tilde{\mathcal{E}} (t) \leqslant  \tilde{\mathcal{E}} (0) e^{C|\beta| t} .$
\end{prop}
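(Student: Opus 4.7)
The plan is to derive a closed identity for $\tfrac{d}{dt}\tilde{\mathcal E}$, show its right-hand side is $\leq C|\beta|\tilde{\mathcal E}$ with $C = C(\mathcal S_0,m,\mathcal J)$, and conclude by Gronwall. First, I would rewrite \eqref{Euler11} in Bernoulli form, using $(v\cdot\nabla) v = \tfrac12 \nabla |v|^2 - v^\perp\omega$ together with the vector identity \eqref{vect} applied to $v$ and $\ell+rx^\perp$, to get
\[
\partial_t v + \nabla\bigl[\tfrac12|v|^2 - v\cdot(\ell+rx^\perp) + \tilde\pi\bigr] + (v - \ell - rx^\perp)^\perp \omega = 0.
\]
Testing against $\tilde v$ over $\mathcal F_0$ is legitimate since $\operatorname{div}\tilde v = 0$ and $\tilde v \in L^2$; integration by parts produces a boundary integral on $\partial\mathcal S_0$ (no contribution at infinity). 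Combining with the solid equations \eqref{Solide11} tested against $(\ell,r)^T$, the pressure boundary terms cancel thanks to $\tilde v\cdot n = (\ell+rx^\perp)\cdot n$ on $\partial\mathcal S_0$ (itself a consequence of $v\cdot n = (\ell + rx^\perp)\cdot n$ and $H\cdot n = 0$) and to $\ell^\perp\cdot\ell = 0$. Using also $\int_{\partial\mathcal S_0}|\ell+rx^\perp|^2(\ell+rx^\perp)\cdot n \, ds = 0$, which follows from Stokes and $\operatorname{div}(|w|^2w) = 0$ for $w = \ell + rx^\perp$, one arrives at
\[
\tfrac{d\tilde{\mathcal E}}{dt} = -\tfrac12\!\int_{\partial\mathcal S_0}\!\!|v - (\ell+rx^\perp)|^2(\ell+rx^\perp)\cdot n\,ds - \!\int_{\mathcal F_0}\!\!\tilde v\cdot(v - \ell - rx^\perp)^\perp\omega\,dx.
\]

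Next, I would substitute $v = \tilde v + \beta H$ and expand both integrals in powers of $\beta$. The crucial claim is that the $\beta$-independent part of the right-hand side vanishes identically. Indeed, when $\beta = 0$ the velocity $v = \tilde v$ is in $L^2$, and testing the Euler equation \eqref{Euler11} directly against $v$ (now legitimate) yields conservation of the standard total energy $\tfrac12 m|\ell|^2 + \tfrac12\mathcal J r^2 + \tfrac12\!\int v^2 = \tilde{\mathcal E}$. Since the $\beta$-independent piece is an explicit algebraic expression in $\tilde v, \ell, r, \omega$ that coincides with $\tfrac{d\tilde{\mathcal E}}{dt}$ in the $\beta = 0$ case, it must vanish on all admissible configurations---and here one checks that the admissibility constraint characterizing $\tilde v$ for $\beta = 0$, namely $\int_{\partial\mathcal S_0}\tilde v\cdot\tau\,ds + \int\omega = 0$, is automatically preserved by $\tilde v = v - \beta H$ for any $\beta$, because subtracting $\beta H$ shifts the circulation of $v$ by exactly $-\beta$. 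What remains are $-\beta\!\int_{\partial\mathcal S_0}\!(\tilde v - (\ell+rx^\perp))\cdot H (\ell+rx^\perp)\cdot n\,ds$, $-\tfrac{\beta^2}{2}\!\int_{\partial\mathcal S_0}\!|H|^2(\ell+rx^\perp)\cdot n\,ds$, and $-\beta\!\int_{\mathcal F_0}\!\tilde v\cdot H^\perp\omega\,dx$.

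Finally, to bound these by $C|\beta|\tilde{\mathcal E}$ uniformly in $\omega$ and the circulation, I would use that $H$ and $\nabla\Phi_i$ are smooth and depend only on $\mathcal S_0$. The $L^2$-orthogonality $\int v_\#\cdot K[\omega]\,dx = 0$, a consequence of $K[\omega]\cdot n = 0$ on $\partial\mathcal S_0$ (which turns the integration by parts into a vanishing boundary integral), yields $\|v_\#\|_{L^2}^2 + \|K[\omega]\|_{L^2}^2 = \|\tilde v\|_{L^2}^2 \leq 2\tilde{\mathcal E}$, whence positive-definiteness of $\mathcal M_g + \mathcal M_a$ (as $\mathcal S_0$ is not a disk) gives $|\ell|^2 + r^2 \leq C\tilde{\mathcal E}$. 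The residual boundary integrals are converted to volume $L^2$-pairings by Stokes, using $K[\omega] = \nabla^\perp G[\omega]$ with $G[\omega]|_{\partial\mathcal S_0} = 0$ and $\operatorname{curl} K[\omega] = \omega$; Cauchy--Schwarz then yields each term $\leq C|\beta|\tilde{\mathcal E}$, and Gronwall finishes the proof.

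The main obstacle is two-fold: justifying the order-$\beta^0$ cancellation (which rests on recognizing the $\beta$-independent piece as the energy derivative of the $\beta = 0$ Yudovich sub-problem and on the automatic preservation of the circulation-vorticity admissibility constraint under $v \mapsto v - \beta H$), and recasting the surviving boundary integrals as volume $L^2$-pairings so that the final constant is controlled by $\mathcal S_0, m, \mathcal J$ alone, without hidden trace-theorem dependence on $\|\omega\|_{L^\infty}$ or $\mathrm{supp}\,\omega$.
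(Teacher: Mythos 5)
Your overall strategy (split $v=\tilde v+\beta H$, show the $\beta$-independent part of $\tilde{\mathcal E}'$ vanishes, bound the remainder, Gronwall) is the right one, and your derivation of the identity
$\tilde{\mathcal E}'=-\tfrac12\int_{\partial\mathcal S_0}|v-(\ell+rx^\perp)|^2(\ell+rx^\perp)\cdot n\,ds-\int_{\mathcal F_0}\tilde v\cdot(v-\ell-rx^\perp)^\perp\omega\,dx$ via the Bernoulli form is correct, as is the observation that $\tilde v$ is exactly the compatible field of a zero-$\beta$ configuration, which legitimizes the order-$\beta^0$ cancellation. The gaps are in the final estimation step. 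First, the term $-\tfrac{\beta^2}{2}\int_{\partial\mathcal S_0}|H|^2(\ell+rx^\perp)\cdot n\,ds$ is \emph{not} bounded by $C|\beta|\tilde{\mathcal E}$: it is of size $\beta^2(|\ell|+|r|)\lesssim\beta^2\tilde{\mathcal E}^{1/2}$, which is incompatible with the purely multiplicative Gronwall bound $\tilde{\mathcal E}(t)\leqslant\tilde{\mathcal E}(0)e^{C|\beta|t}$ (take $\tilde{\mathcal E}(0)=0$). This term must be shown to \emph{vanish}, and that is a genuine fact about $H$: one needs $\int_{\partial\mathcal S_0}|H|^2 n\,ds=0$ and $\int_{\partial\mathcal S_0}|H|^2 x^\perp\cdot n\,ds=0$, which the paper obtains from Blasius' lemma and the residue theorem using the Laurent expansion $\widehat H=\tfrac{1}{2i\pi z}+O(z^{-2})$ (so that $\int(\widehat H)^2dz=0$ and $\int z(\widehat H)^2dz$ is purely imaginary). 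Your proposal neither proves nor flags this.

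Second, the term $-\beta\int_{\mathcal F_0}\tilde v\cdot H^\perp\omega\,dx$ cannot be closed by Cauchy--Schwarz as you suggest: it pairs $\tilde v$ against $H^\perp\omega$, and any such bound brings in $\|\omega\|_{L^2}$ (or $\|\omega\|_{L^\infty}$ and its support), which is not controlled by $\tilde{\mathcal E}$ and would reintroduce exactly the dependence you want to avoid. The way out is to use $\omega=\operatorname{curl}\tilde v$ and integrate by parts so that the derivative falls on $H$: one gets $-\beta\int\tilde v\cdot(\tilde v\cdot\nabla)H\,dx$ plus a boundary term $\beta\int_{\partial\mathcal S_0}(H\cdot\tilde v)(\tilde v\cdot n)\,ds$ that cancels against the corresponding piece of your boundary cross-term $-\beta\int_{\partial\mathcal S_0}(\tilde v-(\ell+rx^\perp))\cdot H\,(\ell+rx^\perp)\cdot n\,ds$ (recall $\tilde v\cdot n=(\ell+rx^\perp)\cdot n$ there); only then can the decay $\nabla H=O(|x|^{-2})$ be used to get $C|\beta|(\|\tilde v\|_{L^2}^2+|\ell|^2+r^2)\leqslant C|\beta|\tilde{\mathcal E}$. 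This is essentially why the paper avoids the Bernoulli/boundary-integral route altogether and instead tests the weak momentum formulation against $\tilde v$, which produces directly the volume integrals $\beta\int\tilde v\cdot((\tilde v\cdot\nabla)H)$, $\beta\int\tilde v\cdot((\ell\cdot\nabla)H)$ and $\beta r\int\tilde v\cdot(H^\perp-(x^\perp\cdot\nabla)H)$, each harmless by the decay of $H$ and $\nabla H$ — no trace of $K[\omega]$ and no explicit $\omega$-weighted pairing ever appears.
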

\begin{rem}
In the case where $\beta=0$, that is, when the solution is of finite energy, the energy is conserved.
\end{rem}
\begin{proof}
We start by recalling that a classical solution satisfies \eqref{SemiW}, and we use the 
decomposition \eqref{allo2} in the left hand side.
One has for all $t$:
\begin{equation} \label{decay}
 \tilde{v} = {\mathcal O}\left( \frac{1}{|x|^{2}}\right) \ \text{ and } \nabla  \tilde{v} = {\mathcal O}\left( \frac{1}{|x|^{3}}\right) \  \text{ as } x \rightarrow \infty .
\end{equation}
We use an integration by parts for the first term of the right hand side of  \eqref{SemiW} to get that 
for any test function $\Psi \in \tilde{\mathcal{H} }_{T}$,
\begin{eqnarray*}
m \, \ell' \cdot \ell_\Psi + \mathcal{J} r' r_\Psi +  \int_{\mathcal{F}_{0} } \partial_{t} \tilde{v}  \cdot \Psi \, dx
 = - \int_{\mathcal{F}_0}  \Psi \cdot\left(\left(v-\ell - r x^\perp \right)\cdot\nabla\right) v \, dx
\\ \quad  -  \int_{\mathcal{F}_0} r v^\perp   \cdot \Psi  \, dx -  m r \ell^\perp   \cdot  \ell_{\Psi}         .
\end{eqnarray*}
Then, using a standard regular truncation process, we obtain that the previous identity is still valid for the test function $\Psi $ defined by $ \Psi (t,x) =  \tilde{v}(t,x)  $ for $(t,x)$ in $ [ 0,T] \times \mathcal{F}_0$ and  $ \Psi (t,x) =   \ell(t)  +  r(t) x^\perp $ for  $(t,x)$ in $ [ 0,T] \times \mathcal{S}_0$. Hence we get:
\begin{eqnarray*}
\tilde{\mathcal{E}}' (t)
&=& - \int_{\mathcal{F} _0 }  \tilde{v} \cdot\left(\left(v-\ell - r x^\perp \right)\cdot\nabla\right)  {v} \, dx
-  \int_{\mathcal{F}_0} r v^\perp   \cdot  \tilde{v}  \, dx \\ 
&=& - \int_{\mathcal{F} _0 }  \tilde{v}  \cdot \Big( (  {v} - \ell-rx^\perp) \cdot\nabla  \tilde{v}   \Big) \, dx
- \beta  \int_{\mathcal{F} _0 }  \tilde{v}  \cdot ( (\tilde{v} \cdot \nabla) H )  \, dx
 \\
&&+ \beta  \int_{\mathcal{F} _0 }  \tilde{v}  \cdot ( (\ell  \cdot\nabla) H )  \, dx - \beta r \int_{\mathcal{F} _0 }  \tilde{v}  \cdot    (H^\perp - (x^\perp  \cdot\nabla) H  ) \, dx  
 \\
&&-  \beta^2 \int_{\mathcal{F} _0 }  \tilde{v}  \cdot ( (H \cdot \nabla) H ) \, dx \\
& =: & I_1 + I_{2}+ I_{3}+ I_4  + I_5 .
\end{eqnarray*}
Integrating by parts we infer that  $I_1 = 0$, since $v - \ell -r x^\perp$ is a divergence free vector field, tangent to the boundary. Let us stress that there is no contribution at infinity because of the decay properties of the various fields involved, see \eqref{decay}. \par 
On the other hand, using the smoothness and decay at infinity of $H$,  we get that there exists $C>0$ depending only on ${\mathcal{F} _0 }$ such that
\begin{equation*}
| I_2 | + | I_3 | + | I_4 | \leqslant C |\beta| \left( \int_{\mathcal{F} _0 } \tilde{v}^2 \,dx  + |\ell|^2 + r^2 \right).
\end{equation*}
Let us now turn our attention to $I_5$.
We first  use  that $H$ being curl free, we have 
\begin{equation*}
 \int_{\mathcal{F} _0 }  \tilde{v}  \cdot ( (H \cdot \nabla) H ) \, dx = \frac{1}{2} \int_{\mathcal{F} _0 }  (\tilde{v}  \cdot \nabla  )| H |^2  \, dx,
\end{equation*}
and then an integration by parts and \eqref{Euler13}
to obtain 
\begin{eqnarray*}
\int_{\mathcal{F} _0 }  \tilde{v}  \cdot ( (H \cdot \nabla) H ) \, dx 
&=& \frac{1}{2} \int_{\partial \mathcal{S} _0 }  (\tilde{v}  \cdot  n )| H |^2 \, ds 
\\
&& = \frac{1}{2} \ell \cdot \int_{\partial \mathcal{S} _0 }  | H |^2 n \, ds  + \frac{1}{2} r \int_{\partial \mathcal{S} _0 }  | H |^2 x^\perp \cdot n \, ds .
\end{eqnarray*}
We make use of  Blasius' lemma and Cauchy's residue Theorem to obtain  that $I_5 = 0$. 

Collecting all these estimates it only remains to use Gronwall's lemma to conclude.
\end{proof}
Proposition \ref{PE2} provides in particular some a priori estimates of the solid velocity. 
We aim now at finding an a priori bound of the body acceleration. 
\begin{prop}
\label{Toascoli}
There exists a constant $C>0$ depending only on $\mathcal{S}_0 $, $m $,  ${\mathcal J}$, $\beta$ and  $ \tilde{\mathcal{E}} (0)$ such that  any classical solution to \eqref{Euler11}--\eqref{Solide1ci} satisfies the estimate $\| (\ell' , r')  \|_{L^{\infty} (0,T)} \leqslant C $.
\end{prop}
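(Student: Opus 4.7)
The plan is to rewrite the solid equations as an ordinary differential equation of the form $(\mathcal{M}_{g}+\mathcal{M}_{a})p' = \Phi(p,v)$ in which the right-hand side only involves the fluid velocity through $L^{2}$-type integrals against the Kirchhoff potentials. Since $\mathcal{M}_{g}+\mathcal{M}_{a}$ is a constant, positive-definite matrix depending only on $\mathcal{S}_{0}, m, \mathcal{J}$, bounding $\Phi$ uniformly will give the conclusion. The approach is thus an adaptation of Lemma~\ref{Refo} to the presence of vorticity, with the right-hand side controlled by the energy-like quantity $\tilde{\mathcal{E}}$ of Proposition~\ref{PE2}.

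First, I would test the Newton equations \eqref{Solide11} against the Kirchhoff potentials, using $\partial_{n}\Phi_{i}=K_{i}$ on $\partial\mathcal{S}_{0}$ and $\Delta\Phi_{i}=0$ to turn the pressure integrals into $\int_{\mathcal{F}_{0}}\nabla\tilde\pi\cdot\nabla\Phi_{i}\,dx$, then substitute the Euler equation \eqref{Euler11}. The time-derivative contribution yields exactly the added-mass part: in the decomposition \eqref{allo2}, $\beta H$ is $L^{2}$-orthogonal to $\nabla\Phi_{i}$ (since $H$ is tangent to $\partial\mathcal{S}_{0}$ and both fields decay at infinity), and the vorticity part $K[\omega]$ similarly satisfies $\int K[\omega]\cdot\nabla\Phi_{i}\,dx=0$, so that $\tfrac{d}{dt}\int v\cdot\nabla\Phi_{i}\,dx = (\mathcal{M}_{a}p')_{i}$. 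The resulting identity reads
\begin{equation*}
(\mathcal{M}_{g}+\mathcal{M}_{a})p' + \langle\Gamma_{g},p,p\rangle = -\Big(\int_{\mathcal{F}_{0}} [(v-\ell-rx^{\perp})\cdot\nabla v]\cdot\nabla\Phi_{i}\,dx + r\int_{\mathcal{F}_{0}} v^{\perp}\cdot\nabla\Phi_{i}\,dx\Big)_{i=1,2,3}.
\end{equation*}

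Next, I would integrate by parts once more in the convection term: since $w := v-\ell-rx^{\perp}$ is divergence-free and tangent to $\partial\mathcal{S}_{0}$, and using the symmetry of the Hessian $\nabla^{2}\Phi_{i}$, one gets $\int(w\cdot\nabla v)\cdot\nabla\Phi_{i}\,dx = -\int v\otimes w : \nabla^{2}\Phi_{i}\,dx$ (the boundary term at infinity vanishing thanks to the $O(|x|^{-2})$ decay of $\nabla\Phi_{i}$ given by \eqref{ComportementPhii}). Using the decomposition $v=\tilde v+\beta H$ and $w = \tilde v + \beta H -\ell-rx^{\perp}$, each piece is bounded: the terms quadratic in $\tilde v$ by $\|\tilde v\|_{L^{2}}^{2}\|\nabla^{2}\Phi_{i}\|_{L^{\infty}}$; the pieces linear in $\tilde v$ by Cauchy--Schwarz, using that $H\nabla^{2}\Phi_{i}$, $\nabla^{2}\Phi_{i}$ and $|x|\nabla^{2}\Phi_{i}$ all belong to $L^{2}(\mathcal{F}_{0})$ because of the $O(|x|^{-3})$ decay of $\nabla^{2}\Phi_{i}$ and $O(|x|^{-1})$ decay of $H$ (cf.~\eqref{HalInfini} and \eqref{ComportementPhii}); and the purely non-$L^{2}$ pieces involving only $\beta H, \ell, rx^{\perp}$ by explicit constants since $|H||x||\nabla^{2}\Phi_{i}|=O(|x|^{-3})\in L^{1}$. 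The linear term $r\int v^{\perp}\cdot\nabla\Phi_{i}$ is handled analogously using that $\nabla\Phi_{i}\in L^{2}(\mathcal{F}_{0})$.

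Combining these estimates gives $|\Phi(p,v)|\leqslant C(1+\tilde{\mathcal{E}}(t))$ for a constant $C$ depending only on $\mathcal{S}_{0}$ and $\beta$. Since Proposition~\ref{PE2} provides a bound on $\tilde{\mathcal{E}}(t)$ on $[0,T]$ depending only on $\mathcal{S}_{0}, m, \mathcal{J}, \beta$ and $\tilde{\mathcal{E}}(0)$ (through the inequality $\tilde{\mathcal{E}}(t) \leqslant \tilde{\mathcal{E}}(0)e^{C|\beta|t}$), and $\mathcal{M}_{g}+\mathcal{M}_{a}$ is positive definite, inverting it gives the desired uniform bound on $\|(\ell',r')\|_{L^{\infty}(0,T)}$. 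The main technical difficulty is precisely the interaction between the $L^{2}$ component $\tilde v$ of the fluid velocity and the non-$L^{2}$ components $\beta H$ and $\ell+rx^{\perp}$ of the effective transport $w$; this is overcome by exploiting the strong decay of the first and second derivatives of the Kirchhoff potentials, which more than compensates for the slow decay of $H$ and the linear growth of $x^{\perp}$, so that no $L^{\infty}$-type control of the vorticity beyond what is already encoded in the energy is needed.
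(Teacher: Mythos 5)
Your proof is correct and follows essentially the same route as the paper: the paper also tests the equations against the Kirchhoff potentials (via the weak formulation \eqref{SemiW} with test functions equal to $\nabla\Phi_i$ in the fluid and the corresponding rigid velocities in the solid), uses \eqref{vieux} to produce the added-mass matrix on the left, arrives at the identity \eqref{EqL2} with the convective term written as $\int_{\mathcal F_0} v\cdot[(w\cdot\nabla)\nabla\Phi_i]\,dx$ — i.e.\ exactly your integrated-by-parts form — and then invokes the decomposition \eqref{allo2}, Proposition \ref{PE2} and the decay of $\nabla\Phi_i$ and $H$. You have simply written out in detail the term-by-term estimates that the paper leaves to the reader, and they check out.
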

\begin{proof}
Again, after a regular truncation procedure, we can use \eqref{SemiW} with, as test functions, the functions $(\Psi_{i} )_{i=1,2,3}$ defined by $\Psi_i  = \nabla \Phi_i $ in $ \mathcal{F}_{0}$ and $\Psi_i = e_i$, for $i=1,2$ and $\Psi_{3 }=  x^\perp $ in $ \mathcal{S}_{0}$.
We observe that the left hand side of \eqref{SemiW} can be recast in terms of the  acceleration of the body only thanks to the added mass phenomenon:
\begin{eqnarray*}
\big( <\partial_{t}  \overline{v} , \Psi_i   > \big)_{i=1,2,3}
&=& \mathcal{M}_g \begin{pmatrix} \ell \\r \end{pmatrix} '(t) + (  \int_{  \mathcal{F}_{0}  }  \partial_{t}  v  \cdot  \nabla   \Phi_i  \, dx   )_{i=1,2,3}
\\ &=& \big(  \mathcal{M}_g+\mathcal{M}_a \big)  \begin{pmatrix} \ell \\r \end{pmatrix} ' ,
\end{eqnarray*}
using \eqref{vieux} (observe that the new contribution in the velocity due to the vorticity does not modify this identity). We recall that $\mathcal{M}_g$ and $\mathcal{M}_a$ were respectively  given by  \eqref{DefMG} and  \eqref{AddedMass}. 
Therefore we infer from  \eqref{SemiW}  that
\begin{eqnarray}
\label{EqL2}
&& (\mathcal{M}_g + \mathcal{M}_a )  \begin{pmatrix} \ell \\ r \end{pmatrix} '
 =  \begin{pmatrix} -m r \ell^\perp \\ 0 \end{pmatrix}
\\  \nonumber  &&
 \quad +  \begin{pmatrix}
   \int_{  \mathcal{F}_{0}  }  v   \cdot    \left[((v-\ell-r x^\perp) \cdot \nabla) \nabla \Phi_i \right] \, dx
   - \int_{\mathcal{F}_{0}} r v^\perp \cdot \nabla \Phi_i \, dx 
  \end{pmatrix}_{i \in \{1,2,3\}}  
  .
  \end{eqnarray}
It then suffices to use the decomposition  \eqref{allo2}, 
Proposition  \ref{PE2} and the decay properties of $\nabla \Phi_i$ and $H$ to obtain a bound of  $\ell'$ and $r'$.
\end{proof}

With these a priori estimates in hand there are several classical ways to infer  the local in time existence of 
 a  weak solution to \eqref{Euler11}-\eqref{Solide1ci} as promised in the statement of Theorem \ref{ThmYudo0}.
Since we do not have mentioned the existence of smooth solutions, one method to produce directly weak solutions is to apply the following Schauder's fixed point theorem.

\begin{thm}\label{Sc}
Let $E$ denotes a Banach space and let $C$ be a nonempty closed
convex set in $E$. Let $F : C\mapsto  C$ be a continuous map such that $F(C)  \subset K$, where
$K$ is a compact subset of $C$. Then $F$ has a fixed point in $K$.
\end{thm}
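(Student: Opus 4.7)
The plan is to reduce Schauder's theorem to Brouwer's fixed point theorem via finite-dimensional approximation, exploiting the compactness of $K$ to construct continuous projections onto finite-dimensional convex subsets of $C$. The strategy has three stages: build approximating maps with finite-dimensional range, locate fixed points of the approximations by Brouwer, then pass to the limit using compactness.

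First, for each $\varepsilon > 0$, I would use total boundedness of the compact set $K$ to pick a finite $\varepsilon$-net $\{y_1,\ldots,y_N\}\subset K$, so that $K\subset \bigcup_i B(y_i,\varepsilon)$. I would then define the Schauder projection
\[
P_\varepsilon(x) := \frac{\sum_{i=1}^N \mu_i(x)\, y_i}{\sum_{i=1}^N \mu_i(x)}, \qquad \mu_i(x) := \max\bigl(0,\,\varepsilon - \|x-y_i\|_E\bigr),
\]
which is well defined and continuous on $K$ (the denominator is strictly positive there), satisfies $\|P_\varepsilon(x)-x\|_E\le\varepsilon$ for every $x\in K$, and takes values in the convex hull $C_\varepsilon := \mathrm{co}\{y_1,\ldots,y_N\}$. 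Since $y_i\in K\subset C$ and $C$ is convex, one has $C_\varepsilon\subset C$.

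Next, I would apply Brouwer's fixed point theorem to the composition $G_\varepsilon := P_\varepsilon \circ F$ restricted to $C_\varepsilon$. Since $F(C)\subset K$ by hypothesis and $P_\varepsilon(K)\subset C_\varepsilon$, the map $G_\varepsilon$ sends $C_\varepsilon$ into itself; moreover $C_\varepsilon$ is a compact convex subset of the finite-dimensional affine subspace $\mathrm{aff}(y_1,\ldots,y_N)$, and $G_\varepsilon$ is continuous there. Brouwer thus furnishes $x_\varepsilon \in C_\varepsilon$ with $x_\varepsilon = P_\varepsilon(F(x_\varepsilon))$, so that $\|x_\varepsilon - F(x_\varepsilon)\|_E \leq \varepsilon$. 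Taking $\varepsilon=1/n$, the sequence $\bigl(F(x_{1/n})\bigr)_n$ lies in the compact set $K$, hence admits a subsequence converging to some $x^\star\in K$; by the inequality just displayed, $x_{1/n}$ converges to $x^\star$ along the same subsequence, and the continuity of $F$ yields $F(x^\star)=x^\star$, which lies in $K$ as required.

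The main obstacle is setting up the Schauder projection so that it is simultaneously continuous, $\varepsilon$-close to the identity on $K$, and has its range inside $C$; the convexity of $C$ is used precisely here, through the inclusion $C_\varepsilon \subset C$, which lets Brouwer's theorem operate within $C$ rather than on an abstract simplex. Once this approximation is in hand, the remaining steps---application of Brouwer to $G_\varepsilon$, extraction of a convergent subsequence from $(F(x_\varepsilon))$ using compactness of $K$, and passage to the limit by continuity of $F$---are standard.
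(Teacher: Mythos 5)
Your proof is correct: the Schauder projection $P_\varepsilon$ is well defined and continuous on $K$ (the denominator is positive because the $y_i$ form an $\varepsilon$-net), the composition $P_\varepsilon\circ F$ maps the nonempty compact convex finite-dimensional set $C_\varepsilon\subset C$ into itself so Brouwer's theorem applies, and the limiting step — extracting a convergent subsequence from $\bigl(F(x_{1/n})\bigr)_n\subset K$ and using $\|x_{1/n}-F(x_{1/n})\|\le 1/n$ together with continuity of $F$ — is sound. For context, the paper does not prove this statement at all: it quotes it as the classical Schauder fixed point theorem and uses it as a black box to produce weak solutions of the fluid–solid system; the argument you give (finite $\varepsilon$-nets, projection onto their convex hull, Brouwer, then compactness) is the standard proof of that classical result, so there is nothing in the paper to compare it against beyond the statement itself.
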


Theorem  \ref{Sc} is applied to an 
operator $F$ 
which maps
 $(\omega,\ell,r)$ 
to  $(\tilde{\omega},\tilde{\ell},\tilde{r})$
 as follows: 
\begin{gather*}
\partial_t  \tilde{\omega} + \left[(v-\ell-r x^\perp)\cdot\nabla\right]  \tilde{\omega} =0    \text{ in }  {\mathcal F}_{0} ,
\\ (\mathcal{M}_g + \mathcal{M}_a )  \begin{pmatrix} \tilde{\ell} \\ \tilde{r} \end{pmatrix} '
 = 
  \begin{pmatrix}
   \int_{  \mathcal{F}_{0}  }  
 \Big(  v   \cdot    \left[((v-\ell-r x^\perp) \cdot \nabla) \nabla \Phi_i \right] 
   -  r v^\perp \cdot \nabla \Phi_i   
    \Big)
    \, dx 
  \end{pmatrix}_{i \in \{1,2,3\}} 
  \\ +
    \begin{pmatrix} -m r \ell^\perp \\ 0 \end{pmatrix} ,
  \end{gather*}
where $v$ is given by \eqref{allo2}, 
with some appropriate sets $C$ and $K$ of functions $(\omega,\ell,r)$ defined on a time interval $(0,T)$ with $T$ small enough. 
We thus observe that a fixed point of $F$ verifies  \eqref{vorty} and   \eqref{EqL2}.
Moreover the previous a priori bounds can be adapted to the system above and this allows to apply Schauder's fixed point theorem.
In particular the compactness for the $(\ell,r)$-part is given by an appropriate modification of Proposition \ref{Toascoli}.

The global in time existence follows then from global a priori estimates in particular of the vorticity.

On the other hand the uniqueness part of Theorem \ref{ThmYudo0} 
relies on Yudovich's method for the case of a fluid alone. Suppose that we have two solutions $(\ell_{1},r_{1},v_{1})$ and $(\ell_{2},r_{2},v_{2})$ with the same initial data (observe that in this part of the proof the indices do not stand for the components.)  In particular, they share the same circulation $\gamma$ and initial vorticity $w_{0}$. As a consequence, despite the fact that $v_{1}$ and $v_{2}$ are not necessarily in $L^{2}({\mathcal F}_{0})$, their difference $v_{1} - v_{2}$ does belong to $L^{\infty}(0,T;L^{2}({\mathcal F}_{0}))$ with\footnote{Recall that both $v_{1}$ and $v_{2}$ are harmonic for $|x|$ large enough and converge to $0$ at infinity.}
\begin{equation} \label{EstDiffv}
v_{1} - v_{2} = {\mathcal O}\left( \frac{1}{|x|^{2}}\right) \text{ and } \nabla( v_{1} - v_{2}) = {\mathcal O}\left( \frac{1}{|x|^{3}}\right) \ \text{ as } |x| \rightarrow +\infty.
\end{equation}
Moreover $\ell_{1}$,  $\ell_{2}$, $r_{1}$, $r_{2}$ belong to $W^{1,\infty}(0,T)$. As a consequence, one can prove  that $\nabla q_{1}$ and $\nabla q_{2}$ belong to $L^{\infty}(0,T;L^{2}({\mathcal F}_{0}))$. 
Then defining $\breve{\ell}:=\ell_{1}-\ell_{2}$, $\breve{r}:=r_{1}-r_{2}$, $\breve{v}:= v_{1} -v_{2}$ and $\breve{q}=q_{1}-q_{2}$,  we deduce from \eqref{Euler11} that
\begin{equation*}
\frac{\partial \breve{v}}{\partial t}
+ \left[(v_{1} -\ell_{1} -r_{1} x^\perp)\cdot\nabla\right]\breve{v} 
+ \left[(\breve{v} -\breve{\ell} -\breve{r} x^\perp)\cdot\nabla\right] v_{2} 
+ r_{1} \breve{v}^\perp + \breve{r} v_{2}^{\perp}+\nabla \breve{q} =0 .
\end{equation*}
\par
We multiply by $\breve{v}$, integrate over ${\mathcal F}_{0}$ and integrate by parts (which is permitted by \eqref{EstDiffv} and by the regularity of the pressure), and deduce:
\begin{equation*}
\frac{1}{2} \frac{d}{dt} \| \breve{v} \|_{L^{2}}^{2}
+ \int_{{\mathcal F}_{0}}  \breve{v} \cdot \left[ (\breve{v} -\breve{\ell} -\breve{r} x^\perp)\cdot \nabla v_{2} \right]  \, dx
+\breve{r} \int_{{\mathcal F}_{0}}  \breve{v}   \cdot v_{2}^{\perp} \, dx
+ \int_{\partial {\mathcal F}_{0}} \breve{q} \breve{v}\cdot n=0 .
\end{equation*}
For what concerns the last term, 
\begin{eqnarray*}
\int_{\partial {\mathcal F}_{0}} \breve{q} \breve{v}\cdot n &=& \breve{\ell} \cdot \int_{\partial {\mathcal F}_{0}} \breve{q} n + \breve{r}  \int_{\partial {\mathcal F}_{0}} \breve{q} x^{\perp} \cdot n \\
&=& m\breve{\ell} \cdot \big( \breve{\ell}'+ \breve{r}\ell_{1}^{\perp} + r_{2} \breve{\ell}^{\perp} \big) + {\mathcal J} \breve{r}\breve{r}' \\
&=& m \breve{r}\, \breve{\ell} \cdot \ell_{1}^{\perp} + m\breve{\ell} \cdot \breve{\ell}'+ {\mathcal J} \breve{r}\breve{r}'.
\end{eqnarray*}
Using $(x^\perp \cdot \nabla) v_{2} = \nabla (x^{\perp} \cdot v_{2}) - v_{2}^{\perp} - x^{\perp} \omega_{2}$,
and an integration by parts, one has
\begin{equation*}
\int_{{\mathcal F}_{0}} \breve{v} \cdot [(x^\perp \cdot \nabla) v_{2}] \, dx 
= \int_{{\mathcal S}_{0}} (x^{\perp} \cdot v_{2}) [(\breve{\ell} + \breve{r} x^{\perp}) \cdot n ]\, ds+
\int_{{\mathcal F}_{0}} \breve{v} \cdot (- v_{2}^{\perp} - x^{\perp} \omega_{2}) \, dx  .
\end{equation*}
Hence using the boundedness of $v_{2}$ and $\omega_{2}$ in $L^{\infty}(0,T;L^{\infty}({\mathcal F}_{0}))$, the boundedness of $\ell^{1}$ and the one of $\mbox{Supp\,}(\omega_{2})$, we arrive to
\begin{equation*}
\frac{d}{dt} \big( \| \breve{v} \|_{L^{2}}^{2} + \| \breve{\ell} \|^{2} + \| \breve{r} \|^{2} \big) 
\leqslant C  \Big( \| \breve{v} \|_{L^{2}}^{2} + \| \breve{\ell} \|^{2} + \| \breve{r} \|^{2} + \| \nabla v_{2} \|_{L^{p}} \| \breve{v}^{2} \|_{L^{p'}} \Big),
\end{equation*}
for $p>2$.
Here, the various constants $C$ may depend on ${\mathcal S}_{0}$ and on the solutions $(\ell_{1},r_{1},v_{1})$ and $(\ell_{2},r_{2},v_{2})$, but not on $p$. 
Hence using elliptic regularity and interpolation, we obtain that for $p$ large,
\begin{eqnarray*}
\frac{d}{dt} \big( \| \breve{v} \|_{L^{2}}^{2} + \| \breve{\ell} \|^{2} + \| \breve{r} \|^{2} \big) 
&\leqslant& C  \Big( \| \breve{v} \|_{L^{2}}^{2} + \| \breve{\ell} \|^{2} + \| \breve{r} \|^{2} \Big) + C p \| \breve{v}^{2} \|_{L^{p'}} \\
&\leqslant& C  \Big( \| \breve{v} \|_{L^{2}}^{2} + \| \breve{\ell} \|^{2} + \| \breve{r} \|^{2} \Big) 
+ C p \| \breve{v} \|_{L^{2}}^{\frac{2}{p'}}  \| \breve{v}^{2} \|_{L^{\infty}}^{\frac{1}{p}}  .
\end{eqnarray*}
For some constant $C>0$, we have on $[0,T]$:
$\| \breve{v} \|_{L^{2}}^{2} + \| \breve{\ell} \|^{2} + \| \breve{r} \|^{2} \leqslant K$, so for some $C>0$ one has in particular
\begin{equation*}
\frac{d}{dt} \big( \| \breve{v} \|_{L^{2}}^{2} + \| \breve{\ell} \|^{2} + \| \breve{r} \|^{2} \big) 
\leqslant C p \Big( \| \breve{v} \|_{L^{2}}^{2} + \| \breve{\ell} \|^{2} + \| \breve{r} \|^{2} \Big)^{1/p'}.
\end{equation*}
Now the unique solution to $y' = N y^{\delta}$ and $y(0)= \varepsilon>0$ for $\delta \in (0,1)$ and $N >0$ is given by
$y(t) = \Big[ (1-\delta) Nt + \varepsilon^{1-\delta}\Big]^{\frac{1}{1-\delta}}.$
Hence a comparison argument proves that 
$\| \breve{v} \|_{L^{2}}^{2} + \| \breve{\ell} \|^{2} + \| \breve{r} \|^{2} \leqslant (Ct)^{p}.$
We conclude that $\breve{v}=0$ for $t< 1/C$ by letting $p $ converge to $ +\infty$.


\subsection{Energy conservation}
\label{NRJ}

Despite the fact that the energy-type bound obtained in  Proposition \ref{PE2} turned out to be sufficient in order to deal with the Cauchy problem, one may wonder if even in the case where $\beta  \neq 0$ (for which the kinetic fluid energy is infinite, see the discussion above Proposition \ref{PE2}) 
there is a renormalized energy which is exactly conserved at least for regular enough solutions to  the problem  \eqref{Euler11}--\eqref{Solide1ci}. 
Another motivation is that the constant $C$ which appears in  Proposition \ref{PE2}  depends on the body geometry in 
such a way that the corresponding estimate is not uniform in the 
zero radius limit. One may hope that an exactly conserved quantity overcomes this lack of uniformity. 
 For any $p$ in $\mathbb R^3$ and for any $\omega \in L^\infty_c (\overline{\mathcal{F}_{0}})$ we define 
 \begin{gather}
  \label{Hamiltonien1ereForme}
  \mathcal{E} (p,\omega) :=   \frac12 p \cdot (\mathcal{M}_g + \mathcal{M}_a) p - \frac12 \int_{\mathcal{F}_0 \times  \mathcal{F}_0 }  G_{H} (x,y) \omega (x) \omega (y) \, dx \, dy
\\ \nonumber -    \gamma \int_{\mathcal{F}_0  } \omega (x)  \Psi_{H} (x) \, dx ,
\end{gather}
where $ \Psi_{H}$ is defined in Section \ref{HF} and $G_{H}$ is the so-called hydrodynamic  Green function defined by 
\begin{equation}
  \label{GpasH}
G_{H} (x,y) := G (x,y) +  \Psi_{H} (x) +  \Psi_{H} (y),
\end{equation}
where $ G$ is  the standard Dirichlet Green's function defined at the beginning of 
Section \ref{sec-coche}. We recall that $\mathcal{M}_g$ and $\mathcal{M}_a$ were respectively  given by  \eqref{DefMG} and  \eqref{AddedMass}. 
Observe that in the irrotational case where  $\omega$ is vanishing on $\overline{\mathcal{F}_{0}}$ the energy  $\mathcal{E} (p,0)$ is equal to the quantity \eqref{NR} which was proved to be 
 conserved in Proposition \ref{CP}.  
  Indeed the three terms in the right hand side of  \eqref{Hamiltonien1ereForme} can therefore be respectively interpreted as 
 the  kinetic energy of the rigid body with its total inertia included  its genuine inertia and  the added inertia due to the incompressible fluid around, 
 the self-interaction energy of the fluid vorticity and the interaction between the fluid vorticity and the circulation around the body. 
  
The following energy conservation property can therefore be interpreted as an extension of Proposition \ref{CP} to the rotational case. 
\begin{prop} \label{PropKirchoffSueur}
For any smooth solution $(\ell ,r ,v)$ of the problem  \eqref{Euler11}-\eqref{Solide1ci}  with compactly supported vorticity, 
the  quantity $ \mathcal{E} (\ell ,r ,\operatorname{curl} v)$ is conserved along the motion.
\end{prop}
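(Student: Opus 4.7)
The plan is a direct differentiation of $\mathcal E(\ell,r,\omega)$ along a smooth solution, using the body-frame Newton equation in the form \eqref{EqL2} (which already incorporates the added-mass contribution and remains valid with vorticity, cf.\ the proof of Proposition \ref{Toascoli}) and the vorticity transport \eqref{vorty}. Since $\mathcal M_a$ is constant in the body frame and $\ell\cdot\ell^\perp=0$, the kinetic piece of $\mathcal E$ contributes
$$\tfrac{d}{dt}\bigl[\tfrac12 p\cdot(\mathcal M_g+\mathcal M_a)p\bigr]=\int_{\mathcal F_0}v\cdot(w\cdot\nabla)u_1\,dx-r\int_{\mathcal F_0}v^\perp\cdot u_1\,dx,$$
with $w:=v-\ell-rx^\perp$ and $u_1:=\ell_1\nabla\Phi_1+\ell_2\nabla\Phi_2+r\nabla\Phi_3$ the potential part of $v$ from \eqref{allo2}.

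For the two $\omega$-dependent pieces, applying \eqref{vorty}, integrating by parts (with vanishing boundary contributions because $w\cdot n=0$ on $\partial\mathcal S_0$, $\Psi_H=0$ on $\partial\mathcal S_0$, and $\omega$ is compactly supported), and using the symmetry of $G_H$ together with $\nabla_x\!\int G(x,y)\omega(y)\,dy=-K[\omega]^\perp$ and $\nabla\Psi_H=-H^\perp$, one arrives at
$$\tfrac{d}{dt}\Bigl[-\tfrac12\iint G_H\omega\omega-\gamma\int\omega\Psi_H\Bigr]=\int_{\mathcal F_0}\omega\,w\cdot(K[\omega]+\beta H)^\perp\,dx,$$
the \emph{key} step being the algebraic identity $\gamma+\int\omega=\beta$, which exactly merges the $\Psi_H$-symmetric part of $G_H$ with the $\gamma\Psi_H$-term into a single $\beta H$-contribution. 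Using the decomposition $K[\omega]+\beta H=v-u_1$ from \eqref{allo2} and $v\cdot v^\perp=0$, this right-hand side becomes $-\int\omega(\ell+rx^\perp)\cdot v^\perp\,dx-\int\omega\,w\cdot u_1^\perp\,dx$.

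Adding the two contributions, one shows the sum is zero by applying the vector identity \eqref{vect} to $v\cdot(w\cdot\nabla)u_1$ (noting $\operatorname{curl}u_1=0$ and $\operatorname{curl}w=\omega-2r$), which produces a gradient term $\int v\cdot\nabla(w\cdot u_1)\,dx$ that integrates by parts into a surface integral on $\partial\mathcal S_0$ evaluated through the Kirchhoff condition $\partial_n\Phi_i=K_i$, a symmetric companion term $\int v\cdot(u_1\cdot\nabla)w\,dx$ handled by a second application of \eqref{vect}, and gyroscopic contributions $\int(\omega-2r)(u_1^\perp\cdot v)\,dx$ that cancel against the $\omega\,w\cdot u_1^\perp$ term and the $r\int v^\perp\cdot u_1\,dx$ term from the first step. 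The principal obstacle is the volume of careful sign-and-boundary bookkeeping: one must track the conventions $(a,b)^\perp=(-b,a)$, $a^\perp\cdot b^\perp=a\cdot b$, $\operatorname{curl}(x^\perp)=2$, as well as the decay at infinity ($u_1,K[\omega]=O(|x|^{-2})$ make all pairings integrable, while $H=O(|x|^{-1})$ enters only through $\beta H$, where $\beta=\gamma+\int\omega$ and the compact support of $\omega$ are precisely what prevents the logarithmic divergences from spoiling any identity). A reassuring consistency check is that setting $\omega\equiv0$ reduces $\mathcal E$ to \eqref{NR} and the statement to Proposition \ref{CP}.
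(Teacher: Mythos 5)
Your first two steps are correct and take a genuinely different route from the paper's. You differentiate the Hamiltonian form \eqref{Hamiltonien1ereForme} term by term: the kinetic part via the reformulated Newton equation \eqref{EqL2} (legitimate, since $\mathcal M_a$ is time-independent in the body frame and $\ell\cdot\ell^\perp=0$), and the two vortical terms via the transport equation \eqref{vorty}, the symmetry of $G_H$ and $\nabla\Psi_H=-H^\perp$; the recombination of the $\Psi_H$-parts of $G_H$ with the $\gamma\Psi_H$ term through $\beta=\gamma+\int\omega$ is exactly the right algebraic observation, and the boundary terms do vanish for the reasons you give. The paper proceeds differently: it first establishes the static identity \eqref{Hamiltonien2emeForme}, rewriting $\mathcal E$ as $\tfrac12 p\cdot\mathcal M_g p+\tfrac12\int(|\tilde v|^2+2\beta\,\tilde v\cdot H)$, after which $\tfrac{d}{dt}\mathcal E=m\ell\cdot\ell'+\mathcal J rr'+\int\partial_t v\cdot v$ and the usual PDE energy argument finishes: Newton's law cancels the pressure contribution and the boundary condition \eqref{Euler13} cancels the transport terms. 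That route confines all the work to one integration-by-parts identity and avoids cross-term bookkeeping altogether.

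The gap is in your final cancellation, which is precisely where the difficulty of this route is concentrated. Writing $\sigma:=\ell+rx^\perp$, $u_1:=\ell_1\nabla\Phi_1+\ell_2\nabla\Phi_2+r\nabla\Phi_3$ and $c:=K[\omega]+\beta H$, and carrying out exactly the manipulations you describe (\eqref{vect} applied to $(w\cdot\nabla)u_1$ with $\operatorname{curl}w=\omega-2r$, then $(u_1\cdot\nabla)w=(u_1\cdot\nabla)v-ru_1^\perp$ and $\int v\cdot(u_1\cdot\nabla)v=\tfrac12\int_{\partial\mathcal S_0}|v|^2(u_1\cdot n)\,ds$), the gyroscopic terms you mention do partially cancel, but one is left with
\[
\int_{\partial\mathcal S_0}(\sigma\cdot n)\Bigl[\tfrac12|u_1|^2-\sigma\cdot u_1-\tfrac12|c|^2\Bigr]\,ds\;-\;\int_{\mathcal F_0}\omega\,\sigma\cdot c^\perp\,dx ,
\]
and neither piece is addressed in your sketch: ``evaluated through the Kirchhoff condition'' only replaces $v\cdot n$ by $\sigma\cdot n$ on $\partial\mathcal S_0$, it does not make anything vanish. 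The first boundary term equals $\sum_i p_iA_i$ in the notation of Lemma \ref{PropCia} and vanishes only by the gyroscopic/skew-symmetry property of the added-mass tensor (the computation behind Proposition \ref{CP} or Lemma \ref{antis}); the remaining $|c|^2$ boundary term must be pushed back into the volume via $\tfrac12\int_{\partial\mathcal S_0}(\sigma\cdot n)|c|^2\,ds=\int_{\mathcal F_0}\sigma\cdot(c\cdot\nabla)c\,dx-\int_{\mathcal F_0}\omega\,\sigma\cdot c^\perp\,dx$, after which it exactly absorbs the leftover volume term, and one still has to check $\int_{\mathcal F_0}\sigma\cdot(c\cdot\nabla)c\,dx=\int_{\partial\mathcal S_0}(\sigma\cdot c)(c\cdot n)\,ds=0$ using $c\cdot n=0$ and the decay and zero flux of $H$ at infinity. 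These two additional identities are genuinely needed, so as written the proof is incomplete at its decisive step, even though the approach can be made to close.
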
 
\begin{proof}
We will proceed in two steps.  We first give another form of \eqref{Hamiltonien1ereForme}. Let us prove that
\begin{equation} \label{Hamiltonien2emeForme}
\mathcal{E} (\ell ,r ,\operatorname{curl} v)= 
   \frac12 p \cdot \mathcal{M}_g \,  p 
+ \frac12 \int_{ \mathcal{F}_{0} } ( |\tilde{v}|^2 + 2  \beta \tilde{v} \cdot    H   ) \, dx ,
\end{equation}
with $ \tilde{v}$ and $\beta$ given by the decomposition \eqref{allo2}.
Observe that the right hand side above can be obtained formally by expanding  the  natural  total kinetic energy of the ``fluid+solid'' system 
$ {\mathcal E}_{g} ( {p} ) 
+\frac12 \int_{ \mathcal{F}_{0} } |{v}(t,\cdot )|^2 \, dx$ thanks to the decomposition   \eqref{allo2} and discarding the infinite term 
$\frac12 \beta^2  \int_{ \mathcal{F}_{0} } |H|^{2} \, dx $ associated with the circulation around the body.
Note in particular that $\tilde{v}(x)= {\mathcal O}(1/|x|^{2}) \ \text{ as }  \ |x| \rightarrow +\infty $, 
so that the last integral in  the right hand side  of  \eqref{Hamiltonien2emeForme} is well-defined. 
 Let us highlight that a  difference with the irrotational case discussed in  Section \ref{sec-decompo} is that 
 $K[\omega] $ and $H$ being not orthogonal in $L^2 (\mathcal{F}_{0})$ there is a crossed term, given by the contribution of the last summand of the last term  of \eqref{Hamiltonien2emeForme}, and which encompasses a dependence on $\gamma$, through $\beta$.
 
In order to simplify the proof of  \eqref{Hamiltonien2emeForme} we introduce a few notations. 
Let us denote  $\Psi (x)  :=  \int_{\mathcal{F}_{0}} G(x,y) \omega(y) dy$
which is a stream function of  $K[\omega]$ vanishing on the boundary $\mathcal{S}_{0}$, so that 
$K[\omega] = \nabla^{\perp} \Psi$.
Let us also  denote  $\nabla \Phi :=  \ell_1 \nabla \Phi_1 +  \ell_2 \nabla \Phi_2 +  r  \nabla \Phi_3$,
so that $\tilde{v} = \nabla^{\perp} \Psi  +  \nabla \Phi$.
Then we compute 
\begin{equation}
\label{A0}
\int_{\mathcal{F}_0}  |\tilde{v}|^2 dx = 
\int_{\mathcal{F}_0} \nabla^\perp  \Psi \cdot   \tilde{v}
+  \int_{\mathcal{F}_0}  \nabla^\perp   \Psi \cdot  \nabla \Phi 
+ \int_{\mathcal{F}_0}  \nabla \Phi  \cdot  \nabla \Phi  .
\end{equation}
First, integrating by parts yields
\begin{gather}
\label{A1}
\int_{\mathcal{F}_0} \nabla^\perp \Psi \cdot \tilde{v}
= - \int_{\mathcal{F}_0 \times \mathcal{F}_0}  G(x,y) \omega (x) \omega (y) \, dx \, dy , \\
\label{A2}
\int_{\mathcal{F}_0}  \nabla^\perp  \Psi \cdot  \nabla \Phi = 0   \text{ and  } \int_{\mathcal{F}_0}  \tilde{v}  \cdot  H = - \int_{\mathcal{F}_0  } \omega (x)  \Psi_H (x) dx .
\end{gather}
There is no boundary terms since  $ \Psi$ and $ \Psi_H$ vanish on the boundary $\mathcal{S}_{0}$, and $ \nabla \Phi $ and $ \tilde{v}$  decrease also   like $1/ | x |^2$  at infinity. 
\par
Also, by definition, we have 
\begin{equation}
\label{A4}
\frac12 \int_{\mathcal{F}_0}  \nabla \Phi  \cdot  \nabla \Phi = \frac12 p \cdot \mathcal{M}_a p .
\end{equation}
Thus combining \eqref{A0}-\eqref{A4} we obtain that 
\begin{eqnarray*}
 \frac12 \int_{ \mathcal{F}_{0} } ( |\tilde{v}|^2 + 2  \beta \tilde{v} \cdot    H   ) \, dx 
 &=&
 \frac12 p \cdot \mathcal{M}_a p
  - \frac12 \int_{\mathcal{F}_0 \times  \mathcal{F}_0 }  G (x,y) \omega (x) \omega (y) \, dx \, dy
\\ &&-    \beta \int_{\mathcal{F}_0  } \omega (x)  \Psi_{H} (x) \, dx .
\end{eqnarray*}
 This entails \eqref{Hamiltonien2emeForme} thanks to  Fubini's theorem, \eqref{allo2} and \eqref{GpasH}.

Now by taking the time derivative of \eqref{Hamiltonien2emeForme}, using the  definition of   $\mathcal{M}_g$  given in  \eqref{DefMG}, \eqref{allo2}  and that  the coefficient $ \beta$ in  \eqref{allo2} is constant in time, 
one obtains:
\begin{align}
\label{hand}
\frac{d}{dt} \Big( \mathcal{E} (\ell ,r ,\operatorname{curl} v) \Big)&=  m \ell  \cdot \ell' (t) + {\mathcal J} r r' (t) +    \int_{\mathcal{F}_0  } \partial_ t  v \cdot  v.  
\end{align}
Using now the fluid equation \eqref{Euler11} one deduces from \eqref{hand} that 
$\frac{d}{dt} \Big( \mathcal{E} (\ell ,r ,\operatorname{curl} v) \Big)=  I_1 + I_2 + I_3$, where 
where
\begin{gather*}
I_1 :=  m \ell   \cdot \ell' (t) + {\mathcal J} r r' (t)  - \int_{\mathcal{F}_0  } \nabla q \cdot  v, \quad  \\ 
I_2 := -\int_{\mathcal{F}_0  } (v-\ell)\cdot\nabla v  \cdot v    \text{ and }
I_3 := - r  \int_{\mathcal{F}_0} [v^{\perp} - (x^{\perp} \cdot \nabla) v] \cdot v .
\end{gather*}
One easily justifies from the decay properties of 
$H$ and $ \tilde{v}$ that 
 each integral above is convergent.
This allows to integrate by parts both  $I_1$ and  $I_2$. 
Using  the interface condition \eqref{Euler13} and then Newton's equations for the body's dynamics, we obtain that  $I_1 = 0$. 
For what concerns $I_{2}$ we get that 
\begin{equation*}
I_2 =  - \frac{1}{2} \int_{\partial \mathcal{S}_0} |v|^2 (v - \ell)\cdot n  .
\end{equation*}
For what concerns $I_{3}$, we consider $R>0$ large in order that ${\mathcal S}_{0} \subset B(0,R)$, and consider the same integral as $I_{3}$, over ${\mathcal F}_{0} \cap B(0,R)$. Integrating by parts we obtain
\begin{equation*}
\int_{\mathcal{F}_0 \cap B(0,R)} [v^{\perp} - (x^{\perp} \cdot \nabla) v] \cdot v
= - \int_{\partial \mathcal{S}_0} (x^{\perp} \cdot n) \frac{|v|^{2}}{2} - \int_{S(0,R)} (x^{\perp} \cdot n) \frac{|v|^{2}}{2},
\end{equation*}
where we denote by $n$ also the unit outward normal on the circle $S(0,R)$. Of course $x^{\perp} \cdot n=0$ on $S(0,R)$, so letting $R \rightarrow +\infty$, we end up with
\begin{equation*}
I_{3} =\frac{1}{2} \int_{\partial \mathcal{S}_0} (rx^{\perp} \cdot n)|v|^{2}.
\end{equation*}
Using \eqref{Euler13} we deduce $I_{2}+I_{3}=0$, so in total we get $\frac{d}{dt} \Big( \mathcal{E} (\ell ,r ,\operatorname{curl} v) \Big)= 0$.
\end{proof}
One difficulty with the  quantity $ \mathcal{E}$  is that  both its form 
\eqref{Hamiltonien1ereForme}
and 
\eqref{Hamiltonien2emeForme} are not the sum of positive terms. 
However one may extirpate some information from the conservation of  $ \mathcal{E}$ thanks to the support of vorticity. 
The basic idea can be exhibited thanks to the 
following technical lemma, having in mind that the hydrodynamic Green function $ G_{H} (x,y)$ behaves like $ \frac{ 1 }{ 2\pi} \ln |x-y| $ at infinity.  
%
%
\begin{lem}\label{intln}
Let $f$ in $L^1 (\mathbb{R}^{2})\cap L^\infty (\mathbb{R}^2)$. We denote by 
$\rho_f :=\inf \, \{ d >1 \ / \  {\rm Supp}(f) \subset B(0,d) \}$.
Then there exists $C>0$ such that or any $y\in B(0,\rho_f)$, 
\begin{equation*}
\int_{\mathbb{R}^2} \Bigl| \ln |x-y| f(x) \Bigl| \, dx \leqslant C\|f \|_{L^\infty} + \ln(2\rho_f) \|f\|_{L^1} .
\end{equation*}
\end{lem}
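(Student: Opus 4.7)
The plan is to split the integration domain according to whether $|x-y|$ is small or large, since the logarithm has opposite signs in the two regimes and must be handled differently. More precisely, I would decompose
\begin{equation*}
\int_{\mathbb{R}^2} \bigl| \ln |x-y| \bigr| |f(x)| \, dx = \int_{|x-y|\leqslant 1} \bigl| \ln |x-y| \bigr| |f(x)| \, dx + \int_{|x-y|> 1} \bigl| \ln |x-y| \bigr| |f(x)| \, dx
\end{equation*}
and estimate the two pieces separately.

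For the near-field contribution (where $|x-y|\leqslant 1$) I would use the pointwise bound $|f(x)|\leqslant \|f\|_{L^\infty}$ and then compute explicitly the local logarithmic integral: passing to polar coordinates centered at $y$, the integral $\int_{B(y,1)} \bigl| \ln |x-y|\bigr|\,dx = 2\pi \int_0^1 (-\ln r)\,r\,dr$ is a fixed finite constant (a quick integration by parts gives $\pi/2$). This yields the $C\,\|f\|_{L^\infty}$ piece of the bound.

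For the far-field contribution I would exploit the support assumption: since $\mathrm{Supp}(f)\subset B(0,\rho_f)$ and $y\in B(0,\rho_f)$, the triangle inequality gives $|x-y|\leqslant 2\rho_f$ for every $x$ contributing to the integral. On the region $|x-y|>1$ we have $\bigl|\ln|x-y|\bigr| = \ln|x-y| \leqslant \ln(2\rho_f)$ (using $\rho_f>1$ to ensure the bound is nonnegative), so this term is controlled by $\ln(2\rho_f)\,\|f\|_{L^1}$.

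Adding the two contributions delivers the claimed inequality with $C=\pi/2$. There is no genuine obstacle here; the only subtlety is that one must use the hypothesis $\rho_f>1$ in the definition of $\rho_f$ to guarantee that $\ln(2\rho_f)$ is positive and hence that the bound on the far-field part is meaningful without an extra additive constant.
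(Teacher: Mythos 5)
Your proof is correct and follows exactly the paper's approach: the paper's own argument is the one-line remark that it suffices to decompose the integral according to whether $|x-y|\geqslant 1$ or not, which is precisely your splitting. Your explicit evaluation of the near-field integral and the triangle-inequality bound $|x-y|\leqslant 2\rho_f$ on the support simply fill in the details the paper leaves to the reader.
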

\begin{proof}
It is sufficient to decompose the integral depending on whether $|x-y|\geqslant 1$ or not. 
\end{proof}
As a consequence we have the following result.
\begin{cor}\label{ProRata}
One has the following estimate for some  positive constant $C$ depending only  on $m$, $ {\mathcal J}$, $ \| \omega_{0} \|_{L^{1} \cap L^{\infty}} $, $ |\ell_{0}|$, $ |r_{0}|$, $|\gamma|, \rho(0)$,  and the geometry: $ | \ell(t) | + |r(t)| \leqslant C [1 + \ln (\rho(t))]$,
where $\rho(t) := \inf \{ d >1 \ / \  {\rm Supp}(\omega(t,\cdot)) \subset B(0,d) \}$.
\end{cor}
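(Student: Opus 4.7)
The plan is to combine the conservation of the energy $\mathcal{E}(\ell,r,\omega)$ from Proposition~\ref{PropKirchoffSueur} with the conservation of the $L^p$ norms of the vorticity from \eqref{ConsOmega} and the logarithmic estimate of Lemma~\ref{intln}. Starting from the form \eqref{Hamiltonien1ereForme} of the energy, I would rewrite the conservation law as
\begin{equation*}
\tfrac{1}{2}\, p \cdot (\mathcal{M}_g + \mathcal{M}_a)\, p
= \mathcal{E}(0) + \tfrac{1}{2}\!\!\int_{\mathcal{F}_0 \times \mathcal{F}_0}\!\! G_H(x,y)\,\omega(t,x)\omega(t,y)\, dx\, dy + \gamma \!\int_{\mathcal{F}_0}\! \omega(t,x)\,\Psi_H(x)\, dx,
\end{equation*}
where $p=(\ell,r)$. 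Since $\mathcal{M}_g \in S_3^{++}(\mathbb{R})$ and $\mathcal{M}_a \in S_3^+(\mathbb{R})$, the left-hand side is bounded below by $c_0(|\ell|^2 + r^2)$ with $c_0>0$ depending only on $m$ and $\mathcal{J}$. Thus it suffices to show that the right-hand side is bounded above by $C(1+\ln \rho(t))$, from which the claim follows after taking a square root and noting that $\rho(t)\geq 1$ implies $(1+\ln\rho)^{1/2} \leq 1+\ln\rho$.

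The first key step is to control the self-interaction of $\omega$. Thanks to the definition \eqref{GpasH} of $G_H$ and to the  behaviour of the standard Dirichlet Green's function at infinity, one has $G_H(x,y) = \frac{1}{2\pi}\ln|x-y| + g(x,y)$ with $g$ bounded uniformly on $\mathcal{F}_0\times\mathcal{F}_0$. Applying Lemma~\ref{intln} with $f=\omega(t,\cdot)$ (extended by $0$ inside $\mathcal{S}_0$), and integrating the resulting bound against $|\omega(t,y)|\,dy$ over ${\rm Supp}(\omega(t,\cdot))\subset B(0,\rho(t))$, we obtain
\begin{equation*}
\Big|\!\!\int_{\mathcal{F}_0\times \mathcal{F}_0}\!\! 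G_H(x,y)\,\omega(t,x)\omega(t,y)\,dx\,dy\Big|
\leq C\|\omega(t,\cdot)\|_{L^1}\big(\|\omega(t,\cdot)\|_{L^\infty} + \|\omega(t,\cdot)\|_{L^1}\ln(2\rho(t))\big).
\end{equation*}
By the conservation \eqref{ConsOmega}, the $L^1$ and $L^\infty$ norms are equal to their initial values, and this whole expression is $\leq C(1+\ln\rho(t))$ with $C$ depending only on $\|\omega_0\|_{L^1\cap L^\infty}$ and the geometry.

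The second step handles the cross term with $\Psi_H$. Since $\Psi_H(x) = \frac{1}{2\pi}\ln|x| + O(1)$ as $|x|\to\infty$ (cf. Section~\ref{HF}), the function $\Psi_H$ is bounded by $C(1+\ln^+|x|)$ on $\mathcal{F}_0$, and integrating against $\omega(t,\cdot)$ whose support lies in $B(0,\rho(t))$ gives
\begin{equation*}
\Big|\gamma\!\int_{\mathcal{F}_0} \omega(t,x)\Psi_H(x)\,dx\Big|
\leq C|\gamma|\,\|\omega(t,\cdot)\|_{L^1}\big(1+\ln\rho(t)\big),
\end{equation*}
again bounded by $C(1+\ln\rho(t))$. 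Combining these two estimates with the conservation of energy yields $c_0(|\ell(t)|^2 + r(t)^2) \leq C(1+\ln\rho(t))$, with $C$ depending on the initial energy $\mathcal{E}(0)$ (which itself depends on $m,\mathcal J,|\ell_0|,|r_0|,|\gamma|,\rho(0)$ and $\|\omega_0\|_{L^1\cap L^\infty}$ via the same logarithmic estimate applied at $t=0$) and on the geometry. The conclusion follows.

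The only delicate point in the argument is the precise asymptotic behaviour of the hydrodynamic Green function $G_H$ at infinity, needed in order to reduce the self-interaction term to the logarithmic kernel treated by Lemma~\ref{intln}; all the remaining steps are essentially algebraic manipulations of the conservation laws.
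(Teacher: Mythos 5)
Your strategy is exactly the one the paper intends (it states the corollary as an immediate consequence of Lemma~\ref{intln} and the conservation law of Proposition~\ref{PropKirchoffSueur}, with precisely this chain of estimates in mind), and every step is sound except one: the claim that $g(x,y):=G_{H}(x,y)-\frac{1}{2\pi}\ln|x-y|$ is bounded uniformly on $\mathcal{F}_0\times\mathcal{F}_0$ is false. The hydrodynamic Green function vanishes when one of its arguments lies on $\partial\mathcal{S}_0$ (both $G$ and $\Psi_H$ do), so for $x\in\partial\mathcal{S}_0$ one has $g(x,y)=\Psi_H(y)-\frac{1}{2\pi}\ln|x-y|$, which tends to $+\infty$ as $x$ and $y$ approach the same boundary point. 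This is why the paper is careful to say that $G_H$ behaves like $\frac{1}{2\pi}\ln|x-y|$ \emph{at infinity} only; near the diagonal on $\partial\mathcal{S}_0$ the correction blows up logarithmically. Since the vorticity is only assumed supported in $\overline{\mathcal{F}_0}$ and may touch the boundary, you cannot simply absorb $g$ into a constant.

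The gap is repairable without changing the architecture. The correction $g$ is bounded below, and bounded above by $C\bigl(1+\ln^{+}|x|+\ln^{+}|y|+\ln^{-}d(x,\partial\mathcal{S}_0)\bigr)$ (maximum principle for the harmonic part of $G$ plus the known behaviour of $\Psi_H$); since $\ln^{-}d(\cdot,\partial\mathcal{S}_0)$ is locally integrable along a smooth curve, the same splitting as in Lemma~\ref{intln} gives $\int\!\!\int g^{+}|\omega(x)||\omega(y)|\,dx\,dy\leqslant C\|\omega\|_{L^1}\bigl(\|\omega\|_{L^1}\ln(2\rho)+\|\omega\|_{L^1}+\|\omega\|_{L^\infty}\bigr)$, and your conclusion survives unchanged. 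Alternatively, and more cleanly, work from the form \eqref{Hamiltonien2emeForme} of the energy: there the self-interaction appears as the manifestly nonnegative term $\frac12\int_{\mathcal{F}_0}|\tilde v|^2$, which can simply be discarded in the lower bound, leaving only the cross term $\beta\int_{\mathcal{F}_0}\tilde v\cdot H=-\beta\int_{\mathcal{F}_0}\omega\,\Psi_H$ to be estimated by $C|\beta|\,\|\omega\|_{L^1}(1+\ln\rho(t))$ exactly as in your second step; the Green-function subtlety then never arises for $t>0$ and only a crude upper bound on $\mathcal{E}(0)$ in terms of the initial data is needed.
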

%
\subsection{A macroscopic normal form tailored for the zero-radius limit}

We define the set 
\begin{equation*}
\mathcal B:=  \cup_{q \in \mathbb{R}^3} \,  \{  q   \} \times \mathbb{R}^3 \times \mathbb{R} \times L^\infty (\mathcal F(q); \mathbb{R}) .
\end{equation*}
The following result is deduced, by going back to the original frame, 
from the existence and uniqueness result established in Section \ref{sec-coche}
for the div/curl type system satisfied by the velocity in the body frame.
 \begin{prop}
 \label{bs}
For  any $(q,p,\gamma,\omega) $ in $\mathcal B$ with $p=(\ell,r)$ in $\mathbb{R}^2 \times \mathbb{R}$, there exists
a unique  $U(q,p,\gamma,\omega)$ in  the space  $\mathcal{LL}({\mathcal F} (q))$ such that 
\begin{gather*}
 \operatorname{div} U(q,p,\gamma,\omega) = 0   \text{ and }  \operatorname{curl} U(q,p,\gamma,\omega)  = \omega   \quad   \text{for}  \ x\in  \mathcal{F}(q) , \\
 U(q,p,\gamma,\omega) \cdot n = \left(\ell+r x^\perp\right)\cdot n  \quad   \text{for}  \ x\in \partial \mathcal{S}(q)    \text{ and }  \int_{ \partial \mathcal{S}(q)} U(q,p,\gamma,\omega)  \cdot  \tau \, ds=  \gamma , \\
 U(q,p,\gamma,\omega) \longrightarrow 0 \quad  \text{as}  \ x \rightarrow  \infty .
\end{gather*}
 \end{prop}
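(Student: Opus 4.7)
The plan is to reduce Proposition \ref{bs} to the body-frame construction already carried out in Section \ref{sec-coche}. Given $(q,p,\gamma,\omega)\in\mathcal B$ with $q=(h,\theta)$ and $p=(\ell,r)$, let $T_q \colon y\in\mathcal F_0 \mapsto R(\theta)y+h\in\mathcal F(q)$ denote the underlying rigid-motion isometry. Setting $\tilde\omega := \omega\circ T_q \in L^\infty(\mathcal F_0)$ and $\tilde\ell := R(\theta)^T\ell$, a direct change-of-variables computation (analogous to the one that produces \eqref{Euler11} from \eqref{Euler1}--\eqref{Euler3b}) shows that a field $U\in \mathcal{LL}(\mathcal F(q))$ solves the system of the statement if and only if $v(y):= R(\theta)^T\,U(T_q(y))$ lies in $\mathcal{LL}(\mathcal F_0)$ and solves the corresponding div/curl system in $\mathcal F_0$ with data $(\tilde\ell,r,\gamma,\tilde\omega)$; divergence, curl, normal trace, tangential circulation, decay at infinity and the log-Lipschitz semi-norm defined by \eqref{DefLL} are all invariant under this rotation/translation.

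In the body frame I would then exhibit the solution explicitly using the decomposition \eqref{allo2}, namely
\begin{equation*}
v \;=\; K[\tilde\omega] \;+\; \tilde\ell_1\nabla\Phi_1 \;+\; \tilde\ell_2\nabla\Phi_2 \;+\; r\,\nabla\Phi_3 \;+\; \beta\,H,\qquad \beta \;:=\; \gamma + \int_{\mathcal F_0}\tilde\omega\,dy,
\end{equation*}
where $K[\cdot]$ is the Biot--Savart operator associated with the Dirichlet Green function of $\mathcal F_0$, the $\Phi_i$ are the Kirchhoff potentials and $H$ is the harmonic field introduced in Section \ref{chap1}. Each summand verifies the correct div/curl equations, boundary conditions, circulation and decay at infinity, as recalled in Section \ref{sec-coche}; combining them yields $\operatorname{div} v=0$, $\operatorname{curl} v=\tilde\omega$, $v\cdot n=(\tilde\ell+r x^\perp)\cdot n$ on $\partial\mathcal S_0$, $\int_{\partial\mathcal S_0} v\cdot\tau\,ds=\gamma$, and $v\to 0$ at infinity, the circulation balance using that $\int_{\partial\mathcal S_0}K[\tilde\omega]\cdot\tau\,ds=-\int_{\mathcal F_0}\tilde\omega$. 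Regularity in $\mathcal{LL}(\mathcal F_0)$ follows from the smoothness of $\nabla\Phi_i$ and $H$ and their $O(1/|x|)$ decay at infinity, together with the classical log-Lipschitz estimate for the Biot--Savart operator $K[\tilde\omega]$ with $\tilde\omega\in L^\infty$ of bounded support.

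Uniqueness is obtained by a standard argument: if $U_1$ and $U_2$ both solve the system, then $W:=U_1-U_2$ is divergence-free, curl-free, tangent to $\partial\mathcal S(q)$, has zero circulation around $\partial\mathcal S(q)$, and tends to $0$ at infinity. The vanishing circulation implies that $W$ admits a single-valued harmonic stream function $\psi$ on $\mathcal F(q)$ which is constant on $\partial\mathcal S(q)$ and whose gradient decays at infinity; the exterior Dirichlet problem with this decay then enforces $\psi$ to be constant, so $W\equiv 0$.

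The main point is not a genuine obstacle but rather one of careful bookkeeping: verifying that the formulae for $K[\tilde\omega]$, $\nabla\Phi_i$ and $H$ recalled in the body-frame setting transport consistently under $T_q$ to produce the promised $U$ on $\mathcal F(q)$, and that the circulation is counted with the correct signs (the $+\gamma$ versus $-\int\tilde\omega$ interplay already visible in the definition of $\beta$ above). Once this is done, the proposition follows.
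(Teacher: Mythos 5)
Your proposal is correct and follows essentially the same route as the paper: the paper proves Proposition \ref{bs} by transferring to the body frame and invoking the existence/uniqueness statement of Section \ref{sec-coche}, where the solution is given exactly by the decomposition $v=K[\omega]+\ell_1\nabla\Phi_1+\ell_2\nabla\Phi_2+r\nabla\Phi_3+\beta H$ with $\beta=\gamma+\int_{\mathcal F_0}\omega$. Your sign bookkeeping for the circulation and your uniqueness argument via the harmonic stream function match the intended argument.
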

In order to prepare the asymptotic analysis of the rigid body's dynamics in the zero radius limit, we first establish here an exact 
normal form of the Newton equations for the solid motion for a fixed radius. 
It follows the analysis performed  in Section \ref{chap2} so that we will adopt here  the real-analytic approach developed in \cite{GMS} rather than the complex-analytic approach performed  in \cite{GLS2}. This will simplify the study of  the zero radius limit in the next subsection. 
\begin{thm}
\label{enmarche}
There exists a mapping $F$  in $C (\mathcal B ;  \mathbb{R}^3 )$  depending only on $\gamma$ and $\mathcal{S}_0$ 
such that  the equations  \eqref{Euler1}-\eqref{Euler3b} are equivalent to the following system: 
\begin{gather}
\label{refor1}
({\mathcal M}_{g}   +  {\mathcal M}_{a, \theta} )  \, q''  
  + \langle  \Gamma_{a,\theta} ,q',q'\rangle
 =  F(q,q',\gamma,\omega) ,
\\  \label{refor2}
  \frac{\partial \omega }{\partial t}+ \operatorname{div}
\big(   \omega   U(q,q',\gamma,\omega)  \big) = 0   \text{ for } x \in  \mathcal{F}(q(t)) ,
\end{gather}
where ${\mathcal M}_{a, \theta} $ is given by Theorem \ref{pasdenom}  and $ \Gamma_{a,\theta}$ denotes the  a-connection associated with ${\mathcal M}_{a, \theta}$ by Definition \ref{Christ}.
\end{thm}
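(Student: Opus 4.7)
The plan is to parallel the derivation of Theorem~\ref{pasdenom} via Theorem~\ref{reup}, now in the presence of vorticity, combining the body-frame reformulation of Section~\ref{Subsec:COF} with the Lamb-type real-analytic computations of Subsection~\ref{nabedian}. First I would pass to the body frame via~\eqref{chgtvar}, obtaining the system~\eqref{Euler11}--\eqref{Solide1ci}. Taking the curl of~\eqref{Euler11} yields the transport equation~\eqref{vorty}, which under the inverse change of frame yields~\eqref{refor2}. By the analogue of Proposition~\ref{bs} applied in the body frame, the fluid velocity $v$ is then uniquely determined by $p=(\ell,r)$, $\gamma$ and $\omega$, and it admits the decomposition
\[
v = K[\omega] + \ell_1 \nabla \Phi_1 + \ell_2 \nabla \Phi_2 + r \nabla \Phi_3 + \beta H
\]
as in~\eqref{allo2}, where $K[\omega]$ is the Biot-Savart operator in $\mathcal F_0$ introduced in Section~\ref{sec-coche} and $\beta = \gamma + \int_{\mathcal F_0} \omega$ is constant in time by Kelvin's theorem and by the conservation of $\int \omega$.

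Following the proof of Lemma~\ref{Refo}, the next step is to rewrite the convective term $(v-\ell-rx^\perp)\cdot\nabla v + rv^\perp$ by means of the identity~\eqref{vect}; compared with the irrotational case, the rotational correction is a distributed term of the form $(v-\ell-rx^\perp)^\perp \omega$. Testing Newton's equations~\eqref{Solide11} against the Kirchhoff potentials $\nabla \Phi_i$ and integrating by parts as in~\eqref{vieux} produces $({\mathcal M}_g + {\mathcal M}_a) p'$ on the left-hand side, together with a new contribution $\int_{\mathcal F_0} \partial_t K[\omega] \cdot \nabla \Phi_i\, dx$; substituting the vorticity transport equation~\eqref{vorty} for $\partial_t \omega$ and integrating by parts, this latter term can be recast as a functional of $(q,q',\gamma,\omega)$ alone, without any time derivative of $\omega$.

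I would then apply Lamb's lemma~\ref{lamb} to the quadratic boundary integrals $\int_{\partial \mathcal S_0} |v|^2 K_i \, ds$ and $\int_{\partial \mathcal S_0} ((\ell+rx^\perp)\cdot v) K_i \, ds$ arising on the right-hand side, splitting $v$ according to the decomposition above. The self-interaction of the Kirchhoff-potential part yields the a-connection $\langle \Gamma_{a,\theta}, q', q'\rangle$ (as in the end of Subsection~\ref{nabedian}), the crossed interaction between the Kirchhoff-potential part and $\beta H$ yields the Kutta-Joukowski gyroscopic force proportional to $\gamma$, and the self-interaction of $\beta H$ vanishes as in~\eqref{T7}. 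All remaining contributions -- namely the self-interaction of $K[\omega]$, its crossed interactions with the $\nabla \Phi_i$'s and with $H$, together with the new distributed term $(v-\ell-rx^\perp)^\perp \omega$ and the reprocessed $\partial_t K[\omega]$ term -- are collected into a single mapping $F(q,q',\gamma,\omega)$.

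The main obstacle is that, contrary to the irrotational case, $F$ does not inherit any clean gyroscopic or geodesic structure from these residual contributions; one must verify by direct estimates on the Biot-Savart kernel in $\mathcal F_0$ (exploiting the $L^\infty$-bound and compact support of $\omega$) that the resulting expression defines a continuous mapping on $\mathcal B$ with values in $\mathbb R^3$. Finally, transferring the resulting body-frame identity back to the original frame via the change of variables~\eqref{chgtvar}, exactly as Theorem~\ref{pasdenom} was deduced from Theorem~\ref{reup}, converts the left-hand side into $({\mathcal M}_g + {\mathcal M}_{a,\theta})q'' + \langle \Gamma_{a,\theta}, q', q'\rangle$ and yields~\eqref{refor1}, completing the proof.
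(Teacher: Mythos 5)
Your proposal is correct in outline, but it takes a genuinely different route from the paper's. The paper explicitly chooses \emph{not} to pass to the body frame --- it remarks that ``in a perhaps surprising way it seems more convenient not to use the reformulation \eqref{Euler11}-\eqref{Solide11}'' --- and instead works in the original frame as in Section \ref{chap2}: Bernoulli's formula \eqref{EQ_bernoulli} is replaced by $\nabla \pi = -\big(\partial_t u + \tfrac12 \nabla |u|^2 + \omega u^\perp\big)$, the resulting identity is tested against $U_1(q,p^\ast)$, and the velocity is split as $u = U_1(q,q') + U_2(q,\gamma,\omega)$ with $U_2$ lumping the circulation and the vorticity-induced part together. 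The $U_1$ self-interaction is then identified with the left-hand side of \eqref{refor1} by invoking the $\gamma=0$ case of the earlier Lagrangian computation, and everything else is organized into the explicit Lorentz-type force $F = \gamma^2 E + \gamma\, p \times B + D$ of \eqref{dim}, whose $E$ and $B$ have the same form as in the bounded irrotational case; that structure is precisely what the zero-radius analysis (Proposition \ref{Pro:NormalForm}) expands and exploits afterwards. Your body-frame route --- Lemma \ref{Refo} with the extra distributed term $(v-\ell-rx^\perp)^\perp\omega$, the decomposition \eqref{allo2}, Lamb's lemma applied only to the individually curl-free pieces, and a catch-all $F$ absorbing all $K[\omega]$ contributions --- is viable and does deliver a continuous $F$, which is all the statement literally requires; what it buys less of is the distinguished electric/magnetic/direct-vorticity splitting of $F$ needed downstream. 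One simplification you missed: the ``new contribution'' $\int_{\mathcal F_0} \partial_t K[\omega]\cdot\nabla\Phi_i\,dx$ vanishes outright, since $K[\omega]\cdot n = 0$ on $\partial\mathcal S_0$ at every time and $\operatorname{div}\partial_t K[\omega]=0$ (this is exactly the paper's observation, in the proof of Proposition \ref{Toascoli}, that the vorticity does not modify \eqref{vieux}), so there is no need to reprocess it through the transport equation.
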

Above it is understood that the equivalence concerns Yudovich type solutions.

One observes that the left hand side of  \eqref{refor1}  is the same than the one of 
 \eqref{ODE_ext}.  On the other hand the right hand side of \eqref{refor1} is more intricate. 
Indeed we are going to provide a rather explicit definition of  the force term $F$.
In order to do so we  split, for $(q,p,\gamma,\omega) $ in $\mathcal B$, the vector field  $U(q,p,\gamma,\omega)$ into 
\begin{equation}
  \label{decomposition}
U(q,p,\gamma,\omega) = U_1 (q,p) + U_2 (q,\gamma,\omega),
\end{equation}
 where 
$U_1(q,p) $ denotes the potential part that is the unique solution in  the space  $\mathcal{LL}({\mathcal F} (q))$ to the following system:
\begin{gather*}
 \operatorname{div} U_1(q,p) = 0   \text{ and }  \operatorname{curl} U_1(q,p)  = 0   \quad   \text{for}  \ x\in  \mathcal{F}(q) , \\
 U_1(q,p) \cdot n = \left(\ell+r (x-h)^\perp\right)\cdot n    \quad   \text{for}  \ x\in \partial \mathcal{S}(q)    \text{ and }  \int_{ \partial \mathcal{S}(q)} U_1(q,p)  \cdot  \tau \, ds=  0 , \\
 U_1(q,p) \longrightarrow 0 \quad  \text{as}  \ x \rightarrow  \infty ,
\end{gather*}
where $q=(h,\theta)$ and $p=(\ell,r)$, 
and $U_2 (q,\gamma,\omega)$ therefore denotes the unique solution in  the space  $\mathcal{LL}({\mathcal F} (q))$ to the following system:
\begin{gather*}
 \operatorname{div} U_2 (q,\gamma,\omega) = 0   \text{ and }  \operatorname{curl} U_2 (q,\gamma,\omega)  = \omega   \quad   \text{for}  \ x\in  \mathcal{F}(q) , \\
 U_2 (q,\gamma,\omega) \cdot n =  0  \quad   \text{for}  \ x\in \partial \mathcal{S}(q)    \text{ and }  \int_{ \partial \mathcal{S}(q)} U_2 (q,\gamma,\omega)  \cdot  \tau \, ds=  \gamma , \\
 U_2 (q,\gamma,\omega) \longrightarrow 0 \quad  \text{as}  \ x \rightarrow  \infty .
\end{gather*}
Observe that the vector fields  $U_1(q,p) $ and $U_2 (q,\gamma,\omega)$ are respectively linear with respect to $p$ and $(\gamma,\omega)$ whereas their dependence on $q$ is encoded into the change of variable \eqref{chgtvar} (since their counterpart in the body frame do not depend on $q$).

Eventually, we define for $(q,p,\gamma,\omega)$ in $\mathcal B$, three vector in $\mathbb R^3$, by
\begin{align}
\label{fifi1}
B(q,\gamma,\omega) & := - \int_{\partial\mathcal S(q)}U_2 (q,\gamma,\omega) \cdot \tau \Big(   (  U_1(q,e_i)  \cdot n)_{i}   \times 
(  U_1(q,e_i)  \cdot \tau)_{i}   \Big)\, {\rm d}s,\\
\label{fifi2}
 E(q,\gamma,\omega) &:= - \frac{1}{2} \big( \int_{\partial\mathcal S(q)}  
 \left|  U_2 (q,\gamma,\omega)  \right|^2           U_1 (q,e_i) \cdot n  \,  {\rm d}s  \big)_{i} ,\\
\label{fifi3}
D(q,p,\gamma,\omega) &:= - \big( \int_{\mathcal F(q)}      \omega U(q,p,\gamma,\omega)^\perp  \cdot  U_1(q,e_i)   \,  {\rm d}x \big)_{i} ,
\end{align}
where the index $i$ runs over $1,2,3$ and the $e_i $ stands for the canonical basis of $\mathbb R^3$. 
We will prove Theorem \ref{enmarche} with the mapping $F$ given, for $(q,p,\gamma,\omega)$ in $\mathcal B$, 
by 
\begin{equation} \label{dim}
F (q,p,\gamma,\omega) :=  \gamma^2 E(q,\gamma,\omega) +  \gamma \, p \times B(q,\gamma,\omega) + D(q,p,\gamma,\omega) .
\end{equation}
Observe that the vector fields $B(q,\gamma,\omega)$ and $E(q,\gamma,\omega)$ above have the same form than 
the vector fields $B(q)$ and $E(q)$ used in Section \ref{chap2}, see \eqref{B-def} and \eqref{E-def} except that they also encompass a contribution due to  the vorticity through the vector fields $  U_2  $.
The last term in \eqref{dim} is a direct contribution of the vorticity,  in the sense that it intervenes explicitly  inside an integral over the fluid domain. In particular this term may be non vanishing even if $\gamma=0$ unlike the two first terms.  

\begin{proof}

First of all  \eqref{refor2} simply recasts the transport of the fluid vorticity
 by the fluid velocity characterized by Proposition  \ref{bs}.
 The proof of Theorem \ref{enmarche} therefore reduces to prove the equivalence of Newton's equations for the solid motion with \eqref{refor1}.
In a perhaps surprising way it seems more convenient not to use the  reformulation \eqref{Euler11}-\eqref{Solide11}  of the system  in the body frame.
Instead we rather  proceed as in  Section \ref{chap2} with a few modifications due to the fact that we now deal with a non-vanishing vorticity $\omega = \operatorname{curl}  u$.
In particular one has to modify \eqref{EQ_bernoulli} into 
\begin{equation*}
\label{EQ_bernoulliVort}
\nabla \pi =-\left( \frac{\partial u}{\partial t}+\frac{1}{2}\nabla|u^2 | +  \omega  u^\perp   \right)               \quad\text{in }\mathcal F(q).
\end{equation*}
and therefore \eqref{bern_2} becomes 
\begin{gather*}
\label{bern_2Vort}
mh''\cdot\ell^\ast+\mathcal J\theta''r^\ast = - \int_{\mathcal F(q)}\left(\frac{\partial u}{\partial t}+
\frac{1}{2}\nabla (u^2) \right)\cdot U_1 (q,p^\ast ){\rm d}x
\\  -  \int_{\mathcal F(q)}    \omega    u^\perp \cdot U_1 (q,p^\ast ){\rm d}x ,
\quad    \text{ for all } p^\ast=(\ell^\ast ,r^\ast)\in\mathbb R^3 .
\end{gather*}
Then we use that $u = U(q,q',\gamma,\omega)  = U_1 (q,q') + U_2 (q,\gamma,\omega)$  to obtain, for all $p^\ast :=(\ell^\ast ,r^\ast) \in \mathbb R^3$, 
\begin{gather}
 \nonumber
m\ell'\cdot\ell^\ast+\mathcal Jr'r^\ast 
+ \int_{\mathcal F(q)} \Big( \frac{\partial U_1 (q,q')}{\partial t}+\frac{1}{2}\nabla|U_1 (q,q')|^2 \Big) \cdot U_1 (q,p^\ast ){\rm d}x
\\  \nonumber =  - \int_{\mathcal F(q)} \big( \frac{1}{2}\nabla|U_2 (q,\gamma,\omega)|^2 \big) \cdot U_1 (q,p^\ast ){\rm d}x
 \\  \nonumber  \qquad-
\int_{\mathcal F(q)}\big(\frac{\partial U_2 (q,\gamma,\omega)}{\partial t} + \frac{1}{2}\nabla (U_1 (q,q') \cdot U_2 (q,\gamma,\omega)) \big) \cdot U_1 (q,p^\ast ){\rm d}x
\\ -  \int_{\mathcal F(q)} \omega   U(q,q',\gamma,\omega) ^\perp \cdot U_1 (q,p^\ast ){\rm d}x . \label{bern_2bis}
\end{gather}
Using Theorem \ref{pasdenom} in the case where $\gamma=0$  yields that the left hand side of \eqref{bern_2bis} is equal to inner product of  the  left hand side of \eqref{refor1} with $p^\ast$.
By integration by parts one obtains that the first term in the right hand side of \eqref{bern_2bis} is equal to inner product of $ \gamma^2 E(q,\gamma,\omega) $ with $p^\ast$.
By adapting the proof of \eqref{last_one} one proves that the second term in the right hand side of \eqref{bern_2bis} is equal to inner product of $ \gamma \, q' \times B(q,\gamma,\omega) $ with $p^\ast$.
It follows from the linearity of $ U_1 (q,p)$ with respect to $p$ that the last term of  \eqref{bern_2bis} is equal to inner product of $D(q,q',\gamma,\omega) $  with $p^\ast$, and this concludes   the proof of Theorem \ref{enmarche}.
\end{proof}

\subsection{Zero radius limit}

We now investigate the zero radius limit and therefore  assume that, for every $\varepsilon \in (0,1]$, the solid domain occupies 
\eqref{DomInit} where  $ \mathcal{S}_0$ is  a fixed simply connected smooth compact subset of $\mathbb{R}^2$.
We consider $p_0  = ( \ell_0 , r_0  ) \in  \mathbb R^3$,   $m>0$, $\mathcal J>0$,  
 $\gamma $ in $ \mathbb R$ (respectively in $ \mathbb R^*$)  in the case of a massive (resp. massless) particle.
 Let  $ \omega_0 $ in  $L_c^{\infty}(\mathbb{R}^{2}\setminus\{0\})$
 Then   for every $\varepsilon \in (0,1]$,  combining Theorem \ref{ThmYudo-intro} and Theorem \ref{enmarche}, we obtain that there exists 
  a  unique global solution 
   $(h^{\varepsilon}, \theta^{\varepsilon}, \omega^\varepsilon)$ with Yudovich regularity (in particular with bounded vorticity) to the equations
   \eqref{refor1}-\eqref{refor2}   with some  coefficients $\mathcal{M}_{g}^\varepsilon$, $\mathcal{M}_{a}^\varepsilon$, $\Gamma_{a}^\varepsilon$ and ${F}^\varepsilon$ 
   associated with $\mathcal S_0^\varepsilon$, $m^\varepsilon ,\mathcal{J}^\varepsilon $  given  in Definition \ref{massiveP} and $\gamma $, 
 and with the initial data $(q^\varepsilon , (q^\varepsilon)')(0)= (0, p_0 )$ and $\omega^{\varepsilon} |_{t=0} = \omega_{0|{\mathcal F}_{0}^{\varepsilon}}$.
  In the massless case with  $ \alpha \geq 2$, we will consider only here the case where  $\mathcal S_0$ is not a  disk. 
 As already mentioned in  the fifth remark after Definition \ref{massiveP} 
 the case where $\mathcal S_0$ is  a non-homogeneous disk requires a few adaptations and can be tackled as in  \cite{GMS} for the irrotational bounded case
(whereas this case was actually  omitted in \cite{GLS,GLS2}).
Our results then read as follows.

\begin{thm}
 \label{yudo-VO}
 Let  be given
 a  circulation  $\gamma $ in $ \mathbb R$ in the case of a  massive particle and in  $ \mathbb R^*$ in the  case of a massless particle.
Let  be given  $(\ell_0 ,r_0 ) \in \mathbb{R}^3$,  $ \omega_0 $ in  $L_c^{\infty}(\mathbb{R}^{2}\setminus\{0\})$.  
  For any $\varepsilon \in (0,1]$,  let us denote $(h^{\varepsilon}, \theta^{\varepsilon}, \omega^\varepsilon)$ the  solution to the system associated with with $ \mathcal{S}_0^\varepsilon$,  $m^\varepsilon ,\mathcal{J}^\varepsilon $, $ \ell_0$,  $  r_0$,  $\gamma$ and $\omega_{0|{\mathcal F}_{0}^{\varepsilon}}$ as above.
 Then in the zero radius limit  $\varepsilon \rightarrow 0$, with the inertia scaling  described in Definition \ref{massiveP},  one has, in the case of a massive (respectively massless) particle, 
 that  for any  $T>0$, up to a subsequence (resp. for the whole sequence), 
   $h^\varepsilon $ converges to $h$ weakly-$*$ in $W^{2,\infty} (0,T;\mathbb{R}^{2})$ (resp. in $W^{1,\infty} (0,T;\mathbb{R}^{2})$), 
 $\varepsilon \theta^{\varepsilon}$ converges to $0$ weakly-$*$ in $W^{2,\infty} (0,T;\mathbb{R})$ 
 $\omega^{\varepsilon}$ (extended by $0$ inside the solid) converges to $\omega$ in $C^{0} ([0,T]; L^{\infty}(\mathbb{R}^{2})-\text{weak-}\star)$.
Moreover one has  \eqref{EulerPoint}  in $[0,T] \times \mathbb{R}^{2}$, respectively  
   \eqref{mls1}  in the massive limit  and \eqref{mls2}  in the massless limit,
with the initial conditions 
$\omega |_{t= 0}=  \omega_0 ,\ h(0) = 0, \ h' (0) = \ell_0$ (resp.  $ \omega |_{t= 0}=  \omega_0 ,\ h(0) = 0 )$.
\end{thm}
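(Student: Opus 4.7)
The plan is to mimic the strategy of Section \ref{chap2} extended to the rotational setting, building upon the exact normal form \eqref{refor1}--\eqref{refor2} of Theorem \ref{enmarche}. First, I would fix $T>0$ and use the scaling of Definition \ref{massiveP} together with self-similarity of $\mathcal{S}_0^\varepsilon$ to write the body equation as
\begin{equation*}
\bigl(\varepsilon^\alpha I_\varepsilon \mathcal{M}_g I_\varepsilon + \mathcal{M}_a^\varepsilon(q^\varepsilon)\bigr)(q^\varepsilon)'' + \langle \Gamma_a^\varepsilon(q^\varepsilon),(q^\varepsilon)',(q^\varepsilon)'\rangle = \gamma^2 E^\varepsilon + \gamma (q^\varepsilon)'\times B^\varepsilon + D^\varepsilon,
\end{equation*}
where $E^\varepsilon$, $B^\varepsilon$, $D^\varepsilon$ are the objects defined in \eqref{fifi1}--\eqref{fifi3} evaluated at $(q^\varepsilon,(q^\varepsilon)',\gamma,\omega^\varepsilon)$. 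Since the vorticity $\omega^\varepsilon$ is $L^\infty$ and supported away from the shrinking body, the splitting $U^\varepsilon=U_1^\varepsilon+U_2^\varepsilon$ allows the whole ``non-direct'' part of the force to be treated, to leading order, by an \emph{irrotational} approximation: on $\partial \mathcal{S}^\varepsilon(q^\varepsilon)$ one replaces $U_2^\varepsilon$ by the harmonic field $H^\varepsilon$ rescaled to carry the circulation $\gamma + \int \omega_0$ shifted by the Biot-Savart velocity $K_{\mathbb{R}^2}[\omega^\varepsilon]$ evaluated near $h^\varepsilon$. This step is where Lamb's lemma (Lemma \ref{lamb}) replaces the complex-analytic computations of \cite{GLS,GLS2}.

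Second, I would establish the analogue of Proposition \ref{dev-added} in the present setting: using a two-scale expansion (inner scale $O(\varepsilon)$ near $\partial\mathcal{S}^\varepsilon$, outer scale $O(1)$), one obtains
\begin{equation*}
\mathcal{M}_a^\varepsilon(q^\varepsilon)=\varepsilon^2 I_\varepsilon \mathcal{M}_{a,\theta^\varepsilon} I_\varepsilon + O(\varepsilon^4),\qquad \langle\Gamma_a^\varepsilon,\cdot,\cdot\rangle = \varepsilon I_\varepsilon\langle\Gamma_{a,\theta^\varepsilon},I_\varepsilon\cdot,I_\varepsilon\cdot\rangle + \cdots,
\end{equation*}
and the leading part of the Lorentz-type force becomes $I_\varepsilon(\gamma \hat p^\varepsilon\times B_{\theta^\varepsilon})$ for a modulated velocity
\begin{equation*}
\hat p^\varepsilon := \bigl((h^\varepsilon)'-K_{\mathbb{R}^2}[\omega^\varepsilon](h^\varepsilon),\,\varepsilon(\theta^\varepsilon)'\bigr)^t,
\end{equation*}
which plays the role of $\tilde p^\varepsilon$ in \eqref{fnorm2}. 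The direct vorticity contribution $D^\varepsilon$ in \eqref{fifi3}, which was the delicate term of \cite{GLS2}, is simpler to handle here because the Kirchhoff potentials satisfy $\nabla \Phi_i^\varepsilon(x)=O(\varepsilon/|x-h^\varepsilon|^2)$ (for $i=1,2$) far from the body and the support of $\omega^\varepsilon$ stays at positive distance from $h^\varepsilon$; after modulation by the Biot-Savart drift, $D^\varepsilon$ is reabsorbed into the subprincipal terms. These manipulations lead to the asymptotic geodesic-gyroscopic normal form
\begin{equation*}
\bigl(\varepsilon^\alpha \mathcal{M}_g + \varepsilon^2 \mathcal{M}_{a,\theta^\varepsilon}\bigr)(\hat p^\varepsilon)' + \varepsilon\langle \Gamma_{a,\theta^\varepsilon},\hat p^\varepsilon,\hat p^\varepsilon\rangle = \gamma \hat p^\varepsilon \times B_{\theta^\varepsilon} + O(\varepsilon^{\min(2,\alpha)}),
\end{equation*}
after dropping the $I_\varepsilon$ factor, together with the vorticity transport equation \eqref{refor2}.

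Third, I would derive the uniform estimates. Multiplying the normal form by $\hat p^\varepsilon$ exploits the skew-symmetry identity analogous to Lemma \ref{antis} and the vanishing of the cross product term, giving a Gronwall-type control of the modulated kinetic energy, uniformly in $\varepsilon$. This supplies $L^\infty$ bounds on $(h^\varepsilon)'-K_{\mathbb{R}^2}[\omega^\varepsilon](h^\varepsilon)$ and on $\varepsilon(\theta^\varepsilon)'$. Combined with the renormalized energy conservation of Proposition \ref{PropKirchoffSueur} (which controls the distance of the vorticity from $h^\varepsilon$ via Corollary \ref{ProRata}) and with Yudovich-type log-Lipschitz estimates on $U^\varepsilon$ away from the body, one obtains: uniform $W^{2,\infty}$ bounds (or $W^{1,\infty}$ bounds, in the massless case, after the extra factor $\varepsilon^{\alpha/2}$ is exploited) on $h^\varepsilon$, uniform support estimates for $\omega^\varepsilon$, and equicontinuity of $\omega^\varepsilon$ with values in $L^\infty$-weak-$\star$.

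Finally, I would pass to the limit by Arzelà--Ascoli on $h^\varepsilon$, extraction of a weak-$\star$ limit of $(h^\varepsilon)'$ (strong convergence in $C^0$), and by compactness of $\omega^\varepsilon$ in $C^0([0,T];L^\infty\text{-weak-}\star)$. The Biot--Savart term $K_{\mathbb{R}^2}[\omega^\varepsilon](h^\varepsilon)$ converges to $K_{\mathbb{R}^2}[\omega](h)$ because the vorticity support stays uniformly away from $h^\varepsilon$, so the kernel is continuous there. In the massive case, the left-hand side $\varepsilon^\alpha \mathcal{M}_g (\hat p^\varepsilon)'|_{\alpha=0}$ passes to $m h''$; in the massless case $\alpha>0$ all acceleration terms vanish and one reads off directly $h'=K_{\mathbb{R}^2}[\omega](h)$. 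The subsidiary statement that $\varepsilon\theta^\varepsilon\to 0$ in the massive case comes from a stationary-phase argument as in \cite[Lemma 10]{GLS}. The main obstacle, and the key new ingredient compared with Section \ref{chap2}, is the simultaneous control of the transport of $\omega^\varepsilon$ (which requires at least log-Lipschitz regularity of the advecting velocity and a positive distance between $\operatorname{Supp}(\omega^\varepsilon)$ and the body) and the Biot--Savart evaluation of $\omega^\varepsilon$ at the moving point $h^\varepsilon$: this coupling must be closed by the modulated energy estimate, and it is here that Theorem \ref{enmarche}'s explicit form of $D^\varepsilon$ and its natural smallness in the irrotational approximation play the decisive role.
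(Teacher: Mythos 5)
Your proposal follows the same route as the paper: the exact normal form of Theorem \ref{enmarche}, an irrotational approximation of the fluid velocity on the body's boundary so that Lamb's lemma applies, a modulated velocity absorbing the electric-type force, a modulated energy estimate, and a bootstrap on the support condition for the vorticity. There is, however, one concrete gap: your modulation $\hat p^\varepsilon = \bigl((h^\varepsilon)'-K_{\mathbb{R}^2}[\omega^\varepsilon](h^\varepsilon),\,\varepsilon(\theta^\varepsilon)'\bigr)^t$ is only the first-order drift. The paper's modulated variable carries the additional correction $-\varepsilon\,\nabla K_{\mathbb{R}^{2}}[\omega^{\varepsilon}(t,\cdot)](h^{\varepsilon})\cdot R(\theta^\varepsilon)\xi$, the exact analogue of the term $\varepsilon u_c(q^\varepsilon)$ in \eqref{dmodu} for the bounded case. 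This second-order correction is not cosmetic: with the first-order modulation \eqref{mod0} alone, the expansion of the right-hand side still contains genuine order-$\varepsilon$ subprincipal terms (the analogues of $\gamma^2\mathsf{E}_{{\it 1}}$, $\gamma I_\varepsilon q'\times\mathsf{B}_{{\it 1}}$ and the order-$\varepsilon$ a-connection contribution in \eqref{mesaoulent}) which are neither gyroscopic nor weakly gyroscopic. In the energy estimate the modulated kinetic energy is of size $\varepsilon^{\min(\alpha,2)}|\tilde p^\varepsilon|^2$, so a leftover source of size $\varepsilon|\tilde p^\varepsilon|$ destroys the Gr\"onwall closure as soon as $\alpha>1$, and in particular in the delicate regime $\alpha\geq 2$ on which the massless analysis must focus. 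The identity \eqref{grav} (and its rotational counterpart) is precisely what recombines these order-$\varepsilon$ terms into $-\varepsilon\langle\Gamma_{a,\theta^\varepsilon},\tilde p^\varepsilon,\tilde p^\varepsilon\rangle$ plus a term absorbable by the second-order shift of the drift, leaving only the remainders $\varepsilon\gamma G$ (weakly gyroscopic, controlled as in \eqref{IneqWG}) and $\varepsilon^{\min(\alpha,2)}F$ in \eqref{Eq:NormalForm}. So the normal form you assert with remainder $O(\varepsilon^{\min(2,\alpha)})$ is not reachable with the variable $\hat p^\varepsilon$ as you defined it; you must either restrict to $\alpha\leq 1$ or upgrade the modulation.

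Two smaller remarks. First, Corollary \ref{ProRata} controls the solid velocity in terms of $\ln\rho(t)$; it does not by itself control the distance between the vorticity support and the body. That two-sided support condition is a standing hypothesis of the normal form (condition \eqref{CondDistVorticiteSolide}) and has to be propagated by the continuous-induction argument, not derived from the energy. Second, in the unbounded setting the scaling of the added inertia is exact, $\mathcal{M}_a^\varepsilon(q)=\varepsilon^2 I_\varepsilon\mathcal{M}_{a,\theta}I_\varepsilon$ as in \eqref{wemerde}, with no $O(\varepsilon^4)$ remainder; and the far-field decay of the rescaled Kirchhoff potentials is $\nabla\Phi_i^\varepsilon(x)=O(\varepsilon^2/|x|^2)$ for $i=1,2$, not $O(\varepsilon/|x|^2)$ (this only strengthens your claim that $D^\varepsilon$ is subprincipal).
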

%

%
\begin{rem}
Note that  the convergence of $h^{\varepsilon}$ cannot be strong in $W^{1,\infty} (0,T;\mathbb{R}^{2})$, in general, as this would entail that
$$
 \ell_0 = \frac{1}{2 \pi} \int_{ \mathbb{R}^{2}} \frac{(h_0 -y)^{\perp}}{| h_0 -y|^{2}}  \omega_0  (y) \, dy .
 $$
\end{rem}
Theorem \ref{thm-intro-sec3-VO} is a consequence of  Theorem \ref{yudo-VO}.

\begin{proof}
We will proceed as in the proof of Theorem  \ref{Bounded-VO} with a few modifications. 
First using $ p^\varepsilon = ( (h^\varepsilon)'  ,  \varepsilon (\theta^\varepsilon)'  )^t $ 
we obtain that  the solid equations  are of the form
\begin{gather}
\label{refor1eps}
 ( \varepsilon^\alpha  \,  \mathcal{M}_g   + \varepsilon^2 \mathcal{M}_{a,\,  \theta^\varepsilon} )  \, (p^\varepsilon)'
  + \varepsilon \langle  \Gamma_{a,\theta^\varepsilon} ,p^\varepsilon,p^\varepsilon\rangle
\\ \nonumber =  
  \gamma^2 \tilde{E}^\varepsilon (q^\varepsilon,\gamma,\omega^\varepsilon) +  \gamma \, p^\varepsilon \times \tilde{B}^\varepsilon (q^\varepsilon,\gamma,\omega^\varepsilon) + \tilde{D}^\varepsilon (q^\varepsilon,p^\varepsilon,\gamma,\omega^\varepsilon) ,
  \end{gather}
where $ \tilde{E}^\varepsilon$, $\tilde{B}^\varepsilon $ and $ \tilde{D}^\varepsilon$ are respectively deduced from   $E$, $B$ and $D$ defined in 
\eqref{fifi1}, \eqref{fifi2} and \eqref{fifi3} by some appropriate scalings. 
Here again the crucial  issue is to obtain some bounds uniformly in $\varepsilon$ in order to 
pass to the limit in \eqref{refor1eps}. 
First we look for an appropriate modification of   Corollary \ref{ProRata}  in the zero radius limit thanks to an appropriate renormalization  of 
the energy   \eqref{Hamiltonien1ereForme}   as $\varepsilon \rightarrow 0^{+}$ by discarding
some  terms which are logarithmically divergent  in the limit  but which do not bear any information on the state of the system.\footnote{Observe that the quantity  \eqref{Hamiltonien1ereForme}  was already obtained from    the  natural  total kinetic energy of the ``fluid+solid'' system by a renormalization at infinity. 
   Here the renormalization rather tackles some undesired concentrations at the center of mass of the shrinking particle.} 
This provides an uniform estimate of  $\varepsilon^{\min(1,\frac{ \alpha }{2 })} \, | (h^\varepsilon)' , \varepsilon (\theta^\varepsilon)' )    | _{\mathbb{R}^{3}} $ at least till 
the vorticity is neither too far from the solid nor too close. 
Unfortunately  in the massless case the coefficient $ \alpha$ satisfies $ \alpha >0$ and the previous estimate is not sufficient.
In order to get some improved estimates,  we expand the coefficients in \eqref{refor1eps}  as $\varepsilon \rightarrow 0^{+}$ 
 using in particular an irrotational approximation of the fluid velocity on the body's boundary in order to use 
Lamb's lemma. Some cancellations similar to 
\eqref{mesaoulent} and \eqref{grav}  allow in particular to absorb the leading orders of  the term $ \tilde{E}^\varepsilon$ into the leading part of the expansions of the terms involving $ \Gamma_{a,\theta^\varepsilon}$ and 
 $\tilde{B}^\varepsilon$ thanks to the following modulation of the velocity:
\begin{equation*} \label{DefLTilde}
\tilde{\ell}^{\varepsilon}(t):= (h^{\varepsilon})' (t) -  K_{\mathbb{R}^{2}}[\omega^{\varepsilon}(t,\cdot)](h^{\varepsilon} )
- \varepsilon \nabla K_{\mathbb{R}^{2}}[\omega^{\varepsilon}(t,\cdot)](h^{\varepsilon}) \cdot R(\theta^\varepsilon) \xi ,
\end{equation*}
where $\xi$ is the conformal center of  $\mathcal{S}_0 $, cf. \eqref{DefXi}. On the other hand the term  $ \tilde{D}^\varepsilon$ turns out to be smaller  at least till 
the vorticity stays supported at distance of order $1$ of the solid. 
  Next we introduce the notation  $\tilde{p}^{\varepsilon}:=(\tilde{\ell}^{\varepsilon},\varepsilon (\theta^{\varepsilon})')$.
We thus obtain the following asymptotic normal form.
\begin{prop}\label{Pro:NormalForm}
Let us fix $\rho>0$. There exists $C>0$ such that if for a given $T>0$ and an $\varepsilon \in (0,1)$ one has for all $t \in [0,T]$:
\begin{equation} \label{CondDistVorticiteSolide}
d(h^{\varepsilon}(t), \mbox{Supp\,} (\omega^{\varepsilon}(t))) \geqslant 1/\rho
\ \text{ and } \ 
\mbox{Supp\,} (\omega^{\varepsilon}(t)) \subset B(h^{\varepsilon}(t),\rho),
\end{equation}
then there exist
a function $G=G(\varepsilon,t): (0,1) \times [0,T] \rightarrow \mathbb{R}^{3}$ satisfying
\begin{equation} \label{IneqWG}
\left| \int_0^t \tilde{p}^{\varepsilon}(s) \cdot G (\varepsilon,s) \, ds \right| 
\leqslant \varepsilon C \left( 1 + t + \int_0^t |\tilde{p}^{\varepsilon}(s)|^{2}\, ds \right) ,
\end{equation}
and a function $F=F(\varepsilon,t):(0,1) \times [0,T] \rightarrow \mathbb{R}^{3}$ satisfying
\begin{equation} \label{IneqWNL}
|F(\varepsilon,t)| \leqslant C \left( 1+ |\tilde{p}^{\varepsilon}(t)| + \varepsilon |\tilde{p}^{\varepsilon}(t)|^{2} \right),
\end{equation}
such that one has on $[0,T]$:
\begin{eqnarray} \label{Eq:NormalForm}
 && \big( \varepsilon^\alpha {\mathcal M}_{g} + \varepsilon^2 {\mathcal M}_{a,\theta^{\varepsilon}} \big) (\tilde{p}^{\varepsilon})' 
+   \varepsilon  \langle\Gamma_{a,\theta^{\varepsilon}} , \tilde{p}^{\varepsilon}, \tilde{p}^{\varepsilon} \rangle
\\ &&\nonumber \quad  = \gamma\, \tilde{p}^{\varepsilon} \times B_{\theta^{\varepsilon}}
 + \varepsilon \gamma G(\varepsilon,t) 
+ \varepsilon^{\min(\alpha,2)} F(\varepsilon,t).
\end{eqnarray}
\end{prop}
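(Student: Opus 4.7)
The plan is to combine the exact solid ODE \eqref{refor1eps} with an asymptotic analysis of each of its coefficients, exactly in the spirit of Proposition \ref{dev-added}, but with two additional features: the fluid vorticity $\omega^\varepsilon$ now enters through the velocity field $U_2(q^\varepsilon,\gamma,\omega^\varepsilon)$, and the zero-radius analysis must be combined with the information provided by the separation assumption \eqref{CondDistVorticiteSolide}. That assumption is used in the following way: on $\partial \mathcal{S}^\varepsilon(q^\varepsilon)$, the contribution of $\omega^\varepsilon$ to the velocity is, up to a harmless lift that respects the tangency and circulation condition, well approximated by the smooth function $K_{\mathbb{R}^{2}}[\omega^\varepsilon](\cdot)$ and its Taylor expansion around $h^\varepsilon$, with derivatives controlled by constants depending only on $\rho$ and $\|\omega_0\|_{L^1\cap L^\infty}$.

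First I would expand $\tilde{M}_a^\varepsilon$, $\tilde{\Gamma}_a^\varepsilon$ and $\tilde{B}^\varepsilon$ thanks to the same two-scale Ansatz that underlies Proposition \ref{dev-added}: the leading profiles are those of the whole-plane irrotational case of Section \ref{chap1}, namely $\mathcal{M}_{a,\theta^\varepsilon}$, $\Gamma_{a,\theta^\varepsilon}$ and $B_{\theta^\varepsilon}$ rescaled by $I_\varepsilon$, while the subprincipal corrections feed on the Taylor expansion of $K_{\mathbb{R}^{2}}[\omega^\varepsilon]$ at $h^\varepsilon$. Repeated applications of Lamb's lemma (Lemma \ref{lamb}) against the Kirchhoff potentials attached to $\mathcal{S}^\varepsilon(q^\varepsilon)$ reduce all the integrals entering \eqref{fifi1}--\eqref{fifi2} to geometric quantities associated with the fixed body $\mathcal{S}_0$ (the matrix $\mathcal{M}_{a,\theta}$, the conformal center $\xi$, etc.), so that $\tilde{E}^\varepsilon$ can be written as $I_\varepsilon$ applied to $\mathsf{E}_0^\omega(q^\varepsilon) + \varepsilon\mathsf{E}_1^\omega(q^\varepsilon) + O(\varepsilon^2)$, with $\mathsf{E}_0^\omega$ and $\mathsf{E}_1^\omega$ obtained from \eqref{tical} and from the profile involved in \eqref{grav} by replacing $u_\Omega$ with $K_{\mathbb{R}^{2}}[\omega^\varepsilon]$ and $u_c$ by the corresponding directional derivative along $R(\theta^\varepsilon)\xi$. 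The remaining term $\tilde{D}^\varepsilon$ is handled in a separate step: the Biot-Savart kernel $U_1(q^\varepsilon,e_i)$ decays like $1/|x|^2$ at infinity, so the hypothesis $d(h^\varepsilon,\mathrm{Supp}(\omega^\varepsilon))\geq 1/\rho$ gives an extra factor $\varepsilon$, and a further expansion splits $\tilde{D}^\varepsilon$ into a piece of the form $\varepsilon \gamma \,(\nabla_q \Psi)(q^\varepsilon)$ with $\Psi$ smooth, and a weakly nonlinear remainder of size $\varepsilon^2 |\tilde{p}^\varepsilon|$.

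The algebraic cancellation then runs exactly as in \eqref{mesaoulent}--\eqref{grav}: expressing $p^\varepsilon$ in terms of $\tilde{p}^\varepsilon$ through the definition of the modulation $\tilde{\ell}^\varepsilon$, the leading and first subprincipal contributions of $\gamma^2 \tilde{E}^\varepsilon + \gamma p^\varepsilon \times \tilde{B}^\varepsilon - \varepsilon\langle \Gamma_{a,\theta^\varepsilon}, p^\varepsilon, p^\varepsilon\rangle$ recombine into $I_\varepsilon\bigl[\gamma \tilde{p}^\varepsilon \times B_{\theta^\varepsilon} - \varepsilon\langle \Gamma_{a,\theta^\varepsilon}, \tilde{p}^\varepsilon, \tilde{p}^\varepsilon\rangle\bigr] + O(\varepsilon^{\min(2,\alpha)})$. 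The two correctors in the definition of $\tilde{\ell}^\varepsilon$ are dictated respectively by the leading term of $\mathsf{E}_0^\omega$ (absorbed via $u_\Omega\rightsquigarrow K_{\mathbb{R}^{2}}[\omega^\varepsilon]$) and by the $\xi$-dependent piece of $\mathsf{E}_1^\omega$ coming from the rotation of the body. What is left outside the principal Lorentz term naturally splits into the two remainders $G$ and $F$ of the statement: $F$ collects all the terms that are pointwise bounded by $C(1+|\tilde{p}^\varepsilon|+\varepsilon|\tilde{p}^\varepsilon|^2)$ times a power $\varepsilon^{\min(\alpha,2)}$, while $G$ collects the remaining terms of the $\varepsilon$ order which are not pointwise small but have the ``gradient in $q$'' structure inherited from $\tilde{D}^\varepsilon$ and from the subprincipal correction in $\tilde{E}^\varepsilon$.

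The hard part will be establishing \eqref{IneqWG}, that is, producing $G$ with a genuinely weakly gyroscopic structure. What is required is that each component of $G(\varepsilon,t)$ equals the total time-derivative of a smooth function of $q^\varepsilon$ modulo $O(|\tilde{p}^\varepsilon|^2)$, so that an integration by parts yields $\int_0^t \tilde{p}^\varepsilon\cdot G = \bigl[\Psi(q^\varepsilon)\bigr]_0^t + O\bigl(\int_0^t |\tilde{p}^\varepsilon|^2\bigr)$. Two ingredients will be decisive here: the identification of the $\varepsilon$-order corrector $\mathsf{E}_1^\omega$ as a $q$-gradient, which is the analogue in the rotational case of the identity \eqref{atten} $E = \tfrac{1}{2}DC$ and which relies on symmetry properties of the Kirchhoff potentials obtained by Lamb's lemma; and the use of the vorticity equation \eqref{refor2} to trade time-derivatives of $K_{\mathbb{R}^{2}}[\omega^\varepsilon](h^\varepsilon)$ for spatial derivatives in $q^\varepsilon$, modulo terms of size $|\tilde{p}^\varepsilon|^2$ that are safely absorbed in $F$. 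Once \eqref{IneqWG} and \eqref{IneqWNL} are secured, \eqref{Eq:NormalForm} follows by collecting the pieces.
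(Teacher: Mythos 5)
Your proposal follows essentially the same route as the paper: starting from the exact ODE \eqref{refor1eps}, expanding $\tilde{\mathcal M}_a^\varepsilon$, $\tilde\Gamma_a^\varepsilon$, $\tilde E^\varepsilon$, $\tilde B^\varepsilon$ via the two-scale analysis and Lamb's lemma with an irrotational approximation of the velocity on the body's boundary, recombining the leading and subprincipal terms through the modulation $\tilde\ell^\varepsilon$ exactly as in \eqref{mesaoulent}--\eqref{grav}, and using the support-separation hypothesis \eqref{CondDistVorticiteSolide} to make $\tilde D^\varepsilon$ subordinate. Your identification of the two extra ingredients specific to the rotational case --- controlling $K_{\mathbb{R}^2}[\omega^\varepsilon]$ and its derivatives near $h^\varepsilon$ via the conserved norms of the vorticity, and invoking the transport equation \eqref{refor2} when time-differentiating the drift so as to secure the weakly gyroscopic bound \eqref{IneqWG} --- matches what the paper's argument (and the detailed computations it refers to) requires.
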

From this normal form, we  deduce the following modulated energy estimates. 
\begin{lem} \label{Pro:ModulatedEnergy}
Let  $\rho>0$. There exists $C>0$ such that if for a given $T>0$ and an $\varepsilon \in (0,1)$ one has that \eqref{CondDistVorticiteSolide} is valid on $[0,T]$, then one has $| (h^{\varepsilon})' | + \varepsilon| (\theta^{\varepsilon})'| \leqslant C$ on $ [0,T].$
\end{lem}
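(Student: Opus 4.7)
The plan is to derive the bound from an energy-type estimate on the modulated variable $\tilde{p}^{\varepsilon}$ by exploiting the structure of the normal form \eqref{Eq:NormalForm}, and then to recover the bound on $(h^{\varepsilon})'$ by using that the modulation terms involve quantities of the form $K_{\mathbb{R}^{2}}[\omega^{\varepsilon}](h^{\varepsilon})$ which remain controlled on $[0,T]$ thanks to \eqref{CondDistVorticiteSolide}.

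First I would introduce the modulated energy
\begin{equation*}
\mathcal{E}^{\varepsilon}(t) := \frac{1}{2} \big( \varepsilon^{\alpha} \mathcal{M}_{g} + \varepsilon^{2} \mathcal{M}_{a,\theta^{\varepsilon}(t)} \big) \tilde{p}^{\varepsilon}(t) \cdot \tilde{p}^{\varepsilon}(t) ,
\end{equation*}
and differentiate it in time. Taking the inner product of \eqref{Eq:NormalForm} with $\tilde{p}^{\varepsilon}$, the gyroscopic term $\gamma (\tilde{p}^{\varepsilon} \times B_{\theta^{\varepsilon}}) \cdot \tilde{p}^{\varepsilon}$ vanishes identically. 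Moreover, as in the proof of Proposition \ref{CP}, the contribution of the a-connection $\varepsilon \langle \Gamma_{a,\theta^{\varepsilon}}, \tilde{p}^{\varepsilon}, \tilde{p}^{\varepsilon}\rangle \cdot \tilde{p}^{\varepsilon}$ is exactly compensated by the term coming from the commutation of the time derivative with $\varepsilon^{2} \mathcal{M}_{a,\theta^{\varepsilon}}$, since $(\theta^{\varepsilon})' = \tilde{p}^{\varepsilon}_{3}/\varepsilon$ and Lemma \ref{antis}-type skew-symmetry applies. This yields the clean identity
\begin{equation*}
(\mathcal{E}^{\varepsilon})'(t) = \varepsilon \gamma \, \tilde{p}^{\varepsilon}(t) \cdot G(\varepsilon,t) + \varepsilon^{\min(\alpha,2)} \tilde{p}^{\varepsilon}(t) \cdot F(\varepsilon,t) .
\end{equation*}

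Next I would integrate this identity on $[0,t]$ and estimate the two right-hand side terms using \eqref{IneqWG} and \eqref{IneqWNL}. The $G$ term is directly absorbed thanks to \eqref{IneqWG}, while \eqref{IneqWNL} combined with Young's inequality yields the bound
\begin{equation*}
\mathcal{E}^{\varepsilon}(t) \leqslant \mathcal{E}^{\varepsilon}(0) + C \varepsilon^{\min(\alpha,2)} \Big( 1 + T + \int_{0}^{t} |\tilde{p}^{\varepsilon}|^{2} \, ds + \varepsilon \int_{0}^{t} |\tilde{p}^{\varepsilon}|^{3} \, ds \Big) .
\end{equation*}
Since $\mathcal{S}_{0}$ is not a disk, the matrix $\varepsilon^{\alpha} \mathcal{M}_{g} + \varepsilon^{2} \mathcal{M}_{a,\theta}$ is bounded below by $c \, \varepsilon^{\min(\alpha,2)} I_{3}$ for some $c>0$, so that $|\tilde{p}^{\varepsilon}|^{2} \leqslant \mathcal{E}^{\varepsilon}/(c \varepsilon^{\min(\alpha,2)})$. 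Moreover $\mathcal{E}^{\varepsilon}(0) = O(\varepsilon^{\min(\alpha,2)})$ since $\tilde{p}^{\varepsilon}(0)$ is bounded uniformly in $\varepsilon$. Inserting these into the above inequality and absorbing the cubic term via a standard bootstrap (or working directly with $\sup_{[0,t]} \mathcal{E}^{\varepsilon}$ and Young's inequality to handle the factor $\varepsilon |\tilde{p}^{\varepsilon}|^{3} \leqslant \varepsilon \cdot (\mathcal{E}^{\varepsilon}/(c\varepsilon^{\min(\alpha,2)}))^{3/2}$, whose prefactor $\varepsilon^{1-\min(\alpha,2)/2}$ remains bounded), a Gronwall-type argument yields $\mathcal{E}^{\varepsilon}(t) \leqslant C \, \varepsilon^{\min(\alpha,2)}$ on $[0,T]$, hence the uniform bound $|\tilde{p}^{\varepsilon}(t)| \leqslant C$.

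It then only remains to translate this bound into the desired control. The third component gives directly $\varepsilon |(\theta^{\varepsilon})'| = |\tilde{p}^{\varepsilon}_{3}| \leqslant C$. For the translational part one uses the definition
\begin{equation*}
(h^{\varepsilon})' = \tilde{\ell}^{\varepsilon} + K_{\mathbb{R}^{2}}[\omega^{\varepsilon}](h^{\varepsilon}) + \varepsilon \nabla K_{\mathbb{R}^{2}}[\omega^{\varepsilon}](h^{\varepsilon}) \cdot R(\theta^{\varepsilon}) \xi,
\end{equation*}
together with the fact that condition \eqref{CondDistVorticiteSolide} guarantees that the Biot-Savart integrand and its first derivative are evaluated at distance at least $1/\rho$ from the support of $\omega^{\varepsilon}$, with the support itself contained in $B(h^{\varepsilon},\rho)$. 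Combined with the conservation of $\|\omega^{\varepsilon}\|_{L^{1} \cap L^{\infty}}$, this gives uniform $L^{\infty}$ bounds on $K_{\mathbb{R}^{2}}[\omega^{\varepsilon}](h^{\varepsilon})$ and $\nabla K_{\mathbb{R}^{2}}[\omega^{\varepsilon}](h^{\varepsilon})$ depending only on $\rho$ and $\|\omega_{0}\|_{L^{1} \cap L^{\infty}}$, and therefore $|(h^{\varepsilon})'| \leqslant C$ on $[0,T]$. The main delicate point of the argument is the treatment of the cubic term $\varepsilon^{\min(\alpha,2)+1}|\tilde{p}^{\varepsilon}|^{3}$, whose absorption requires the positive lower bound on the full inertia matrix (which is why the case of a disk for $\mathcal{S}_{0}$ must be excluded here) and a careful bootstrap.
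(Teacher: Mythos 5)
Your proof is correct and follows essentially the same route as the paper: multiply the normal form \eqref{Eq:NormalForm} by $\tilde{p}^{\varepsilon}$, use the gyroscopic cancellation and the exact compensation between the a-connection and the time derivative of $\varepsilon^{2}\mathcal{M}_{a,\theta^{\varepsilon}}$ (as in Proposition \ref{CP}), absorb $G$ and $F$ via \eqref{IneqWG}--\eqref{IneqWNL} and Gr\"onwall, and then transfer the bound from $\tilde{p}^{\varepsilon}$ to $(h^{\varepsilon})'$. The only cosmetic difference is that where the paper invokes a second Gr\"onwall estimate for this last step, you bound the Biot--Savart drift $K_{\mathbb{R}^{2}}[\omega^{\varepsilon}](h^{\varepsilon})$ and its gradient directly from \eqref{CondDistVorticiteSolide} and the conservation of $\|\omega^{\varepsilon}\|_{L^{1}\cap L^{\infty}}$, which is legitimate since that condition is assumed on all of $[0,T]$.
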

\begin{proof}
Let  $\rho>0$ and let  $C>0$  be given by Proposition \ref{Pro:NormalForm}. 
Let   $T>0$ and an $\varepsilon \in (0,1)$ one has that \eqref{CondDistVorticiteSolide} is valid on $[0,T]$.
Then according to Proposition \ref{Pro:NormalForm} one has  \eqref{Eq:NormalForm} on $[0,T]$. 
It is then sufficient to multiply  \eqref{Eq:NormalForm} by $ \tilde{p}^{\varepsilon}$, to deal with the right hand side as in  Proposition 
 \ref{CP}, to use  the assumption on the initial data and finally to apply two Gr\"onwall type estimates\footnote{The second one being devoted to deduce some estimates for $(h^{\varepsilon})'$ from the ones on the modulated velocities.} to conclude. 
%
%

\end{proof}
Let us now tackle the passage to the limit.
In a first time, we obtain the convergence stated in Theorem~\ref{yudo-VO} on a small interval $[0,T]$, and only in a second time obtain this convergence on any time interval. 
We consider $T^{\varepsilon}$ the supremum of the positive real number $ \tau $ for which one has for any $t\in [0,\tau]$, 
$ {\rm d}(h^\varepsilon(t), \mbox{Supp\,} \omega^\varepsilon(t))>1/(2\rho_{T})$
and $\mbox{Supp\,} \omega^\varepsilon(t) \subset B(h^\varepsilon(t),2\rho_{T}).$
For any $\varepsilon>0$ small enough such that ${\rm d}(\mbox{Supp\,} \omega_{0}, \mathcal{S}_{0}^\varepsilon)>2\rho_{T}/3$, we have of course $T^{\varepsilon}>0$. Using Proposition~\ref{Pro:ModulatedEnergy}, we deduce that 
there exists $\varepsilon_{0}>0$ and $\underline{T}>0$ such that $\inf_{\varepsilon \in (0,\varepsilon_{0})} T^{\varepsilon} \geqslant \underline{T}.$

Thanks to  a compactness argument using these estimates, the uniqueness of the solutions in the limit and Proposition~\ref{Pro:NormalForm}
 this allows to prove the convergence claimed in Theorem~\ref{yudo-VO} locally in time, that is 
 $h^\varepsilon $ converges to $h$ weakly-$\star$ in $W^{1,\infty} (0,\underline{T};\mathbb{R}^{2})$ and  $\omega^{\varepsilon}$ converges to $\omega$ in $C^{0} ([0,\underline{T}]; L^{\infty}(\mathbb{R}^{2})-\text{weak-}\star)$.
Finally we obtain the solid part of Theorem~\ref{yudo-VO} by a sort of continuous induction argument. 
Moreover, with the previous uniform estimates, passing to the limit in the fluid equation is routine.
\end{proof}

%

%


%


\subsection*{Acknowledgment}
The author thanks  the Agence Nationale de la Recherche, Project DYFICOLTI, grant ANR-13-BS01-0003-01 and Project IFSMACS, grant ANR-15-CE40-0010  for their financial support.


\newpage
\begin{changemargin}{-1cm}{-1cm}

\tableofcontents

\end{changemargin}


\end{document}